\documentclass[11pt]{amsart}
\usepackage[english]{babel}
\usepackage[margin=1in]{geometry}
\usepackage{setspace}
\usepackage{subcaption} 
\usepackage{graphicx}
\usepackage[colorlinks=true, allcolors=blue]{hyperref}
\usepackage{amsmath,amssymb,graphicx}
\usepackage{dsfont}
\usepackage{hyperref}
\usepackage{bm}
\usepackage{float}
\usepackage{tabularx}
\usepackage{mathrsfs}
\usepackage{geometry}
\usepackage{enumerate}
\usepackage{enumitem}
\usepackage{tikz}
\usetikzlibrary{arrows.meta,decorations.pathreplacing}
\usepackage{subcaption}
\usepackage{graphicx}
\usepackage{cite}
\usetikzlibrary{shapes.misc}
\tikzset{crosspt/.style={cross out,draw,thick,minimum size=5pt,inner sep=0pt}}

\setlist[enumerate]{label={\upshape(\roman*)}}
\allowdisplaybreaks

\theoremstyle{plain}
\newtheorem{assumption}{Assumption}
\newtheorem{remark}[]{Remark}

\newtheorem{theorem}{Theorem}
\newtheorem{lemma}{Lemma}

\newtheorem{proposition}{Proposition}

\newcommand{\rd}{\mathrm{d}}
\newcommand{\R}{\mathbb{R}}
\newcommand{\X}{\mathbb{X}}
\newcommand{\F}{\mathbb{F}}
\newcommand{\N}{\mathbb{N}}

\newcommand{\E}{\mathbb{E}}
\newcommand{\tl}{\tilde}
\newcommand{\wtl}{\widetilde}
\newcommand{\dl}{\delta}
\newcommand{\sg}{\sigma}
\newcommand{\gm}{\gamma}
\newcommand{\Th}{\Theta}
\newcommand{\ep}{\epsilon}
\newcommand{\lf}{\left}
\newcommand{\rt}{\right}

\title[]{Strong convergence and temporal-spatial regularity for tamed Euler approximations of Lévy-driven SDEs}

\author[Y. Ding]{Yan Ding}
\author[S. Wu]{Sizhou Wu}
\author[Y. Zhang]{Ying Zhang}

\address{Financial Technology Thrust, Society Hub, The Hong Kong University of Science and Technology (Guangzhou), %No.\ 1 Du Xue Rd, Nansha District,
Guangzhou, China}
\email{yding632@connect.hkust-gz.edu.cn}

\address{School of Mathematics, Shanghai University of Finance and Economics, Shanghai, China}
\email{wusizhou@sufe.edu.cn}

\address{Financial Technology Thrust, Society Hub, The Hong Kong University of Science and Technology (Guangzhou), %No.\ 1 Du Xue Rd, Nansha District,
Guangzhou, China}
\email{yingzhang@hkust-gz.edu.cn}

\thanks{
Financial support by the Guangzhou-HKUST(GZ) Joint Funding Program (No. 2025A03J3322) and Fundamental Research Funds for the Central Universities (China) is gratefully acknowledged.}

\keywords{Tamed Euler scheme, Strong convergence, Temporal-spatial regularity, L\'evy-driven SDE, Superlinearly growing coefficients}

\begin{document}
\maketitle
\vspace{-5mm}
\begin{abstract}
We study the temporal-spatial regularity properties of tamed Euler approximations for L\'evy-driven SDEs with superlinearly growing drift and diffusion coefficients. We first introduce a novel tamed Euler-type scheme and establish its strong convergence. We then derive temporal-spatial regularity estimates with respect to the initial value, the initial time, and the evaluation time. In particular, we obtain a stability estimate for fixed step size and a corresponding continuity estimate in the vanishing step-size regime. Numerical experiments are presented to support the theoretical results.
\end{abstract}

\section{Introduction}
Stochastic differential equations (SDEs) driven by L\'evy noise constitute an important class of stochastic models in mathematical finance, engineering, and many other areas where problems are influenced by event-driven uncertainties \cite{MR2512800,barndorff2012levy,MR2042661,MR2160585}. In many applications, one needs to approximate the solution for several initial values and initial times. However, straightforward interpolation over a fine grid may be inefficient in such situations. A more efficient alternative is to use multilevel or multigrid approximation. Moreover, multilevel Picard approximations have proved effective for approximating solutions to nonlinear and high-dimensional partial differential equations, see, e.g., \cite{MR4462404,MR4338044,hutzenthaler2020multilevel1,MR4557620,disintegration,MR4075337}. To justify and analyze such methods, it is essential to understand how numerical approximations depend on the initial value, the initial time, and the evaluation time. This naturally leads to the study of temporal-spatial regularity properties of Euler-type approximations.

Motivated by this perspective, \cite{MR4423846} established a strong convergence rate of $1/2$ for Euler-Maruyama approximations of SDEs driven by standard Brownian motion in the temporal-spatial H\"older norm under suitable assumptions on the coefficients. Their analysis shows that the Euler scheme is continuous with respect to perturbations of the initial time, initial value, and evaluation time. However, as the framework in \cite{MR4423846} relies on global Lipschitz assumptions and is restricted to Brownian-motion-driven SDEs, it does not apply to SDEs with superlinearly growing coefficients, nor does it extend to SDEs driven by Lévy noise. Hence, it excludes important applications including, e.g., the $3/2$ volatility model with jumps.

In this paper, we consider L\'evy-driven SDEs whose drift and diffusion coefficients satisfy a one-sided Lipschitz condition and a polynomial Lipschitz condition, while the jump coefficient satisfies an integrability condition and a global Lipschitz condition. Since the classical explicit Euler scheme is not applicable \cite{divergence}, we propose a novel tamed Euler-type scheme based on the taming technique introduced in \cite{2016eulerdiffusion} and the idea of fully implementable Euler schemes presented in \cite{L.GononC.Schwab}. Under appropriate assumptions, we first establish strong $L^p$-convergence of the proposed scheme in Theorem \ref{theorem1}\ref{theorem(i)}. We then derive temporal-spatial regularity estimates showing that: 
\begin{enumerate}
\item For a fixed step size, the proposed scheme is stable with respect to perturbations of the initial value, the initial time, and the evaluation time, as given in Theorem~\ref{theorem1}\ref{theoremnew}.
\item The proposed scheme is continuous with respect to these variables as the step size tends to zero, see Theorem \ref{theorem1}\ref{theorem(ii)}.
\end{enumerate}
To the best of our knowledge, these are the first such results for fully implementable tamed Euler schemes in the literature. Numerical experiments are provided which support our main findings.

The paper is organized as follows. In Section~\ref{Setting and Tamed Euler Scheme}, we introduce the L\'evy-driven SDE under consideration and define the proposed tamed Euler-type scheme. Section~\ref{Assumptions and Main Results} states the assumptions and presents the main results of this paper. In Section~\ref{Numerical Simulations}, we provide numerical experiments for two examples supporting the theoretical findings. Finally, Section~\ref{Proof of the Main Results} contains the proofs of the main results.

\subsection{Notation} We conclude this section by introducing some basic notations that will be used throughout the paper. 
Let $m,d \in \N$. For a vector $\mu \in \R^{d}$, we denote by $\|\mu\|$ its Euclidean norm, and by $\mu^{*}$ its transpose.  For a matrix $\sg \in \R^{d\times m}$, we denote by $\|\sg\|$ its Hilbert-Schmidt norm, and by $\sg^{*}$ its transpose. 
For a vector $\gm \in \R^{d}$, its $i$-th component is denoted by $\gm^{(i)}$, for $i=1,\dots,d$. 
For $x,y \in \R^{d}$, we write $xy$ for their Euclidean inner product. Moreover, \(C(\R^{d},\R^{d})\) denotes the space of continuous functions from \(\R^{d}\) to \(\R^{d}\). Finally, for \(p\ge 1\) and an \(\R^d\)-valued random variable \(X\), we denote by \(\|X\|_{L^p}:=(\E[\|X\|^p])^{1/p}\) its \(L^p\)-norm.

\section{Setting and Tamed Euler Scheme}\label{Setting and Tamed Euler Scheme}

\subsection{Setting}
Fix $T>0$. Let $(\Omega, \mathcal{F}, P)$ be a complete probability space equipped with a filtration $\F := \lf(\mathcal{F}_{t}\rt)_{t \in [0, T]}$ satisfying the usual conditions\footnote{$\mathcal{F}_{0}$ contains all $P$-null sets and the filtration is right continuous.}. 
Let $W:=\lf(W_{t}\rt)_{t \in [0, T]}$ be an $\R^{m}$-valued standard $\F$-Brownian motion and let $\nu(\rd z)$ be a L\'evy measure on $\R^{d}$, i.e., $\nu(\{0\})=0$ and $ \int_{\R^d} (1\wedge \|z\|^2)\,\nu(\rd z)<\infty$.
Moreover, let $\pi(\rd z, \rd t)$ be an stationary $\F$-Poisson random measure on $\lf(\R^{d} \times [0, T], \mathcal{B}(\R^{d}) \otimes \mathcal{B}([0, T])\rt)$ with intensity measure $\nu(\rd z) \otimes \rd t$, and denote by
$
\tl{\pi}(\rd z, \rd t) := \pi(\rd z, \rd t) - \nu(\rd z)\rd t
$
the compensated Poisson random measure associated with $\pi$.

For each $s \in [0,T]$ and $x \in \R^{d}$, we consider the following SDE on $[s,T]$:
\begin{equation}\label{SDE}
	\rd X_{t}^{s, x}
	= \mu\lf(X_{t-}^{s, x}\rt)\rd t
	+ \sg\lf(X_{t-}^{s, x}\rt)\rd W_{t}
	+ \int_{\R^d} \gm\lf(X_{t-}^{s, x}, z\rt)\, \tl{\pi}(\rd z,\rd t),
\end{equation}
with the initial condition $X_{s}^{s, x} = x$, where $\mu \in C(\R^{d}, \R^{d})$, $\sg \in C(\R^{d}, \R^{d \times m})$, and $\gm \in C(\R^{d} \times \R^{d}, \R^{d})$. 

\subsection{Tamed Euler Scheme}
We first introduce a time-discretized approximation of \eqref{SDE} based on an Euler-Maruyama-type scheme. 
To this end, let $\Th$ denote the collection of all discretization maps $\dl : [0,T] \to [0,T]$ such that
\[
\Th := \biggl\{ \dl : [0,T]\to[0,T] :
\begin{aligned}
	&\exists \; n\in\N,\ 0 = t_0 < t_1 < \dots < t_n = T, \\
	&\;\text{s.t.}\; \dl([t_0,t_1]) = \{t_0\},\ \dl((t_{i-1},t_i]) = \{t_{i-1}\},\ i=1,\dots,n,\\
	&\text{and } |t_i - t_{i-1}| < 1,\ \forall i=1,\dots,n
\end{aligned}
\biggr\}.
\]
Let $\iota : [0,T] \to [0,T]$ be the identity mapping defined by $\iota(t) = t$, and set $\wtl{\Th} := \Th \cup \{\iota\}$. 
Define $|\cdot| : \wtl{\Th} \to [0,T]$ by $|\iota| = 0$ and, for $\dl \in \Th$,
\[
|\dl| := \max \lf\{ |s-t| : s,t \in \dl([0,T]),\ s<t,\ (s,t)\cap \dl([0,T]) = \emptyset \rt\}.
\]

For $s \in [0,T]$, $x \in \R^d$, and identity mapping $\iota$, let $\lf(X^{s,x,\iota}_t\rt)_{t\in[s,T]}$ be an $\F$-adapted stochastic process satisfying that $X^{s,x,\iota}_s = x$ and
\begin{equation}\label{exactsolu}
	\rd X^{s,x,\iota}_t
	= \mu\lf(X^{s,x,\iota}_{\max\{s,t-\}}\rt)\rd t
	+ \sg\lf(X^{s,x,\iota}_{\max\{s,t-\}}\rt)\rd W_t
	+ \int_{\R^{d}} \gm\lf(X_{\max\{s,t-\}}^{s,x,\iota}, z\rt)\, \tl{\pi}(\rd z,\rd t),
\end{equation}
which can be viewed as an alternative form of SDE \eqref{SDE}.

We now consider the case where coefficients $\mu$ and $\sg$ may grow superlinearly. Since the classical Euler scheme is known to fail to approximate \eqref{exactsolu} (see, e.g., \cite{divergence}), we introduce a novel tamed Euler-type scheme based on \cite{L.GononC.Schwab} and \cite{2013anoteoneuler}: for \(s\in[0,T]\), \(x\in\R^d\), \(\dl\in\Th\),  \(\ep\in(0,1)\), and \(\mathcal{M}\in\N\), let \(\bigl(X^{s,x,\dl,\ep,\mathcal{M}}_t\bigr)_{t\in[s,T]}\) be an $\F$-adapted stochastic process satisfying that \(X^{s,x,\dl,\ep,\mathcal{M}}_s=x\) and
\begin{equation}\label{Tamedscheme}
	\begin{aligned}
		\rd X^{s,x,\dl,\ep,\mathcal{M}}_t
		&= \mu^{\dl}\bigl(X^{s,x,\dl,\ep,\mathcal{M}}_{\max\{s,\dl(t-)\}}\bigr)\,\rd t
		+ \sg^{\dl}\bigl(X^{s,x,\dl,\ep,\mathcal{M}}_{\max\{s,\dl(t-)\}}\bigr)\,\rd W_t + \int_{A_\ep} \gm\bigl(X_{\max\{s,\dl(t-)\}}^{s,x,\dl,\ep,\mathcal{M}}, z\bigr)\, \pi(\rd z,\rd t) \\
		&\quad - \Bigl( \frac{\nu(A_\ep)}{\mathcal{M}} \sum_{i=1}^{\mathcal{M}}
		\gm\bigl(X_{\max\{s,\dl(t-)\}}^{s,x,\dl,\ep,\mathcal{M}}, V^{\dl,\ep,\mathcal{M}}_{i,\max\{s,\dl(t-)\}} \bigr) \Bigr)\,\rd t,
	\end{aligned}
\end{equation}
where, for $\chi \geq 1$, the tamed coefficients are given by
\begin{equation}\label{tameddef}
	\mu^{\dl}(x)
	:= \frac{\mu(x)}{\bigl(1+|\dl|^{\frac{1}{2}}\|x\|^{\chi}\bigr)^{\frac{1}{2}}},
	\qquad
	\sg^{\dl}(x)
	:= \frac{\sg(x)}{\bigl(1+|\dl|^{\frac{1}{2}}\|x\|^{\chi}\bigr)^{\frac{1}{2}}},
\end{equation}
for $\ep\in(0,1)$, the set $A_\ep$ is defined by
\begin{equation}\label{Aep}
A_\ep:=\{z\in\R^d:\|z\|\ge \ep\},
\end{equation}
and for $\mathcal{M} \in \N$,
$\lf(V^{\dl,\ep,\mathcal{M}}_{i,t_j}\rt)_{i=1,...,\mathcal{M}, j =0,...,n}$ are i.i.d.\ \(\R^d\)-valued random variables with distribution
\begin{equation}\label{nuep}
\nu_\ep(B):=\frac{\nu(B\cap A_\ep)}{\nu(A_\ep)},\qquad B\in\mathcal{B}(\R^d),
\end{equation}
independent of \(W\) and \(\pi\).

\begin{remark}
	The taming factor given in \eqref{tameddef} is specifically designed for the Lévy-driven SDEs considered in this paper. In addition to controlling the superlinear growth of the coefficients and preserving consistency to the original coefficients as $|\dl|\to0$, it is chosen so that the tamed coefficients satisfy the estimate
	\begin{equation}\label{strongerbound}
		\max_{\xi\in\{\mu,\sg\}}\|\xi^\dl(x)\|^2\le K|\dl|^{-1/2}(1+\|x\|^2),
	\end{equation}
	as shown in Remark~\ref{remark2} below, which  is fundamental to our moment and convergence analysis.
	
	By comparison, the taming factors proposed in \cite{MR2985171,2016eulerdiffusion,MR3364862} typically yield only
	\begin{equation}\label{weakbound}
		\max_{\xi\in\{\mu,\sg\}}\|\xi^\dl(x)\|\le K|\dl|^{-1/2}(1+\|x\|),
	\end{equation}
	which is often sufficient for SDEs driven by Brownian motion, but appears to be insufficient for establishing the moment bounds and convergence estimates in the L\'evy-driven setting studied here, see, e.g., \eqref{L2B_1} in Lemma \ref{L3}.
\end{remark}

\begin{remark}
		The proposed scheme \eqref{Tamedscheme} is obtained by combining two approximation steps for Lévy-driven SDEs with a taming procedure addressing superlinearly growing coefficients.
		
\begin{enumerate}
	 \item\label{remark(i)}\textbf{Taming.} We adopt the taming technique and propose a novel taming factor to obtain the following tamed Euler approximation to SDE \eqref{exactsolu}: for $s\in[0,T]$, $x\in\R^d$, and $\dl \in \Th$, let $\bigl(X^{s,x,\dl}_t\bigr)_{t\in[s,T]}$ be an $\F$-adapted stochastic process satisfying that $X^{s,x,\dl}_s = x$ and
		\begin{equation}\label{tamedscheme1}
			\begin{aligned}
				\rd X^{s,x,\dl}_t
				&= \mu^{\dl}\lf(X^{s,x,\dl}_{\max\{s,\dl(t-)\}}\rt)\rd t
				+ \sg^{\dl}\lf(X^{s,x,\dl}_{\max\{s,\dl(t-)\}}\rt)\rd W_t + \int_{\R^d} \gm\lf(X_{\max\{s,\dl(t-)\}}^{s,x,\dl}, z\rt)\, \tl{\pi}(\rd z,\rd t),
			\end{aligned}
		\end{equation}
		where $	\mu^{\dl}(x)$ and $\sg^{\dl}(x)$ are given in \eqref{tameddef}.
		
	\item\textbf{ Truncation of small jumps.}
		The compensated jump integral in \eqref{tamedscheme1} is not directly simulatable. Following \cite{L.GononC.Schwab}, we introduce the $\ep$-truncated tamed Euler scheme: for $s\in [0,T]$, $x\in \R^d$, $\dl \in \Th$, and $\ep \in (0,1)$, let $\lf(X^{s,x,\dl,\ep}_t\rt)_{t\in [s,T]}$ be an $\F$-adapted stochastic process satisfying that $X^{s,x,\dl,\ep}_s = x$ and
		\begin{equation}\label{tamedscheme2}
			\begin{aligned}
				\rd X^{s,x,\dl,\ep}_t
				&= \mu^{\dl}\lf(X^{s,x,\dl,\ep}_{\max\{s,\dl(t-)\}}\rt)\rd t
				+ \sg^{\dl}\lf(X^{s,x,\dl,\ep}_{\max\{s,\dl(t-)\}}\rt)\rd W_t \\
				&\quad + \int_{A_\ep} \gm\lf(X_{\max\{s,\dl(t-)\}}^{s,x,\dl,\ep}, z\rt)\, \pi(\rd z,\rd t)
				- \int_{A_\ep} \gm\lf(X_{\max\{s,\dl(t-)\}}^{s,x,\dl,\ep}, z\rt)\, \nu(\rd z)\rd t,
			\end{aligned}
		\end{equation}
		where the set $A_\ep$ is defined in \eqref{Aep} such that $\nu(A_\ep) < \infty$.
		
	\item\label{remark(iii)}\textbf{ Monte Carlo approximation of the compensator.}
		To obtain a fully implementable discretization, we replace the compensator integral by the Monte Carlo estimator
		\[
		\int_{A_\ep}\gm(x,z)\,\nu(\rd z)
		=\nu(A_\ep)\int_{\R^d}\gm(x,z)\,\nu_\ep(\rd z)
		\approx \frac{\nu(A_\ep)}{\mathcal{M}}\sum_{i=1}^{\mathcal{M}}\gm(x,V_i),
		\]
		where $\lf(V_i\rt)_{i=1,...,\mathcal{M}}$ are i.i.d. $\R^d$-valued random variables with distribution $\nu_\ep$ defined in \eqref{nuep}, independent of $W$ and $\pi$.
\end{enumerate}
		Finally, combining \ref{remark(i)}--\ref{remark(iii)} yields the proposed scheme \eqref{Tamedscheme}. 
\end{remark}

\section{Assumptions and Main Results}\label{Assumptions and Main Results}

We fix $p_0 \geq 2$ and make the following assumptions on the coefficients $\mu$, $\sg$, and $\gm$ of SDE \eqref{SDE}. Proofs for the remarks are postponed to Appendix \ref{app:Assumptions and Main Results}.

\begin{assumption}\label{A1}
	There exists a constant $L > 0$ such that, for all $x,y \in \R^d$,
	\[
	2(x-y)\lf(\mu(x)-\mu(y)\rt) + (p_0 -1)\|\sg(x)-\sg(y)\|^2 \leq L\|x-y\|^2.
	\]
\end{assumption}

\begin{assumption}\label{A2}
	There exist constants $L>0$ and $ \chi \geq 1$ such that, for all $x,y \in \R^d$,
	\[
	\max_{\xi \in\{\mu,\sg\}}\|\xi(x)-\xi(y)\|
	\leq L\|x-y\|\lf(1+\|x\|^{\frac{\chi}{2}}+\|y\|^{\frac{\chi}{2}}\rt).
	\]
\end{assumption}

\begin{remark}
	Throughout this paper, the constant $K>0$ may take different values at different places, it depends on $L,\chi,p_0, T$, but is always independent of $\dl \in \wtl{\Th}$ and $d \in \N$.
\end{remark}

\begin{remark}\label{remark1}
	Under Assumption \ref{A2}, there exists a constant $K>0$ such that, for all $x \in \R^d$,
	\[
	\max_{\xi \in\{\mu,\sg\}}\|\xi(x)\| \leq K\lf(1+\|x\|^{\frac{\chi}{2}+1}\rt).
	\]
\end{remark}

\begin{remark}\label{exist}
	By Assumption \ref{A1} and Remark \ref{remark1}, there exists a constant $K>0$ such that, for all $x \in \R^d$,
	\[
	2x\mu(x) + (p_0-1)\|\sg(x)\|^2 \leq K(1 + \|x\|^2).
	\]
\end{remark}

\begin{remark}\label{remark2}
	By Remark \ref{remark1} and the definition of the tamed coefficients in \eqref{tameddef}, one observes that, for all $x \in \R^d$ and $\dl \in \Th$,
	\[
	\max_{\xi\in\{\mu,\sg\}}\|\xi^{\dl}(x)\|^2
	\leq \min\lf( K|\dl|^{-\frac{1}{2}}(1+\|x\|^2), \|\xi(x)\|^2 \rt).
	\]
\end{remark}

\begin{remark}\label{remark3}
	Combining Remarks \ref{remark1} and \ref{remark2}, one further observes that, for all $x \in \R^d$ and $\dl \in \Th$,
	\begin{align*}
		\max_{\xi \in \{\mu,\sg\}}\|\xi(x)-\xi^\dl(x)\|
		\leq K|\dl|^{\frac{1}{2}}\lf(1+\|x\|^{\frac{3}{2}\chi+1}\rt).
	\end{align*}
\end{remark}

\begin{remark}\label{remark4}
By Assumption \ref{A2} and Remark \ref{remark1}, there exists a constant $K>0$ such that, for all $x,y \in \R^d$ and $\dl \in \Th$,
\begin{align*}
	\max_{\xi \in \{\mu,\sg\}}\|\xi^\dl(x)-\xi^\dl(y)\|
	&\leq  K|\dl|^{-\frac14}\|x-y\|.
\end{align*}
\end{remark}

\begin{assumption}\label{A3}
	There exists a constant $N_d>0$ such that, for all $x \in \R^d$ and $\rho \in [1,p_0]$,
	\[
	\int_{\R^d}\|\gm(x,z)\|^{\rho}\nu(\rd z) \leq N_d\lf(1+\|x\|^{\rho}\rt).
	\]
\end{assumption}

\begin{assumption}\label{A4}
	There exists a constant $L_d>0$ such that, for all $x,y \in \R^d$ and $\rho \in [1,p_0]$,
	\[
	\int_{\R^d}\|\gm(x,z)-\gm(y,z)\|^{\rho}\nu(\rd z)
	\leq L_d\|x-y\|^{\rho}.
	\]
\end{assumption}

\begin{remark}
	Note that Assumptions \ref{A1}--\ref{A4} guarantee the existence and uniqueness of a solution to SDE \eqref{exactsolu} (see e.g., \cite[Theorem 2]{I.Gyongy1}).
\end{remark}

Moreover, for any $p \in [2, p^*]$, where
\begin{equation}\label{p*}
p^* := \min\left\{ \frac{2p_0}{\chi+2}-\zeta,\; \frac{2p_0}{3\chi+2},\;
\frac{-\chi\zeta+\sqrt{\chi\zeta(\chi\zeta+8 p_0)}}{2\chi} \right\}
\end{equation}
with some arbitrarily small $\zeta > 0$, we impose the following condition:

\begin{assumption}\label{A5}
	There exist constants $\mathcal{Z}_d,q > 0$ such that for all $\ep \in (0,1)$ and $x \in \R^d$,
	\begin{align}
		\int_{\R^d}(1 \wedge \|z\|^2)\nu(\rd z)
		&\leq \mathcal{Z}_d, \label{A51}  \\
		\int_{\|z\|\leq \ep}\|\gm(x,z)-\gm(y,z)\|^{p}\nu(\rd z)
		&\leq \ep^{q}\mathcal{Z}_d\|x-y\|^{p}, \label{A52} \\
		\int_{\|z\|\leq \ep}\|\gm(x,z)\|^{p}\nu(\rd z)
		&\leq \ep^{q}\mathcal{Z}_d\lf(1+\|x\|^{p}\rt). \label{A53}
	\end{align}
\end{assumption}

\begin{remark}\label{newscheme}
	By Assumption \ref{A5} and the definition of $A_{\ep}$ in \eqref{Aep}, we have, for all $\ep \in (0,1)$, that
	\begin{equation}\label{nudef}
		\nu(A_\ep) = \int_{A_\ep} \frac{1 \wedge \|z\|^2}{1 \wedge\|z\|^2} \nu(\rd z) \leq \ep^{-2}\int_{\R^d} (1 \wedge \|z\|^2)\nu(\rd z) \leq  \ep^{-2}\mathcal{Z}_d,
	\end{equation}
	which implies that $\nu$ is finite on $(A_\ep,\mathcal{B}(A_\ep))$. Thus, for all $t\in[s,T]$, we can rewrite scheme \eqref{Tamedscheme} as
	\begin{equation}\label{Tamedscheme2}
		\begin{aligned}
			X^{s,x,\dl,\ep,\mathcal{M}}_t &= x+ \int_s^t\mu^\dl\lf(X^{s,x,\dl,\ep,\mathcal{M}}_{\max\{s,\dl(r-)\}}\rt)\rd r+\int_s^t\sg^\dl\lf(X^{s,x,\dl,\ep,\mathcal{M}}_{\max\{s,\dl(r-)\}}\rt)\rd W_r\\
			&\quad+\int_s^t\int_{A_\ep} \gm\lf(X_{\max\{s,\dl(r-)\}}^{s,x,\dl,\ep,\mathcal{M}}, z\rt) \tl{\pi}(\rd z,\rd r)+\int_s^t\int_{A_\ep} \gm\lf(X_{\max\{s,\dl(r-)\}}^{s,x,\dl,\ep,\mathcal{M}}, z\rt) \nu(\rd z)\rd r\\
			&\quad- \int_s^t \lf(\frac{\nu(A_\ep)}{\mathcal{M}}\sum_{i=1}^{\mathcal{M}}\gm\lf(X_{\max\{s,\dl(r-)\}}^{s,x,\dl,\ep,\mathcal{M}},V^{\dl,\ep,\mathcal{M}}_{i,\max\{s,\dl(r-)\}}\rt)\rt)\rd r.
		\end{aligned} 
	\end{equation}
\end{remark}

Under these assumptions, we establish a strong convergence result for scheme \eqref{Tamedscheme}. Moreover, we show that scheme \eqref{Tamedscheme} is continuous w.r.t. the initial value, the initial time, and the evaluation time. The proofs for the main results can be found in Section \ref{Proof of the Main Results}.

\begin{theorem}\label{theorem1}
	Let Assumptions \ref{A1}--\ref{A5} hold and define $\lf(\X^{s,x,\dl}_t\rt)_{\dl \in \wtl\Th}$ such that for all $ s\in [0,T]$, $x\in \R^d$, $\dl \in \wtl{\Th}$, $\ep \in (0,1)$, and $\mathcal{M} \in \N$ with $\mathcal{M}\geq (1 \vee \ep^{-2}\mathcal{Z}_d)^{2}$,
	\[
	\X^{s,x,\dl}_t := 
	\begin{cases}
		X^{s,x,\dl,\ep,\mathcal{M}}_t, &\text{when} \; \dl \in \Th, \\
		X^{s,x,\iota}_t, &\text{when}\;\dl = \iota. 
	\end{cases}
	\]
	Then, for $p\in [2,p^*]$ with $p^*$ defined in \eqref{p*}, we obtain the following results:
	\begin{enumerate}
		\item\label{theorem(i)}It holds for all $ s\in [0,T]$, $x \in \R^d$, $\dl \in 
		\Th$, $\ep \in (0,1)$, and $\mathcal{M} \in \N$ with $\mathcal{M}\geq (1 \vee \ep^{-2}\mathcal{Z}_d)^{2}$, that
		\begin{align*}
			&\sup_{t\in[s,T]}\E  \lf\| X^{s,x,\iota}_t-X^{s,x,\dl,\ep,\mathcal{M}}_t \rt\|^p  \\
			&\leq K(|\dl|^{\frac{p}{p+\zeta}}+\ep^q(1+\mathcal{Z}_d^{\frac{p}{2}})+(\ep^{-2}\mathcal{Z}_d)^{p-1}\mathcal{M}^{-\frac{p}{2}}) (1+L_d^{p}) \big(1+\|x\|^{p_0}+N
			_d^{p_0}\big)e^{K(1+L_d^{p_0})}.
		\end{align*} 
		\item\label{theoremnew} It holds for all $s,\tl{s}\in [0,T]$, $t \in[s,T]$, $\tl{t} \in [\tl{s},T]$, $x,\tl{x} \in \R^d$, $\dl \in 
		\wtl{\Th}$, $\ep \in (0,1)$, and $\mathcal{M} \in \N$ with $\mathcal{M}\geq (1 \vee \ep^{-2}\mathcal{Z}_d)^{2}$, that
		\begin{align*}
			&\E\lf\|\X^{s,x,\dl}_t - \X^{\tl{s},\tl{x},\dl}_{\tl{t}} \rt\|^p \leq  K\bigg\{\|x -\tl{x}\|^p+(1+L_d^{\frac{p}{2}})\big(1+\|x\|^{p_0}+\|\tl{x}\|^{p_0}+N_d^{p_0}\big)\\
			&\quad\times\big(|\tl{t}-t|+|\tl{s}-s|+(|\tl{t}-t|^{p}+|\tl{s}-s|^p)(1+(  \ep^{-2}\mathcal{Z}_d )^{p-1}\mathcal{M}^{-\frac{p}{2}})
			\big)\bigg\}e^{K(|\dl|^{-\frac{p}{4}}+L_d^{p_0})}.
		\end{align*}
		\item\label{theorem(ii)} It holds for all $s,\tl{s}\in [0,T]$, $t \in[s,T]$, $\tl{t} \in [\tl{s},T]$, $x,\tl{x} \in \R^d$, $\dl \in 
		\wtl{\Th}$, $\ep \in (0,1)$, and $\mathcal{M} \in \N$ with $\mathcal{M}\geq (1 \vee \ep^{-2}\mathcal{Z}_d)^{2}$, that
		\begin{align*}
			&\E\lf\|\X^{s,x,\dl}_t - \X^{\tl{s},\tl{x},\dl}_{\tl{t}} \rt\|^p \leq  K\bigg\{\|x -\tl{x}\|^p+(1+L_d^{\frac{p}{2}})\big(1+\|x\|^{p_0}+\|\tl{x}\|^{p_0}+N_d^{p_0}\big)\\
			&\quad\times\big(|\tl{t}-t|+|\tl{s}-s|+(|\tl{t}-t|^{p}+|\tl{s}-s|^p)(1+(  \ep^{-2}\mathcal{Z}_d )^{p-1}\mathcal{M}^{-\frac{p}{2}})+|\dl|^{\frac{p}{p+\zeta}}\big)\bigg\}e^{K(1+L_d^{p_0})}.
		\end{align*}
			\end{enumerate}
	\end{theorem}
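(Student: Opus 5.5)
We argue in three stages. We first obtain uniform moment bounds, then prove the convergence statement \ref{theorem(i)} through the intermediate schemes \eqref{tamedscheme1} and \eqref{tamedscheme2}, and finally derive \ref{theoremnew} and \ref{theorem(ii)} from a stability estimate combined with the triangle inequality.

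\textbf{Moment bounds.} We prove $\sup_{t\in[s,T]}\E\|X^{s,x,\dl,\ep,\mathcal{M}}_t\|^{\rho}\le K(1+\|x\|^{\rho}+N_d^{\rho})e^{K(1+L_d)}$ for $\rho\le p_0$, and the analogous bound for $X^{s,x,\iota}$.
\begin{itemize}
\item We apply It\^o's formula to $(1+\|X_t\|^2)^{\rho/2}$ using the representation \eqref{Tamedscheme2}.
\item The term $2X_{\dl(t)}\mu^\dl(X_{\dl(t)})+(\rho-1)\|\sg^\dl\|^2$ is handled by Remark~\ref{exist}, since the taming factor only divides it.
\item The mismatch $X_t-X_{\dl(t)}$ is controlled with \eqref{strongerbound}: over one step, $\E\|X_t-X_{\dl(t)}\|^2\lesssim |\dl|\cdot|\dl|^{-1/2}(1+\|X_{\dl(t)}\|^2)+|\dl|(1+\|X\|^2)$. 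This is exactly where the bound with $|\dl|^{-1/2}$ on the squared coefficients is needed, since the jump part contributes additional terms of order $|\dl|$.
\item Jumps are bounded via Assumption~\ref{A3} together with the Kunita/BDG inequality.
\item The Monte Carlo compensator error $R_r:=\frac{\nu(A_\ep)}{\mathcal{M}}\sum_i\gm(\cdot,V_i)-\int_{A_\ep}\gm\,\rd\nu$ is conditionally centered given $\mathcal{F}_{\dl(r)}$. By the Marcinkiewicz--Zygmund inequality and Assumption~\ref{A3},
\[
\E\|R_r\|^{\rho}\le K\nu(A_\ep)^{\rho-1}\mathcal{M}^{-\rho/2}N_d(1+\E\|X\|^{\rho}),
\]
and the hypothesis $\mathcal{M}\ge(1\vee\ep^{-2}\mathcal{Z}_d)^2$ together with \eqref{nudef} makes this $O(1)$.
\end{itemize}
Gronwall's inequality then gives the moment bound.

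\textbf{Strong convergence \ref{theorem(i)}.} We split
\[
X^{\iota}-X^{\dl,\ep,\mathcal{M}}=(X^\iota-X^{\dl})+(X^\dl-X^{\dl,\ep})+(X^{\dl,\ep}-X^{\dl,\ep,\mathcal{M}}).
\]
\begin{itemize}
\item \emph{First term.} We apply It\^o's formula to $\|e_t\|^p$ with $e_t=X^\iota_t-X^\dl_t$ and write the drift difference as
\[
\mu(X^\iota_t)-\mu(X^\dl_t)+\mu(X^\dl_t)-\mu(X^\dl_{\dl(t)})+\mu(X^\dl_{\dl(t)})-\mu^\dl(X^\dl_{\dl(t)}),
\]
and similarly for $\sg$. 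The first piece is absorbed by Assumption~\ref{A1}, with $p\le p_0$ and Young's inequality leaving room for $p-1$. The second piece is bounded by Assumption~\ref{A2}, Hölder's inequality and the one-step estimate $\E\|X_t-X_{\dl(t)}\|^{\rho}\lesssim|\dl|$, which is only first order in $|\dl|$ because of jumps. The third piece uses Remark~\ref{remark3}. The jump differences are handled by Assumption~\ref{A4}.
\item \emph{Balancing exponents.} The polynomial weights force Hölder exponents that consume moments; this is what produces $p^*$ in \eqref{p*} and the rate $|\dl|^{p/(p+\zeta)}$. I would use Young's inequality in the form $ab\le \|e\|^{p}+\ldots$ with conjugate exponents $(p+\zeta)/\zeta$.
\item \emph{Second term.} Only the small-jump compensated integral over $\{\|z\|<\ep\}$ remains, and by \eqref{A52} and \eqref{A53} together with BDG it is $O(\ep^q(1+\mathcal{Z}_d^{p/2}))$.
\item \emph{Third term.} The Monte Carlo error estimate above gives $(\ep^{-2}\mathcal{Z}_d)^{p-1}\mathcal{M}^{-p/2}$.
\end{itemize}
Gronwall's inequality yields the factor $e^{K(1+L_d^{p_0})}$.

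\textbf{Regularity \ref{theoremnew} and \ref{theorem(ii)}.} Assume without loss of generality $s\le\tl s$. We write
\[
\X^{s,x,\dl}_t-\X^{\tl s,\tl x,\dl}_{\tl t}=(\X^{s,x}_t-\X^{s,x}_{\tl t})+(\X^{s,x}_{\tl t}-\X^{\tl s,\tl x}_{\tl t}).
\]
\begin{itemize}
\item The time increment is bounded by the moment bounds. The $\rd t$ terms give $|t-\tl t|^p$, inflated by the Monte Carlo factor, and the martingale terms give $|t-\tl t|$ because of jumps.
\item For the second difference, we compare at time $\tl s$: the process $\X^{s,x}_{\tl s}$ differs from $\tl x$ by $\|x-\tl x\|$ plus a time increment of size $|s-\tl s|$.
\item For $\tl t\ge\tl s$, both processes follow the same dynamics from $\tl s$. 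For fixed $\dl$ we then use the global Lipschitz bound of Remark~\ref{remark4}, $K|\dl|^{-1/4}$, together with Assumption~\ref{A4}. The $p$-th moment Gronwall argument gives $e^{K(|\dl|^{-p/4}+L_d^{p_0})}$, which is \ref{theoremnew}.
\end{itemize}

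The main obstacle is that the grids are misaligned when $s\ne\tl s$, since $\max\{s,\dl(t)\}$ differs between the two processes. We handle this by one extra step increment on the first grid interval, which contributes an $O(|\dl|)$-type term bounded by the moment bounds.

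For \ref{theorem(ii)} we avoid the $|\dl|^{-1/4}$ blow-up. We compare each scheme with the exact solution using \ref{theorem(i)}, which produces the $|\dl|^{p/(p+\zeta)}$ term, and apply the case $\dl=\iota$ of the stability estimate. For $\dl=\iota$ this estimate uses Assumptions~\ref{A1}--\ref{A2} instead of Remark~\ref{remark4} and gives the $e^{K(1+L_d^{p_0})}$ constant. The triangle inequality then yields \ref{theorem(ii)}.
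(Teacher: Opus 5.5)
Your plan for (i) follows the paper's three-step splitting, and your plan for Theorem~\ref{theorem1}\,\ref{theoremnew} is workable with one correction (see the end). The argument for Theorem~\ref{theorem1}\,\ref{theorem(ii)}, however, has a genuine gap. You route through the exact solution:
\[
\E\bigl\|X^{s,x,\dl,\ep,\mathcal{M}}_t-X^{\tl s,\tl x,\dl,\ep,\mathcal{M}}_{\tl t}\bigr\|^p\le K\Bigl(\E\bigl\|X^{s,x,\dl,\ep,\mathcal{M}}_t-X^{s,x,\iota}_t\bigr\|^p+\E\bigl\|X^{s,x,\iota}_t-X^{\tl s,\tl x,\iota}_{\tl t}\bigr\|^p+\E\bigl\|X^{\tl s,\tl x,\iota}_{\tl t}-X^{\tl s,\tl x,\dl,\ep,\mathcal{M}}_{\tl t}\bigr\|^p\Bigr).
\]
The outer terms import the entire error of part (i), not just $|\dl|^{\frac{p}{p+\zeta}}$. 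They also carry the truncation error $\ep^q(1+\mathcal{Z}_d^{\frac p2})$ and the Monte Carlo error $(\ep^{-2}\mathcal{Z}_d)^{p-1}\mathcal{M}^{-\frac p2}$ as stand-alone additive terms. The claimed bound contains neither: $\ep^q$ is absent, and the Monte Carlo factor appears only multiplied by $|\tl t-t|^p+|\tl s-s|^p$. These terms cannot be absorbed. Take $s=\tl s$, $t=\tl t$ and fixed $\ep,\mathcal{M}$. As $\tl x\to x$ and $|\dl|\to0$, the stated right-hand side tends to $0$, while yours stays of order $\ep^q(1+\mathcal{Z}_d^{\frac p2})+(\ep^{-2}\mathcal{Z}_d)^{p-1}\mathcal{M}^{-\frac p2}$. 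So you only get continuity in the joint limit $|\dl|\to0$, $\ep\to0$, $\mathcal{M}\to\infty$, which is strictly weaker than the claim. Comparing with an $\ep$-truncated exact SDE instead would remove the $\ep^q$ term but not the Monte Carlo term.

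What is missing is a $\dl$-uniform stability estimate that compares two copies of the fully discrete scheme directly; this is the paper's Lemmas~\ref{differentx} and~\ref{differents}. Write $X,\tl X$ for the copies started at $x,\tl x$ and apply It\^o's formula to $\|X_t-\tl X_t\|^p$. Split every tamed, frozen coefficient difference into three pieces:
\begin{itemize}
\item the untamed difference at the current states, handled by Assumption~\ref{A1}, so the Gronwall constant does not depend on $\dl$;
\item one-step errors, handled by Assumption~\ref{A2}, H\"older's inequality and Lemma~\ref{corollary2}\,\ref{lemma9};
\item taming errors, handled by Remark~\ref{remark3}.
\end{itemize}
These pieces produce exactly the $|\dl|^{\frac p{p+\zeta}}$ term. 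Both copies share $A_\ep$, $\pi$ and the samples $V$, so the large-jump and Monte Carlo terms enter only through $\gm(X,\cdot)-\gm(\tl X,\cdot)$. Assumption~\ref{A4} and the Marcinkiewicz--Zygmund bound turn these into multiples of $\|X_{\dl(r)}-\tl X_{\dl(r)}\|^p$, which Gronwall absorbs. No truncation or Monte Carlo bias appears. The initial-time perturbation then follows by restarting at grid points.

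The same device is needed in (i) for $X^{\dl}-X^{\dl,\ep}$ and $X^{\dl,\ep}-X^{\dl,\ep,\mathcal{M}}$. If you compare the tamed coefficients through Remark~\ref{remark4}, the Gronwall constant becomes $e^{K|\dl|^{-p/4}}$. This is why Lemmas~\ref{convergencescheme2}--\ref{convergencescheme3} carry an extra $|\dl|^{\frac p{p+\zeta}}$, which is harmless there.

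Finally, in Theorem~\ref{theorem1}\,\ref{theoremnew} the grid-misalignment term is an increment of $X^{s,x,\dl,\ep,\mathcal{M}}$ over a subinterval of $[s,\tl s]$. It must be bounded by $|\tl s-s|+|\tl s-s|^p(\cdots)$, not by an $O(|\dl|)$ term, which that estimate does not allow. With this correction, your direct Gronwall argument from $\tl s$ is a valid alternative to the paper's Markov-property case analysis.
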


\begin{remark}
Theorem~\ref{theorem1}\,\ref{theorem(i)} shows scheme \eqref{Tamedscheme} converges to the exact solution \eqref{exactsolu} in $L^p$-sense with a rate of convergence arbitrarily close to $1/p$ which is consistent with the existing literature; see, e.g., \cite{2017eulerdiffusionlevynoise}. More precisely, the estimate in Theorem~\ref{theorem1}\,\ref{theorem(i)} quantifies the strong error between the exact solution \eqref{exactsolu} and scheme \eqref{Tamedscheme}, and presents three error contributions: the time-discretization error $|\dl|^{\frac{p}{p+\zeta}}$, the truncation error $\ep^{q}(1+\mathcal{Z}_d^{\frac p2})$, and the Monte Carlo error $(\ep^{-2}\mathcal{Z}_d)^{p-1}\mathcal{M}^{-\frac p2}$.

Moreover, Theorem~\ref{theorem1}\,\ref{theoremnew}--\ref{theorem(ii)} provide stability and continuity estimates for $(\X^{s,x,\dl}_t)$ with respect to perturbations of the initial time $s$, the initial value $x$, and the evaluation time $t$.
The estimate in Theorem~\ref{theorem1}\,\ref{theoremnew} shows that, for a fixed step size $|\dl|$, scheme \eqref{Tamedscheme} is stable with respect to $s,x$, and $t$. Furthermore, Theorem~\ref{theorem1}\,\ref{theorem(ii)} provides a continuity estimate for scheme \eqref{Tamedscheme} in the vanishing step-size regime, i.e., $|\dl| \to 0$.
\end{remark}

\section{Numerical Simulations}\label{Numerical Simulations}
In this section, we provide numerical experiments through two examples. The first example is a standard polynomial-growth model, whereas the second one is a L\'evy-driven SDE with \(3/2\)-type volatility \cite{MR3225549,MR3070371,MR3428878}. We show that both examples satisfy the assumptions and simulation results support our main findings.
\subsection{1D Lévy-driven SDE}\label{ex:1}
Consider the one-dimensional Lévy-driven SDE 
\begin{equation*}
	\rd X_t=(X_{t-}-X_{t-}^3)\,\rd t+X_{t-}^2\,\rd W_t+X_{t-}\,\rd L_t,
\end{equation*}
for $t\in[0,T]$ with initial condition $X_0=2$, where
 \[ L_t:=\sum_{k=1}^{N_t} Z_k-\lambda \E[Z_1]\,t. \] Here, $(N_t)_{t\ge0}$ is a Poisson process with intensity $\lambda>0$, and $\{Z_k\}_{k\in\N}  \sim \mathcal{N}(0.05,0.15^2)$ is an i.i.d.\ sequence of random variables. Define the Poisson random meaure
 \[
 \pi((0,t] \times E) := \#\{s\in(0,t]: L_s-L_{s-} \in E\}, \qquad t\in(0,T],\qquad E \in \mathcal{B}(\R \backslash \{0\}),
 \]
and its compensated Poisson random measure $\tl{\pi}(\rd z,\rd t):= \pi(\rd z,\rd t) - \nu(\rd z)\rd t$, where
\begin{equation}\label{jumpsetting}\tag{$\lozenge$}
	\nu(\rd z)=\lambda \phi(z)\,\rd z,
	\qquad 
	\phi(z)=\frac{1}{\sqrt{2\pi\cdot 0.15^2}}
	e^{-\frac{(z-0.05)^2}{2\cdot 0.15^2}}.
\end{equation}
Then, for every $t\in[0,T]$, we have
\begin{equation}\label{SDEexample}
	\rd X_t = (X_{t-}-X_{t-}^3)\,\rd t + X_{t-}^2\rd W_t +  \int_{\R} X_{t-}z \,\tl{\pi}(\rd z,\rd t).
\end{equation}
\begin{proposition}\label{prop1}
	The coefficients of SDE \eqref{SDEexample} satisfy Assumptions \ref{A1}--\ref{A5}.
	\begin{proof}
		See Appendix \ref{app:Numerical Simulations}.
	\end{proof}
\end{proposition}

Fix $T = 1$. Let $\dl\in\Th$ correspond to the uniform grid $t_i=i|\dl|$, $i=0,1,\dots,n$, with step size $|\dl|=1/n$. The tamed Euler scheme of \eqref{SDEexample} reads as follows: for $i=1,\dots,n$,
\begin{equation}\label{Schemeexample}
	\begin{aligned}
		Y^{x,\dl,\ep,\mathcal{M}}_{t_i}
		&= Y^{x,\dl,\ep,\mathcal{M}}_{t_{i-1}}
		+ \frac{Y^{x,\dl,\ep,\mathcal{M}}_{t_{i-1}}-(Y^{x,\dl,\ep,\mathcal{M}}_{t_{i-1}})^3}
		{\bigl(1+|\dl|^{\frac{1}{2}}|Y^{x,\dl,\ep,\mathcal{M}}_{t_{i-1}}|^{4}\bigr)^{\frac{1}{2}}}\,|\dl| + \frac{(Y^{x,\dl,\ep,\mathcal{M}}_{t_{i-1}})^2}
		{\bigl(1+|\dl|^{\frac{1}{2}}|Y^{x,\dl,\ep,\mathcal{M}}_{t_{i-1}}|^{4}\bigr)^{\frac{1}{2}}}\,(W_{t_i}-W_{t_{i-1}}) \\
		&\quad+ \int_{t_{i-1}}^{t_i}
		\int_{A_\ep} Y^{x,\dl,\ep,\mathcal{M}}_{t_{i-1}} z\, \pi(\rd z,\rd r)
		-\Bigl(\frac{\nu(A_\ep)}{\mathcal{M}}\,Y^{x,\dl,\ep,\mathcal{M}}_{t_{i-1}}
		\sum_{j=1}^{\mathcal{M}} V^{\dl,\ep,\mathcal{M}}_{j,t_{i-1}}\Bigr)\,|\dl|,
	\end{aligned}
\end{equation}
with $Y^{x,\dl,\ep,\mathcal{M}}_{t_0}=x=X_0$, where
$
\nu(A_\ep)=\lambda \int_{A_\ep}\phi(z)\,\rd z
$
is the intensity of large jumps.
Here,
\[
\int_{t_{i-1}}^{t_i}\int_{A_\ep} Y^{x,\dl,\ep,\mathcal{M}}_{t_{i-1}} z\, \pi(\rd z,\rd r)
\]
represents the contribution of all jumps with size at least $\ep$ occurring on the interval $(t_{i-1},t_i]$. To simulate this term, we let $\pi_\ep(t):=\pi( (0,t]\times A_\ep)$
denote the counting process of jumps of $L$ with sizes in $A_\ep$, and let \(\{\mathscr{Z}_k\}_{k\in\N}\) be the corresponding jump sizes, which are i.i.d. with distribution
\[
\nu_\ep(B)=\frac{\nu(B\cap A_\ep)}{\nu(A_\ep)},
\qquad B\in\mathcal{B}(\R).
\]
Then, we have
\[
\int_{t_{i-1}}^{t_i}\int_{A_\ep} Y^{x,\dl,\ep,\mathcal{M}}_{t_{i-1}} z\, \pi(\rd z,\rd r)
\overset{d}{=}
Y^{x,\dl,\ep,\mathcal{M}}_{t_{i-1}}
\sum_{k=\pi_\ep(t_{i-1})+1}^{\pi_\ep(t_i)} \mathscr{Z}_k.
\]

We now examine the numerical behaviour of scheme \eqref{Schemeexample}. Figure~\ref{fig:convergence1}(\subref{fig:ex1thm1}) illustrates the strong $L^2$ convergence rate calculated using
\[
\bigg(\frac{1}{N}\sum_{k=1}^{N}|X_{T,k}-Y^{x,\dl,\ep,\mathcal{M}}_{t_n,k}|^2\bigg)^{\frac{1}{2}}
\]
with $N = 1000$, where $X_{T,k}$ is approximated by scheme \eqref{Schemeexample} using the fine step size $|\dl|=2^{-18}$, while $Y^{x,\dl,\ep,\mathcal{M}}_{t_n,k}$ is computed with $|\dl| \in \{ 2^{-17},2^{-16},...,2^{-11},2^{-10}\}$, $\ep=0.05$, $\mathcal{M}=1000$, and the jump intensity $\lambda = 3$.
The numerical results presented in Figure~\ref{fig:convergence1}(\subref{fig:ex1thm1}) indicate that the strong $L^2$ convergence rate of the proposed method approaches $1/2$, in alignment with the theoretical result established in Theorem \ref{theorem1}\,\ref{theorem(i)}.

Figure~\ref{fig:convergence1}(\subref{fig:ex1thm2}) illustrates the stability of scheme \eqref{Schemeexample} with respect to perturbations of the initial value. Specifically, we fix the step size \( |\dl|=2^{-18} \), \( \epsilon=0.05 \), \( \mathcal{M}=5000 \), and $\lambda = 0.5$, then compute
\begin{equation}\label{ex13}
	\bigg(\frac{1}{N}\sum_{k=1}^{N}|Y_{t_n,k}^{x,\dl,\epsilon,\mathcal{M}}-Y^{\tilde x,\dl,\ep,\mathcal{M}}_{t_n,k}|^2\bigg)^{\frac{1}{2}}
\end{equation}
for different initial gaps \( |x-\tilde x| \in \{10^{-10},10^{-\frac{68}{7}}, 10^{-\frac{66}{7}},...,10^{-\frac{60}{7}},10^{-\frac{58}{7}}, 10^{-8}\} \) with $N = 1000$.
The resulting log-log plot shows an approximately linear dependence of the $L^2$ distance on the initial condition, with observed slope close to $1$. This is consistent with the estimate established in Theorem~\ref{theorem1}\,\ref{theoremnew}.

Figures~\ref{fig:convergence1}(\subref{fig:ex1thm31})-\ref{fig:convergence1}(\subref{fig:ex1thm32}) further examine the joint dependence of the numerical solution on both the initial perturbation \( |x-\tilde x| \) and the time step size \( |\dl| \). We let 
\[
|x-\tilde x| \in \{10^{-3},10^{-\frac{19}{7}}, 10^{-\frac{17}{7}},...,10^{-\frac{11}{7}},10^{-\frac{9}{7}}, 10^{-1}\}, \quad |\dl| \in \{ 2^{-17},2^{-16},...,2^{-11},2^{-10}\},
\]
and for each pair \( (|x-\tilde x|,|\dl|) \), we compute the error in \eqref{ex13}, with $\epsilon=0.05$, $\mathcal{M}=1000$, and $\lambda = 3$ fixed. The heatmap and the surface plot in Figures~\ref{fig:convergence1}(\subref{fig:ex1thm31})--\ref{fig:convergence1}(\subref{fig:ex1thm32}) show that the $L^2$ distance decreases as either the initial gap or the step size becomes smaller, which confirms the result in Theorem~\ref{theorem1}\,\ref{theorem(ii)}.

\begin{figure}[t]
	\centering
	\begin{subfigure}{0.44\textwidth}
		\centering
		\includegraphics[width=\textwidth]{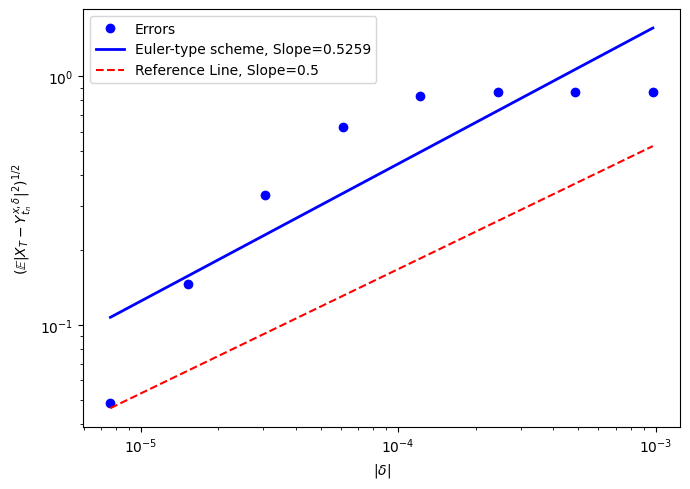}
		\subcaption{(a)}
		\label{fig:ex1thm1}
	\end{subfigure}
	\begin{subfigure}{0.44\textwidth}
		\centering
		\includegraphics[width=\textwidth]{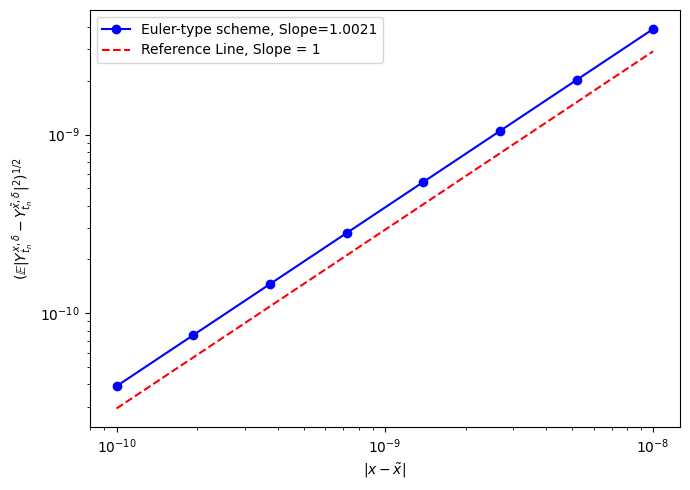}
		\subcaption{(b)}
		\label{fig:ex1thm2}
	\end{subfigure}
	\begin{subfigure}{0.48\textwidth}
		\centering
		\includegraphics[width=\textwidth]{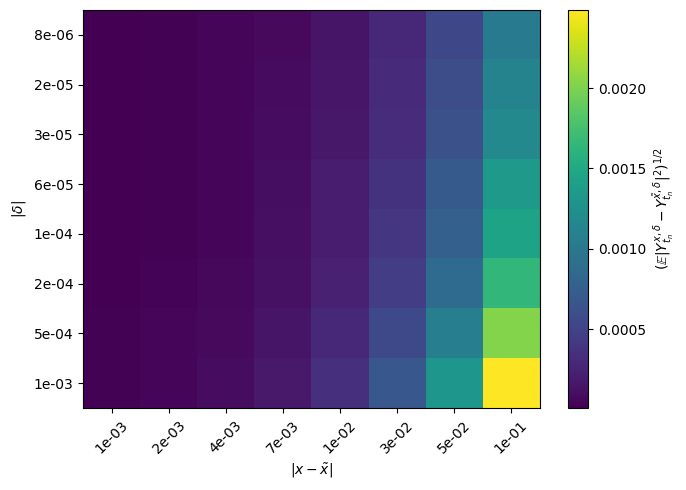}
		\subcaption{(c)}
		\label{fig:ex1thm31}
	\end{subfigure}
	\begin{subfigure}{0.44\textwidth}
		\centering
		\includegraphics[width=\textwidth]{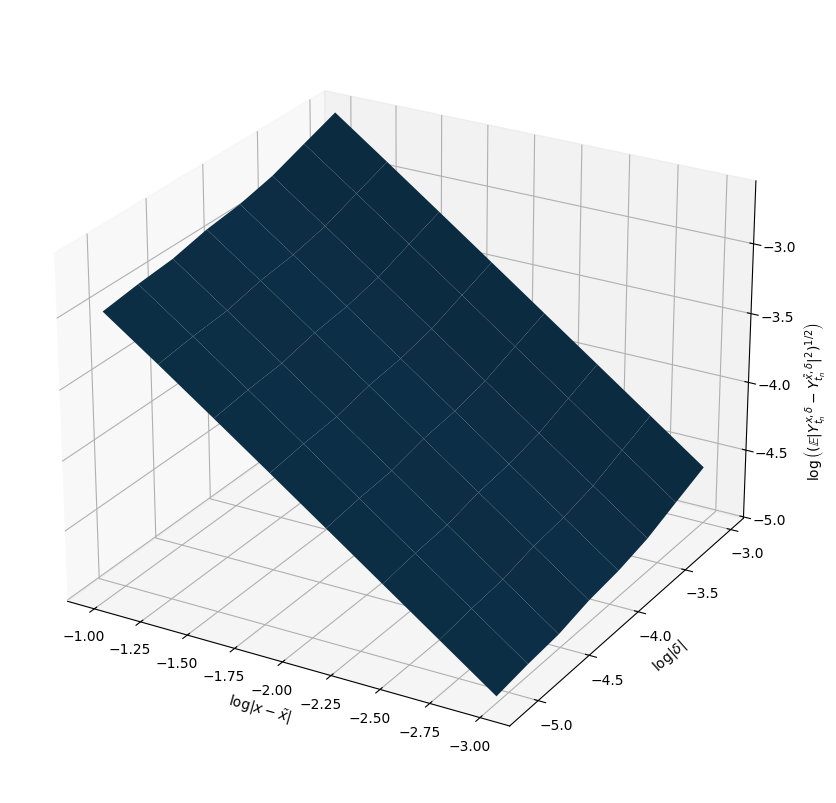}
		\subcaption{(d)}
		\label{fig:ex1thm32}
	\end{subfigure}
	\caption{Simulation results for the tamed Euler scheme \eqref{Schemeexample} corresponding to Theorem \ref{theorem1}\,\ref{theorem(i)}--\ref{theorem(ii)}. For brevity, we denote $Y_{t_n}^{x,\delta,\ep,\mathcal{M}}$ and $Y_{t_n}^{\tilde{x},\delta,\ep,\mathcal{M}}$ by $Y_{t_n}^{x,\delta}$ and $Y_{t_n}^{\tilde{x},\delta}$, respectively.}
	\label{fig:convergence1}
\end{figure}

\subsection{1D SDE in financial model}\label{ex:2}
\begin{figure}[t]
	\centering
	\begin{subfigure}{0.44\textwidth}
		\centering
		\includegraphics[width=\textwidth]{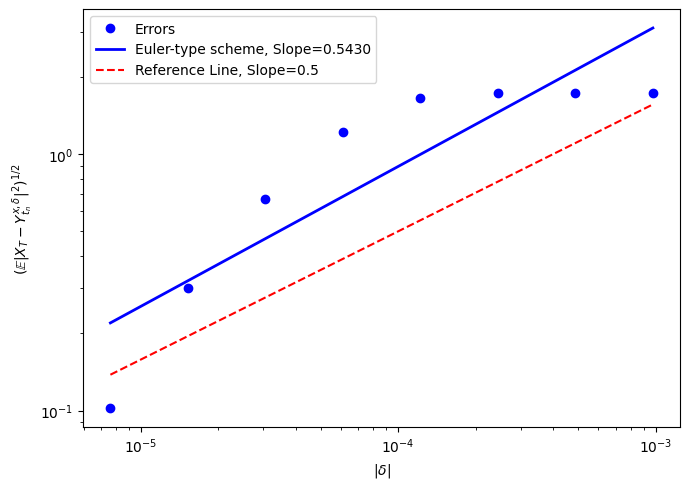}
		\subcaption{(a)}
		\label{fig:ex2thm1}
	\end{subfigure}
	\begin{subfigure}{0.44\textwidth}
		\centering
		\includegraphics[width=\textwidth]{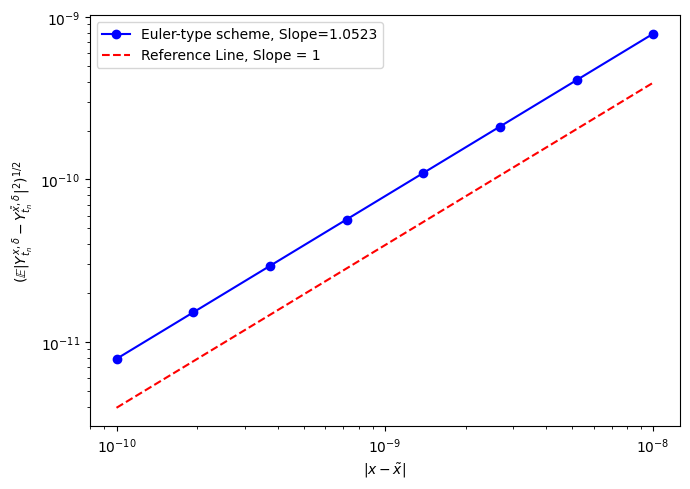}
		\subcaption{(b)}
		\label{fig:ex2thm2}
	\end{subfigure}
	\begin{subfigure}{0.48\textwidth}
		\centering
		\includegraphics[width=\textwidth]{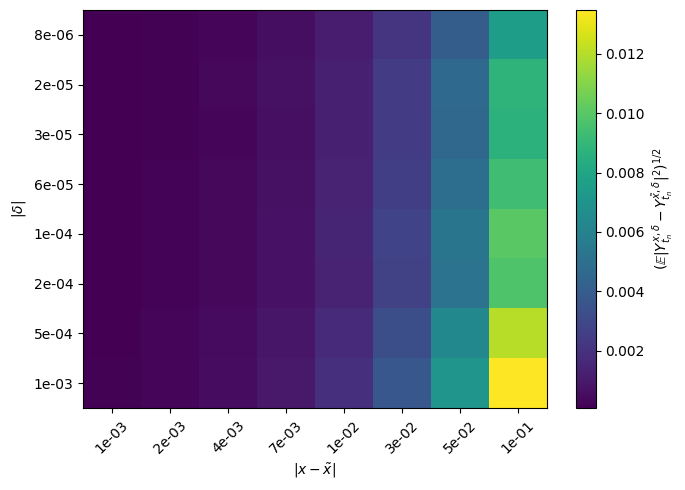}
		\subcaption{(c)}
		\label{fig:ex2thm31}
	\end{subfigure}
	\begin{subfigure}{0.44\textwidth}
		\centering
		\includegraphics[width=\textwidth]{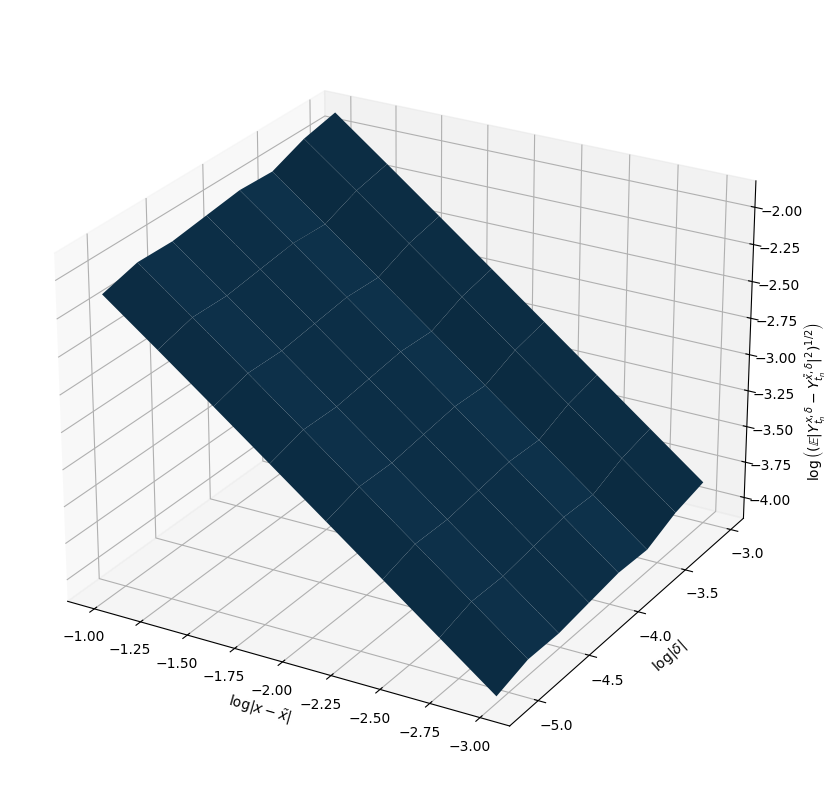}
		\subcaption{(c)}
		\label{fig:ex2thm32}
	\end{subfigure}
	\caption{Simulation results for the tamed Euler scheme \eqref{Schemeexample2} corresponding to Theorem \ref{theorem1}\,\ref{theorem(i)}--\ref{theorem(ii)}.}
	\label{fig:convergence2}
\end{figure}
Consider the one-dimensional Lévy-driven SDE with 3/2-type volatility
	\begin{equation}\label{SDEexample2}
		\rd X_t = aX_{t-}(b-|X_{t-}|)\,\rd t + c|X_{t-}|^{\frac32}\,\rd W_t
		+ X_{t-}\int_{\R} z\,\tilde\pi(\rd z,\rd t),
	\end{equation}
	for all $t \in[0,1]$, with parameters $a,b,c\ge0$ and initial value $X_0=2$. We use the Lévy measure defined in \eqref{jumpsetting}. 
	
	\begin{proposition}\label{prop2}
		The coefficients of SDE \eqref{SDEexample2} satisfy Assumptions \ref{A1}--\ref{A5}.
		\begin{proof}
			See Appendix \ref{app:Numerical Simulations}.
		\end{proof}
	\end{proposition}
	We fix $T = 1$ and choose $a=c=1$, $b=2$. Let $\dl\in\Th$ correspond to the uniform grid $t_i=i|\dl|$, $i=0,1,\dots,n$, with step size $|\dl|=1/n$. 
	The tamed Euler scheme of \eqref{SDEexample2} is given by, for $i=1,\dots,n$,
	\\
	\begin{equation}\label{Schemeexample2}
		\begin{aligned}
			Y_{t_i}^{x,\dl,\ep,\mathcal{M}}
			&=Y_{t_{i-1}}^{x,\dl,\ep,\mathcal{M}}
			+\frac{Y_{t_{i-1}}^{x,\dl,\ep,\mathcal{M}}(2-|Y_{t_{i-1}}^{x,\dl,\ep,\mathcal{M}}|)}
			{\bigl(1+|\dl|^{\frac12}|Y_{t_{i-1}}^{x,\dl,\ep,\mathcal{M}}|^{2}\bigr)^{\frac12}}\,|\dl|+\frac{|Y_{t_{i-1}}^{x,\dl,\ep,\mathcal{M}}|^{\frac32}}
			{\bigl(1+|\dl|^{\frac12}|Y_{t_{i-1}}^{x,\dl,\ep,\mathcal{M}}|^{2}\bigr)^{\frac12}}(W_{t_i}-W_{t_{i-1}}) \\
			&\quad+ \int_{t_{i-1}}^{t_i}
			\int_{A_\ep} Y^{x,\dl,\ep,\mathcal{M}}_{t_{i-1}} z\, \pi(\rd z,\rd r)
			-\Bigl(\frac{\nu(A_\ep)}{\mathcal{M}}Y_{t_{i-1}}^{x,\dl,\ep,\mathcal{M}}\sum_{j=1}^{\mathcal{M}}V^{\dl,\ep,\mathcal{M}}_{j,t_{i-1}}\Bigr)|\dl|.
		\end{aligned}
	\end{equation}
with $Y^{x,\dl,\ep,\mathcal{M}}_{t_0}=x=X_0$. Using the same numerical setting as in Example~\ref{ex:1}, we present the simulation results in Figure~\ref{fig:convergence2}. In Figure~\ref{fig:convergence2}(\subref{fig:ex2thm1}), the log-log plot of the strong $L^2$ error against the step size $|\dl|$ exhibits an empirical slope close to $1/2$, in agreement with Theorem~\ref{theorem1}\,\ref{theorem(i)}. 

Figure~\ref{fig:convergence2}(\subref{fig:ex2thm2}) shows that the $L^2$ error between two numerical solutions (cf. \eqref{ex13}) scales linearly with the initial perturbation $|x-\tilde x|$, which is consistent with Theorem~\ref{theorem1}\,\ref{theoremnew}. 

Finally, Figures~\ref{fig:convergence2}(\subref{fig:ex2thm31})--\ref{fig:convergence2}(\subref{fig:ex2thm32}) illustrate the joint dependence of the $L^2$ distance between two numerical solutions \eqref{ex13} on the initial gap $|x-\tilde x|$ and the step size $|\dl|$. The numerical error decreases as either the initial perturbation or the step size becomes smaller, which confirms to the estimate given in Theorem~\ref{theorem1}\,\ref{theorem(ii)}.

\section{Proof of the Main Results}\label{Proof of the Main Results}
This section is devoted to the proof of Theorem~\ref{theorem1}. The argument is built in two stages. 
We first establish uniform moment bounds for the exact solution to SDE \eqref{exactsolu} and for the successive approximations  \eqref{Tamedscheme}, \eqref{tamedscheme1}, and \eqref{tamedscheme2} in Section~\ref{section5.1}. These estimates provide the basic stability and regularity properties needed to control both discretization errors and perturbations of initial value and time. 
We then combine these auxiliary bounds to prove each part of Theorem~\ref{theorem1}.
For clarity, the proof is organized into three subsections from Section \ref{section5.2} to \ref{section5.4} corresponding to assertions Theorem \ref{theorem1}\,\ref{theorem(i)}--\ref{theorem(ii)}.

\subsection{Moment estimates}\label{section5.1}
 In this section, we adapt the approach of \cite{L.GononC.Schwab} and \cite{MR4951649} to derive uniform $p_0$-th moment bounds of SDE \eqref{exactsolu} and its numerical approximations \eqref{Tamedscheme}, \eqref{tamedscheme1}, and \eqref{tamedscheme2}. The proofs are postponed to Appendix \ref{app:section5.1}.
\begin{lemma}\label{exactmomentbounds}
	Let Assumptions \ref{A1}--\ref{A4} hold, then there exists $K>0$ such that for all $s\in [0,T]$ and $x\in \R^d$,
	\[
	\sup_{t\in[s,T]}\E\|X^{s,x,\iota}_t\|^{p_0} \leq K\big( 1+\|x\|^{p_0}+N_d^{\frac{p_0}{2}}\big)e^{K(1+L_d^{\frac{p_0}{2}})}.\\
	\]
\end{lemma}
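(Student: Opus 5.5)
We apply Itô's formula for jump processes to $\|X^{s,x,\iota}_t\|^{p_0}$ on $[s,t]$ and then close with Gronwall's inequality. To avoid integrability issues, we first localize with the stopping times $\tau_R:=\inf\{t\ge s:\|X^{s,x,\iota}_t\|\ge R\}\wedge T$. For $X:=X^{s,x,\iota}$ this gives
\begin{align*}
\|X_{t\wedge\tau_R}\|^{p_0}&=\|x\|^{p_0}+\int_s^{t\wedge\tau_R}\tfrac{p_0}{2}\|X_{r}\|^{p_0-2}\Big(2X_r\mu(X_r)+\|\sg(X_r)\|^2+(p_0-2)\tfrac{\|X_r^*\sg(X_r)\|^2}{\|X_r\|^2}\Big)\rd r\\
&\quad+\int_s^{t\wedge\tau_R}\!\!\int_{\R^d}\Big(\|X_{r}+\gm(X_{r},z)\|^{p_0}-\|X_{r}\|^{p_0}-p_0\|X_{r}\|^{p_0-2}X_{r}\gm(X_{r},z)\Big)\nu(\rd z)\rd r+M_{t\wedge\tau_R},
\end{align*}
where $M$ collects the $\rd W$ and $\tl\pi$ local martingale terms. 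Since $X$ has at most countably many jumps, we may replace $X_{r-}$ by $X_r$ in the $\rd r$-integrals. The stopped $M$ is a true martingale with zero expectation. For the Brownian part this follows because the integrand is bounded up to $\tau_R$. For the $\tl\pi$ part it follows from Assumption \ref{A3} with $\rho\in\{1,p_0\}$.

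Next we bound the drift-diffusion term. Using $\|X_r^*\sg\|^2\le\|X_r\|^2\|\sg\|^2$, it is at most $\frac{p_0}{2}\|X_r\|^{p_0-2}\big(2X_r\mu(X_r)+(p_0-1)\|\sg(X_r)\|^2\big)$. By Remark \ref{exist} this is at most $K(1+\|X_r\|^{p_0})$, with a constant free of any $d$-dependent quantity.

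The jump term is the main obstacle, because the constants must come out as $N_d^{p_0/2}$ in the prefactor and $L_d^{p_0/2}$ in the exponent. For the integrand we use the elementary Taylor bound
\[
\big|\|a+b\|^{p_0}-\|a\|^{p_0}-p_0\|a\|^{p_0-2}ab\big|\le K\big(\|a\|^{p_0-2}\|b\|^2+\|b\|^{p_0}\big).
\]
Integrating against $\nu$ with Assumption \ref{A3} at $\rho=2$ and $\rho=p_0$ would give $KN_d(1+\|X_r\|^{p_0})$, hence a factor $e^{KN_d}$. That is not the claimed form, so we refine it. We write $\gm(X_r,z)=\big(\gm(X_r,z)-\gm(0,z)\big)+\gm(0,z)$. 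By Assumption \ref{A4}, the part Lipschitz in $x$ contributes
\[
K\big(L_d\|X_r\|^{p_0}+L_d\|X_r\|^{p_0}\big)\le K\big(1+L_d^{p_0/2}\big)\|X_r\|^{p_0},
\]
after using $L_d\le 1+L_d^{p_0/2}$ since $p_0\ge 2$. The $x$-independent part $\gm(0,\cdot)$ satisfies $\int\|\gm(0,z)\|^\rho\nu(\rd z)\le N_d$. Its contribution is
\[
K\|X_r\|^{p_0-2}N_d+KN_d\le K\|X_r\|^{p_0}+K N_d^{p_0/2}+KN_d,
\]
where the first step is Young's inequality with exponents $p_0/(p_0-2)$ and $p_0/2$. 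Finally $N_d\le 1+N_d^{p_0/2}$. Which constant is attached to which term is the delicate bookkeeping point. If the precise exponents do not come out directly, we would adjust the split, for instance by treating the $\|b\|^2$ and $\|b\|^{p_0}$ pieces separately with Young's inequality.

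Combining the pieces and taking expectations gives
\[
\E\|X_{t\wedge\tau_R}\|^{p_0}\le \|x\|^{p_0}+K\big(1+N_d^{p_0/2}\big)+K\big(1+L_d^{p_0/2}\big)\int_s^t\E\|X_{r\wedge\tau_R}\|^{p_0}\rd r.
\]
Gronwall's inequality then yields
\[
\E\|X_{t\wedge\tau_R}\|^{p_0}\le K\big(1+\|x\|^{p_0}+N_d^{p_0/2}\big)e^{K(1+L_d^{p_0/2})},
\]
uniformly in $R$ and in $t\in[s,T]$. Letting $R\to\infty$, Fatou's lemma gives the claim, since $\tau_R\to T$ a.s. by non-explosion of the unique solution from \cite[Theorem 2]{I.Gyongy1}.
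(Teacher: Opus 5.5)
Your proof is correct and follows essentially the same route as the paper. Both arguments use It\^o's formula for $\|X^{s,x,\iota}_t\|^{p_0}$ localized by $\tau_R$, Cauchy--Schwarz plus Remark~\ref{exist} for the drift--diffusion part, and the Taylor remainder bound (Lemma~\ref{Formula for the remainder}) with the split $\gm(X_r,z)=(\gm(X_r,z)-\gm(0,z))+\gm(0,z)$, Assumptions~\ref{A3}--\ref{A4} and Young's inequality for the jumps, followed by Gr\"onwall and Fatou. Your added justifications (the true-martingale property of the stopped $\tl\pi$-integral via Assumption~\ref{A3} with $\rho\in\{1,p_0\}$, and $\tau_R=T$ eventually because c\`adl\`ag paths are bounded) fill in details the paper leaves implicit. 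Your hedge about adjusting the split is unnecessary, since your bookkeeping already produces $N_d^{p_0/2}$ in the prefactor and $L_d^{p_0/2}$ in the exponent as claimed.
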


\begin{lemma}\label{L3}
	Let Assumptions \ref{A1}--\ref{A4} hold, then there exists $K>0$ such that, for all  $ s\in [0,T]$, $x\in \R^d$, and $\dl \in \Th$,
	\begin{equation*}
			\sup_{t\in[s,T]}\E\|X^{s,x,\dl}_t\|^{p_0}  \leq  K\big( 1+\|x\|^{p_0}+N_d^{\frac{p_0}{2}}\big)e^{K\big(1+L_d^{\frac{p_0}{2}}\big)}.
	\end{equation*}
\end{lemma}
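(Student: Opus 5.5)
We prove Lemma~\ref{L3} by applying Itô's formula to $\|X^{s,x,\dl}_t\|^{p_0}$, splitting the increment into a one-step term, and closing with Gronwall's inequality. Fix $s$, $x$, $\dl\in\Th$ and write $X_t:=X^{s,x,\dl}_t$ and $\bar X_t:=X_{\max\{s,\dl(t-)\}}$. A localization by stopping times $\tau_R:=\inf\{t\ge s:\|X_t\|>R\}$ makes the stochastic integrals true martingales; we then let $R\to\infty$ via Fatou.

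Itô's formula for the jump process gives
\begin{align*}
\|X_t\|^{p_0}&=\|x\|^{p_0}+\int_s^t p_0\|X_r\|^{p_0-2}\Bigl(X_r\mu^\dl(\bar X_r)+\tfrac{p_0-1}{2}\|\sg^\dl(\bar X_r)\|^2\Bigr)\rd r+\text{mart.}\\
&\quad+\int_s^t\int_{\R^d}\Bigl(\|X_{r-}+\gm(\bar X_r,z)\|^{p_0}-\|X_{r-}\|^{p_0}-p_0\|X_{r-}\|^{p_0-2}X_{r-}\gm(\bar X_r,z)\Bigr)\nu(\rd z)\rd r.
\end{align*}
The jump remainder is bounded by $K(\|X_{r-}\|^{p_0-2}\|\gm\|^2+\|\gm\|^{p_0})$. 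Integrating in $\nu$ with Assumption~\ref{A3} (for $\rho=2$ and $\rho=p_0$) and applying Young's inequality yields $K(1+N_d)(\|X_r\|^{p_0}+\|\bar X_r\|^{p_0})+KN_d$. This produces the factor $N_d$ outside and $L_d$- or $N_d$-dependent constants in the exponent. Tracking these dependencies through Young gives the stated form $N_d^{p_0/2}$ and $e^{KL_d^{p_0/2}}$. Here Assumption~\ref{A4} is used to compare $\gm$ at different points if needed.

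For the drift--diffusion term, write $X_r=\bar X_r+(X_r-\bar X_r)$. The main part $\|\bar X_r\|^{p_0-2}\bigl(2\bar X_r\mu^\dl(\bar X_r)+(p_0-1)\|\sg^\dl(\bar X_r)\|^2\bigr)$ is controlled by Remark~\ref{exist}. The taming factor is a positive scalar $\le1$ multiplying both $\mu$ and $\sg$ (squared factor for $\sg$, unsquared for $\mu$). Thus we split according to the sign of $2x\mu(x)$: where it is negative, dividing by the factor only helps, and where it is positive, it is bounded by $K(1+\|x\|^2)$ directly, using $\|\sg\|^2$ with factor $\le1$. This gives $\le K(1+\|\bar X_r\|^2)$. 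The cross terms involve $\|X_r\|^{p_0-2}X_r-\|\bar X_r\|^{p_0-2}\bar X_r$ against $\mu^\dl(\bar X_r)$, and $X_r-\bar X_r$ on a step of length $\le|\dl|$ equals $\mu^\dl(\bar X_r)(r-\dl(r))+\sg^\dl(\bar X_r)\Delta W+\text{jump increment}$.

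The key, and the expected main obstacle, is bounding $\E[\|X_r-\bar X_r\|\,\|\mu^\dl(\bar X_r)\|\,(\|X_r\|+\|\bar X_r\|)^{p_0-2}]$ uniformly in $\dl$. Here we use the strong bound \eqref{strongerbound} (Remark~\ref{remark2}). First, $\|\mu^\dl\|\,\|\mu^\dl\||\dl|\le K|\dl|^{1/2}(1+\|\bar X\|^2)$. Second, conditioning on $\mathcal F_{\dl(r)}$, the Brownian increment gives $\E[\|\sg^\dl\Delta W\|\,|\mathcal F]\,\|\mu^\dl\|\le|\dl|^{1/2}\|\sg^\dl\|\|\mu^\dl\|\le K(1+\|\bar X\|^2)$. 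The jump increment is the delicate Lévy part. Its conditional $L^2$ norm is $|\dl|^{1/2}(N_d(1+\|\bar X\|^2))^{1/2}$, and multiplying by $\|\mu^\dl\|\le K|\dl|^{-1/4}(1+\|\bar X\|)$ still gives a vanishing factor $|\dl|^{1/4}$. With only the weaker bound \eqref{weakbound} this product would be of order $|\dl|^{-1/2}\cdot|\dl|^{1/2}$ but with higher moments of the jump term not decaying, which is why \eqref{strongerbound} is needed.

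Higher-order terms from the mean value expansion are handled by Hölder's and Young's inequalities with $\E\|X_r-\bar X_r\|^{p_0}\le K(1+N_d^{p_0/2})(1+\E\|\bar X_r\|^{p_0})$. This bound follows from BDG/Kunita inequalities and \eqref{strongerbound}, which keeps $|\dl|^{p_0/2}\|\mu^\dl\|^{p_0}+|\dl|^{p_0/2}\|\sg^\dl\|^{p_0}\le K(1+\|\bar X\|^{p_0})$. Collecting terms, $f(t):=\sup_{r\in[s,t]}\E\|X_r\|^{p_0}$ satisfies
\[
f(t)\le\|x\|^{p_0}+KN_d^{p_0/2}+K(1+L_d^{p_0/2})\int_s^t f(r)\,\rd r,
\]
and Gronwall's inequality yields the claim.
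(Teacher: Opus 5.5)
Your overall route is the paper's: It\^o's formula for $\|X^{s,x,\dl}_t\|^{p_0}$, one drift cross term whose factor $\|\mu^\dl(\bar X_r)\|\le K|\dl|^{-1/4}(1+\|\bar X_r\|)$ from \eqref{strongerbound} is compensated by small one-step increments, then Gr\"onwall. Your taming-factor argument for the main term is also correct: with $c:=(1+|\dl|^{1/2}\|x\|^\chi)^{-1/2}\le 1$ one has $2x\mu^\dl(x)+(p_0-1)\|\sg^\dl(x)\|^2\le c\,(2x\mu(x)+(p_0-1)\|\sg(x)\|^2)\le K(1+\|x\|^2)$.

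\textbf{First gap: an untreated diffusion term.} Because you weight the main part by $\|\bar X_r\|^{p_0-2}$, the remainder contains, besides the drift cross term you list, the diffusion correction $\frac{p_0(p_0-1)}{2}\bigl(\|X_r\|^{p_0-2}-\|\bar X_r\|^{p_0-2}\bigr)\|\sg^\dl(\bar X_r)\|^2$, which you never treat. It is not harmless. The factor $\|\sg^\dl(\bar X_r)\|^2$ is only $O(|\dl|^{-1/2})$, while $\E[\|X_r-\bar X_r\|\,|\,\mathcal F_{\dl(r)}]$ is only $O(|\dl|^{1/4})$, since the Brownian increment has size $|\dl|^{1/2}\|\sg^\dl\|$. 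Bounding it like the drift cross term therefore gives $O(|\dl|^{-1/4})$. Rescuing it would need a second-order expansion that uses the zero conditional mean of the martingale increments.

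The paper simply avoids this term. After Cauchy--Schwarz the It\^o diffusion terms are at most $\frac{p_0(p_0-1)}{2}\|X_r\|^{p_0-2}\|\sg^\dl(\bar X_r)\|^2$, so keep the weight $\|X_r\|^{p_0-2}$ on both drift and diffusion. Then split only $X_r\mu^\dl(\bar X_r)=\bar X_r\mu^\dl(\bar X_r)+(X_r-\bar X_r)\mu^\dl(\bar X_r)$. Since the weight is nonnegative, your pointwise bound at $\bar X_r$ plus Young gives $K(1+\|X_r\|^{p_0}+\|\bar X_r\|^{p_0})$. The only cross term left is $p_0\E\int_s^t\|X_r\|^{p_0-2}(X_r-\bar X_r)\mu^\dl(\bar X_r)\,\rd r$.

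\textbf{Second gap: the increment bound does not decay.} Your bound $\E\|X_r-\bar X_r\|^{p_0}\le K(1+N_d^{p_0/2})(1+\E\|\bar X_r\|^{p_0})$ has no decay in $|\dl|$, so it cannot handle the higher-order pieces. After $\|X_r\|^{p_0-2}\le K(\|\bar X_r\|^{p_0-2}+\|X_r-\bar X_r\|^{p_0-2})$ you face $\E\bigl[\|X_r-\bar X_r\|^{p_0-1}\|\mu^\dl(\bar X_r)\|\bigr]$. Against $\|\mu^\dl\|\le K|\dl|^{-1/4}(1+\|\bar X_r\|)$, a non-decaying bound leaves a factor $|\dl|^{-1/4}$.

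What is needed is the conditional one-step estimate of Lemma~\ref{corollary1}\,\ref{lemma2}: $\E[\|X_r-\bar X_r\|^\rho\,|\,\mathcal F_{\dl(r)}]\le K(|\dl|^{\rho/4}+|\dl|)\bigl(1+N_d^{\rho/2}+(1+L_d^{\rho/2})\|\bar X_r\|^\rho\bigr)$ for $\rho\in\{1,p_0-1\}$. This comes out of your own BDG computation if you keep the powers of $|\dl|$ instead of discarding them; for example, $|\dl|^{\rho/2}\|\sg^\dl(\bar X_r)\|^\rho\le K|\dl|^{\rho/4}(1+\|\bar X_r\|^\rho)$. Since $|\dl|<1$, one has $(|\dl|^{\rho/4}+|\dl|)|\dl|^{-1/4}\le 2$ for $\rho\ge1$. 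This treats the drift, Brownian and jump parts of the increment in one stroke.

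\textbf{A bookkeeping point on the jump terms.} With Assumption~\ref{A3} alone, your jump estimate $K(1+N_d)(\|X_r\|^{p_0}+\|\bar X_r\|^{p_0})+KN_d$ puts $N_d$ into the Gr\"onwall exponent. To obtain $e^{K(1+L_d^{p_0/2})}$, split $\gm(\bar X_r,z)=(\gm(\bar X_r,z)-\gm(0,z))+\gm(0,z)$. Use Assumption~\ref{A4} on the first piece and Assumption~\ref{A3} at $x=0$ on the second, then $N_d\|X_r\|^{p_0-2}\le K(N_d^{p_0/2}+\|X_r\|^{p_0})$. The same split is needed in the jump part of the increment estimate.
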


\begin{lemma}\label{epbounds}
	Let Assumptions \ref{A1}--\ref{A4} hold, then there exists $K>0 $ such that, for all $s\in [0,T]$, $x\in \R^d$, $\dl \in \Th$, and $\ep \in(0,1)$,
	\[
	\sup_{t\in[s,T]}\E\|X^{s,x,\dl,\ep}_t\|^{p_0} \leq  K\big( 1+\|x\|^{p_0}+N_d^{\frac{p_0}{2}}\big)e^{K\big(1+L_d^{\frac{p_0}{2}}\big)}.
	\]
\end{lemma}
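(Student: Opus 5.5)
The plan is to reduce Lemma~\ref{epbounds} to the argument behind Lemma~\ref{L3}. The key observation is that scheme \eqref{tamedscheme2} is just \eqref{tamedscheme1} with $\gm$ replaced by the truncated coefficient $\gm_\ep(x,z):=\gm(x,z)\mathds{1}_{A_\ep}(z)$. Indeed, since $\nu(A_\ep)<\infty$ by \eqref{nudef}, the large-jump integral against $\pi$ minus its compensator equals $\int_{A_\ep}\gm\,\tl\pi(\rd z,\rd r)$, which is exactly the form in \eqref{Tamedscheme2} without the Monte Carlo terms. Moreover $\gm_\ep$ satisfies Assumptions~\ref{A3}--\ref{A4} with the same constants $N_d$ and $L_d$, because
\[
\int_{\R^d}\|\gm_\ep(x,z)\|^\rho\nu(\rd z)\le\int_{\R^d}\|\gm(x,z)\|^\rho\nu(\rd z),
\]
and similarly for differences. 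Assumptions~\ref{A1}--\ref{A2} concern only $\mu$ and $\sg$ and are unchanged. So if the proof of Lemma~\ref{L3} uses $\gm$ only through Assumptions~\ref{A3}--\ref{A4}, with constants depending only on $L,\chi,p_0,T,N_d,L_d$, the bound transfers verbatim, uniformly in $\ep\in(0,1)$.

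To make this robust, I would retrace the steps of that argument for $Y_t:=X^{s,x,\dl,\ep}_t$.
\begin{enumerate}
\item Apply It\^o's formula to $\|Y_t\|^{p_0}$ (or $(1+\|Y_t\|^2)^{p_0/2}$), localized by stopping times. Take expectations so the $\rd W$ and $\tl\pi$ martingale parts vanish.
\item Split the drift/diffusion contribution as $\|Y_{r}\|^{p_0-2}\bigl(2Y_{\kappa(r)}\mu^\dl(Y_{\kappa(r)})+(p_0-1)\|\sg^\dl(Y_{\kappa(r)})\|^2\bigr)$ plus a correction involving $Y_r-Y_{\kappa(r)}$, where $\kappa(r)=\max\{s,\dl(r-)\}$. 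The first term is bounded via Remark~\ref{exist}, using that the taming factor in \eqref{tameddef} is at most $1$ and is squared consistently in both terms.
\item Control the correction $(Y_r-Y_{\kappa(r)})\mu^\dl(Y_{\kappa(r)})$ using the one-step increment over an interval of length at most $|\dl|$. Together with Remark~\ref{remark2}, i.e.\ $\|\xi^\dl(x)\|^2\le K|\dl|^{-1/2}(1+\|x\|^2)$, this gives bounds of the form $K(1+\|Y_{\kappa(r)}\|^2)$ times moments of Brownian and jump increments.
\item Treat the jump term of It\^o's formula, $\int\bigl(\|Y+\gm_\ep\|^{p_0}-\|Y\|^{p_0}-p_0\|Y\|^{p_0-2}Y\gm_\ep\bigr)\nu(\rd z)$, by Taylor expansion. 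This gives the bound $K\int(\|Y\|^{p_0-2}\|\gm_\ep\|^2+\|\gm_\ep\|^{p_0})\nu(\rd z)$, which Assumption~\ref{A3} (with $\rho=2$ and $\rho=p_0$) bounds by $KN_d(1+\|Y_{\kappa}\|^{p_0})$ after Young's inequality.
\end{enumerate}
These steps yield
\[
\E\|Y_t\|^{p_0}\le K(1+\|x\|^{p_0}+N_d^{p_0/2})+K(1+L_d^{p_0/2})\int_s^t\sup_{u\le r}\E\|Y_u\|^{p_0}\,\rd r,
\]
and Gronwall's lemma then gives the claim.

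The main obstacle is step 3: bounding the cross term between the one-step increment (which includes large jumps of $\gm_\ep$) and the tamed drift. One needs $\E\|Y_r-Y_{\kappa(r)}\|^{p_0}$-type bounds uniform in $\ep$. I would get these by Burkholder--Davis--Gundy/Kunita inequalities applied to the $\tl\pi$-integral over $A_\ep$, controlled by $\int\|\gm_\ep\|^{\rho}\nu\le N_d(1+\|x\|^\rho)$ for $\rho\in\{2,p_0\}$. Here the stronger bound \eqref{strongerbound} exactly compensates the $|\dl|$ factor. Since none of these estimates involve $\nu(A_\ep)$, and the Monte Carlo term is absent in \eqref{tamedscheme2}, no $\ep^{-2}$ blow-up appears.
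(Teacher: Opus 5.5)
Your reduction is exactly the paper's proof. The paper notes that $\int_{A_\ep} g\,\nu(\rd z)\le\int_{\R^d} g\,\nu(\rd z)$ for nonnegative $g$, so the truncated coefficient $\gm\mathds{1}_{A_\ep}$ satisfies Assumptions \ref{A3}--\ref{A4} with the same $N_d,L_d$, and then reruns the proof of Lemma \ref{L3} verbatim.

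One correction to your retracing concerns step 4 and the one-step increment bounds in step 3. There you should split $\gm_\ep(Y_\kappa,z)=(\gm_\ep(Y_\kappa,z)-\gm_\ep(0,z))+\gm_\ep(0,z)$ and use Assumption \ref{A4} for the difference, as the paper does. Applying Assumption \ref{A3} directly, as you wrote, puts $N_d$ into the Gr\"onwall coefficient and yields $e^{K(1+N_d+L_d^{p_0/2})}$ instead of the stated $e^{K(1+L_d^{p_0/2})}$.
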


\begin{lemma}\label{scheme3bounds}
	Let Assumptions \ref{A1}--\ref{A5} hold, then there exists $K>0$ such that, for all $ s\in [0,T]$, $x\in \R^d$, $\dl \in \Th$, $\ep \in (0,1)$, and $\mathcal{M} \in \N$ with $\mathcal{M}\geq (1 \vee \ep^{-2}\mathcal{Z}_d)^{2}$,
	\[
	\sup_{t\in[s,T]}   \E\|X^{s,x,\dl,\ep,\mathcal{M}}_{t}\|^{p_0} 
	\leq K  \lf(1+ \|x\|^{p_0}+N_d^{\frac{p_0}{2}}\rt)e
	^{K(1+L_d^{\frac{p_0}{2}})}.
	\]
\end{lemma}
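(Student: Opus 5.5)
We aim to prove Lemma~\ref{scheme3bounds} by comparing with the $\ep$-truncated scheme \eqref{tamedscheme2} and controlling the Monte Carlo error. We work with the representation \eqref{Tamedscheme2}. The last two integrals combine into a single drift correction
\[
D_r := \int_{A_\ep}\gm(Y_r,z)\,\nu(\rd z) - \frac{\nu(A_\ep)}{\mathcal{M}}\sum_{i=1}^{\mathcal{M}}\gm\bigl(Y_r,V^{\dl,\ep,\mathcal{M}}_{i,r}\bigr),
\]
where $Y_r := X^{s,x,\dl,\ep,\mathcal{M}}_{\max\{s,\dl(r-)\}}$ and the $V$'s are evaluated at the frozen grid time $\max\{s,\dl(r-)\}$. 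The rest of the equation has exactly the structure of \eqref{tamedscheme2}. So we repeat the argument behind Lemma~\ref{epbounds} (Itô's formula for $\|X_t\|^{p_0}$, in the style of \cite{L.GononC.Schwab}) and treat $D_r$ as an additional drift perturbation.

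The main steps, in order, are as follows.

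\emph{Itô expansion.} We apply Itô's formula to $(1+\|X_t\|^2)^{p_0/2}$ along \eqref{Tamedscheme2}. The drift/diffusion terms at frozen arguments are handled exactly as in Lemma~\ref{L3}. We write $X_r = Y_r + (X_r - Y_r)$ and use Remark~\ref{exist} for $2Y_r\mu^\dl(Y_r)+(p_0-1)\|\sg^\dl(Y_r)\|^2$; the taming factor is scalar and at most $1$, so the one-sided bound persists. The cross term $(X_r-Y_r)\mu^\dl(Y_r)$ is bounded via the one-step increment together with \eqref{strongerbound}, so that $|\dl|\,\|\mu^\dl\|^2\lesssim|\dl|^{1/2}(1+\|Y\|^2)$. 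The compensated jump terms are bounded through Assumption~\ref{A3} and Assumption~\ref{A4}; this is where the factors $N_d^{p_0/2}$ and $e^{KL_d^{p_0/2}}$ arise. The resulting estimate is
\[
\E(1+\|X_t\|^2)^{p_0/2} \le \text{(terms of Lemma~\ref{epbounds})} + p_0\,\E\int_s^t (1+\|X_r\|^2)^{\frac{p_0-2}{2}} X_r D_r\,\rd r .
\]

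\emph{The Monte Carlo term.} This is the main obstacle. Conditionally on $\mathcal{F}_{\dl(r)}$, the $V_i$ at the current grid point are independent of $Y_r$ and have law $\nu_\ep$, so $\E[D_r\mid \mathcal{F}_{\max\{s,\dl(r-)\}}]=0$. We split $X_r = Y_r + (X_r-Y_r)$ and handle the two pieces separately.
\begin{itemize}
\item Against $(1+\|Y_r\|^2)^{(p_0-2)/2}Y_r$ the term has zero expectation. The only issue is that the continuous part of the scheme within a step also depends on $V$ through the drift, which is why the expansion error is what remains.
\item The remainder is bounded by Young's inequality by $K\E(1+\|X_r\|^{p_0}) + K\E\|D_r\|^{p_0}|\cdot|$-type terms. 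For the latter we use Rosenthal/Marcinkiewicz–Zygmund for i.i.d.\ centered sums:
\[
\E\bigl[\|D_r\|^{p_0}\mid Y_r\bigr]\le K\,\nu(A_\ep)^{p_0}\Bigl(\mathcal{M}^{-p_0/2}\Bigl(\int\|\gm\|^2\rd\nu_\ep\Bigr)^{p_0/2}+\mathcal{M}^{1-p_0}\int\|\gm\|^{p_0}\rd\nu_\ep\Bigr).
\]
Using $\nu(A_\ep)\int\|\gm\|^\rho\rd\nu_\ep\le N_d(1+\|Y\|^\rho)$ (Assumption~\ref{A3}), the right-hand side becomes $K\bigl((\nu(A_\ep)/\mathcal{M})^{p_0/2}N_d^{p_0/2}+(\nu(A_\ep)/\mathcal{M})^{p_0-1}N_d\bigr)(1+\|Y_r\|^{p_0})$.
\end{itemize}

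\emph{Using the constraint on $\mathcal{M}$.} By \eqref{nudef}, $\nu(A_\ep)\le \ep^{-2}\mathcal{Z}_d$. The hypothesis $\mathcal{M}\ge (1\vee\ep^{-2}\mathcal{Z}_d)^2$ then gives $\nu(A_\ep)/\mathcal{M}\le 1$, so the Monte Carlo contribution is at most $K(1+N_d^{p_0/2})\E(1+\|Y_r\|^{p_0})$, uniformly in $\ep,\mathcal{M},\dl$. Note also that the increment $X_r-Y_r$ now contains $D$-driven drift of size $|\dl|\|D\|$, which is controlled by the same bound.

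\emph{Closing with Gronwall.} Collecting the terms, $f(t):=\sup_{u\in[s,t]}\E\|X_u\|^{p_0}$ satisfies
\[
f(t)\le K\bigl(1+\|x\|^{p_0}+N_d^{p_0/2}\bigr)+K\bigl(1+L_d^{p_0/2}\bigr)\int_s^t f(r)\,\rd r,
\]
and Gronwall's inequality yields the stated bound. A preliminary finiteness check of $f$ (needed for Gronwall) follows by induction over the finitely many grid steps, since each step is a function of finitely many variables with polynomial bounds and $\nu(A_\ep)<\infty$.
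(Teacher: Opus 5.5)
Your architecture matches the paper's: It\^o's formula for the $p_0$-th power, splitting $X_r=Y_r+(X_r-Y_r)$ for the drift, diffusion and cross terms, Young's inequality on the Monte Carlo term, a $p_0$-th moment bound for the Monte Carlo error using $\mathcal{M}\ge(1\vee\ep^{-2}\mathcal{Z}_d)^2$, and Gr\"onwall. Your Rosenthal estimate is sharper than the Marcinkiewicz--Zygmund/Jensen bound in Lemma~\ref{L3A_4}, since it only needs $\nu(A_\ep)/\mathcal{M}\le 1$.

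\textbf{The gap.} You apply Assumption~\ref{A3} directly, $\int_{A_\ep}\|\gm(Y_r,z)\|^\rho\,\nu(\rd z)\le N_d(1+\|Y_r\|^\rho)$, and obtain the Monte Carlo contribution $K(1+N_d^{p_0/2})\E(1+\|Y_r\|^{p_0})$. Here $N_d^{p_0/2}$ multiplies $\E\|Y_r\|^{p_0}$, so it enters the Gr\"onwall coefficient. What your estimates actually give is
\[
f(t)\le K(1+\|x\|^{p_0}+N_d^{p_0/2})+K(1+L_d^{p_0/2}+N_d^{p_0/2})\int_s^t f(r)\,\rd r ,
\]
hence a factor $e^{K(1+L_d^{p_0/2}+N_d^{p_0/2})}$. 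That is neither the inequality you wrote nor the lemma's $e^{K(1+L_d^{p_0/2})}$.

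This is a real loss, because $N_d$ and $L_d$ are unrelated. For example, if $\gm(x,z)$ does not depend on $x$, then $L_d$ can be taken arbitrarily small while $N_d$ is large. The same defect recurs in the $D$-driven part of the increment $X_r-Y_r$ in the cross term, which you control by the same bound.

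The repair is the split the paper uses throughout, for instance in Lemma~\ref{L3A_4} and Lemma~\ref{corollary1}. Write $\gm(Y_r,z)=(\gm(Y_r,z)-\gm(0,z))+\gm(0,z)$. Apply Assumption~\ref{A4} to the first piece and Assumption~\ref{A3} only at $x=0$ to the second. Your Rosenthal bound then becomes $K(N_d+N_d^{p_0/2})+K(L_d+L_d^{p_0/2})\|Y_r\|^{p_0}$, which is compatible with the Gr\"onwall inequality you state.

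\textbf{The zero-mean step.} This decomposition of the Monte Carlo term is unnecessary, and as stated it is wrong. The scheme is $\F$-adapted, $\F$ is right-continuous, and $D_r$ is constant on each step and revealed by the scheme's increments immediately after the grid point. Hence $D_r$ is $\mathcal{F}_{\max\{s,\dl(r)\}}$-measurable, and $\E[D_r\mid\mathcal{F}_{\max\{s,\dl(r)\}}]=D_r$, not $0$.

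What does hold is $\E[D_r\mid Y_r]=0$, since the current $V$'s are independent of $Y_r$. That suffices for $\E[(1+\|Y_r\|^2)^{(p_0-2)/2}Y_rD_r]=0$. But your remainder estimate reduces to $K\E(1+\|X_r\|^{p_0})+K\E\|D_r\|^{p_0}$ anyway. That is exactly what the paper obtains by applying Young's inequality to the whole term $\|X_r\|^{p_0-2}X_rD_r$, so you can drop the cancellation argument.
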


\subsection{Proof of Theorem \ref{theorem1}\,\ref{theorem(i)}}\label{section5.2}
In this section, we establish several auxiliary estimates needed for the proof of Theorem~\ref{theorem1}\,\ref{theorem(i)}. Their proofs are deferred to Appendix~\ref{app:section5.2}. We then use these estimates to complete the proof of Theorem~\ref{theorem1}\,\ref{theorem(i)}.

\begin{lemma}\label{convergencescheme1}
	Let Assumptions \ref{A1}--\ref{A4} hold, then there exists $K>0$ such that, for all $s\in [0,T]$, $ x\in \R^d$, and $\dl \in \Th$,
	\[
	\sup_{t\in[s,T]} \E \|X_t^{s,x,\iota} - X_t^{s,x,\dl} \|^{p}  \leq K |\dl|^{\frac{p}{p+\zeta}}(1+L_d^{p})(1+\|x\|^{p_0}+N_d^{\frac{p_0}{2}})e^{K(1+L_d^{\frac{p_0}{2}})}.
	\]
\end{lemma}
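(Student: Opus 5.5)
We compare the exact solution $X^{s,x,\iota}$ of \eqref{exactsolu} with the tamed scheme $X^{s,x,\dl}$ of \eqref{tamedscheme1}. Set $e_t:=X^{s,x,\iota}_t-X^{s,x,\dl}_t$ and apply It\^o's formula for jump processes to $\|e_t\|^p$. Write $\bar X_r:=X^{s,x,\dl}_{\max\{s,\dl(r-)\}}$ and $X_r:=X^{s,x,\iota}_{r-}$. In each coefficient difference we insert the intermediate terms $\mu(X^{s,x,\dl}_{r-})$ and $\mu(\bar X_r)$, and likewise for $\sg$ and $\gm$. This splits each difference into three pieces:
\begin{itemize}
\item[(a)] a one-sided Lipschitz term, e.g. $\mu(X_r)-\mu(X^{s,x,\dl}_{r-})$;
\item[(b)] a one-step regularity term, e.g. $\mu(X^{s,x,\dl}_{r-})-\mu(\bar X_r)$;
\item[(c)] a taming error, e.g. $\mu(\bar X_r)-\mu^\dl(\bar X_r)$.
\end{itemize}

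For the drift and diffusion parts, Assumption \ref{A1} is applied with the $(p_0-1)$ factor, which absorbs the $(p-1)\|\sg\text{-difference}\|^2$ term after Young's inequality $\|a+b\|^2\le(1+\eta)\|a\|^2+(1+\eta^{-1})\|b\|^2$ since $p\le p_0$. This bounds piece (a) by $K\|e_r\|^p$.

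For the jump part, the standard inequality
$$\|a+b\|^p-\|a\|^p-p\|a\|^{p-2}ab\le K(\|a\|^{p-2}\|b\|^2+\|b\|^p)$$
together with Assumption \ref{A4} (for $\rho=2$ and $\rho=p$) gives a contribution $K(1+L_d^{p})\|e_r\|^p$ plus the jump analogue of piece (b), namely $\int\|\gm(X^{s,x,\dl}_{r-},z)-\gm(\bar X_r,z)\|^\rho\nu(\rd z)\le L_d\|X^{s,x,\dl}_{r-}-\bar X_r\|^\rho$.

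The remaining pieces are handled by Young's inequality and H\"older's inequality, using the moment bounds of Lemmas \ref{exactmomentbounds} and \ref{L3}.
\begin{itemize}
\item For (c), Remark \ref{remark3} gives $\|\xi-\xi^\dl\|\le K|\dl|^{1/2}(1+\|x\|^{\frac32\chi+1})$. Since $p(\frac32\chi+1)\le p_0$ by the definition \eqref{p*} of $p^*$, the $p$-th moments are finite and these terms are $O(|\dl|^{p/2})$.
\item For (b), Assumption \ref{A2} and H\"older's inequality give
$$\E\big[\|X^{s,x,\dl}_{r-}-\bar X_r\|^{p}(1+\|\cdot\|^{\chi/2})^{p}\big]\le \big(\E\|X^{s,x,\dl}_{r-}-\bar X_r\|^{p\alpha}\big)^{1/\alpha}(\cdots)^{1/\alpha'}.$$
We choose $\alpha$ close to $1$ so that the polynomial factor has a moment of order at most $p_0$; this is exactly the role of the constraint $p\le \frac{2p_0}{\chi+2}-\zeta$.
\end{itemize}

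This leaves a one-step increment bound for the scheme itself: for $\rho\in[2,p_0]$,
$$\E\|X^{s,x,\dl}_{r}-X^{s,x,\dl}_{\dl(r)}\|^\rho\le K|\dl|(1+\|x\|^{p_0}+N_d^{p_0/2})e^{K(1+L_d^{p_0/2})}.$$
Only order $|\dl|$, not $|\dl|^{\rho/2}$, is attainable, because jumps contribute $\int_{\dl(r)}^r\int\|\gm\|^\rho\nu\,\rd u$. For the Brownian and drift parts we use the bound \eqref{strongerbound}, $\|\sg^\dl(x)\|^2\le K|\dl|^{-1/2}(1+\|x\|^2)$, giving $|\dl|^{\rho/2}\cdot|\dl|^{-\rho/4}=|\dl|^{\rho/4}$ growth-free. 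More simply, we use the untamed bound of Remark \ref{remark2} together with polynomial moments where they are available, and otherwise the tamed bound. For the jump part we use the Kunita/BDG inequality with Assumption \ref{A3}.

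Inserting the increment bound with H\"older exponent $\alpha=1+\zeta/p$ gives $\big(|\dl|\big)^{1/\alpha}=|\dl|^{\frac{p}{p+\zeta}}$ from piece (b). This explains the rate in the lemma: the loss from $1$ to $\frac{p}{p+\zeta}$ comes from the H\"older split, and the jump term prevents anything better than order one in $|\dl|$. The parameter $\zeta$ and the remaining constraints in \eqref{p*} (the third entry, which couples $\chi\zeta$ and $p_0$) should guarantee that all conjugate moment orders stay below $p_0$.

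Collecting the terms, we obtain
$$\E\|e_t\|^p\le K(1+L_d^p)\int_s^t\E\|e_r\|^p\rd r+K|\dl|^{\frac{p}{p+\zeta}}(1+L_d^p)(1+\|x\|^{p_0}+N_d^{p_0/2})e^{K(1+L_d^{p_0/2})},$$
and Gronwall's inequality concludes. One caveat is that Gronwall produces a factor $e^{K L_d^p}$, which is absorbed into $e^{K(1+L_d^{p_0/2})}$ only if the $L_d$-dependence in the jump term is tracked more carefully. I would therefore use Assumption \ref{A4} with the $\|a\|^{p-2}\|b\|^2$ term bounded by $L_d\|e\|^p$ ($\rho=2$) and the $\|b\|^p$ term by $L_d\|e\|^p$, so that the Gronwall constant is linear in $L_d$.

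The main obstacle is the step (b) H\"older bookkeeping combined with the one-step increment estimate. One must keep every moment order at most $p_0$ while extracting exactly $|\dl|^{p/(p+\zeta)}$ and the stated $L_d$-dependence. This is where I expect the precise form of $p^*$ to be forced.
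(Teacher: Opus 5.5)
Your proposal is essentially the paper's proof: It\^o's formula for $\|X^{s,x,\iota}_t-X^{s,x,\dl}_t\|^p$, the same three-way splitting with Young's inequality absorbing the $\sg$ cross term into the $(p_0-1)$ factor of Assumption~\ref{A1}, Remark~\ref{remark3} with $p(\frac32\chi+1)\le p_0$ for the taming error, H\"older with exponent $\frac{p+\zeta}{p}$ against the $O(|\dl|)$ one-step bound of Lemma~\ref{corollary2}\,\ref{lemma5}, and Gr\"onwall. The paper also localizes with stopping times and closes with Fatou's lemma, a step you omit. One correction to your bookkeeping: taking $\alpha$ close to $1$ makes the conjugate moment order $\frac{\chi p(p+\zeta)}{2\zeta}$ of the polynomial factor \emph{large}, and it is the third entry of $p^*$ that keeps it $\le p_0$. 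The first entry $p\le\frac{2p_0}{\chi+2}-\zeta$ is instead what makes $\rho=p+\zeta$ admissible in the one-step bound, via the untamed growth $\|\xi^\dl(x)\|\le\|\xi(x)\|\le K(1+\|x\|^{\frac{\chi}{2}+1})$. Your claim for all $\rho\in[2,p_0]$ is not justified, since \eqref{strongerbound} only gives $|\dl|^{\rho/4}$, which is not $O(|\dl|)$ for $\rho<4$; but you only need $\rho=p+\zeta$. Finally, your $L_d$ caveat is moot, since $p\le\frac{2p_0}{3\chi+2}<\frac{p_0}{2}$.
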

\begin{lemma}\label{convergencescheme2}
	Let Assumptions \ref{A1}--\ref{A5} hold, then there exists $K>0$ such that, for all $s\in [0,T]$, $x\in \R^d$, $\dl \in \Th$, and $ \ep \in(0,1)$,
	\begin{equation*}
		\sup_{t\in[s,T]}	\E\|X_t^{s,x,\dl} - X_t^{s,x,\dl,\ep} \|^{p} \leq K \big(|\dl|^{\frac{p}{p+\zeta}}+\ep^{q}(1+\mathcal{Z}_d^{\frac{p}{2}})\big)(1+L_d^{\frac{p}{2}})(1+\|x\|^{p_0}+N_d^{\frac{p_0}{2}})e^{K(1+L_d^{\frac{p_0}{2}})}.
	\end{equation*}
\end{lemma}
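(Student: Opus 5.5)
We prove Lemma~\ref{convergencescheme2} by a direct comparison of \eqref{tamedscheme1} and \eqref{tamedscheme2}, started from the same data $(s,x)$. Set $e_t := X^{s,x,\dl}_t - X^{s,x,\dl,\ep}_t$, and write $Y_r := X^{s,x,\dl}_{\max\{s,\dl(r-)\}}$ and $\bar Y_r := X^{s,x,\dl,\ep}_{\max\{s,\dl(r-)\}}$. Splitting $\R^d = A_\ep \cup A_\ep^c$ in the jump part of \eqref{tamedscheme1}, and recalling that on $A_\ep$ the jump terms of \eqref{tamedscheme2} form exactly a compensated integral $\int_{A_\ep}\gm(\bar Y_r,z)\,\tl\pi(\rd z,\rd r)$, we obtain
\begin{align*}
e_t &= \int_s^t\big(\mu^\dl(Y_r)-\mu^\dl(\bar Y_r)\big)\rd r+\int_s^t\big(\sg^\dl(Y_r)-\sg^\dl(\bar Y_r)\big)\rd W_r\\
&\quad+\int_s^t\int_{A_\ep}\big(\gm(Y_r,z)-\gm(\bar Y_r,z)\big)\tl\pi(\rd z,\rd r)+\int_s^t\int_{\|z\|<\ep}\gm(Y_r,z)\,\tl\pi(\rd z,\rd r).
\end{align*}
The last term is the truncation error. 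By the Bichteler--Jacod/Kunita inequality together with \eqref{A53} (for the $p$-th power integrand) and \eqref{A51} combined with Assumption~\ref{A3} (for the quadratic part), its $p$-th moment is bounded by $K\ep^{q}(1+\mathcal{Z}_d^{p/2})\,(1+\sup_r\E\|Y_r\|^{p})$, with a possible interpolation between \eqref{A53} at exponents $2$ and $p$. Lemma~\ref{L3} then controls $\sup_r\E\|Y_r\|^p$.

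For the remaining terms we apply It\^o's formula to $\|e_t\|^p$ and aim for a Gronwall inequality in $\sup_{u\le t}\E\|e_u\|^p$ whose constant does not depend on $|\dl|$. Using the global bound of Remark~\ref{remark4} would produce a factor $e^{K|\dl|^{-p/4}}$, which is unacceptable here, so the coefficient differences must be handled by the one-sided condition. We decompose
\[
\mu^\dl(Y_r)-\mu^\dl(\bar Y_r) = \big[\mu^\dl(X^{s,x,\dl}_r)-\mu^\dl(X^{s,x,\dl,\ep}_r)\big] + (\text{one-step increments}),
\]
and similarly for $\sg^\dl$. We then replace the tamed coefficients by $\mu,\sg$ at cost $K|\dl|^{1/2}(1+\|\cdot\|^{\frac32\chi+1})$ via Remark~\ref{remark3}, so that Assumption~\ref{A1} (valid for $p\le p_0$) applies to the main bracket and yields $K\|e_r\|^p$. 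The jump difference on $A_\ep$ is bounded by $L_d\|e\|$ through Assumption~\ref{A4} and Kunita's inequality, and this is where the factor $(1+L_d^{p/2})$ and the exponential $e^{K(1+L_d^{p_0/2})}$ arise.

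The main obstacle is the collection of error terms, namely the one-step increments $X_r-X_{\dl(r)}$ for both schemes and the taming error, which must be bounded by $|\dl|^{p/(p+\zeta)}$. These terms carry the polynomial weights $\|x\|^{\chi/2}$ from Assumption~\ref{A2}. We handle them by Young's inequality in the form $\|e\|^{p-1}\cdot(\text{error})\le \|e\|^p+(\text{error})^p$, and then by H\"older's inequality with exponents $(p+\zeta)/p$ and its conjugate. The growth weight is then absorbed by the $p_0$-moments of Lemmas~\ref{L3} and~\ref{epbounds}; this is precisely where the restriction $p\le p^*$ in \eqref{p*} is used. The one-step increment moments are of order $|\dl|$ in $L^{p}$, since the jump part contributes $|\dl|$ rather than $|\dl|^{p/2}$, and this is where the rate $|\dl|^{p/(p+\zeta)}$ originates. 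The argument reuses the same increment and H\"older estimates as in Lemma~\ref{convergencescheme1}, now applied to the scheme \eqref{tamedscheme2} as well. Gronwall's lemma then gives the bound stated in the lemma.
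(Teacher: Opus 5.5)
Your proposal is correct and follows essentially the same route as the paper: It\^o's formula for $\|X^{s,x,\dl}_t-X^{s,x,\dl,\ep}_t\|^p$ (with localization), then a split of the coefficient differences into three pieces (the monotone part handled by Assumption~\ref{A1}, taming errors via Remark~\ref{remark3}, and one-step increments via Assumption~\ref{A2}), followed by H\"older with exponent $(p+\zeta)/p$ together with the $O(|\dl|)$ increment bounds of Lemma~\ref{corollary2}, the moment bounds (where $p\le p^*$ enters), and Gr\"onwall. The only cosmetic difference is the truncation term: inside It\^o's formula it enters through $\|e_r\|^{p-2}\int_{\|z\|<\ep}\|\gm(Y_r,z)\|^2\nu(\rd z)$ and $\int_{\|z\|<\ep}\|\gm(Y_r,z)\|^p\nu(\rd z)$, which are bounded by Young's inequality and \eqref{A53} at exponents $2$ and $p$ (Assumption~\ref{A5} is imposed for every exponent in $[2,p^*]$), so neither a separate Kunita estimate nor \eqref{A51} with Assumption~\ref{A3} is needed there.
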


\begin{lemma}\label{convergencescheme3}
	Let Assumptions \ref{A1}--\ref{A5} hold, then there exists $K>0$ such that, for all  $s\in [0,T]$, $x\in \R^d$, $\dl \in \Th$, $\ep \in (0,1)$, and $\mathcal{M} \in \N$ with $\mathcal{M}\geq (1 \vee \ep^{-2}\mathcal{Z}_d)^{2}$,
	\begin{align*}
		&\sup_{t\in[s,T]}	\E\|X_t^{s,x,\dl,\ep} - X_t^{s,x,\dl,\ep,\mathcal{M}} \|^{p}\leq  K(|\dl|^{\frac{p}{p+\zeta}}+(\ep^{-2}\mathcal{Z}_d)^{p-1}\mathcal{M}^{-\frac{p}{2}}) (1+L_d^{\frac{p}{2}})\big(1+\|x\|^{p_0}+N_d^{p_0}\big)e^{K(1+L_d^{p_0})}.
	\end{align*}
\end{lemma}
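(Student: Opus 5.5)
Write $Y_t:=X^{s,x,\dl,\ep}_t$, $Z_t:=X^{s,x,\dl,\ep,\mathcal{M}}_t$ and $e_t:=Y_t-Z_t$. Both processes are driven by the same $W$ and $\pi$, so by \eqref{Tamedscheme2} their difference satisfies
\[
e_t=\int_s^t\bigl(\mu^\dl(Y_{\kappa(r)})-\mu^\dl(Z_{\kappa(r)})\bigr)\rd r+\int_s^t\bigl(\sg^\dl(Y_{\kappa(r)})-\sg^\dl(Z_{\kappa(r)})\bigr)\rd W_r+\int_s^t\int_{A_\ep}\bigl(\gm(Y_{\kappa(r)},z)-\gm(Z_{\kappa(r)},z)\bigr)\tl\pi(\rd z,\rd r)+\int_s^t R_r\,\rd r,
\]
with $\kappa(r):=\max\{s,\dl(r-)\}$ and Monte Carlo residual
\[
R_r:=\int_{A_\ep}\gm(Z_{\kappa(r)},z)\nu(\rd z)-\frac{\nu(A_\ep)}{\mathcal{M}}\sum_{i=1}^{\mathcal{M}}\gm\bigl(Z_{\kappa(r)},V_{i,\kappa(r)}\bigr).
\]
The $\nu(\rd z)\rd r$ compensator terms of the two schemes have been absorbed into $R_r$.

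I would apply It\^o's formula to $\|e_t\|^p$. The problem is that the coefficients are evaluated at the grid points $\kappa(r)$, not at $r$, so the one-sided Lipschitz condition cannot be applied directly. I would rewrite the drift and diffusion differences as
\[
\xi^\dl(Y_{\kappa(r)})-\xi^\dl(Z_{\kappa(r)})=\bigl[\xi(Y_r)-\xi(Z_r)\bigr]+\text{(consistency terms)},\qquad \xi\in\{\mu,\sg\}.
\]
The consistency terms are of two kinds:
\begin{itemize}
\item replacing $\xi^\dl$ by $\xi$, controlled by Remark~\ref{remark3} at rate $|\dl|^{1/2}$ times a polynomial moment;
\item replacing $Y_{\kappa(r)}$ by $Y_r$ (and likewise for $Z$), controlled by Assumption~\ref{A2} together with the one-step increment bound $\E\|Y_r-Y_{\kappa(r)}\|^{\rho}\le K|\dl|$. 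The increment bound follows from Remark~\ref{remark2} and the BDG/Kunita inequality, and the jump part only gives order $|\dl|$ rather than $|\dl|^{\rho/2}$.
\end{itemize}
The main part is then handled by Assumption~\ref{A1} for the $\mu$–$\sg$ pair and Assumption~\ref{A4} for the jump part, giving a term $K(1+L_d^{p/2})\E\|e_r\|^p$. The cross terms are bounded with Young's inequality and H\"older's inequality using exponents $(p+\zeta)/p$. This is where the rate $|\dl|^{p/(p+\zeta)}$ and the restriction $p\le p^*$ come from: the polynomial weights $\|x\|^{\chi/2}$, $\|x\|^{3\chi/2+1}$ must have their moments of order $p$ times those weights controlled by the $p_0$-moments in Lemmas~\ref{epbounds} and~\ref{scheme3bounds}. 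I would reuse the same estimates as in Lemma~\ref{convergencescheme1}, since this part of the argument is identical.

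The genuinely new term is $\int_s^t\E[\|e_r\|^{p-2}e_r R_r]\rd r$. Writing $e_r=e_{\kappa(r)}+(e_r-e_{\kappa(r)})$:
\begin{itemize}
\item For the $e_{\kappa(r)}$ part, condition on $\mathcal{G}_{\kappa(r)}:=\mathcal{F}_{\kappa(r)}\vee\sigma(V_{\cdot,t_j}:t_j<\kappa(r))$. The $V_{i,\kappa(r)}$ are independent of this $\sigma$-field and have law $\nu_\ep$, while $e_{\kappa(r)}$ and $Z_{\kappa(r)}$ are measurable with respect to it. Hence $\E[R_r\mid\mathcal{G}_{\kappa(r)}]=0$ and this part vanishes exactly.
\item For the remainder, use Young's inequality to get $K\E\|e_r-e_{\kappa(r)}\|^p+K\E\|R_r\|^p$, plus a $\|e\|^p$ contribution.
\end{itemize}
To bound $\E\|R_r\|^p$, I would apply the Marcinkiewicz–Zygmund (Rosenthal) inequality conditionally on $Z_{\kappa(r)}$ to the centered i.i.d. sum:
\[
\E\|R_r\|^p\le K\mathcal{M}^{-p/2}\,\nu(A_\ep)^p\,\E\!\int\|\gm(Z_{\kappa(r)},z)\|^p\nu_\ep(\rd z)\le K\mathcal{M}^{-p/2}\,\nu(A_\ep)^{p-1}N_d\bigl(1+\E\|Z_{\kappa(r)}\|^p\bigr).
\]
Combined with \eqref{nudef}, this gives the factor $(\ep^{-2}\mathcal{Z}_d)^{p-1}\mathcal{M}^{-p/2}$, and Lemma~\ref{scheme3bounds} bounds the moment of $Z$. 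The increment $e_r-e_{\kappa(r)}$ contributes order $|\dl|$ (again by the one-step increment bound) plus an $R$-term multiplied by $|\dl|^{p}$, which is harmless. The hypothesis $\mathcal{M}\ge(1\vee\ep^{-2}\mathcal{Z}_d)^2$ keeps $\nu(A_\ep)/\sqrt{\mathcal{M}}\le1$. This is what allows the $R$-contributions inside the one-step increment, and inside the BDG terms where $\nu(A_\ep)$ appears linearly, to be absorbed uniformly in $\ep$.

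Finally, I collect everything into
\[
\E\|e_t\|^p\le K(1+L_d^{p/2})\int_s^t\sup_{u\le r}\E\|e_u\|^p\,\rd r+\text{(rate terms)}
\]
and apply Gronwall's inequality. The main obstacle I expect is the bookkeeping in the H\"older step: the polynomial weights from Assumption~\ref{A2} must multiply $\|e\|^{p-2}$ and the increments while staying within the available $p_0$ moments. That is exactly where the three constraints defining $p^*$ in \eqref{p*} must be invoked. The dependence on $N_d^{p_0}$ and the factor $e^{K(1+L_d^{p_0})}$ comes from raising the moment bounds of Lemmas~\ref{epbounds} and~\ref{scheme3bounds} to the H\"older powers.
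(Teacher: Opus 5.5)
Your proposal is correct and follows essentially the paper's route: It\^o's formula for $\|e_t\|^p$ (the paper additionally localizes with stopping times and concludes with Fatou), the same splitting of $\xi^\dl(Y_{\kappa(r)})-\xi^\dl(Z_{\kappa(r)})$ into $\xi(Y_r)-\xi(Z_r)$ plus taming errors (Remark~\ref{remark3}) and one-step errors, H\"older with exponent $(p+\zeta)/p$ combined with Lemma~\ref{corollary2}, the Monte Carlo bound \eqref{L3A_4(t)}, and Gr\"onwall. The only deviation is your conditional-centering step for the Monte Carlo cross term, which buys nothing here: the paper applies Young's inequality directly to $\|e_r\|^{p-1}\|R_r\|$ and gets $K\E\|e_r\|^p+K\E\|R_r\|^p$, which is what your argument ends with anyway. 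As written, your claim that this part ``vanishes exactly'' is also imprecise for $p>2$, because the factor $\|e_r\|^{p-2}$ is not $\mathcal{G}_{\kappa(r)}$-measurable and would first have to be replaced by $\|e_{\kappa(r)}\|^{p-2}$.
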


\begin{proof}[\textbf{Proof of Theorem \ref{theorem1}\,\ref{theorem(i)}}]
	Fix $s\in [0,T]$, $x\in \R^d$, $\dl \in \Th$, $\ep\in(0,1)$, and $\mathcal{M}\in\N$ with $\mathcal{M}\ge (1\vee \ep^{-2}\mathcal{Z}_d)^{2}$. By the triangle inequality,
	\begin{align*}
		&\sup_{t\in[s,T]}\E\|X_t^{s,x,\iota}-X_t^{s,x,\dl,\ep,\mathcal{M}}\|^p\\
		&\leq K\bigg(
		\sup_{t\in[s,T]}\E\|X_t^{s,x,\iota}-X_t^{s,x,\dl}\|^p
		+\sup_{t\in[s,T]}\E\|X_t^{s,x,\dl}-X_t^{s,x,\dl,\ep}\|^p
		+\sup_{t\in[s,T]}\E\|X_t^{s,x,\dl,\ep}-X_t^{s,x,\dl,\ep,\mathcal{M}}\|^p
		\bigg).
	\end{align*}
	The three terms are controlled by Lemmas \ref{convergencescheme1}, \ref{convergencescheme2}, and \ref{convergencescheme3}, respectively. Collecting these bounds yields
	\begin{align*}
		\sup_{t\in[s,T]}\E\|X_t^{s,x,\iota}-X_t^{s,x,\dl,\ep,\mathcal{M}}\|^p
		&\le K\Bigl(|\dl|^{\frac{p}{p+\zeta}}+\ep^q\bigl(1+\mathcal{Z}_d^{\frac{p}{2}}\bigr)+(\ep^{-2}\mathcal{Z}_d)^{p-1}\mathcal{M}^{-\frac{p}{2}}\Bigr)\\
		&\quad\times(1+L_d^{p})\bigl(1+\|x\|^{p_0}+N_d^{p_0}\bigr)e^{K(1+L_d^{p_0})}.
	\end{align*}
\end{proof}

\subsection{Proof of Theorem \ref{theorem1}\,\ref{theoremnew}}\label{section5.3}
This section collects several auxiliary estimates that will be used in the proof of Theorem \ref{theorem1}\,\ref{theoremnew}. They quantify the stability of both the exact solution and its numerical approximation under perturbations of the evaluation time, the initial value, and the initial time. 
More precisely, Lemmas \ref{exactdifferenttimet}–\ref{exactdifferents} establish temporal and spatial regularity estimates for SDE \eqref{exactsolu}, while Lemmas \ref{differenttimet}–\ref{differents'} provide the corresponding bounds for scheme \eqref{Tamedscheme}. The proofs of these lemmas are postponed to Appendix \ref{app:section5.3}.

\begin{lemma}\label{exactdifferenttimet}
	Let Assumptions \ref{A1}--\ref{A4} hold, then there exists $K>0$ such that, for all $s\in [0,T]$, $t, \tl{t} \in[s,T]$, and $x\in \R^d$,
	\begin{align*}
		\E\|X_{\tl{t}}^{s,x,\iota} - X_t^{s,x,\iota} \|^{p}\leq K\lf(|\tl{t}-t|+|\tl{t}-t|^{p}\rt)(1+L_d^{\frac{p}{2}})\big(1+\|x\|^{p_0}+N_d^{\frac{p_0}{2}}\big)e^{K(1+L_d^{\frac{p_0}{2}})}.
	\end{align*}
\end{lemma}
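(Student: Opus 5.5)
Without loss of generality assume $s\le t\le \tl{t}\le T$. Write the increment via the integral form of \eqref{exactsolu}:
\[
X_{\tl t}^{s,x,\iota}-X_t^{s,x,\iota}=\int_t^{\tl t}\mu(X_{r-}^{s,x,\iota})\,\rd r+\int_t^{\tl t}\sg(X_{r-}^{s,x,\iota})\,\rd W_r+\int_t^{\tl t}\int_{\R^d}\gm(X_{r-}^{s,x,\iota},z)\,\tl\pi(\rd z,\rd r).
\]
By the elementary inequality $\|a+b+c\|^p\le 3^{p-1}(\|a\|^p+\|b\|^p+\|c\|^p)$, it suffices to bound the $p$-th moment of each term separately. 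Throughout, the only input on the solution will be Lemma~\ref{exactmomentbounds}, together with the fact that the growth exponents produced below never exceed $p_0$ because $p\le p^*$.

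\textbf{Drift and diffusion.} For the drift, apply H\"older's inequality in time and Remark~\ref{remark1}:
\[
\E\Big\|\int_t^{\tl t}\mu\,\rd r\Big\|^p\le |\tl t-t|^{p-1}\int_t^{\tl t}K\big(1+\E\|X_r\|^{(\frac{\chi}{2}+1)p}\big)\rd r .
\]
The constraint $p\le \frac{2p_0}{\chi+2}-\zeta$ gives $(\frac{\chi}{2}+1)p\le p_0$. Hence Jensen's inequality and Lemma~\ref{exactmomentbounds} bound this term by $K|\tl t-t|^p(1+\|x\|^{p_0}+N_d^{p_0/2})e^{K(1+L_d^{p_0/2})}$. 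For the diffusion, use the Burkholder--Davis--Gundy inequality followed by H\"older's inequality:
\[
\E\Big\|\int_t^{\tl t}\sg\,\rd W\Big\|^p\le K|\tl t-t|^{\frac p2-1}\int_t^{\tl t}\E\|\sg(X_r)\|^p\,\rd r .
\]
The same moment control bounds this by $K|\tl t-t|^{p/2}(\cdots)$. Since $|\tl t-t|\le T$ and $p/2\ge1$, we have $|\tl t-t|^{p/2}\le K(|\tl t-t|+|\tl t-t|^p)$.

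\textbf{Jump term (main obstacle).} The jump integral is where the linear factor $|\tl t-t|$, rather than $|\tl t-t|^{p/2}$, necessarily appears. I will use Kunita's (BDG-type) inequality for compensated Poisson integrals:
\[
\E\Big\|\int_t^{\tl t}\!\int\gm\,\tl\pi\Big\|^p\le K\,\E\Big(\int_t^{\tl t}\!\int\|\gm(X_r,z)\|^2\nu(\rd z)\rd r\Big)^{p/2}+K\,\E\int_t^{\tl t}\!\int\|\gm(X_r,z)\|^p\nu(\rd z)\rd r .
\]
\begin{itemize}
\item The first term is handled with Assumption~\ref{A3} at $\rho=2$ and H\"older's inequality in time. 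It is bounded by $K|\tl t-t|^{p/2-1}\int_t^{\tl t}N_d^{p/2}(1+\E\|X_r\|^p)\,\rd r$, which is $\le K|\tl t-t|^{p/2}N_d^{p/2}(\cdots)$.
\item The second term is handled with Assumption~\ref{A3} at $\rho=p$. It gives $K|\tl t-t|N_d(1+\sup_r\E\|X_r\|^p)$.
\end{itemize}
The delicate point is bookkeeping of the constants so that they match the stated form. The $N_d$ factors multiplying the moment bound must be absorbed into the factor $(1+\|x\|^{p_0}+N_d^{p_0/2})$ via Young's inequality, since $p/2\le p_0/2$. The factor $(1+L_d^{p/2})$ arises in the same way once Lemma~\ref{exactmomentbounds} is used to control $\E\|X_r\|^p\le(\E\|X_r\|^{p_0})^{p/p_0}$. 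If the $N_d$ powers do not absorb cleanly into this form, I would instead keep them inside the exponential-type constant, since $K$ is allowed to depend only on $L,\chi,p_0,T$.

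Summing the three contributions and again using $|\tl t-t|^{p/2}\le K(|\tl t-t|+|\tl t-t|^p)$ yields the claimed estimate.
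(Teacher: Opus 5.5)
Your skeleton matches the paper's: H\"older for the drift, BDG for the diffusion, the Kunita-type bound for the compensated jump integral, Lemma~\ref{exactmomentbounds} with $(\frac{\chi}{2}+1)p\le p_0$, and $h^{p/2}\le h+h^p$. The gap is in the step you yourself call delicate, and the proposal does not close it.

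\textbf{The problem.} You apply Assumption~\ref{A3} directly to $\gm(X_r,z)$, which produces the products $N_d^{p/2}\E\|X_r\|^p$ and $N_d\E\|X_r\|^p$. On the right-hand side of the lemma, $N_d$ may enter only additively, through $N_d^{p_0/2}$, and the only prefactor is $(1+L_d^{p/2})$. Moreover $N_d$ and $L_d$ are unrelated: take $\gm(x,z)=g(z)$ with $\int\|g(z)\|^\rho\,\nu(\rd z)$ huge and $L_d=1$.
\begin{itemize}
\item Your reason ``since $p/2\le p_0/2$'' does not justify the absorption. If you bound $\E\|X_r\|^p\le 1+\E\|X_r\|^{p_0}$, you get a term $N_d^{p/2}\|x\|^{p_0}$. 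This is not dominated by $K(1+\|x\|^{p_0}+N_d^{p_0/2})$ for any $K$ independent of $N_d$.
\item Your fallback is not available. $K$ must be independent of $d$, hence of $N_d$; tracking $N_d$ and $L_d$ explicitly is the whole point. The exponential $e^{K(1+L_d^{p_0/2})}$ contains no $N_d$ either.
\item The factor $(1+L_d^{p/2})$ does not come from Lemma~\ref{exactmomentbounds}. That lemma contributes only the exponential.
\end{itemize}

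\textbf{The missing idea.} Use the paper's decomposition $\gm(X_r,z)=\bigl(\gm(X_r,z)-\gm(0,z)\bigr)+\gm(0,z)$ inside both Kunita terms.
\begin{itemize}
\item Assumption~\ref{A4} bounds the first piece by $L_d^{p/2}\E\|X_r\|^p$ and $L_d\E\|X_r\|^p$. This is where $(1+L_d^{p/2})$ comes from, using $L_d\le 1+L_d^{p/2}$.
\item Assumption~\ref{A3} at $x=0$ bounds the second piece by the deterministic constants $N_d^{p/2}+N_d\le 2(1+N_d^{p_0/2})$, with no moment of $X$ attached.
\end{itemize}
This works on the whole range $p\le\frac{2p_0}{\chi+2}$ used in the paper's proof.

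\textbf{Rescuing your direct route.} If you prefer to keep Assumption~\ref{A3} applied directly, the products can be absorbed, but only as follows:
\begin{itemize}
\item Use the sharper bound $\E\|X_r\|^p\le(\E\|X_r\|^{p_0})^{p/p_0}\le K(1+\|x\|^p+N_d^{p/2})e^{K(1+L_d^{p_0/2})}$.
\item Then apply Young's inequality to the resulting terms $N_d^{p}$, $N_d^{p/2}\|x\|^p$, $N_d\|x\|^p$, $N_d^{1+p/2}$. This needs $p\le p_0/2$ and $p\le p_0-2$.
\item Both conditions hold here only because $p\le p^*\le\frac{2p_0}{3\chi+2}\le\frac{2p_0}{5}$, not because $p\le p_0$.
\end{itemize}
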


\begin{lemma}\label{exactdifferentx}
	Let Assumptions \ref{A1}--\ref{A4} hold, then there exists $K>0$ such that, for all $s\in[0,T]$, and $x,\tl{x}\in\R^d$,
	\[
	\sup_{t\in[s,T]}\E\|X_{t}^{s,x,\iota} - X_t^{s,\tl{x},\iota} \|^{p}\leq  K\|x-\tl{x}\|^pe^{K(1+L_d^{\frac{p_0}{2}})}.
	\]
\end{lemma}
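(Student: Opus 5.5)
The plan is a standard Itô argument for the difference process, closed with Gronwall's lemma. Fix $s\in[0,T]$ and $x,\tl x\in\R^d$, and set
\[
e_t:=X_t^{s,x,\iota}-X_t^{s,\tl x,\iota}, \qquad t\in[s,T].
\]
Then $e_s=x-\tl x$ and
\[
\rd e_t=\bigl(\mu(X_{t-}^{s,x,\iota})-\mu(X_{t-}^{s,\tl x,\iota})\bigr)\rd t+\bigl(\sg(X_{t-}^{s,x,\iota})-\sg(X_{t-}^{s,\tl x,\iota})\bigr)\rd W_t+\int_{\R^d}\Delta\gm_t(z)\,\tl\pi(\rd z,\rd t),
\]
where $\Delta\gm_t(z):=\gm(X_{t-}^{s,x,\iota},z)-\gm(X_{t-}^{s,\tl x,\iota},z)$. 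We apply Itô's formula for jump processes to $\|e_t\|^p$ with $p\in[2,p^*]\subset[2,p_0]$. After a localization by stopping times, the local martingale terms vanish in expectation.

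\emph{Continuous part.} The drift and diffusion contributions combine into
\[
\tfrac p2\|e_t\|^{p-2}\Bigl(2e_t\bigl(\mu(X)-\mu(\tl X)\bigr)+(p-1)\|\sg(X)-\sg(\tl X)\|^2\Bigr).
\]
Since $p\le p_0$, Assumption \ref{A1} bounds this by $\tfrac p2 L\|e_t\|^p$. The key feature of the one-sided condition is that no polynomial weights from Assumption \ref{A2} enter, so the estimate stays free of any moment of $x$.

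\emph{Jump part.} The compensated jump contribution is
\[
\int_{\R^d}\Bigl(\|e+\Delta\gm\|^p-\|e\|^p-p\|e\|^{p-2}e\,\Delta\gm\Bigr)\nu(\rd z).
\]
By Taylor's theorem with the elementary bound, the integrand is at most $K\bigl(\|e\|^{p-2}\|\Delta\gm\|^2+\|\Delta\gm\|^p\bigr)$. Assumption \ref{A4} with $\rho=2$ and $\rho=p$ then bounds the integral by $K(L_d+L_d)\|e\|^p$. Note that the factor $\|e\|^{p-2}$ is $\mathcal F_{t-}$-measurable, so it can be pulled out of the $\nu$-integral.

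\emph{Gronwall step.} Combining the two parts gives
\[
\E\|e_t\|^p\le\|x-\tl x\|^p+K(1+L_d)\int_s^t\E\|e_r\|^p\,\rd r.
\]
Gronwall's lemma yields $\|x-\tl x\|^pe^{K(1+L_d)}$. Since $L_d\le 1+L_d^{p_0/2}$ (as $p_0\ge2$), this is dominated by the stated bound, uniformly in $t\in[s,T]$.

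\emph{Main obstacle.} The delicate step is justifying that the stochastic integrals have zero expectation and that Fubini/Gronwall apply. I would handle this with the stopping times $\tau_n:=\inf\{t\ge s:\|e_t\|>n\}\wedge T$, running the argument for $\E\|e_{t\wedge\tau_n}\|^p$, where the integrands are bounded up to $\tau_n$. Finiteness of the relevant quantities uses Lemma \ref{exactmomentbounds}: $p_0$-th moments are finite, and $p\le p_0$, so Fatou's lemma lets us pass $n\to\infty$. One must also use $e_{r-}$ in the integrand, which coincides with $e_r$ for almost every $r$, so the time integrals are unaffected.
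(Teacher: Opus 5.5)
Your proposal is correct and follows essentially the same route as the paper: It\^o's formula for $\|X^{s,x,\iota}_t-X^{s,\tl x,\iota}_t\|^p$ under localization, Assumption~\ref{A1} with $p\le p_0$, Lemma~\ref{Formula for the remainder} and Assumption~\ref{A4} for the jump part, then Gr\"onwall and Fatou. You differ only in two small ways: you apply Assumption~\ref{A4} with $\rho=2$ directly to the $\|e\|^{p-2}\|\Delta\gm\|^2$ term, which gives the slightly sharper factor $e^{K(1+L_d)}$ instead of the paper's Young-inequality route through $L_d^{p/2}$; and you localize on $\|e\|$ rather than on $\|X^{s,x,\iota}\|$ and $\|X^{s,\tl x,\iota}\|$. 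With your stopping times the integrand $\sg(X^{s,x,\iota}_{r-})-\sg(X^{s,\tl x,\iota}_{r-})$ is not actually bounded, so the vanishing of the $\rd W$-martingale term relies on Assumption~\ref{A2} together with Lemma~\ref{exactmomentbounds} (legitimate, since $p^*\ge 2$ forces $p_0\ge 3\chi+2$), or more simply on stopping when either solution leaves a ball, as the paper does.
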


\begin{lemma}\label{exactdifferents}
	Let Assumptions \ref{A1}--\ref{A4} hold, then there exists $K>0$ such that, for all $s, \tl{s}\in[0,T]$, $t\in[s \vee \tl{s},T]$, and $x\in\R^d$,
	\[
	\E\|X_{t}^{s,x,\iota} - X_t^{\tl{s},x,\iota} \|^{p}\leq  K\big(|\tl{s}-s|+|\tl{s}-s|^p\big)(1+L_d^{\frac{p}{2}}) \big(1+\|x\|^{p_0}+N_d^{\frac{p_0}{2}}\big)e^{K(1+L_d^{\frac{p_0}{2}})}.
	\]
\end{lemma}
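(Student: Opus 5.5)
The plan is to reduce Lemma \ref{exactdifferents} to Lemmas \ref{exactdifferenttimet} and \ref{exactdifferentx} through the flow property. Without loss of generality assume $s\le \tl s$ (the estimate is symmetric in $s,\tl s$), and fix $t\in[\tl s,T]$. By pathwise uniqueness of solutions to \eqref{exactsolu}, which holds under Assumptions \ref{A1}--\ref{A4}, the process $(X^{s,x,\iota}_r)_{r\in[\tl s,T]}$ solves \eqref{exactsolu} on $[\tl s,T]$ with the $\mathcal F_{\tl s}$-measurable initial value $\xi:=X^{s,x,\iota}_{\tl s}$. Hence $X^{s,x,\iota}_t=X^{\tl s,\xi,\iota}_t$ almost surely. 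The quantity to bound is then $\E\|X^{\tl s,\xi,\iota}_t-X^{\tl s,x,\iota}_t\|^p$, which compares two solutions started at the same time $\tl s$ from the initial values $\xi$ and $x$.

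The next step is to extend Lemma \ref{exactdifferentx} to random, $\mathcal F_{\tl s}$-measurable initial data. The cleanest route is to rerun its proof directly with $\xi$ in place of $x$, rather than condition on $\mathcal F_{\tl s}$ and appeal to a Markov or independent-increment argument. That proof is an It\^o formula for $\|X^{\tl s,\xi,\iota}_r-X^{\tl s,x,\iota}_r\|^p$ on $[\tl s,t]$. Assumption \ref{A1} controls the drift and diffusion terms, using $p\le p^*\le p_0$ so that the factor $(p_0-1)$ dominates $(p-1)$. Assumption \ref{A4} controls the jump term, via a Taylor-type bound $\bigl|\|a+b\|^p-\|a\|^p-p\|a\|^{p-2}a\,b\bigr|\le K(\|a\|^{p-2}\|b\|^2+\|b\|^p)$ combined with Young's inequality. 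Gronwall's lemma then gives
\[
\E\|X^{s,x,\iota}_t-X^{\tl s,x,\iota}_t\|^p\le K e^{K(1+L_d^{p_0/2})}\,\E\|X^{s,x,\iota}_{\tl s}-x\|^p .
\]
Every step is conditional-expectation friendly and uses no property of the initial value beyond $\mathcal F_{\tl s}$-measurability and finite $p$-th moment. The latter is supplied by Lemma \ref{exactmomentbounds}.

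It remains to bound $\E\|X^{s,x,\iota}_{\tl s}-x\|^p=\E\|X^{s,x,\iota}_{\tl s}-X^{s,x,\iota}_s\|^p$. This is exactly Lemma \ref{exactdifferenttimet} with evaluation times $s$ and $\tl s$, which gives the bound $K(|\tl s-s|+|\tl s-s|^p)(1+L_d^{p/2})(1+\|x\|^{p_0}+N_d^{p_0/2})e^{K(1+L_d^{p_0/2})}$. Multiplying the two exponential factors only changes $K$, and this yields the claimed estimate.

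The main obstacle is justifying the flow identity and the random-initial-value version of Lemma \ref{exactdifferentx}. The identity needs pathwise uniqueness for \eqref{exactsolu} started at a random initial condition, which the cited existence and uniqueness result of Gy\"ongy--Krylov provides for $\mathcal F_{\tl s}$-measurable initial data. The random-initial-value bound has to be rechecked so that the one-sided Lipschitz argument never uses determinism of the initial value. If that recheck proves awkward, the fallback is to condition on $\mathcal F_{\tl s}$: on that $\sigma$-algebra, $X^{\tl s,\cdot,\iota}$ is driven by increments independent of $\mathcal F_{\tl s}$, so Lemma \ref{exactdifferentx} can be applied pointwise with a measurable version of $y\mapsto X^{\tl s,y,\iota}_t$.
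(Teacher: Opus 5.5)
Your proof is correct and follows the paper's argument. The paper applies It\^o's formula (with localization, Lemma~\ref{Formula for the remainder}, Young's inequality, and Assumptions~\ref{A1} and~\ref{A4}) directly to $X^{s,x,\iota}_t-X^{\tl s,x,\iota}_t$ on $[\tl s,t]$, obtains the bound $\E\|X^{s,x,\iota}_{\tl s}-x\|^p e^{K(1+L_d^{p/2})}$ via Gr\"onwall, and concludes with Lemma~\ref{exactdifferenttimet} applied at the times $s,\tl s$. The paper never uses the flow identity $X^{s,x,\iota}_t=X^{\tl s,\xi,\iota}_t$: both processes already satisfy the same equation on $[\tl s,T]$, and their difference starts from the random value $X^{s,x,\iota}_{\tl s}-x$, so the uniqueness obstacle you flag does not arise.
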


\begin{lemma}\label{differenttimet}
	Let Assumptions \ref{A1}--\ref{A5} hold, then there exists $K>0$ such that, for all $s\in [0,T]$, $t, \tl{t} \in[s,T]$, $x\in \R^d$, $\dl \in \Th$, $\ep \in (0,1)$, and $\mathcal{M} \in \N$ with $\mathcal{M}\geq (1 \vee \ep^{-2}\mathcal{Z}_d)^{2}$,
	\begin{align*}
		&\E\|X_{\tl{t}}^{s,x,\dl,\ep,\mathcal{M}} - X_t^{s,x,\dl,\ep,\mathcal{M}} \|^{p}\\
		&\leq K\lf(|\tl{t}-t|+|\tl{t}-t|^{p}(1+(\ep^{-2}\mathcal{Z}_d)^{p-1}\mathcal{M}^{-\frac{p}{2}})\rt) (1+L_d^{\frac{p}{2}})\big(1+\|x\|^{p_0}+N_d^{p_0}\big)e^{K(1+L_d^{p_0})}.
	\end{align*}
\end{lemma}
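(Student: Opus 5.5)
Without loss of generality let $s\le t\le\tl t\le T$, and set $Y:=X^{s,x,\dl,\ep,\mathcal{M}}$, $\hat r:=\max\{s,\dl(r-)\}$. By the integral form \eqref{Tamedscheme2},
\[
Y_{\tl t}-Y_t=I_1+I_2+I_3+I_4-I_5 ,
\]
where $I_1=\int_t^{\tl t}\mu^\dl(Y_{\hat r})\rd r$, $I_2=\int_t^{\tl t}\sg^\dl(Y_{\hat r})\rd W_r$, $I_3=\int_t^{\tl t}\int_{A_\ep}\gm(Y_{\hat r},z)\tl\pi(\rd z,\rd r)$, $I_4=\int_t^{\tl t}\int_{A_\ep}\gm(Y_{\hat r},z)\nu(\rd z)\rd r$, and $I_5$ is the Monte Carlo compensator term over $[t,\tl t]$. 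We estimate $\E\|I_j\|^p$ separately, always using Lemma~\ref{scheme3bounds} for moments of $Y_{\hat r}$ up to order $p_0$.

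For the drift and diffusion, we use the untamed growth bound from Remark~\ref{remark2}, namely $\|\xi^\dl(x)\|\le\|\xi(x)\|\le K(1+\|x\|^{\chi/2+1})$ by Remark~\ref{remark1}, rather than the $|\dl|^{-1/2}$ bound, so the constants stay uniform in $\dl$. Hölder in time gives $\E\|I_1\|^p\le|\tl t-t|^{p-1}\int_t^{\tl t}\E\|\mu(Y_{\hat r})\|^p\rd r$. The Burkholder--Davis--Gundy inequality gives $\E\|I_2\|^p\le K|\tl t-t|^{p/2-1}\int_t^{\tl t}\E\|\sg(Y_{\hat r})\|^p\rd r$. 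Both need $p(\chi/2+1)\le p_0$, which holds since $p\le p^*\le 2p_0/(\chi+2)$. Hence these two terms are bounded by $K(|\tl t-t|+|\tl t-t|^{p})(1+\|x\|^{p_0}+N_d^{p_0/2})e^{K(1+L_d^{p_0/2})}$, using $|\tl t-t|^{p/2}\le |\tl t-t|+|\tl t-t|^p$.

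For the jump martingale $I_3$, we use a Kunita/BDG-type inequality for compensated Poisson integrals:
\[
\E\|I_3\|^p\le K\,\E\Bigl(\int_t^{\tl t}\!\int\|\gm\|^2\nu\,\rd r\Bigr)^{p/2}+K\,\E\int_t^{\tl t}\!\int\|\gm\|^p\nu\,\rd r .
\]
Assumption~\ref{A3} with $\rho=2$ and $\rho=p$ then yields $K(1+N_d^{p/2})(|\tl t-t|^{p/2}+|\tl t-t|)(1+\sup\E\|Y\|^p)$. For $I_4$, Jensen's inequality together with Assumption~\ref{A3} at $\rho=1$ gives $\int_{A_\ep}\|\gm\|\nu\le N_d(1+\|x\|)$, so $\E\|I_4\|^p\le K|\tl t-t|^pN_d^p(1+\E\|Y\|^p)$. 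All $N_d$ powers produced here are absorbed into the factor $(1+N_d^{p_0})$ via Young's inequality, with the $L_d$ dependence entering only through the moment bound.

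The main obstacle is $I_5$, the only source of the factor $(\ep^{-2}\mathcal{Z}_d)^{p-1}\mathcal{M}^{-p/2}$. We split the Monte Carlo average into its conditional mean and a fluctuation. Since the $V$'s are independent of $Y_{\hat r}$, the mean equals $\int_{A_\ep}\gm(Y_{\hat r},z)\nu(\rd z)$, which is treated exactly like $I_4$. For the fluctuation, Hölder in time gives a factor $|\tl t-t|^p$. Conditioning on $Y_{\hat r}$, we apply Marcinkiewicz--Zygmund (Rosenthal) to the centered i.i.d. sum to get
\[
\E\Bigl\|\tfrac{\nu(A_\ep)}{\mathcal{M}}\textstyle\sum_i(\cdots)\Bigr\|^p\le K\,\nu(A_\ep)^p\mathcal{M}^{-p/2}\E\|\gm(Y,V_1)\|^p= K\,\nu(A_\ep)^{p-1}\mathcal{M}^{-p/2}\int_{A_\ep}\E\|\gm(Y,z)\|^p\nu(\rd z).
\]
Then \eqref{nudef} and Assumption~\ref{A3} bound this by $K(\ep^{-2}\mathcal{Z}_d)^{p-1}\mathcal{M}^{-p/2}N_d(1+\E\|Y\|^p)$. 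The care needed is in the conditioning: the samples $V_{i,t_j}$ are fresh at each grid point and independent of $\mathcal F_{t_j}$-measurable $Y_{t_j}$. The condition $\mathcal{M}\ge(1\vee\ep^{-2}\mathcal{Z}_d)^2$ enters only through Lemma~\ref{scheme3bounds}. Summing the five bounds gives the claimed estimate.
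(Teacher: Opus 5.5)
Your argument follows the paper's proof closely: Hölder and BDG for the drift and diffusion via the untamed growth bound of Remark~\ref{remark2}, a Kunita-type inequality for the compensated integral, Marcinkiewicz--Zygmund for the Monte Carlo fluctuation, and then Lemma~\ref{scheme3bounds}. It differs at one point, namely how you control the size of $\gm$.

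You apply Assumption~\ref{A3} directly to $\gm(Y_{\hat r},z)$, so $N_d$ ends up \emph{multiplying} $1+\E\|Y_{\hat r}\|^p$. The paper instead writes $\gm(y,z)=(\gm(y,z)-\gm(0,z))+\gm(0,z)$. It uses Assumption~\ref{A4} for the first piece and Assumption~\ref{A3} only at $y=0$, giving e.g.\ $\int_{A_\ep}\|\gm(y,z)\|^\rho\nu(\rd z)\le 2^{\rho-1}(L_d\|y\|^\rho+N_d)$; this is also how \eqref{L3A_4(t)} is obtained. In the paper's version $N_d$ appears only additively and $L_d$ multiplies the moments. That is exactly where the factor $1+L_d^{p/2}$ in the statement comes from, and nothing beyond $(\frac{\chi}{2}+1)p\le p_0$ is needed.

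In your version, the sentence claiming that all $N_d$ powers are absorbed via Young's inequality carries real weight and must be done carefully.
\begin{itemize}
\item If you bound $\E\|Y_{\hat r}\|^p\le 1+\E\|Y_{\hat r}\|^{p_0}$, you get terms such as $N_d^{p/2}\|x\|^{p_0}$. These cannot be bounded by $K(1+\|x\|^{p_0}+N_d^{p_0})$ with $K$ independent of $d$: let $\|x\|=N_d\to\infty$ with $L_d$ fixed.
\item It does work if you use Lyapunov's inequality, $\E\|Y_{\hat r}\|^p\le(\E\|Y_{\hat r}\|^{p_0})^{p/p_0}\le K(1+\|x\|^p+N_d^{p/2})e^{K(1+L_d^{p_0/2})}$, and then Young with exponents $p_0/p$ and $p_0/(p_0-p)$.
\item The worst cross term is $N_d^{p}\|x\|^{p}$, coming from $I_4$ and the mean part of $I_5$. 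Young turns it into $\|x\|^{p_0}+N_d^{pp_0/(p_0-p)}$, and $pp_0/(p_0-p)\le p_0$ holds because $p\le p^*\le 2p_0/(3\chi+2)\le p_0/2$.
\end{itemize}

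Also, the conditional mean of the Monte Carlo average is exactly the integrand of $I_4$, so $I_4-I_5$ equals minus the fluctuation integral. Grouping them as the paper does removes that worst term altogether; then $p\le 2p_0/3$ suffices for the remaining cross term $N_d^{p/2}\|x\|^p$.

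With these steps made explicit, your route is a valid alternative. It needs only the growth bound on $\gm$ in this step, but it closes only because of the specific upper bound on $p$ built into $p^*$. The paper's split does not depend on that bound.
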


\begin{lemma}\label{differentx'}
	Let Assumptions \ref{A1}--\ref{A5} hold, then there exists $K>0$ such that, for all $s\in[0,T]$, $t\in[s,T]$, $x,\tl{x}\in\R^d$, $\dl \in \Th$, $\ep \in (0,1)$, and $\mathcal{M} \in \N$ with $\mathcal{M}\geq (1 \vee \ep^{-2}\mathcal{Z}_d)^{2}$,
	\begin{align*}
	&\E\|X_{t}^{s,x,\dl,\ep,\mathcal{M}} - X_{t }^{s,\tl{x},\dl,\ep,\mathcal{M}} \|^{p} \leq K\|x -\tl{x}\|^p e^{K\big(|\dl|^{-\frac{p}{4}}+L_d^{\frac{p}{2}} \big)}.
\end{align*}
\end{lemma}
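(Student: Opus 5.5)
Write $Y_t:=X_t^{s,x,\dl,\ep,\mathcal{M}}$, $\tl Y_t:=X_t^{s,\tl x,\dl,\ep,\mathcal{M}}$ and $e_t:=Y_t-\tl Y_t$. Both processes are driven by the same $W$, $\pi$ and the same Monte Carlo samples $V^{\dl,\ep,\mathcal{M}}_{i,t_j}$. By the representation \eqref{Tamedscheme2}, $e_t$ is an It\^o--L\'evy process. Its drift is
\[
\mu^\dl(Y_{\eta(r)})-\mu^\dl(\tl Y_{\eta(r)})+\int_{A_\ep}\bigl(\gm(Y_{\eta(r)},z)-\gm(\tl Y_{\eta(r)},z)\bigr)\nu(\rd z)-\frac{\nu(A_\ep)}{\mathcal{M}}\sum_{i=1}^{\mathcal{M}}\bigl(\gm(Y_{\eta(r)},V_i)-\gm(\tl Y_{\eta(r)},V_i)\bigr),
\]
with $\eta(r):=\max\{s,\dl(r-)\}$. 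Its diffusion coefficient is $\sg^\dl(Y_{\eta(r)})-\sg^\dl(\tl Y_{\eta(r)})$, and its compensated jump integrand on $A_\ep$ is $\gm(Y_{\eta(r)},z)-\gm(\tl Y_{\eta(r)},z)$. Since $\dl$ is fixed, I will not use the one-sided Lipschitz structure (Assumption~\ref{A1}). Instead I rely on the \emph{global} Lipschitz bound for the tamed coefficients from Remark~\ref{remark4}, with constant $K|\dl|^{-1/4}$. This is why the final exponent contains $|\dl|^{-p/4}$.

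\textbf{Step 1.} Take $\sup_{u\in[s,t]}$ of $\|e_u\|^p$ and expectations. Then apply Burkholder--Davis--Gundy to the Brownian part, Kunita's (or Bichteler--Jacod's) inequality to the compensated jump part, and H\"older to the drift. Term by term:
\begin{enumerate}
\item The tamed drift and diffusion contribute $K|\dl|^{-p/4}\int_s^t\E\sup_{u\le r}\|e_u\|^p\,\rd r$, using Remark~\ref{remark4}.
\item The jump part gives $\int_s^t\E\bigl(\int\|\gm(Y,z)-\gm(\tl Y,z)\|^2\nu(\rd z)\bigr)^{p/2}\rd r$ and $\int_s^t\E\int\|\cdots\|^p\nu(\rd z)\rd r$. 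Assumption~\ref{A4} bounds both by $K(L_d^{p/2}+L_d)\int_s^t\E\sup\|e\|^p$.
\item The $\nu$-compensator drift over $A_\ep$ is handled by H\"older and Assumption~\ref{A4} with $\rho=1$. This gives $L_d^p\|e\|^p$, and $L_d^p$ can be absorbed into $e^{KL_d^{p}}$.
\end{enumerate}

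\textbf{Step 2 (main obstacle).} The Monte Carlo term carries the factor $\nu(A_\ep)\le\ep^{-2}\mathcal{Z}_d$, which a naive bound would let blow up. I will split it as
\[
\frac{\nu(A_\ep)}{\mathcal{M}}\sum_i\Delta\gm(V_i)=\int_{A_\ep}\Delta\gm\,\rd\nu+\Bigl(\frac{\nu(A_\ep)}{\mathcal{M}}\sum_i\Delta\gm(V_i)-\int_{A_\ep}\Delta\gm\,\rd\nu\Bigr).
\]
The first piece cancels exactly against the compensator term. The second piece is a centred average, conditional on $\mathcal{F}_{\eta(r)}$, since the $V_i$ at grid time $t_j$ are independent of $\mathcal{F}_{t_j}$. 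Marcinkiewicz--Zygmund then gives the bound
\[
K\nu(A_\ep)^{p}\mathcal{M}^{-p/2}\E_{\nu_\ep}\|\Delta\gm(V)\|^p\le K\nu(A_\ep)^{p-1}\mathcal{M}^{-p/2}L_d\|e_{\eta(r)}\|^p .
\]
Here I used $\E_{\nu_\ep}\|\cdot\|^p=\nu(A_\ep)^{-1}\int_{A_\ep}\|\cdot\|^p\rd\nu$ and Assumption~\ref{A4}. Because $\mathcal{M}\ge(1\vee\ep^{-2}\mathcal{Z}_d)^2$, we have $\nu(A_\ep)^{p-1}\mathcal{M}^{-p/2}\le(\ep^{-2}\mathcal{Z}_d)^{p-1}(1\vee\ep^{-2}\mathcal{Z}_d)^{-p}\le 1$, so this contribution is $\le KL_d\|e\|^p$. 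I expect this to be the delicate point: the sum is evaluated at the frozen grid value, so the conditioning must be done at $\mathcal{F}_{t_j}$ (for $p\ge2$, conditioning and then taking the outer expectation is fine), and the uniformity in $\ep$ has to be tracked carefully.

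\textbf{Step 3.} Collecting the bounds gives
\[
\E\sup_{u\le t}\|e_u\|^p\le K\|x-\tl x\|^p+K(|\dl|^{-p/4}+1+L_d^{p/2}+L_d^p)\int_s^t\E\sup_{u\le r}\|e_u\|^p\,\rd r .
\]
The left side is finite by Lemma~\ref{scheme3bounds} and $p\le p_0$. Gronwall's inequality then yields $K\|x-\tl x\|^p e^{K(|\dl|^{-p/4}+L_d^{p/2})}$. Any stray $L_d^p$ from step (iii) can be removed by replacing H\"older with Assumption~\ref{A4} at $\rho=p$ on the finite measure together with the same cancellation argument; alternatively, the term cancels entirely because it is compensated.
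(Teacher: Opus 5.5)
Your proposal is correct and follows essentially the same route as the paper's proof. Both arguments avoid It\^o's formula and Assumption~\ref{A1}, and instead combine Hölder/BDG-type moment bounds for the drift, Brownian and compensated-jump parts, the global Lipschitz bound $K|\dl|^{-1/4}$ for the tamed coefficients from Remark~\ref{remark4}, Assumption~\ref{A4} for the jump integrands, a Marcinkiewicz--Zygmund bound on the centred Monte Carlo error (analogous to \eqref{L3A_4(t)}) made uniform in $\ep$ via $\mathcal{M}\ge(1\vee\ep^{-2}\mathcal{Z}_d)^2$, and Gr\"onwall's lemma. The only fix needed is to delete your Step~1(iii): as you already note in Step~2, the $\nu$-compensator cancels exactly against the mean of the Monte Carlo sum, and keeping the separate $L_d^p\|e\|^p$ bound would produce $e^{KL_d^{p}}$ instead of the claimed $e^{KL_d^{p/2}}$.
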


\begin{lemma}\label{differents'}
	Let Assumptions \ref{A1}--\ref{A5} hold, then there exists $K>0$ such that, for all $s, \tl{s} \in[0,T]$, $t\in[s \vee \tl{s},T]$, $x\in\R^d$, $\dl \in \Th$, $\ep \in (0,1)$, and $\mathcal{M} \in \N$ with $\mathcal{M}\geq (1 \vee \ep^{-2}\mathcal{Z}_d)^{2}$,
	\begin{align*}
		&\E\|X_{t}^{s,x,\dl,\ep,\mathcal{M}} - X_{t}^{\tl{s},x,\dl,\ep,\mathcal{M}} \|^p\\
&\leq K\big( |\tl{s}-s|+|\tl{s}-s|^p(1+(\ep^{-2}\mathcal{Z}_d)^{p-1}\mathcal{M}^{-\frac{p}{2}}) \big)(1+L_d^{\frac{p}{2}})(1+\|x\|^{p_0}+N_d^{p_0})e^{K(|\dl|^{-\frac{p}{4}}+L_d^{p_0})}.
	\end{align*}
\end{lemma}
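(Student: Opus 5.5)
The statement to prove is Lemma \ref{differents'}. Without loss of generality I take $s<\tl{s}\le t$. The plan is a flow (Markov) decomposition at the later start time $\tl{s}$. Set $Y:=X^{s,x,\dl,\ep,\mathcal{M}}_{\tl{s}}$. The process $X^{s,x,\dl,\ep,\mathcal{M}}$ restricted to $[\tl{s},T]$ is not literally $X^{\tl{s},Y,\dl,\ep,\mathcal{M}}$, because the frozen grid point $\max\{s,\dl(r-)\}$ differs from $\max\{\tl{s},\dl(r-)\}$ on the grid cell containing $\tl{s}$. So I write the difference as
\[
X^{s,x,\dl,\ep,\mathcal{M}}_t-X^{\tl{s},x,\dl,\ep,\mathcal{M}}_t=\bigl(X^{s,x,\dl,\ep,\mathcal{M}}_t-X^{\tl{s},Y,\dl,\ep,\mathcal{M}}_t\bigr)+\bigl(X^{\tl{s},Y,\dl,\ep,\mathcal{M}}_t-X^{\tl{s},x,\dl,\ep,\mathcal{M}}_t\bigr)=:I_1+I_2 .
\]

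\textbf{Term $I_2$.} This is a perturbation of the initial value with common start time $\tl{s}$ and random $\mathcal{F}_{\tl{s}}$-measurable initial datum $Y$. I will apply Lemma \ref{differentx'} conditionally on $\mathcal{F}_{\tl{s}}$. This is legitimate because $W$, $\pi$ and the fresh $V$'s after $\tl{s}$ are independent of $\mathcal{F}_{\tl{s}}$; if needed, I will redo the Gronwall argument of that lemma with the random initial datum. This gives
\[
\E\|I_2\|^p\le K\,\E\|Y-x\|^p\,e^{K(|\dl|^{-p/4}+L_d^{p/2})}.
\]
The factor $\E\|Y-x\|^p=\E\|X^{s,x,\dl,\ep,\mathcal{M}}_{\tl{s}}-X^{s,x,\dl,\ep,\mathcal{M}}_s\|^p$ is exactly a time increment. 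Lemma \ref{differenttimet} bounds it by $K(|\tl{s}-s|+|\tl{s}-s|^p(1+(\ep^{-2}\mathcal{Z}_d)^{p-1}\mathcal{M}^{-p/2}))(1+L_d^{p/2})(1+\|x\|^{p_0}+N_d^{p_0})e^{K(1+L_d^{p_0})}$. Multiplying the two exponentials yields the claimed form.

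\textbf{Term $I_1$.} Both processes start from the same value $Y$ at time $\tl{s}$. Their coefficients differ only on $r\in[\tl{s},\tl{t}_1]$, where $\tl{t}_1$ is the first grid point $\ge\tl{s}$. On that interval the first process freezes its coefficients at $X^{s,x,\dl,\ep,\mathcal{M}}_{\max\{s,\dl(\tl{s})\}}$, while the second freezes them at $Y$. After $\tl{t}_1$ the two schemes obey the same recursion with the same noise, from their respective values at $\tl{t}_1$.

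Take the difference $D_r$ of the two processes and apply Itô's formula for $\|D_r\|^p$, or simply the BDG/Kunita inequalities. The coefficient mismatch is controlled by the tamed-Lipschitz bound of Remark \ref{remark4} (constant $K|\dl|^{-1/4}$) for the drift and diffusion. For the jump and compensator terms I use Assumption \ref{A4}, together with the Monte Carlo term estimated via conditional independence of the $V$'s and $\mathcal{M}\ge(1\vee\ep^{-2}\mathcal{Z}_d)^2$. The mismatch is thereby reduced to $\E\|X^{s,x,\dl,\ep,\mathcal{M}}_{\max\{s,\dl(\tl{s})\}}-X^{s,x,\dl,\ep,\mathcal{M}}_{\tl{s}}\|^p$. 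This is again a time increment of length at most $|\tl{s}-s|$, because the frozen point lies in $[s,\tl{s}]$ (it is either $s$ or a grid point between $s$ and $\tl{s}$), so Lemma \ref{differenttimet} applies once more. A Gronwall argument with rate $K(|\dl|^{-p/4}+L_d^{p/2})$ then propagates this bound to time $t$, giving the same right-hand side.

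\textbf{Main obstacle.} The delicate step is $I_1$ on the initial cell. The frozen-point mismatch must be handled inside a Gronwall loop whose Lipschitz constant depends on $|\dl|$; this is where the factor $e^{K|\dl|^{-p/4}}$ unavoidably enters, since the tamed coefficients are only $|\dl|^{-1/4}$-Lipschitz. In the same loop I have to check that the Monte Carlo compensator term contributes only the factor $(\ep^{-2}\mathcal{Z}_d)^{p-1}\mathcal{M}^{-p/2}$ multiplying $|\tl{s}-s|^p$. Finally, I combine $I_1$ and $I_2$ by $\|a+b\|^p\le 2^{p-1}(\|a\|^p+\|b\|^p)$.
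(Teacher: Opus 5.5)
Your decomposition applies the Markov/disintegration property at $\tl s$, which in general is not a grid point. For the drift, diffusion and Poisson parts your $I_1$/$I_2$ argument is fine. It fails on the Monte Carlo compensator, because the scheme freezes not only the state but also the samples $V^{\dl,\ep,\mathcal{M}}_{i,\cdot}$ over a whole grid cell.

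For the lemma to hold at all, $X^{s,x,\dl,\ep,\mathcal{M}}$ and $X^{\tl s,x,\dl,\ep,\mathcal{M}}$ must share the samples of the cell containing $\tl s$. Suppose instead that $\dl(\tl s)<s<\tl s$ and the process started at $\tl s$ drew fresh samples. Then the difference at the next grid point $\tl t_1$ contains $(\tl t_1-s)$ times the difference of two independent Monte Carlo errors, which does not vanish as $\tl s\downarrow s$.

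With shared samples, however, $Y=X^{s,x,\dl,\ep,\mathcal{M}}_{\tl s}$ already depends on them, through the compensator term on $(\max\{s,\dl(\tl s)\},\tl s]$. So the randomness driving $X^{\tl s,\eta,\dl,\ep,\mathcal{M}}$ on $(\tl s,\tl t_1]$ is \emph{not} independent of $Y$, and your justification for applying Lemma \ref{differentx'} conditionally on $\mathcal{F}_{\tl s}$ breaks down on that cell. Redoing the Gronwall argument with a random datum does not help. In $I_2$ you must control $\frac{\nu(A_\ep)}{\mathcal{M}}\sum_i\bigl(\gm(Y,V_i)-\gm(x,V_i)\bigr)$, and in the mismatch of $I_1$ the analogous sum with $\gm(a,V_i)-\gm(Y,V_i)$, where $a=X^{s,x,\dl,\ep,\mathcal{M}}_{\max\{s,\dl(\tl s)\}}$. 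In both, $Y$ is correlated with the $V_i$. Assumption \ref{A4} and the Marcinkiewicz--Zygmund estimate \eqref{L3A_4(t)} only control such sums when the first argument of $\gm$ is independent of the samples; for fixed $z$ nothing beyond continuity is assumed about $y\mapsto\gm(y,z)$.

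Conversely, if the process started at $\tl s$ uses fresh samples, $I_2$ works but $I_1$ fails. The two compensator terms on $(\tl s,\tl t_1]$ then use different samples, and their difference is of order $|\tl t_1-\tl s|^p(\ep^{-2}\mathcal{Z}_d)^{p-1}\mathcal{M}^{-p/2}$, which is not controlled by $|\tl s-s|$. Under either convention, one of your two steps fails.

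This is exactly why the paper invokes the Markov property only at grid points:
\begin{enumerate}
\item Case (i) computes the partial cell directly. When $(s,\bar s)$ contains no grid point, both processes freeze at the deterministic value $x$, so the difference at $\bar s$ is just an increment over $[s,\tl s]$.
\item Case (ii) conditions at the next grid point and applies Lemma \ref{differentx'}.
\item Case (iii) conditions at $\tl s$ only when $\tl s$ is a grid point.
\item Case (iv) connects general $s\le\tl s$ through the last grid point $\bar s\le\tl s$.
\end{enumerate}

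You can repair your argument in the same spirit by moving the intermediate point from $\tl s$ to $\tl t_1$. Write $X^{s,x,\dl,\ep,\mathcal{M}}_{\tl t_1}-X^{\tl s,x,\dl,\ep,\mathcal{M}}_{\tl t_1}$ as $\bigl(X^{s,x,\dl,\ep,\mathcal{M}}_{\tl s}-x\bigr)$ plus the one-cell differences of the coefficients evaluated at $a$ and at $x$ over $(\tl s,\tl t_1]$. Now $a$ is independent of that cell's samples. Remark \ref{remark4}, Assumption \ref{A4}, \eqref{L3A_4(t)} and Lemma \ref{differenttimet} then give the required bound, the last one applied to both $\E\|a-x\|^p$ and $\E\|X^{s,x,\dl,\ep,\mathcal{M}}_{\tl s}-x\|^p$. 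Finally, conditioning at the grid point $\tl t_1$ together with Lemma \ref{differentx'} propagates the estimate to time $t$.
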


Now, we are ready to prove Theorem \ref{theorem1}\,\ref{theoremnew}.
\begin{proof}[\textbf{Proof of Theorem \ref{theorem1}\,\ref{theoremnew}}]
	We first consider the case $\dl = \iota$. Fix $s,\tl{s} \in[0,T]$, $x,\tl{x}\in \R^d$, and observe the following estimates hold for all $t\in[s,T]$ and $\tl{t} \in [\tl{s},T]$:
	\begin{align*}
		&\E\|X^{s,x,\iota}_t - X^{\tl{s},\tl{x},\iota}_{\tl{t}} \|^p=\E\|X_{\max\{s,t\}}^{s,x,\iota} - X_{\max\{\tl{s},\tl{t}\}}^{\tl{s},\tl{x},\iota} \|^{p}\nonumber\\
		& \leq K\bigg( \E\|X_{\max\{s,t\}}^{s,x,\iota} - X_{\max\{s,t,\tl{s},\tl{t}\}}^{s,x,\iota} \|^{p}+ \E\|X_{\max\{s,t,\tl{s},\tl{t}\}}^{s,x,\iota} - X_{\max\{s,t,\tl{s},\tl{t}\}}^{s,\tl{x},\iota} \|^{p}\nonumber\\
		&\hspace{10mm}+\E\|X_{\max\{s,t,\tl{s},\tl{t}\}}^{s,\tl{x},\iota} - X_{\max\{s,t,\tl{s},\tl{t}\}}^{\tl{s},\tl{x},\iota} \|^{p}+ \E\|X_{\max\{s,t,\tl{s},\tl{t}\}}^{\tl{s},\tl{x},\iota} - X_{\max\{\tl{s},\tl{t}\}}^{\tl{s},\tl{x},\iota} \|^{p}\bigg).
	\end{align*}
	By Lemmas \ref{exactdifferenttimet}--\ref{exactdifferents} and the fact that $|\max\{s,t,\tl{s},\tl{t}\} -\max\{\tl{s},\tl{t}\} | \leq |\tl{t}-t|$ for all $t\in[s,T]$ and $\tl{t}\in[\tl{s},T]$, we obtain that
	\begin{align}\label{exactii}
		&\E\|X^{s,x,\iota}_t - X^{\tl{s},\tl{x},\iota}_{\tl{t}} \|^p \nonumber\\
		&\leq K(|\tl{t}-t|+|\tl{t}-t|^{p})(1+L_d^{\frac{p}{2}})\big(1+\|x\|^{p_0}+N_d^{\frac{p_0}{2}}\big)e^{K(1+L_d^{\frac{p_0}{2}})}\nonumber\\
		&\quad+ K \|x -\tl{x}\|^p e^{K(1+L_d^{\frac{p_0}{2}})}+   K\big(|\tl{s}-s|+|\tl{s}-s|^p\big)(1+L_d^{\frac{p}{2}}) \big(1+\|\tl{x}\|^{p_0}+N_d^{\frac{p_0}{2}}\big)e^{K(1+L_d^{\frac{p_0}{2}})}\nonumber\\
		&\quad +K\lf(|\tl{t}-t|+|\tl{t}-t|^{p}\rt)(1+L_d^{\frac{p}{2}})\big(1+\|\tl{x}\|^{p_0}+N_d^{\frac{p_0}{2}}\big)e^{K(1+L_d^{\frac{p_0}{2}})}\nonumber\\
		&\leq K\bigg\{\lf(|\tl{t}-t|+|\tl{t}-t|^{p}+|\tl{s}-s|+|\tl{s}-s|^p\rt)(1+L_d^{\frac{p}{2}})\big(1+\|x\|^{p_0}+\|\tl{x}\|^{p_0}+N_d^{\frac{p_0}{2}}\big)\\
		&\hspace{14mm}+\|x -\tl{x}\|^p\bigg\}e^{K(1+L_d^{\frac{p_0}{2}})}.\nonumber
	\end{align}
	
We next turn to the case $\dl \in \Th$.  Fix $s,\tl{s} \in[0,T]$, $x,\tl{x}\in \R^d$, $\dl \in \Th$, $\ep\in(0,1)$, and $\mathcal{M} \in \N$ with $\mathcal{M} \geq (1 \vee \ep^{-2}\mathcal{Z}_d)^{2}$. We combine Lemmas \ref{differenttimet}--\ref{differents'} and the fact that $|\max\{s,t,\tl{s},\tl{t}\} -\max\{\tl{s},\tl{t}\} | \leq |\tl{t}-t|$ for all $t\in[s,T]$ and $\tl{t}\in[\tl{s},T]$ to obtain
\begin{align}\label{schemeii'}
	&\E\|X^{s,x,\dl,\ep,\mathcal{M}}_t - X^{\tl{s},\tl{x},\dl,\ep,\mathcal{M}}_{\tl{t}} \|^p \nonumber\\
	& \leq K\bigg( \E\|X_{\max\{s,t\}}^{s,x,\dl,\ep,\mathcal{M}} - X_{\max\{s,t,\tl{s},\tl{t}\}}^{s,x,\dl,\ep,\mathcal{M}} \|^{p}+ \E\|X_{\max\{s,t,\tl{s},\tl{t}\}}^{s,x,\dl,\ep,\mathcal{M}} - X_{\max\{s,t,\tl{s},\tl{t}\}}^{s,\tl{x},\dl,\ep,\mathcal{M}} \|^{p} \nonumber\\
	&\hspace{10mm}+\E\|X_{\max\{s,t,\tl{s},\tl{t}\}}^{s,\tl{x},\dl,\ep,\mathcal{M}} - X_{\max\{s,t,\tl{s},\tl{t}\}}^{\tl{s},\tl{x},\dl,\ep,\mathcal{M}} \|^{p}+ \E\|X_{\max\{s,t,\tl{s},\tl{t}\}}^{\tl{s},\tl{x},\dl,\ep,\mathcal{M}} - X_{\max\{\tl{s},\tl{t}\}}^{\tl{s},\tl{x},\dl,\ep,\mathcal{M}} \|^{p}\bigg) \nonumber\\
	&\leq K\big(|\tl{t}-t|+|\tl{t}-t|^{p}(1+(\ep^{-2}\mathcal{Z}_d)^{p-1}\mathcal{M}^{-\frac{p}{2}})\big) (1+L_d^{\frac{p}{2}})\big(1+\|x\|^{p_0}+N_d^{p_0}\big)e^{K(1+L_d^{p_0})} \nonumber\\
	&\quad+ K\big(\|x -\tl{x}\|^p  e^{K\big(|\dl|^{-\frac{p}{4}}+L_d^{\frac{p}{2}} \big)} \nonumber\\
	&\quad +K\big( |\tl{s}-s|+|\tl{s}-s|^p(1+(\ep^{-2}\mathcal{Z}_d)^{p-1}\mathcal{M}^{-\frac{p}{2}}) \big)(1+L_d^{\frac{p}{2}})(1+\|\tl{x}\|^{p_0}+N_d^{p_0})e^{K(|\dl|^{-\frac{p}{4}}+L_d^{p_0})} \nonumber\\
	&\quad +K\big(|\tl{t}-t|+|\tl{t}-t|^{p}(1+(\ep^{-2}\mathcal{Z}_d)^{p-1}\mathcal{M}^{-\frac{p}{2}})\big) (1+L_d^{\frac{p}{2}})\big(1+\|\tl{x}\|^{p_0}+N_d^{p_0}\big)e^{K(1+L_d^{p_0})} \nonumber\\
	&\leq  K\bigg\{\lf(|\tl{t}-t|+|\tl{s}-s|+(|\tl{t}-t|^{p}+|\tl{s}-s|^p)(1+(\ep^{-2}\mathcal{Z}_d)^{p-1}\mathcal{M}^{-\frac{p}{2}})\rt) \\
	&\hspace{10mm}\times(1+L_d^{\frac{p}{2}})\big(1+\|x\|^{p_0}+\|\tl{x}\|^{p_0}+N_d^{p_0}\big)+\|x -\tl{x}\|^p\bigg\}e^{K(|\dl|^{-\frac{p}{4}}+L_d^{p_0})}.\nonumber
\end{align}
Consequently, combining \eqref{exactii} and \eqref{schemeii'} finishes the proof.
\end{proof}

\subsection{Proof of Theorem \ref{theorem1}\,\ref{theorem(ii)}}\label{section5.4}
In this section, we present several auxiliary estimates that are required for the proof of Theorem \ref{theorem1}\,\ref{theorem(ii)}. Lemmas \ref{differentx} and \ref{differents} provide continuity estimates of scheme \eqref{Tamedscheme} with respect to perturbations of the initial value and the initial time. Their proofs are deferred to Appendix \ref{app:section5.4}.

\begin{lemma}\label{differentx}
	Let Assumptions \ref{A1}--\ref{A5} hold, then there exists $K>0$ such that, for all $s\in[0,T]$, $t\in[s,T]$, $x,\tl{x}\in\R^d$, $\dl \in \Th$, $\ep \in (0,1)$, and $\mathcal{M} \in \N$ with $\mathcal{M}\geq (1 \vee \ep^{-2}\mathcal{Z}_d)^{2}$,
	\begin{align*}
		&\E\|X_{t}^{s,x,\dl,\ep,\mathcal{M}} - X_t^{s,\tl{x},\dl,\ep,\mathcal{M}} \|^{p} \\
		&\leq   K\big(\|x -\tl{x}\|^p +|\dl|^{\frac{p}{p+\zeta}} (1+L_d^{\frac{p}{2}})(1+\|x\|^{p_0}+\|\tl{x}\|^{p_0}+N_d^{p_0}) \big)e^{K(1+L_d^{p_0})}.
	\end{align*}
\end{lemma}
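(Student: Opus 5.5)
The plan is to pass through the exact solution rather than argue directly on the scheme, since the scheme's own Lipschitz constant from Remark \ref{remark4} scales like $|\dl|^{-1/4}$ and would bring back the factor $e^{K|\dl|^{-p/4}}$ of Lemma \ref{differentx'}. Fix $s\in[0,T]$, $t\in[s,T]$, $x,\tl{x}\in\R^d$, $\dl\in\Th$, $\ep\in(0,1)$ and admissible $\mathcal{M}$. By the triangle inequality,
\begin{align*}
\E\|X_{t}^{s,x,\dl,\ep,\mathcal{M}} - X_t^{s,\tl{x},\dl,\ep,\mathcal{M}} \|^{p}
&\leq K\Big(\E\|X_{t}^{s,x,\dl,\ep,\mathcal{M}} - X_t^{s,x,\iota}\|^{p}+\E\|X_t^{s,x,\iota}-X_t^{s,\tl{x},\iota}\|^{p}\\
&\qquad+\E\|X_t^{s,\tl{x},\iota}-X_{t}^{s,\tl{x},\dl,\ep,\mathcal{M}}\|^{p}\Big).
\end{align*}
The middle term is bounded by $K\|x-\tl{x}\|^pe^{K(1+L_d^{p_0/2})}$ via Lemma \ref{exactdifferentx}. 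It uses only the one-sided Lipschitz Assumption \ref{A1} and Assumption \ref{A4}, so it carries no $|\dl|$-dependence.

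For the two outer terms I would use Theorem \ref{theorem1}\,\ref{theorem(i)} at initial values $x$ and $\tl{x}$ respectively. This gives bounds of the form
\[
K\big(|\dl|^{\frac{p}{p+\zeta}}+\ep^q(1+\mathcal{Z}_d^{\frac p2})+(\ep^{-2}\mathcal{Z}_d)^{p-1}\mathcal{M}^{-\frac p2}\big)(1+L_d^{p})(1+\|x\|^{p_0}+\|\tl{x}\|^{p_0}+N_d^{p_0})e^{K(1+L_d^{p_0})}.
\]
Summing the three pieces gives the structure of the statement, with the $|\dl|^{p/(p+\zeta)}$ term multiplied by the moment factor.

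The main obstacle is that this naive route produces extra terms that the claimed bound does not contain: the truncation error $\ep^q(1+\mathcal{Z}_d^{p/2})$, the Monte Carlo error, and the factor $L_d^{p}$ rather than $L_d^{p/2}$. Two points need handling.

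First, the $L_d$ power. The factor $(1+L_d^{p})$ in Theorem \ref{theorem1}\,\ref{theorem(i)} comes only from Lemma \ref{convergencescheme1}. Lemmas \ref{convergencescheme2}--\ref{convergencescheme3} carry $(1+L_d^{p/2})$. So I would redo the argument of Lemma \ref{convergencescheme1} using the BDG/Kunita bound only in its $p/2$-power form, or absorb the difference via $L_d^{p}\le K(1+L_d^{p_0})$ into $e^{K(1+L_d^{p_0})}$. The latter is legitimate since $p\le p_0$ and $1+y\le e^{y}$, so $(1+L_d^{p})\le (1+L_d^{p/2})e^{K(1+L_d^{p_0})}$.

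Second, the $\ep$ and $\mathcal{M}$ terms. To avoid them I would instead compare $X^{s,x,\dl,\ep,\mathcal{M}}$ with an auxiliary exact-type process that has the same truncated, Monte-Carlo-compensated jump structure but untamed, continuous-time coefficients. This process would satisfy a version of Lemma \ref{exactdifferentx}: Itô's formula for $\|\cdot\|^p$ of the difference, Assumption \ref{A1} for drift/diffusion, and Assumption \ref{A4} plus Assumption \ref{A5} for the jump parts, including the Monte Carlo compensator, which is Lipschitz in $x$ with constant controlled by $L_d$ after conditioning on the $V$'s. Its time-discretization error against the scheme would be bounded by rerunning the proof of Lemma \ref{convergencescheme1} with the same noise, yielding only $|\dl|^{p/(p+\zeta)}$.

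If the second point proves too heavy, I would fall back on the triangle-inequality route and accept the additional $\ep,\mathcal{M}$ terms, noting that they vanish in the joint limit.
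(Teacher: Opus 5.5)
\emph{Verdict:} your proposal has a genuine gap. The route you commit to cannot give the stated bound, and the repair that could give it rests on an incorrect justification for the Monte Carlo term.

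Your main route (triangle inequality through $X^{s,\cdot,\iota}$ plus Theorem~\ref{theorem1}\,\ref{theorem(i)}) is also your fallback, and it cannot give the lemma. The lemma's right-hand side contains no $\ep$ or $\mathcal{M}$, so the terms $\ep^q(1+\mathcal{Z}_d^{p/2})$ and $(\ep^{-2}\mathcal{Z}_d)^{p-1}\mathcal{M}^{-p/2}$ are not dominated by it: fix $\ep,\mathcal{M}$ and let $\tl x\to x$, $|\dl|\to0$. ``Accepting'' them would leave stand-alone truncation and Monte Carlo errors in Theorem~\ref{theorem1}\,\ref{theorem(ii)}. (Absorbing $L_d^{p}$ into $e^{K(1+L_d^{p_0})}$ is fine.) So everything rests on the auxiliary-process repair, and its one concrete justification is wrong. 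Once the $V$'s are fixed, $y\mapsto\frac{\nu(A_\ep)}{\mathcal{M}}\sum_{i}\gm(y,V_i)$ has no Lipschitz bound in terms of $L_d$. Assumption~\ref{A4} only controls $\nu(\rd z)$-averages, so $\gm(\cdot,z)$ need not be Lipschitz for individual $z$. Even when it is, the realized constant carries the factor $\nu(A_\ep)\le\ep^{-2}\mathcal{Z}_d$ and is controlled by $L_d$ only on average.

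The usable estimate goes the other way. Condition on the state at the grid point $t_j$ where $V^{\dl,\ep,\mathcal{M}}_{\cdot,t_j}$ is drawn, and integrate out the $V$'s using their independence from that state. Then apply the Marcinkiewicz--Zygmund argument of \eqref{L3A_4(t)} to the difference between $\int_{A_\ep}(\gm(y,z)-\gm(\tl y,z))\,\nu(\rd z)$ and its Monte Carlo average. This gives $K(\ep^{-2}\mathcal{Z}_d)^{p-1}\mathcal{M}^{-p/2}L_d\|y-\tl y\|^p\le KL_d\|y-\tl y\|^p$, using $\mathcal{M}\ge(1\vee\ep^{-2}\mathcal{Z}_d)^2$. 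It is only an $L^p$ bound, and it is valid only if the Monte Carlo term is frozen at grid points. If your auxiliary process evaluates it at $Y_{r-}$ (``continuous-time coefficients''), then $Y_{r-}$ already depends on $V_{\cdot,t_j}$ for $r\in(t_j,t_{j+1}]$ and the argument breaks. (Assumption~\ref{A5} plays no role here; it only concerns $\|z\|\le\ep$.)

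Suppose you build $Y$ with exactly the scheme's jump and Monte Carlo terms at $Y_{\max\{s,\dl(r-)\}}$, and only replace $\mu^\dl,\sg^\dl$ at grid points by $\mu,\sg$ at $Y_{r-}$. Then the repair does work, at the cost of three things: well-posedness of $Y$; a flow estimate for $Y$ (It\^o, Assumptions~\ref{A1}, \ref{A4}, and the bound above); and a comparison of $X^{s,x,\dl,\ep,\mathcal{M}}$ with $Y^{s,x}$. That comparison only needs scheme moments and one-step errors (Lemma~\ref{scheme3bounds}, Lemma~\ref{corollary2}\,\ref{lemma9}, Remark~\ref{remark3}).

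The paper gets the same effect without $Y$. With $X=X^{s,x,\dl,\ep,\mathcal{M}}$ and $\tl X=X^{s,\tl x,\dl,\ep,\mathcal{M}}$, it applies It\^o's formula directly to $\|X_t-\tl X_t\|^p$. It splits $\mu^\dl(X_{\max\{s,\dl(r)\}})-\mu^\dl(\tl X_{\max\{s,\dl(r)\}})$, and likewise for $\sg^\dl$, into three parts:
\begin{itemize}
\item $\mu(X_r)-\mu(\tl X_r)$, controlled by Assumption~\ref{A1};
\item taming residuals, controlled by Remark~\ref{remark3};
\item one-step residuals, controlled by Assumption~\ref{A2}, H\"older and Lemma~\ref{corollary2}\,\ref{lemma9}.
\end{itemize}
Jumps go through Assumption~\ref{A4}, the Monte Carlo difference through the grid-point argument above, and Gr\"onwall closes. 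This is your two auxiliary comparisons merged into a single It\^o/Gr\"onwall step. That is how it avoids both the $|\dl|^{-1/4}$ constant of Remark~\ref{remark4} and any $\ep$- or $\mathcal{M}$-dependent error.
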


\begin{lemma}\label{differents}
	Let Assumptions \ref{A1}--\ref{A5} hold, then there exists $K>0$ such that, for all $s, \tl{s} \in[0,T]$, $t\in[s \vee \tl{s},T]$, $x\in\R^d$, $\dl \in \Th$, $\ep \in (0,1)$, and $\mathcal{M} \in \N$ with $\mathcal{M}\geq (1 \vee \ep^{-2}\mathcal{Z}_d)^{2}$,
	\begin{align*}
		&\E\|X_{t}^{s,x,\dl,\ep,\mathcal{M}} - X_{t}^{\tl{s},x,\dl,\ep,\mathcal{M}} \|^p\\
		&\leq K \big( |\tl{s}-s|+|\tl{s}-s|^p(1+(\ep^{-2}\mathcal{Z}_d)^{p-1}\mathcal{M}^{-\frac{p}{2}})+ |\dl|^{\frac{p}{p+\zeta}}  \big)(1+L_d^{\frac{p}{2}})(1+\|x\|^{p_0}+N_d^{p_0})e^{K(1+L_d^{p_0})}.
	\end{align*}
\end{lemma}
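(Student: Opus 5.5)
We prove Lemma \ref{differents} by comparing both schemes with the exact solution. The estimate we want carries the factor $e^{K(1+L_d^{p_0})}$ and no $|\dl|^{-p/4}$, so a direct Gronwall argument in the tamed Lipschitz constant of Remark \ref{remark4} is not available. Instead we pass through $X^{\cdot,x,\iota}$, where the one-sided condition of Assumption \ref{A1} is usable. Without loss of generality let $s\le\tl{s}$ and $t\in[\tl{s},T]$. The triangle inequality gives
\begin{align*}
\E\|X_{t}^{s,x,\dl,\ep,\mathcal{M}} - X_{t}^{\tl{s},x,\dl,\ep,\mathcal{M}}\|^p
&\le K\Big(\E\|X_{t}^{s,x,\dl,\ep,\mathcal{M}} - X_{t}^{s,x,\iota}\|^p+\E\|X_{t}^{s,x,\iota} - X_{t}^{\tl{s},x,\iota}\|^p\\
&\qquad+\E\|X_{t}^{\tl{s},x,\iota} - X_{t}^{\tl{s},x,\dl,\ep,\mathcal{M}}\|^p\Big).
\end{align*}

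The middle term is handled by Lemma \ref{exactdifferents}, which gives $K(|\tl{s}-s|+|\tl{s}-s|^p)(1+L_d^{p/2})(1+\|x\|^{p_0}+N_d^{p_0/2})e^{K(1+L_d^{p_0/2})}$. This is already of the required form.

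The two outer terms are controlled by the strong error estimate. The plan is to use the time-discretization parts of Lemmas \ref{convergencescheme1}--\ref{convergencescheme3}. Applying Theorem \ref{theorem1}\,\ref{theorem(i)} verbatim would produce the terms $\ep^q(1+\mathcal{Z}_d^{p/2})$ and $(\ep^{-2}\mathcal{Z}_d)^{p-1}\mathcal{M}^{-p/2}$ with no $|\tl{s}-s|$ factor, and these do not appear in the statement. So the errors from $\ep$ and $\mathcal{M}$ must cancel between the two schemes started at $s$ and at $\tl{s}$.

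To achieve this, we instead compare with the intermediate process that keeps the same truncation and Monte Carlo structure but uses exact time. Concretely:
\begin{enumerate}
\item Introduce the continuous-time analogue $Y^{s,x}$ of \eqref{Tamedscheme2} with $\dl$ replaced by $\iota$, untamed coefficients, and the same $A_\ep$ and Monte Carlo compensator. The $V$'s are indexed by the grid of $\dl$, so $Y^{s,x}$ is frozen piecewise there.
\item Mimic the proof of Lemma \ref{convergencescheme1} to get $\sup_t\E\|X^{s,x,\dl,\ep,\mathcal{M}}_t-Y^{s,x}_t\|^p\le K|\dl|^{p/(p+\zeta)}(1+L_d^{p/2})(1+\|x\|^{p_0}+N_d^{p_0})e^{K(1+L_d^{p_0})}$. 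This uses Lemma \ref{scheme3bounds}, Remark \ref{remark3}, Assumption \ref{A1}, and the one-step increment bounds.
\item Prove the analogue of Lemma \ref{exactdifferents} for $Y$. Write $Y^{s,x}_t-Y^{\tl{s},x}_t$ as $Y^{s,x}_{\tl{s}}-x$ plus the integral over $[\tl{s},t]$ of coefficient differences, and apply Itô's formula for $\|\cdot\|^p$.
\end{enumerate}

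In step 3, Assumption \ref{A1} and Assumption \ref{A4} control the coefficient differences, and \eqref{A52} controls the small-jump part. The Monte Carlo term is Lipschitz in expectation: it equals $\nu(A_\ep)\frac1{\mathcal M}\sum_i$ of Lipschitz-in-$L^p$ terms, which is bounded by $L_d$ once we use $\mathcal{M}\ge(\ep^{-2}\mathcal{Z}_d)^2$. Gronwall then gives $\E\|Y^{s,x}_t-Y^{\tl{s},x}_t\|^p\le K\,\E\|Y^{s,x}_{\tl{s}}-x\|^p e^{K(1+L_d^{p_0})}$. The right-hand side is a time increment over $[s,\tl{s}]$, estimated exactly as in Lemma \ref{differenttimet}. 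This produces $|\tl{s}-s|+|\tl{s}-s|^p(1+(\ep^{-2}\mathcal{Z}_d)^{p-1}\mathcal{M}^{-p/2})$.

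The three pieces then combine into the stated bound. The main obstacle is step 2, because the error analysis there has to be uniform in $\ep$ and $\mathcal{M}$. My first attempt would be to rerun the proof of Lemma \ref{convergencescheme1} with $\tl\pi$ on $A_\ep$ and the compensator error treated as a common term that cancels in the difference. The frozen $V$-indices should help here, since both processes use identical Monte Carlo samples on each grid cell. If exact cancellation fails, a fallback is to bound the remaining Monte Carlo discrepancy by $|\dl|\,(\ep^{-2}\mathcal{Z}_d)^{p-1}\mathcal{M}^{-p/2}$ and absorb it using $\mathcal{M}\ge(1\vee\ep^{-2}\mathcal{Z}_d)^2$.
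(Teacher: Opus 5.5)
There is a genuine gap in the auxiliary process $Y^{s,x}$, on which both your steps 2 and 3 rest. The paper controls the Monte Carlo compensator uniformly in $\ep$ and $\mathcal{M}$ only through the concentration estimate \eqref{L3A_4(t)} and its Lipschitz analogue in Lemmas \ref{differentx'} and \ref{differentx}. That argument freezes the point where $\gm(\cdot,V_i)$ is evaluated, uses that the samples $V^{\dl,\ep,\mathcal{M}}_{i,\max\{s,\dl(r)\}}$ are independent of it, and applies Marcinkiewicz--Zygmund, so that $\nu(A_\ep)^{p-1}$ is compensated by $\mathcal{M}^{-p/2}$. For the scheme this works because the evaluation point is the grid value $X^{s,x,\dl,\ep,\mathcal{M}}_{\max\{s,\dl(r)\}}$, which is independent of the samples used on the current cell. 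In your $Y$ the evaluation point is $Y_{r-}$, and for $r$ inside a cell $Y_{r-}$ has been driven on $(\dl(r),r)$ by exactly those samples. So you cannot bound the $L^p$-norm of $\frac{\nu(A_\ep)}{\mathcal{M}}\sum_i(\gm(Y^{s,x}_{r-},V_i)-\gm(Y^{\tl{s},x}_{r-},V_i))-\int_{A_\ep}(\gm(Y^{s,x}_{r-},z)-\gm(Y^{\tl{s},x}_{r-},z))\nu(\rd z)$ by $KL_d\E\|Y^{s,x}_{r-}-Y^{\tl{s},x}_{r-}\|^p$. Assumption \ref{A4} is only a $\nu$-integrated Lipschitz condition, so any pathwise substitute costs powers of $\nu(A_\ep)\le\ep^{-2}\mathcal{Z}_d$, which end up in the Gronwall exponent. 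For the same reason $Y$ is not even known to be well posed: for fixed $v$, $y\mapsto\gm(y,v)$ is merely continuous, with no pointwise Lipschitz or growth control from Assumptions \ref{A3}--\ref{A4}. The same issue breaks step 2. Identical samples do not make the Monte Carlo terms cancel, because they are evaluated at different states, $Y_{r-}$ and $X^{s,x,\dl,\ep,\mathcal{M}}_{\max\{s,\dl(r)\}}$. What remains is a Lipschitz-type difference, not a pure sampling error, so your fallback bound $|\dl|(\ep^{-2}\mathcal{Z}_d)^{p-1}\mathcal{M}^{-p/2}$ does not cover it. Step 2, which you yourself flag as the main obstacle, is therefore not established.

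The paper uses no auxiliary process. The ingredient you are missing is Lemma \ref{differentx}, which is exactly the Gronwall argument for two copies of the scheme that avoids Remark \ref{remark4}. It inserts $\mu(X_r)$, $\mu(\tl{X}_r)$ (and likewise for $\sg$) so that Assumption \ref{A1} is applied at the current states. The one-step and taming errors are then paid as $|\dl|^{\frac{p}{p+\zeta}}$ instead of through the factor $e^{K|\dl|^{-p/4}}$. With this lemma the proof follows the grid:
\begin{enumerate}
\item On a cell without grid points both schemes have coefficients frozen at $x$, so their difference reduces to the explicit increment over $[s,\tl{s}]$; this is \eqref{enumerate1}.
\item From the next grid point on, the Markov property of the scheme at grid points and the disintegration lemma propagate this bound through Lemma \ref{differentx}.
\item If $\tl{s}$ is a grid point, Lemmas \ref{differenttimet} and \ref{differentx} do the same job.
\item The general case follows by the triangle inequality through the last grid point $\bar s\le\tl{s}$.
\end{enumerate}
To keep your architecture, you would at least have to freeze the Monte Carlo correction in $Y$ at $Y_{\max\{s,\dl(r)\}}$ to restore independence. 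You would then still need moment bounds for $Y$ and both comparison estimates from scratch.
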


Now, we are ready to prove Theorem \ref{theorem1}\,\ref{theorem(ii)}.
\begin{proof}[\textbf{Proof of Theorem \ref{theorem1}\,\ref{theorem(ii)}}]
	For the case $\dl = \iota$, the desired estimate has already been established in the proof of Theorem \ref{theorem1}\,\ref{theoremnew} (cf.\;\eqref{exactii}). It therefore remains to prove the corresponding bound for the case $\dl \in \Th$. Fix $s,\tl{s} \in[0,T]$, $x,\tl{x}\in \R^d$, $\dl \in \Th$, $\ep\in(0,1)$, and $\mathcal{M} \in \N$ with $\mathcal{M} \geq (1 \vee \ep^{-2}\mathcal{Z}_d)^{2}$. We combine Lemmas \ref{differenttimet}, \ref{differentx}, and \ref{differents} to obtain, for all $t\in[s,T]$ and $\tl{t}\in[\tl{s},T]$, that
	\begin{align}\label{schemeii}
		&\E\|X^{s,x,\dl,\ep,\mathcal{M}}_t - X^{\tl{s},\tl{x},\dl,\ep,\mathcal{M}}_{\tl{t}} \|^p \nonumber\\
		& \leq K\bigg( \E\|X_{\max\{s,t\}}^{s,x,\dl,\ep,\mathcal{M}} - X_{\max\{s,t,\tl{s},\tl{t}\}}^{s,x,\dl,\ep,\mathcal{M}} \|^{p}+ \E\|X_{\max\{s,t,\tl{s},\tl{t}\}}^{s,x,\dl,\ep,\mathcal{M}} - X_{\max\{s,t,\tl{s},\tl{t}\}}^{s,\tl{x},\dl,\ep,\mathcal{M}} \|^{p} \nonumber\\
		&\hspace{10mm}+\E\|X_{\max\{s,t,\tl{s},\tl{t}\}}^{s,\tl{x},\dl,\ep,\mathcal{M}} - X_{\max\{s,t,\tl{s},\tl{t}\}}^{\tl{s},\tl{x},\dl,\ep,\mathcal{M}} \|^{p}+ \E\|X_{\max\{s,t,\tl{s},\tl{t}\}}^{\tl{s},\tl{x},\dl,\ep,\mathcal{M}} - X_{\max\{\tl{s},\tl{t}\}}^{\tl{s},\tl{x},\dl,\ep,\mathcal{M}} \|^{p}\bigg) \nonumber\\
		&\leq K\big(|\tl{t}-t|+|\tl{t}-t|^{p}(1+(\ep^{-2}\mathcal{Z}_d)^{p-1}\mathcal{M}^{-\frac{p}{2}})\big) (1+L_d^{\frac{p}{2}})\big(1+\|x\|^{p_0}+N_d^{p_0}\big)e^{K(1+L_d^{p_0})} \nonumber\\
		&\quad+ K\big(\|x -\tl{x}\|^p +|\dl|^{\frac{p}{p+\zeta}} (1+L_d^{\frac{p}{2}})(1+\|x\|^{p_0}+\|\tl{x}\|^{p_0}+N_d^{p_0}) \big)e^{K(1+L_d^{p_0})} \nonumber\\
		&\quad + K \big( |\tl{s}-s|+|\tl{s}-s|^p(1+(\ep^{-2}\mathcal{Z}_d)^{p-1}\mathcal{M}^{-\frac{p}{2}})+ |\dl|^{\frac{p}{p+\zeta}}  \big)(1+L_d^{\frac{p}{2}})(1+\|\tl{x}\|^{p_0}+N_d^{p_0})e^{K(1+L_d^{p_0})} \nonumber\\
		&\quad +K\big(|\tl{t}-t|+|\tl{t}-t|^{p}(1+(\ep^{-2}\mathcal{Z}_d)^{p-1}\mathcal{M}^{-\frac{p}{2}})\big) (1+L_d^{\frac{p}{2}})\big(1+\|\tl{x}\|^{p_0}+N_d^{p_0}\big)e^{K(1+L_d^{p_0})} \nonumber\\
		&\leq  K\bigg\{\lf(|\tl{t}-t|+|\tl{s}-s|+(|\tl{t}-t|^{p}+|\tl{s}-s|^p)(1+(\ep^{-2}\mathcal{Z}_d)^{p-1}\mathcal{M}^{-\frac{p}{2}})+|\dl|^{\frac{p}{p+\zeta}}\rt) \\
		&\hspace{10mm}\times(1+L_d^{\frac{p}{2}})\big(1+\|x\|^{p_0}+\|\tl{x}\|^{p_0}+N_d^{p_0}\big)+\|x -\tl{x}\|^p\bigg\}e^{K(1+L_d^{p_0})}.\nonumber
	\end{align}
	Consequently, combining \eqref{exactii} and \eqref{schemeii} finishes the proof.
\end{proof}

\newpage
\appendix

\makeatletter
\@addtoreset{lemma}{section}      % reset lemma each appendix section
\renewcommand{\thelemma}{\thesection.\arabic{lemma}}
\allowdisplaybreaks

\section{Proof of results in Section \ref{Assumptions and Main Results}}\label{app:Assumptions and Main Results}

\begin{proof}[\textbf{Proof of Remark \ref{remark1}}]
By Assumption \ref{A2}, we obtain, for all $x\in\R^d$, that
\[
\|\xi(x)-\xi(0)\|
\le L\|x\|\bigl(1+\|x\|^{\frac{\chi}{2}}\bigr).
\]
By the triangle inequality,
\[
\|\xi(x)\|\le \|\xi(0)\|+L\|x\|+L\|x\|^{\frac{\chi}{2}+1}.
\]
By using $\|x\|\le 1+\|x\|^{\frac{\chi}{2}+1}$, we obtain
\[
\|\xi(x)\|
\le \|\xi(0)\|+L\bigl(1+\|x\|^{\frac{\chi}{2}+1}\bigr)+L\|x\|^{\frac{\chi}{2}+1}
\le K\bigl(1+\|x\|^{\frac{\chi}{2}+1}\bigr),
\]
for a constant $K>0$ depending on $L$ and $\|\xi(0)\|$.
\end{proof}

\begin{proof}[\textbf{Proof of Remark \ref{exist}}]
		By Assumption \ref{A1}, we have, for all $x\in\R^d$, that
		\[
		2x\lf(\mu(x)-\mu(0)\rt) + (p_0 -1)\|\sg(x)-\sg(0)\|^2 \leq L\|x\|^2,
		\]
		which implies
		\begin{align*}
		2x\mu(x) + (p_0 -1)\|\sg(x)\|^2 &\leq 2x\lf(\mu(x)-\mu(0)\rt) +2x\mu(0) + (p_0 -1)\|\sg(x)-\sg(0)\|^2 +(p_0 -1)\|\sg(0)\|^2 \\
		&\leq L\|x\|^2 + 2\|x\|\|\mu(0)\| +(p_0 -1)\|\sg(0)\|^2.
		\end{align*}
       Then, applying Remark \ref{remark1} yields that,
       	\[
       \|\mu(0)\|\le K\bigl(1+\|0\|^{\frac{\chi}{2}+1}\bigr)\le K,
       \quad\quad
       \|\sg(0)\|^2 \le  K\bigl(1+\|0\|^{\frac{\chi}{2}+1}\bigr)^2 \le K.
		\]
		Combining the two estimates gives that,
		\[
		2x\mu(x) + (p_0 -1)\|\sg(x)\|^2
		\le L\|x\|^2 + 2K\|x\|+K(p_0 -1)\le K(1+\|x\|^2),
		\]
        for a constant $K>0$ depending on $L, \|\xi(0)\|$, and $p_0$.
		
\end{proof}

\begin{proof}[\textbf{Proof of Remark \ref{remark2}}]
By the definition of the tamed coefficients in \eqref{tameddef}, we have, for all $x\in\R^d$ and $\dl \in \Th$, that
\[
\|\xi^{\dl}(x)\|^2	= \frac{\|\xi(x)\|^2}{1+|\dl|^{\frac{1}{2}}\|x\|^{\chi}}
\leq \|\xi(x)\|^2.
\]
Moreover, by Remark \ref{remark1}, we obtain that
	\[
	\|\xi^{\dl}(x)\|^2 \le K \frac{\bigl(1+\|x\|^{\chi+2}\bigr)}{\bigl(1+|\dl|^{\frac{1}{2}}\|x\|^{\chi}\bigr)}
	\leq K |\dl|^{-\frac{1}{2}} \frac{\bigl(1+\|x\|^{\chi}\bigr)\bigl(1+\|x\|^{2}\bigr)}{1+\|x\|^{\chi}}
	\leq  K |\dl|^{-\frac{1}{2}} \bigl(1+\|x\|^{2}\bigr),
\]
for a constant $K>0$ depending on $L$ and $\|\xi(0)\|$.
\end{proof}

\begin{proof}[\textbf{Proof of Remark \ref{remark3}}]
By the definition of the tamed coefficients in \eqref{tameddef} and Remark \ref{remark1}, we have, for all $x\in\R^d$ and $\dl \in \Th$, that
		\begin{align*}
	\|\xi(x)-\xi^\dl(x)\|
		= \lf\|\xi(x)-\frac{\|\xi(x)\|}{\bigl(1+|\dl|^{\frac{1}{2}}\|x\|^{\chi}\bigr)^{\frac{1}{2}}} \rt\|
		\leq \frac{\|\xi(x)\|\bigl(1+|\dl|^{\frac{1}{2}}\|x\|^{\chi}\bigr) -\|\xi(x)\|}{\bigl(1+|\dl|^{\frac{1}{2}}\|x\|^{\chi}\bigr)^{\frac{1}{2}}}
	\leq \frac{|\dl|^{\frac{1}{2}}\|x\|^{\chi}\|\xi(x)\|}{\bigl(1+|\dl|^{\frac{1}{2}}\|x\|^{\chi}\bigr)^{\frac{1}{2}}}.
		\end{align*}
Further applying Remark \ref{remark1} yields that
		\begin{align*}
		\|\xi(x)-\xi^\dl(x)\|
		&\leq K |\dl|^{\frac{1}{2}}\|x\|^{\chi}
		\frac{1+\|x\|^{\frac{\chi}{2}+1} }{(1+|\dl|^{\frac{1}{2}}\|x\|^\chi)^{\frac{1}{2}}}\\
		&\leq K |\dl|^{\frac{1}{2}}\|x\|^{\chi}
		 \bigl(1+\|x\|^{\frac{\chi}{2}+1}\bigr)  \\
		&\leq K|\dl|^{\frac{1}{2}}\big(1+\|x\|^{\frac{3}{2}\chi+1}\big),
	\end{align*}
for a constant $K>0$ depending on $L$ and $\|\xi(0)\|$.
\end{proof}

\begin{proof}[\textbf{Proof of Remark \ref{remark4}}]
Let $\dl \in \Th$ and fix $x,y \in \R^d$.
Define $g^{x,y,\dl}(t) = \xi^{\dl}(tx+(1-t)y)$ for $t\in[0,1]$.
Then,
\[
\frac{d}{dt} g^{x,y,\dl}(t)= (D\xi^{\dl})(tx+(1-t)y)(x-y),
\]
where $D\xi^{\dl}$ denotes the Fr\'echet derivative of $\xi^\dl$. Since $\chi \geq 1$, we have, for all $x, v\in\R^d$, that
\begin{equation}\label{nablexi}
	(D\xi^\dl(x))v
	=\frac{(D\xi(x))v}{\bigl(1+|\dl|^{\frac12}\|x\|^\chi\bigr)^{\frac12}}
	-\frac{\chi}{2}\,|\dl|^{\frac12}\,
	\frac{\|x\|^{\chi-2}(x\cdot v)\,\xi(x)}{\bigl(1+|\dl|^{\frac12}\|x\|^\chi\bigr)^{\frac32}}.
\end{equation}
Moreover, by Assumption~\ref{A2}, for any $x, v\in\R^d$, and $h\in\R$, we have that
\[
\|\xi(x+h v)-\xi(x)\|
\le  L|h|\,\|v\|\Bigl(1+\|x\|^{\frac{\chi}{2}}+\|x+h v\|^{\frac{\chi}{2}}\Bigr).
\]
Dividing by $|h|$ and letting $h\to 0$ yields
\[
\|(D\xi(x))\,v\|
=\left\|\lim_{h\to 0}\frac{\xi(x+h v)-\xi(x)}{h}\right\|
\le K\|v\|\Bigl(1+\|x\|^{\frac{\chi}{2}}\Bigr).
\]
It is followed by
\[
\|D\xi(x)\|_{\mathrm{op}}:= \sup_{v\in \R^d: \|v\|=1}\|(D\xi(x))\,v\|  \leq  K\Bigl(1+\|x\|^{\frac{\chi}{2}}\Bigr),
\]
where $\|\cdot\|_{\mathrm{op}}$ denotes the operator norm. This, \eqref{nablexi}, and Remark \ref{remark1} yield, for all $x\in \R^d$, that
\begin{align*}
 \|(D \xi^\dl(x) )v\|& \leq \frac{\|D \xi(x)v\| }{\bigl(1+|\dl|^{\frac12}\|x\|^\chi\bigr)^{\frac12}} +  \lf\|\frac{\frac{\chi}{2}  |\dl|^{\frac12}\|x\|^{\chi-2}(x \cdot  v)  \xi(x)}{\bigl(1+|\dl|^{\frac12}\|x\|^\chi\bigr)^{\frac32}} \rt\| \\
 &\leq \frac{\|D \xi(x)\|_{\mathrm{op}}\|v\|}{\bigl(1+|\dl|^{\frac12}\|x\|^\chi\bigr)^{\frac12}} + \frac{\frac{\chi}{2}  |\dl|^{\frac12} (1+\|x\|^{\frac{\chi}{2}+1}) \|x\|^{\chi-1} \|v\| }{\bigl(1+|\dl|^{\frac12}\|x\|^\chi\bigr)^{\frac32}}\\
&\leq K\|v\|\lf(\frac{1+\|x\|^{\frac{\chi}{2}}}{|\dl|^{\frac14} \bigl(1+\|x\|^\frac{\chi}{2}\bigr)} + \frac{\frac{\chi}{2}  |\dl|^{\frac12} \bigl(1+\|x\|^{\frac{3}{2}\chi}\bigr)  }{|\dl|^{\frac34}\bigl(1+\|x\|^{\frac{3}{2}\chi}\bigr)}\rt)\\
&\leq K\|v\||\dl|^{-\frac{1}{4}},
\end{align*}
which implies
\[
\|D \xi^\dl(x) \|_{\mathrm{op}}:= \sup_{v\in \R^d: \|v\|=1}\|(D \xi^\dl(x))v \| \leq K|\dl|^{-\frac{1}{4}}. 
\]
Thus, for all $x,y \in \R^d$ and $\dl \in \Th$, we have that
\begin{align*}
\|\xi^\dl(x)-\xi^\dl(y)\| &\leq \int_0^1 \lf\|D\xi^{\dl}(tx+(1-t)y) (x-y)\rt\| \rd t    \\
& \leq \int_0^1 \|D\xi^{\dl}(tx+(1-t)y)\|_{\mathrm{op}} \|x-y\| \rd t  \\
& \leq  K|\dl|^{-\frac{1}{4}}\|x-y\|.
\end{align*}
\end{proof}

\section{Proof of results in Section \ref{Numerical Simulations}}\label{app:Numerical Simulations}
\begin{proof}[\textbf{Proof of Proposition \ref{prop1}}]
Recall the definition of one-dimensional SDE \eqref{SDEexample}:
\[
\rd X_t = (X_{t-}-X_{t-}^3)\rd t + X_{t-}^2 \rd W_t + X_{t-} \int_{\R} z \tl{\pi}(\rd z,\rd t),    \quad \forall t\in[0,T]
\]
with initial value $X_0 >0$.
\begin{enumerate}[label=(\alph*)]
	\item
	To verify Assumption \ref{A1}, we calculate
	\begin{align*}
		&2(x-y)(\mu(x)-\mu(y)) + (p_0-1)(\sg(x)-\sg(y))^2\\
		&=2(x-y) ((x-y) - (x^3-y^3))+ (p_0-1)( x^2-y^2)^2\\
		&=2(x-y)^2\big(1-(x^2+xy+y^2)\big) +(p_0-1) (x-y)^2(x+y)^2\\
		&\leq 2(x-y)^2+\big( \tfrac43(p_0-1)-  2 \big)(x^2+xy+y^2)(x-y)^2.
	\end{align*}
	Thus, Assumption \ref{A1} is satisfied	for $p_0 \leq \tfrac52$ and $L \geq 2$.
	\item
	For any $x,y\in\R$,
\[	
		|\mu(x)-\mu(y)|
		=|(x-y)-(x^3-y^3)|
		\le|x-y|+|x-y|\,(x^2+|xy|+y^2),
\]
	which implies that, by $|xy|\le \frac{x^2+y^2}{2}$,
	\[
	|\mu(x)-\mu(y)| \le \frac{3}{2}\,|x-y|\bigl(1+|x|^{2}+|y|^{2}\bigr).
	\]
	Similarly,
	$|\sigma(x)-\sigma(y)|=|x^2-y^2|=|x-y||x+y|
	\le |x-y|\bigl(1+|x|+|y|\bigr).$\\
	Let \(\chi=4\), then Assumption \ref{A2} is satisfied for $L\geq \frac{3}{2}$.
	\item
	For Assumptions \ref{A3} and \ref{A4}, we have
	\begin{align*}
		\int_{\R} |xz|^\rho\nu(dz) &= |x|^\rho \int_{\R} |z|^\rho\nu(dz),\\
		\int_{\R} |xz-yz|^\rho\nu(dz) &= |x-y|^\rho \int_{\R} |z|^\rho\nu(dz).
	\end{align*}
	Define $m :=\sup_{\rho\in[1,p_0]}\int_{\mathbb R}|z|^\rho\,\nu(dz)<\infty$, which is true for Gaussian jump sizes. Thus, Assumptions \ref{A3} and \ref{A4} are satisfied if we take $L_d=N_d =m.$
	\item
	To verify Assumption \ref{A5}, we observe that \eqref{A51} is satisfied for $\nu(\R)= \lambda < \infty$. Let $\mathcal{Z}_d=\max\{m, \lambda\}$, then, for $q \in (0,p]$,
	\begin{align*}
		\int_{|z|\leq \ep} |xz-yz|^p \nu(dz) &= \int_{|z|\leq \ep} |x-y|^p|z|^{p-q}|z|^{q} \nu(dz)\\
		& \leq  \ep^{q}|x-y|^p \int_{\R} |z|^{p-q} \nu(dz)
		  \leq  \ep^{q}\mathcal{Z}_d  |x-y|^p.
	\end{align*}
	Thus, \eqref{A52} is satisfied and similar argument applied for \eqref{A53}.
\end{enumerate}
\end{proof}

\begin{proof}[\textbf{Proof of Proposition \ref{prop2}}]
	Recall the definition of one-dimensional SDE \eqref{SDEexample2}:
\[
\rd X_t = a X_t(b-|X_t|) \rd t + c |X_t|^{3/2} \rd W_t + X_{t}\int_{\mathbb R} z\,\tilde\pi(\rd t,\rd z), \quad \forall t\in[0,T]
\]
with parameters $a,b,c\geq0$ and initial value $X_0 = x_0 >0$.
\begin{enumerate}[label=(\alph*)]
	\item
	To verify Assumption \ref{A1}, we calculate
	\begin{align}\label{23}
		&(x-y)(\mu(x)-\mu(y)) \leq 2ab|x-y|^2-2a(|x|+|y|)(|x|-|y|)^2,\nonumber \\
		&(\sg(x)-\sg(y))^2 
		= c^2(|x|^{\frac{1}{2}}-|y|^{\frac{1}{2}})^2(|x|^2+2|x|^{\frac{3}{2}}|y|^{\frac{1}{2}}+3|x||y|+2|x|^{\frac{1}{2}}|y|^{\frac{3}{2}}+|y|^2)\nonumber\\
		&\hspace{24.5mm}\leq \frac{3c^2}{2}(|x|^{\frac{1}{2}}-|y|^{\frac{1}{2}})^2(|x|^2+2|x|^{\frac{3}{2}}|y|^{\frac{1}{2}}+2|x||y|+2|x|^{\frac{1}{2}}|y|^{\frac{3}{2}}+|y|^2)\nonumber\\
		&\hspace{24.5mm}\leq \frac{3c^2}{2}(|x|^{\frac{1}{2}}-|y|^{\frac{1}{2}})^2(|x|^{\frac{1}{2}}+|y|^{\frac{1}{2}})^2(|x|+|y|) \nonumber \\
		&\hspace{24.5mm}\leq  \frac{3c^2}{2}(|x|+|y|)(|x|-|y|)^2.
	\end{align}
Consequently,
	\begin{align*}
		&2(x-y)(\mu(x)-\mu(y)) + (p_0-1)(\sg(x)-\sg(y))^2\\
		&\leq 2ab|x-y|^2 + (\frac{3c^2}{2}(p_0-1)-2a)(|x|+|y|)(|x|-|y|)^2.
		\end{align*}
Thus, Assumption \ref{A1} is satisfied for $p_0 = \frac{4a}{3c^2}+1$ and $L \geq 2ab$.
	\item
   Fix $x,y\in\R$, define
   \[
   g(t):=\mu(tx-(1-t)y), \qquad t\in[0,1].
   \]
   We note that $g'(t)= \mu'(tx-(1-t)y)(x-y)$. Then, we have
   \[
   g'(t)=a(x-y)\bigl(b-2|tx+(1-t)y|\bigr).
   \]
   Therefore, we obtain that
   \begin{align*}
   	|\mu(y)-\mu(x)|
   	 \le \int_0^1 |g'(t)|\rd t &\leq a|x-y|\left(b+2\int_0^1 |tx+(1-t)y|\rd t\right) \\
   	& \le a(b+1)|x-y|\bigl(1+|x|+|y|\bigr).
   \end{align*}
Moreover, by \eqref{23}, we have that
\begin{align*}
|\sigma(x)-\sigma(y)| \le  \sqrt{\frac{3}{2}}c(|x|+|y|)^{\frac{1}{2}}\big||x|-|y|\big| \leq \sqrt{\frac{3}{2}}c|x-y|(|x|^{\frac{1}{2}}+|y|^{\frac{1}{2}}).
\end{align*}
Thus, choosing \(\chi=2\) makes both bounds controlled and Assumption \ref{A2} is satisfied for \(L\geq \max\{a(b+1),\sqrt{\tfrac32} c\}\).
	\item
Finally, note that SDE \eqref{SDEexample2} has the same jump coefficient as SDE \eqref{SDEexample}. Therefore, Assumptions~\ref{A3}--\ref{A5} can be verified by the same arguments as in the proof of Proposition \ref{prop1}.
\end{enumerate}	
\end{proof}

\section{Proof of results in Section \ref{Proof of the Main Results}}\label{app:Proof of the Main Results}
In view of Remark~\ref{newscheme}, we use scheme \eqref{Tamedscheme2} in place of scheme \eqref{Tamedscheme} throughout Appendix~\ref{app:Proof of the Main Results}.

\subsection{Proof of results in Section \ref{section5.1}}\label{app:section5.1}

\begin{proof}[\textbf{Proof of Lemma \ref{exactmomentbounds}}]
Fix $s\in [0,T]$ and $x\in \R^d$. Then, we apply It\^o's formula (see, e.g., \cite[Theorem 2.3]{Itosformula}) to obtain, for any $t\in[s,T]$, that
	\begin{align*}
		\|X^{s,x,\iota}_t\|^{p_0} &=\|x\|^{p_0}+p_0\int^{t}_s\|X^{s,x,\iota}_{r-}\|^{p_0-2} X^{s,x,\iota}_{r-} \mu(X^{s,x,\iota}_{r-}) \rd r + p_0\int^{t}_s\|X^{s,x,\iota}_{r-}\|^{p_0-2} X^{s,x,\iota}_{r-} \sg(X^{s,x,\iota}_{r-})\rd W_r\\
		&\quad+\frac{p_0(p_0-2)}{2} \int^{t}_s\|X^{s,x,\iota}_{r-}\|^{p_0-4} \|\sg(X^{s,x,\iota}_{r-})^*X^{s,x,\iota} \|^{2}\rd r+\frac{p_0}{2}\int^{t}_s\|X^{s,x,\iota}_{r-}\|^{p_0-2}\|\sg(X^{s,x,\iota}_{r-})\|^2\rd r\\
		&\quad+p_0\int^t_s\int_{\R^d}\|X^{s,x,\iota}_{r-}\|^{p_0-2} X^{s,x,\iota}_{r-}\gm(X^{s,x,\iota}_{r-},z)\tl{\pi}(\rd z,\rd r)\\
		&\hspace{-8mm}+\int_s^t\int_{\R^d}\lf\{\|X^{s,x,\iota}_{r-}+\gm(X^{s,x,\iota}_{r-},z)\|^{p_0}-\|X^{s,x,\iota}_{r-}\|^{p_0}-p_0\|X^{s,x,\iota}_{r-}\|^{p_0-2}X^{s,x,\iota}_{r-}\gm(X^{s,x,\iota}_{r-},z)\rt\}\pi(\rd z,\rd r),
	\end{align*}
	almost surely\footnote{We adopt the convention that $0/0:=0$.}. Then, we define a stopping time $\tau^{s,x}_R:= \inf\{t\geq s: \|X^{s,x,\iota}_t\|\geq R\}\wedge T$ with $R \in \N$. Thus, on taking expectation and using Cauchy-Schwarz inequality, we obtain, for any $t\in [s,T],$ that
	\begin{align*}
		\E\|X^{s,x,\iota}_{t\wedge\tau^{s,x}_R}\|^{p_0} &\leq  \|x\|^{p_0}+\frac{p_0}{2}\E\int^{t\wedge\tau^{s,x}_{R}}_s\|X^{s,x,\iota}_{r}\|^{p_0-2}\{2 X^{s,x,\iota}_{r}\mu(X^{s,x,\iota}_{r})+(p_0-1)\|\sg(X^{s,x,\iota}_{r})\|^2\}\rd r\\
		&\hspace{-20mm}+\E\int^{t\wedge\tau^{s,x}_{R}}_s\int_{\R^d} \lf\{\|X^{s,x,\iota}_{r}+\gm(X^{s,x,\iota}_{r},z)\|^{p_0}-\|X^{s,x,\iota}_{r}\|^{p_0}-p_0\|X^{s,x,\iota}_{r}\|^{p_0-2}X^{s,x,\iota}_{r}\gm(X^{s,x,\iota}_{r},z)\rt\}\nu(\rd z)\rd r.
	\end{align*}
	Moreover, by Lemma \ref{Formula for the remainder}, we have, for all $t\in [s,T]$, that
	\begin{align*}
		\E\|X^{s,x,\iota}_{t\wedge\tau^{s,x}_R}\|^{p_0} &\leq \|x\|^{p_0}+\frac{p_0}{2}\E\int^{t\wedge\tau^{s,x}_{R}}_s\|X^{s,x,\iota}_{r}\|^{p_0-2}\{2 X^{s,x,\iota}_{r}\mu(X^{s,x,\iota}_{r})+(p_0-1)\|\sg(X^{s,x,\iota}_{r})\|^2\}\rd r\\
		&\quad+K\E\int^{t\wedge\tau^{s,x}_{R}}_s\int_{\R^d} \lf\{\|X^{s,x,\iota}_{r}\|^{p_0-2}\|\gm(X^{s,x,\iota}_{r},z)\|^2+\|\gm(X^{s,x,\iota}_{r},z)\|^{p_0}\rt\}\nu(\rd z)\rd r\\
		&\leq \|x\|^{p_0}+\frac{p_0}{2}\E\int^{t\wedge\tau^{s,x}_{R}}_s\|X^{s,x,\iota}_{r}\|^{p_0-2}\{2 X^{s,x,\iota}_{r}\mu(X^{s,x,\iota}_{r})+(p_0-1)\|\sg(X^{s,x,\iota}_{r})\|^2\}\rd r\\
		&\quad+K\E\int^{t\wedge\tau^{s,x}_{R}}_s\int_{\R^d} \big\{\|X^{s,x,\iota}_{r}\|^{p_0-2}\|\gm(X^{s,x,\iota}_{r},z)-\gm(0,z)\|^2\\
		&\hspace{40mm}+\|\gm(X^{s,x,\iota}_{r},z)-\gm(0,z)\|^{p_0}\big\}\nu(\rd z)\rd r\\
		&\quad+K\E\int^{t\wedge\tau^{s,x}_{R}}_s\int_{\R^d} \lf\{\|X^{s,x,\iota}_{r}\|^{p_0-2}\|\gm(0,z)\|^2+\|\gm(0,z)\|^{p_0}\rt\}\nu(\rd z)\rd r.
	\end{align*}
	On using Assumptions \ref{A3}, \ref{A4}, Remark \ref{exist}, and Young's inequality, we obtain, for any $t\in [s,T],$ that
	\begin{align*}
		\E\|X^{s,x,\iota}_{t\wedge\tau^{s,x}_R}\|^{p_0} &\leq \|x\|^{p_0}+K(1+N_d^{\frac{p_0}{2}})(t-s)+K(1+L_d^{\frac{p_0}{2}})\E\int^{t\wedge\tau^{s,x}_{R}}_s\|X^{s,x,\iota}_{r}\|^{p_0}\rd r.
	\end{align*}
	This implies
	\begin{align*}
		\sup_{t\in[s,T]}\E\|X^{s,x,\iota}_{t\wedge\tau^{s,x}_R}\|^{p_0} &\leq \|x\|^{p_0}+K(1+N_d^{\frac{p_0}{2}})(T-s)+K(1+L_d^{\frac{p_0}{2}})\int^{T}_s \sup_{u\in[s,r]}\E\|X^{s,x,\iota}_{u\wedge\tau_R^{s,x}}\|^{p_0}\rd r,
	\end{align*}
	which, by applying Gr\"onwall's lemma, yields
	\[
	\sup_{t\in[s,T]}\E\|X^{s,x,\iota}_{t\wedge\tau^{s,x}_R}\|^{p_0}  \leq K\lf( 1+\|x\|^{p_0}+N_d^{\frac{p_0}{2}}\rt)e^{K(1+L_d^{\frac{p_0}{2}})}.
	\]
	Finally, the application of Fatou's lemma completes the proof.
\end{proof}

\begin{lemma}\label{corollary1}
	Let Assumptions \ref{A1}--\ref{A5} hold, then there exists $K>0 $ such that, 
		\begin{enumerate}
		\item\label{lemma2} for all $s\in [0,T]$, $t\in [s,T]$, $x\in \R^d$, $\dl \in \Th$, and $\rho \in [1,p_0]$,
		\begin{equation*}
			\E(\|X^{s,x,\dl}_t - X^{s,x,\dl}_{\max\{s,\dl(t)\}} \|^\rho|\mathcal{F}_{\max\{s,\dl(t)\}}) \leq K(|\dl|^{\frac{\rho}{4}} +|\dl| ) \big(1+N_d^{\frac{\rho}{2}}+(1+L_d^{\frac{\rho}{2}})\|X^{s,x,\dl}_{\max\{s,\dl(t)\}}\|^{\rho} \big),
		\end{equation*}
		\item  for all $s\in [0,T]$, $t\in [s,T]$, $x\in \R^d$, $\dl \in \Th$, $\ep\in(0,1)$, and $\rho \in [1,p_0]$,
		\[
		\E(\|X^{s,x,\dl,\ep}_t - X^{s,x,\dl,\ep}_{\max\{s,\dl(t)\}} \|^\rho|\mathcal{F}_{\max\{s,\dl(t)\}}) \leq K(|\dl|^{\frac{\rho}{4}} +|\dl| ) \big(1+N_d^{\frac{\rho}{2}}+(1+L_d^{\frac{\rho}{2}})\|X^{s,x,\dl,\ep}_{\max\{s,\dl(t)\}}\|^{\rho} \big),
		\]
		\item \label{lemma3} for all $ s\in [0,T]$, $t\in [s,T]$, $x\in \R^d$, $\dl \in \Th$, $\ep\in(0,1)$, $\mathcal{M} \in \N$, and $\rho \in [1,p_0]$,
		\begin{align*}
			&\E(\|X^{s,x,\dl,\ep,\mathcal{M}}_t - X^{s,x,\dl,\ep,\mathcal{M}}_{\max\{s,\dl(t)\}} \|^\rho|\mathcal{F}_{\max\{s,\dl(t)\}}) \leq K(|\dl|^{\frac{\rho}{4}} +|\dl| ) \big(1+N_d^{\frac{\rho}{2}}+(1+L_d^{\frac{\rho}{2}})\|X^{s,x,\dl,\ep,\mathcal{M}}_{\max\{s,\dl(t)\}}\|^{\rho} \big)\\
			&\quad+  |\dl|^{\rho} \bigg\|\int_{A_\ep} \gm(X^{s,x,\dl,\ep,\mathcal{M}}_{\max\{s,\dl(t)\}},z)\nu(\rd z) - \lf(\frac{\nu(A_\ep)}{\mathcal{M}}\sum_{i=1}^{\mathcal{M}} \gm\lf(X_{\max\{s,\dl(t)\}}^{s,x,\dl,\ep,\mathcal{M}},V^{\dl,\ep,\mathcal{M}}_{i,\max\{s,\dl(t)\}}\rt)\rt)\bigg\|^\rho.
		\end{align*}
	\end{enumerate}
\begin{proof}[Proof of (i):]
		Fix $s\in [0,T]$, $x\in \R^d$, and $\dl \in \Th$ throughout this proof. By the definition of scheme \eqref{tamedscheme1},  we obtain, for all $t\in[s,T]$, that
		\begin{equation}\label{L2A13}
			\E(\|X^{s,x,\dl}_t - X^{s,x,\dl}_{\max\{s,\dl(t)\}} \|^\rho|\mathcal{F}_{\max\{s,\dl(t)\}}) \leq 3^{\rho-1}\sum_{k=1}^3 A_k(t), \tag{$\star$}
		\end{equation}
		where
		\begin{align*}
			A_1(t):=&\E \lf(\bigg\|\int^{t}_{\max\{s,\dl(t)\}} \mu^{\dl}(X^{s,x,\dl}_{\max\{s,\dl(r)\}})\rd r\bigg\|^\rho \bigg|\mathcal{F}_{\max\{s,\dl(t)\}} \rt),\\
			A_2(t):=&\E\lf(\bigg\|\int^{t}_{\max\{s,\dl(t)\}} \sg^{\dl}(X^{s,x,\dl}_{\max\{s,\dl(r)\}})\rd W_r \bigg\|^\rho \bigg|\mathcal{F}_{\max\{s,\dl(t)\}}\rt),\\
			A_3(t):=&\E\lf(\bigg\|\int^{t}_{\max\{s,\dl(t)\}} \int_{\R^d} \gm(X^{s,x,\dl}_{\max\{s,\dl(r)\}},z) \tl\pi(\rd z,\rd r)\bigg\|^\rho \bigg|\mathcal{F}_{\max\{s,\dl(t)\}}\rt).
		\end{align*}
		On the application of H\"older's inequality and Remark \ref{remark2} gives, for all $t\in[s,T]$, that
		\begin{equation}\label{L2A_1}
			\begin{aligned}
				A_1(t) &\leq |\dl|^{\rho-1}\E \lf(\int^{t}_{\max\{s,\dl(t)\}} \|\mu^{\dl}(X^{s,x,\dl}_{\max\{s,\dl(r)\}})\|^{\rho} \rd r \bigg|\mathcal{F}_{\max\{s,\dl(t)\}} \rt)\\
				& \leq K|\dl|^{\frac{3\rho}{4}}  \lf( 1+ \|X^{s,x,\dl}_{\max\{s,\dl(t)\}}\|^{\rho}     \rt).
			\end{aligned}
		\end{equation}
		Moreover, by using \cite[Theorem 7.3]{xuerongmao} and Remark \ref{remark2}, we obtain, for all $t\in[s,T]$, that
		\begin{equation}\label{L2A_2}
			\begin{aligned}
				A_2(t) &\leq K \E  \lf(\lf(\int^{t}_{\max\{s,\dl(t)\}} \|\sg^{\dl}(X^{s,x,\dl}_{\max\{s,\dl(r)\}})\|^{2} \rd r\rt)^{\frac{\rho}{2}}  \bigg|\mathcal{F}_{\max\{s,\dl(t)\}} \rt)\\
				&\leq  K  |\dl|^{\frac{\rho}{4}}\lf(1+\|X^{s,x,\dl}_{\max\{s,\dl(t)\}}\|^{\rho}\rt).
			\end{aligned}
		\end{equation}
		We now estimate $A_3(t)$. For $\rho \in [1,2]$, Jensen's inequality, Burkholder-Davis-Gundy inequality (see, e.g., \cite[Theorem VII.92]{ProbabilityB}), Assumptions \ref{A3} and \ref{A4} yield
		\begin{align}\label{L2A_3(t)_1}
			A_3(t) \leq & K\lf(\E\lf[\bigg\|\int^{t}_{\max\{s,\dl(t)\}} \int_{\R^d} \gm(X^{s,x,\dl}_{\max\{s,\dl(r)\}},z) \tl\pi(\rd z,\rd r)\bigg\|^2 \bigg|\mathcal{F}_{\max\{s,\dl(t)\}}\rt]\rt)^{\frac{\rho}{2}}\nonumber\\
			\leq&K\lf(\E \lf[ \int^{t}_{\max\{s,\dl(t)\}} \int_{\R^d}  \|\gm(X^{s,x,\dl}_{\max\{s,\dl(r)\}},z)-\gm(0,z)\|^2  \nu(\rd z)\rd r \big|\mathcal{F}_{\max\{s,\dl(t)\}}\rt]\rt)^{\frac{\rho}{2}} \nonumber\\
			\quad&+K\lf(\E \lf[ \int^{t}_{\max\{s,\dl(t)\}} \int_{\R^d}  \|\gm(0,z)\|^2  \nu(\rd z)\rd r \big|\mathcal{F}_{\max\{s,\dl(t)\}}\rt]\rt)^{\frac{\rho}{2}} \nonumber\\
			\leq& K|\dl|^{\frac{\rho}{2}}\big(L_d^{\frac{\rho}{2}}\|X^{s,x,\dl}_{\max\{s,\dl(t)\}}\|^{\rho}+ N_d^{\frac{\rho}{2}}\big).
		\end{align}
		For $\rho \in (2,p_0]$, we apply \cite[Lemma 1]{Mikulevicius}, together with Assumptions \ref{A3} and \ref{A4} to obtain, for all $t\in[s,T]$, that
		\begin{align}\label{L2A_3(t)_2}
			A_3(t) 
			&\leq K \E \lf[ \int^{t}_{\max\{s,\dl(t)\}} \int_{\R^d} \|\gm(X^{s,x,\dl}_{\max\{s,\dl(r)\}},z)\|^\rho\nu(\rd z)\rd r \big|\mathcal{F}_{\max\{s,\dl(t)\}}\rt]\nonumber\\
			&\quad +K\E\lf[\lf(\int^{t}_{\max\{s,\dl(t)\}} \int_{\R^d} \|\gm(X^{s,x,\dl}_{\max\{s,\dl(r)\}},z)\|^2 \nu(\rd z)\rd r\rt)^\frac{\rho}{2} \big|\mathcal{F}_{\max\{s,\dl(t)\}} \rt]\nonumber\\
			&\leq K|\dl|\big(L_d
			\|X^{s,x,\dl}_{\max\{s,\dl(t)\}}\|^\rho+N_d\big)+K|\dl|^{\frac{\rho}{2}}\big(L_d^{\frac{\rho}{2}}\|X^{s,x,\dl}_{\max\{s,\dl(t)\}}\|^\rho+N_d^{\frac{\rho}{2}}\big).
		\end{align}
		Thus, substituting \eqref{L2A_1}, \eqref{L2A_2}, and \eqref{L2A_3(t)_1} back into \eqref{L2A13} gives, for all $\rho \in [1,2]$ and $t\in[s,T]$, that
		\begin{equation*}
			\E(\|X^{s,x,\dl}_t - X^{s,x,\dl}_{\max\{s,\dl(t)\}} \|^\rho|\mathcal{F}_{\max\{s,\dl(t)\}}) 
			\leq K|\dl|^{\frac{\rho}{4}}\big(1+N_d^{\frac{\rho}{2}}+(1+L_d^{\frac{\rho}{2}})\|X^{s,x,\dl}_{\max\{s,\dl(t)\}}\|^{\rho} \big).
		\end{equation*}
		Similarly, combining \eqref{L2A_1}, \eqref{L2A_2}, and \eqref{L2A_3(t)_2}, we obtain, for all $\rho \in (2,p_0]$ and $t\in[s,T]$, that
		\begin{equation*}
			\E(\|X^{s,x,\dl}_t - X^{s,x,\dl}_{\max\{s,\dl(t)\}} \|^\rho|\mathcal{F}_{\max\{s,\dl(t)\}}) 
			\leq K(|\dl|^{\frac{\rho}{4}} +|\dl| ) \big(1+N_d^{\frac{\rho}{2}}+(1+L_d^{\frac{\rho}{2}})\|X^{s,x,\dl}_{\max\{s,\dl(t)\}}\|^{\rho} \big).
		\end{equation*}
		This completes the proof.
	\end{proof}
		\begin{proof}[Proof of (ii):]
Since, for any non-negative integrable function $g$,
$
\int_{A_\ep} g(z)\,\nu(\rd z)\le \int_{\R^d} g(z)\,\nu(\rd z),
$
the proof follows by the same arguments as in the proof of Lemma~\ref{lemma2}.
	\end{proof}
		\begin{proof}[Proof of (iii):]
Fix $ s\in [0,T]$, $x\in \R^d$, $\dl \in \Th$, $\ep \in (0,1)$, and $\mathcal{M} \in \N$ throughout this proof. By the definition of scheme $\bigl(X^{s,x,\dl,\ep,\mathcal{M}}_t \bigr)_{t\in[s,T]}$ in \eqref{Tamedscheme2}, we have, for all $t\in[s,T]$, that
\begin{equation}\label{l3B1-4}
	\E(\|X^{s,x,\dl,\ep,\mathcal{M}}_t - X^{s,x,\dl,\ep,\mathcal{M}}_{\max\{s,\dl(t)\}} \|^\rho|\mathcal{F}_{\max\{s,\dl(t)\}}) \leq 4^{\rho-1}\sum_{k=1}^4 B_k(t), \tag{$\dagger$}
\end{equation}
where
\begin{align*}
	B_1(t):=&\E \lf(\bigg\|\int^{t}_{\max\{s,\dl(t)\}} \mu^{\dl}(X^{s,x,\dl,\ep,\mathcal{M}}_{\max\{s,\dl(r)\}})\rd r\bigg\|^\rho \bigg|\mathcal{F}_{\max\{s,\dl(t)\}} \rt),\\
	B_2(t):=&\E\lf(\bigg\|\int^{t}_{\max\{s,\dl(t)\}} \sg^{\dl}(X^{s,x,\dl,\ep,\mathcal{M}}_{\max\{s,\dl(r)\}})\rd W_r \bigg\|^\rho \bigg|\mathcal{F}_{\max\{s,\dl(t)\}}\rt),\\
	B_3(t):=&\E\lf(\bigg\|\int^{t}_{\max\{s,\dl(t)\}} \int_{A_\ep} \gm(X^{s,x,\dl,\ep,\mathcal{M}}_{\max\{s,\dl(r)\}},z) \tl\pi(\rd z,\rd r)\bigg\|^\rho \bigg|\mathcal{F}_{\max\{s,\dl(t)\}}\rt),\\
	B_4(t):=&\E\bigg(\bigg\|\int^{t}_{\max\{s,\dl(t)\}} \int_{A_\ep} \gm(X^{s,x,\dl,\ep,\mathcal{M}}_{\max\{s,\dl(r)\}},z)\nu(\rd z) \\
	&\quad - \lf\{\frac{\nu(A_\ep)}{\mathcal{M}}\sum_{i=1}^{\mathcal{M}}\gm\lf(X_{\max\{s,\dl(r)\}}^{s,x,\dl,\ep,\mathcal{M}},V^{\dl,\ep,\mathcal{M}}_{i,\max\{s,\dl(r)\}}\rt)\rt\}\rd r \bigg\|^\rho \bigg|\mathcal{F}_{\max\{s,\dl(t)\}}\bigg).\\
\end{align*}
The estimates for $B_1(t)$--$B_3(t)$ follow as in the proof of \ref{lemma2} (cf.~\eqref{L2A_1}--\eqref{L2A_3(t)_2}), yielding for all $t\in[s,T]$, that
\begin{equation}\label{L3A_13}
	\begin{aligned}
		B_1(t)  & \leq K|\dl|^{\frac{3\rho}{4}}  \big( 1+ \|X^{s,x,\dl,\ep,\mathcal{M}}_{\max\{s,\dl(t)\}}\|^{\rho}     \big),\\
		B_2(t) &\leq  K  |\dl|^{\frac{\rho}{4}}\big(1+\|X^{s,x,\dl,\ep,\mathcal{M}}_{\max\{s,\dl(t)\}}\|^{\rho}\big),\\
		B_3(t) &\leq
		\begin{cases}
			K|\dl|^{\frac{\rho}{2}}\big(L_d^{\frac{\rho}{2}}\|X^{s,x,\dl}_{\max\{s,\dl(t)\}}\|^{\rho}+ N_d^{\frac{\rho}{2}}\big),    & \forall \rho\in[1,2], \\
			K|\dl|\big(N_d+L_d
			\|X^{s,x,\dl}_{\max\{s,\dl(t)\}}\|^\rho\big)+K|\dl|^{\frac{\rho}{2}}\big(N_d^{\frac{\rho}{2}}+L_d^{\frac{\rho}{2}}\|X^{s,x,\dl}_{\max\{s,\dl(t)\}}\|^\rho \big),&\forall \rho\in(2,p_0].
		\end{cases}
	\end{aligned}
\end{equation}
For $B_4(t)$, one observes, for all $t\in[s,T]$, that
\begin{align}\label{L3A4}
	B_4(t) \leq |\dl|^{\rho} \bigg\|\int_{A_\ep} \gm(X^{s,x,\dl,\ep,\mathcal{M}}_{\max\{s,\dl(t)\}},z)\nu(\rd z) - \lf(\frac{\nu(A_\ep)}{\mathcal{M}}\sum_{i=1}^{\mathcal{M}} \gm\lf(X_{\max\{s,\dl(t)\}}^{s,x,\dl,\ep,\mathcal{M}},V^{\dl,\ep,\mathcal{M}}_{i,\max\{s,\dl(t)\}}\rt)\rt)\bigg\|^\rho.
\end{align}
Combining \eqref{L3A_13} with \eqref{L3A4} and \eqref{l3B1-4} completes the proof.
	\end{proof}
\end{lemma}

\begin{lemma}\label{L3A_4}
	Let Assumptions \ref{A1}--\ref{A5} hold, then there exists $K > 0$ such that, for all $s\in [0,T]$, $t \in [s,T]$, $x\in \R^d$, $\dl \in \Th$, $\ep \in (0,1)$, $\mathcal{M} \in \N$ with $\mathcal{M} \geq (1 \vee \ep^{-2}\mathcal{Z}_d)^{2}$, and $\rho \in [1,p_0]$,
	\begin{align*}
		& \E \bigg\|\int_{A_\ep} \gm(X^{s,x,\dl,\ep,\mathcal{M}}_{\max\{s,\dl(t)\}},z)\nu(\rd z) - \lf(\frac{\nu(A_\ep)}{\mathcal{M}}\sum_{i=1}^{\mathcal{M}} \gm\lf(X_{\max\{s,\dl(t)\}}^{s,x,\dl,\ep,\mathcal{M}},V^{\dl,\ep,\mathcal{M}}_{i,\max\{s,\dl(t)\}}\rt)\rt)\bigg\|^\rho \\
		&\leq K \big( N_d+L_d\E \|X_{\max\{s,\dl(r)\}}^{s,x,\dl,\ep,\mathcal{M}}\|^{\rho}\big).
	\end{align*}
\end{lemma}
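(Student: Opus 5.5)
Write $Y:=X^{s,x,\dl,\ep,\mathcal{M}}_{\max\{s,\dl(t)\}}$ and $V_i:=V^{\dl,\ep,\mathcal{M}}_{i,\max\{s,\dl(t)\}}$. The key structural fact is that $Y$ is $\mathcal{F}_{\max\{s,\dl(t)\}}$-measurable and built from $W$, $\pi$ and the $V$'s at earlier grid points only. Hence $(V_i)_{i=1}^{\mathcal{M}}$ is i.i.d.\ with law $\nu_\ep$ and independent of $Y$. I therefore condition on $Y$ and freeze $Y=y\in\R^d$. Define $\xi_i(y):=\nu(A_\ep)\gm(y,V_i)-\int_{A_\ep}\gm(y,z)\nu(\rd z)$. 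These are i.i.d.\ and centered, since $\nu(A_\ep)\E\gm(y,V_1)=\int_{A_\ep}\gm(y,z)\nu(\rd z)$ by \eqref{nuep}. The quantity to bound is $\E\|\mathcal{M}^{-1}\sum_{i=1}^{\mathcal{M}}\xi_i(y)\|^\rho$, evaluated at $y=Y$ and then integrated, which is justified by independence and Fubini.

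For the frozen estimate I split into two cases. For $\rho\in[2,p_0]$ I use a Marcinkiewicz--Zygmund (discrete BDG) inequality, $\E\|\sum_i\xi_i\|^\rho\le K\mathcal{M}^{\rho/2-1}\sum_i\E\|\xi_i\|^\rho$, which gives $\E\|\mathcal{M}^{-1}\sum\xi_i\|^\rho\le K\mathcal{M}^{-\rho/2}\E\|\xi_1\|^\rho$. For $\rho\in[1,2)$ I use Jensen to reduce to the $\rho=2$ bound raised to the power $\rho/2$. Next,
\[
\E\|\xi_1\|^\rho\le K\nu(A_\ep)^\rho\E\|\gm(y,V_1)\|^\rho=K\nu(A_\ep)^{\rho-1}\int_{A_\ep}\|\gm(y,z)\|^\rho\nu(\rd z).
\]
Splitting $\gm(y,z)=(\gm(y,z)-\gm(0,z))+\gm(0,z)$ and applying Assumptions \ref{A3}--\ref{A4} bounds the integral by $K(L_d\|y\|^\rho+N_d)$; writing $\gm(0,z)$ directly under \ref{A3} gives $N_d(1+0)$.

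It remains to absorb the prefactor $\nu(A_\ep)^{\rho-1}\mathcal{M}^{-\rho/2}$. By \eqref{nudef}, $\nu(A_\ep)\le\ep^{-2}\mathcal{Z}_d\le 1\vee\ep^{-2}\mathcal{Z}_d$. The assumption $\mathcal{M}\ge(1\vee\ep^{-2}\mathcal{Z}_d)^2$ gives $\mathcal{M}^{-\rho/2}\le(1\vee\ep^{-2}\mathcal{Z}_d)^{-\rho}$, so the prefactor is at most $(1\vee\ep^{-2}\mathcal{Z}_d)^{-1}\le 1$. For $\rho<2$ the Jensen route gives $(\nu(A_\ep)\mathcal{M}^{-1})^{\rho/2}(\int\|\gm\|^2\nu)^{\rho/2}\le(L_d\|y\|^2+N_d)^{\rho/2}$. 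I would then convert this to the form $K(L_d^{\rho/2}\|y\|^\rho+N_d^{\rho/2})$, or handle this range directly via $\E\|\xi_1\|^\rho$ using the von Bahr--Esseen inequality for $\rho\in[1,2]$. Von Bahr--Esseen gives $\mathcal{M}^{1-\rho}\nu(A_\ep)^{\rho-1}\int\|\gm\|^\rho\nu$, which is again at most $K(L_d\|y\|^\rho+N_d)$ because $\mathcal{M}\ge\nu(A_\ep)$. I prefer this uniform route since it matches the stated form with $N_d$ and $L_d$ to the first power. Finally, substituting $y=Y$ and taking expectations yields $K(N_d+L_d\E\|Y\|^\rho)$.

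The main obstacle is making the independence and conditioning step rigorous: the same $V_i$ at grid time $t_j$ must not have been used in constructing $Y$. This holds because $Y$ depends only on $V_{\cdot,t_k}$ for $t_k<\max\{s,\dl(t)\}$ on the relevant interval. A secondary obstacle is matching the powers of $L_d$ and $N_d$ exactly as in the statement.
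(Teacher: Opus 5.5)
Your proposal is correct and follows essentially the same route as the paper. Both arguments freeze $X^{s,x,\dl,\ep,\mathcal{M}}_{\max\{s,\dl(t)\}}$ using independence of the fresh samples and apply the Marcinkiewicz--Zygmund inequality to the centered i.i.d.\ summands. They then bound the single-summand moment by $K\nu(A_\ep)^{\rho-1}\int_{A_\ep}\|\gm(y,z)\|^\rho\nu(\rd z)$, split $\gm(y,z)=(\gm(y,z)-\gm(0,z))+\gm(0,z)$ to apply Assumptions \ref{A3}--\ref{A4}, and absorb the prefactor via $\mathcal{M}\geq(1\vee\ep^{-2}\mathcal{Z}_d)^2$. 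The only cosmetic difference is that the paper treats all $\rho\in[1,p_0]$ at once through the factor $\mathcal{M}^{1-\rho}+\mathcal{M}^{-\rho/2}$, coming from $(\sum_i a_i^2)^{\rho/2}\le(1+\mathcal{M}^{\rho/2-1})\sum_i a_i^\rho$; this is exactly your two-case split, with your von Bahr--Esseen step for $\rho\le 2$.
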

	\begin{proof}
		Fix $ s\in [0,T]$, $x\in \R^d$, $\dl \in \Th$, $\ep \in (0,1)$, $\mathcal{M} \in \N$ with $\mathcal{M} \geq (1 \vee \ep^{-2}\mathcal{Z}_d)^{2}$, and $\rho \in [1,p_0]$. Moreover, by the definition of $V^{\dl,\ep,\mathcal{M}}_{i,t_j}$ and $\nu_\ep$, one observes for all $x\in\R^d$, $t\in[s,T]$, $i=1,2,...,\mathcal{M}$ and $j=0,1,...,n,$ that
		\begin{equation}\label{defV}
			\nu(A_\ep)\E\lf[\gm(x,V^{\dl,\ep,\mathcal{M}}_{i,t_j})\rt] = \int_{A_\ep} \gm(x,z)\nu(\rd z).
		\end{equation}
		By using \eqref{nudef}, \eqref{defV}, the independence of $V^{\dl,\ep,\mathcal{M}}_{i,t_j}$, Marcinkiewicz–Zygmund inequality, Jensen's inequality, Assumptions \ref{A3} and \ref{A4}, we infer that for all $t\in[s,T]$,
		\begin{align}\label{L3A_4(t)}
			&\E \lf[\bigg\|\int_{A_\ep}\gm\lf(X_{\max\{s,\dl(t)\}}^{s,x,\dl,\ep,\mathcal{M}}, z\rt) \nu(\rd z)- \bigg(\frac{\nu(A_\ep)}{\mathcal{M}}\sum_{i=1}^{\mathcal{M}}\gm\lf(X_{\max\{s,\dl(t)\}}^{s,x,\dl,\ep,\mathcal{M}},V^{\dl,\ep,\mathcal{M}}_{i,\max\{s,\dl(t)\}}\rt)\bigg)\bigg\|^{\rho}\rt] \nonumber\\
			&=\E \lf[\E \bigg[\bigg\|\int_{A_\ep}\gm\lf(x, z\rt) \nu(\rd z)- \bigg(\frac{\nu(A_\ep)}{\mathcal{M}}\sum_{i=1}^{\mathcal{M}} \gm\lf(x,V^{\dl,\ep,\mathcal{M}}_{i,\max\{s,\dl(t)\}}\rt)\bigg)\bigg\|^{\rho} \bigg] \Biggr\rvert_{x=X_{\max\{s,\dl(t)\}}^{s,x,\dl,\ep,\mathcal{M}}} \rt]\nonumber\\
			& =\nu(A_\ep)^{\rho} \mathcal{M}^{-\rho}\E\lf[\E \lf\|\sum_{i=1}^{\mathcal{M}} \lf(\E\lf[\gm\lf(x,V^{\dl,\ep,\mathcal{M}}_{i,t_1}\rt)\rt]-\gm\lf(x,V^{\dl,\ep,\mathcal{M}}_{i,t_1}\rt)\rt)\rt\|^{\rho} \Biggr\rvert_{x=X_{\max\{s,\dl(t)\}}^{s,x,\dl,\ep,\mathcal{M}}} \rt]\nonumber\\
			& \leq K\nu(A_\ep)^{\rho} \mathcal{M}^{-\rho}\E\lf[\E \lf[\sum_{i=1}^{\mathcal{M}}\lf\|\E\lf[\gm\lf(x,V^{\dl,\ep,\mathcal{M}}_{i,t_1}\rt)\rt]-\gm\lf(x,V^{\dl,\ep,\mathcal{M}}_{i,t_1}\rt)\rt\|^2\rt]^{\frac{\rho}{2}} \Biggr\rvert_{x=X_{\max\{s,\dl(t)\}}^{s,x,\dl,\ep,\mathcal{M}}} \rt]\nonumber\\
			& \leq K \nu(A_\ep)^{\rho} \mathcal{M}^{-\rho}\big(1+\mathcal{M}^{\frac{\rho}{2}-1}\big)\E\lf[\E \lf[\sum_{i=1}^{\mathcal{M}}\lf\|\E\lf[\gm\lf(x,V^{\dl,\ep,\mathcal{M}}_{i,t_1}\rt)\rt]-\gm\lf(x,V^{\dl,\ep,\mathcal{M}}_{i,t_1}\rt)\rt\|^{\rho}\rt] \Biggr\rvert_{x=X_{\max\{s,\dl(t)\}}^{s,x,\dl,\ep,\mathcal{M}}} \rt]\nonumber\\
			& \leq K \nu(A_\ep)^{\rho}\big(  \mathcal{M}^{-\rho+1}+ \mathcal{M}^{-\frac{\rho}{2}} \big)\E\lf[\E \lf[\lf\|\E\lf[\gm\lf(x,V^{\dl,\ep,\mathcal{M}}_{1,t_1}\rt)\rt]-\gm\lf(x,V^{\dl,\ep,\mathcal{M}}_{1,t_1}\rt)\rt\|^{\rho}\rt] \Biggr\rvert_{x=X_{\max\{s,\dl(t)\}}^{s,x,\dl,\ep,\mathcal{M}}} \rt]\nonumber\\
			& \leq K  \nu(A_\ep)^{\rho}\big(  \mathcal{M}^{-\rho+1}+ \mathcal{M}^{-\frac{\rho}{2}} \big)\E\lf[  \E\lf\|\gm\lf(x,V^{\dl,\ep,\mathcal{M}}_{1,t_1}\rt)\rt\|^{\rho} \Biggr\rvert_{x=X_{\max\{s,\dl(t)\}}^{s,x,\dl,\ep,\mathcal{M}}} \rt]\nonumber\\
			& \leq K  \nu(A_\ep)^{\rho-1}\big(  \mathcal{M}^{-\rho+1}+ \mathcal{M}^{-\frac{\rho}{2}} \big) \E\lf[\int_{A_\ep} \|\gm(X_{\max\{s,\dl(t)\}}^{s,x,\dl,\ep,\mathcal{M}},z)\|^{\rho}\nu(\rd z) \rt]\nonumber\\
			& \leq K\nu(A_\ep)^{\rho-1}\big(  \mathcal{M}^{-\rho+1}+ \mathcal{M}^{-\frac{\rho}{2}} \big) \E\lf[\int_{A_\ep} \|\gm(X_{\max\{s,\dl(t)\}}^{s,x,\dl,\ep,\mathcal{M}},z) -\gm(0,z) \|^{\rho}+\|\gm(0,z)\|^{\rho}\nu(\rd z) \rt]\nonumber\\
			&\leq K (\ep^{-2}\mathcal{Z}_d)^{\rho-1} \big(  \mathcal{M}^{-\rho+1}+ \mathcal{M}^{-\frac{\rho}{2}} \big)  \big( L_d\E\|X_{\max\{s,\dl(t)\}}^{s,x,\dl,\ep,\mathcal{M}}\|^{\rho}+ N_d\big).
		\end{align}
		Since $\mathcal{M}\geq (1 \vee \ep^{-2}\mathcal{Z}_d)^{2}$, we conclude, for all $t\in[s,T]$, that
		\begin{align*}
			&\E \lf[\bigg\|\int_{A_\ep}\gm\lf(X_{\max\{s,\dl(t)\}}^{s,x,\dl,\ep,\mathcal{M}}, z\rt) \nu(\rd z)- \bigg(\frac{\nu(A_\ep)}{\mathcal{M}}\sum_{i=1}^{\mathcal{M}}\gm\lf(X_{\max\{s,\dl(t)\}}^{s,x,\dl,\ep,\mathcal{M}},V^{\dl,\ep,\mathcal{M}}_{i,\max\{s,\dl(t)\}}\rt)\bigg)\bigg\|^{\rho}\rt]\\
			&\quad \leq K \big( N_d+L_d\E \|X_{\max\{s,\dl(t)\}}^{s,x,\dl,\ep,\mathcal{M}}\|^{\rho}\big).
		\end{align*}
	This completes the proof.
	\end{proof}

As a preparatory step for the subsequent estimate analysis, we establish uniform moment bounds for schemes \eqref{tamedscheme1}, \eqref{tamedscheme2}, and \eqref{Tamedscheme2}.

\begin{lemma}\label{schemebounds}
	Let Assumptions \ref{A1}--\ref{A4} hold and let $\dl \in \Th$ be fixed. Then, for all $ s\in [0,T]$, $x\in \R^d$, $\ep \in (0,1)$, $\mathcal{M} \in \N$ with $\mathcal{M} \geq (1 \vee \ep^{-2}\mathcal{Z}_d)^{2}$, and $\rho \geq 1$, it holds that
	\begin{equation*}
		\begin{aligned}
			\sup_{t\in[s,T]}\E\|X^{s,x,\dl}_t\|^{\rho}<  \infty, \quad \sup_{t\in[s,T]}\E\|X^{s,x,\dl,\ep}_t\|^{\rho}<  \infty, \quad
			\sup_{t\in[s,T]}\E\|X^{s,x,\dl,\ep,\mathcal{M}}_t\|^{\rho}<  \infty.
		\end{aligned}
	\end{equation*}
\begin{proof}
	By Remark \ref{remark2}, the coefficients $\mu^{\dl}$ and $\sg^\dl$ are growing linearly for fixed $\dl \in \Th$, hence standard arguement applies, see e.g., \cite{MR1214374}.
\end{proof}
\end{lemma}

\begin{proof}[\textbf{Proof of Lemma \ref{L3}}]
	Fix $s \in [0,T]$, $x \in \R^d$, and $\dl \in \Th$ throughout this proof. Then, applying It\^o's formula to $\|X_t^{s,x,\dl}\|^{p_0}$ yields, for all $t \in[s,T]$, that
	\begin{equation*}
		\begin{aligned}
			\|X^{s,x,\dl}_t\|^{p_0} &=\|x\|^{p_0}+p_0\int^{t}_s\|X^{s,x,\dl}_{r-}\|^{p_0-2} X^{s,x,\dl}_{r-} \mu^\dl(X^{s,x,\dl}_{\max\{s,\dl(r-)\}}) \rd r \\
			&\quad+ p_0\int^{t}_s\|X^{s,x,\dl}_{r-}\|^{p_0-2} X^{s,x,\dl}_{r-} \sg^\dl(X^{s,x,\dl}_{\max\{s,\dl(r-)\}})\rd W_r\\
			&\quad+\frac{p_0(p_0-2)}{2} \int^{t}_s\|X^{s,x,\dl}_{r-}\|^{p_0-4} \|\sg^\dl(X^{s,x,\dl}_{\max\{s,\dl(r-)\}})^*X^{s,x,\dl}_{r-} \|^{2}\rd r\\
			&\quad+\frac{p_0}{2}\int^{t}_s\|X^{s,x,\dl}_{r-}\|^{p_0-2}\|\sg^\dl(X^{s,x,\dl}_{\max\{s,\dl(r-)\}})\|^2\rd r\\
			&\quad+p_0\int^t_s\int_{\R^d}\|X^{s,x,\dl}_{r-}\|^{p_0-2} X^{s,x,\dl}_{r-}\gm(X^{s,x,\dl}_{\max\{s,\dl(r-)\}},z)\tl{\pi}(\rd z,\rd r)\\
			&\quad+\int_s^t\int_{\R^d}\big\{\|X^{s,x,\dl}_{r-}+\gm(X^{s,x,\dl}_{\max\{s,\dl(r-)\}},z)\|^{p_0}-\|X^{s,x,\dl}_{r-}\|^{p_0}\\
			&\quad\;\;-p_0\|X^{s,x,\dl}_{r-}\|^{p_0-2}X^{s,x,\dl}_{r-}\gm(X^{s,x,\dl}_{\max\{s,\dl(r-)\}},z)\big\}\pi(\rd z,\rd r),
		\end{aligned}
	\end{equation*}
	almost surely. By Lemmas \ref{schemebounds}, \ref{Formula for the remainder}, and Cauchy-Schwarz inequality, we have, for all $t \in [s,T]$, that
	\begin{equation}\label{L3A13}
		\E\|X^{s,x,\dl}_{t}\|^{p_0} \leq\|x\|^{p_0}+\sum_{k=1}^3 C_k(t), \tag{$\triangle$}
	\end{equation}
	where
	\begin{align*}
		C_1(t)&:=p_0\E \int ^{t }_s \|X^{s,x,\dl}_r\|^{p_0-2}(X^{s,x,\dl}_r-X^{s,x,\dl}_{\max\{s,\dl(r)\}})\mu^\dl(X^{s,x,\dl}_{\max\{s,\dl(r)\}}) \rd r,\\
		C_2(t)&:=\frac{p_0} {2}\E\int^{t }_s\|X^{s,x,\dl}_r\|^{p_0-2} \{2 X^{s,x,\dl}_{\max\{s,\dl(r)\}} \mu^\dl(X^{s,x,\dl}_{\max\{s,\dl(r)\}})+(p_0-1) \|\sg^\dl(X^{s,x,\dl}_{\max\{s,\dl(r)\}})\|^2\} \rd r, \\
		C_3(t)&:=K\E\int^{t }_s\int_{\R^d} \{\|X^{s,x,\dl}_r\|^{p_0-2}\|\gm(X^{s,x,\dl}_{\max\{s,\dl(r)\}},z)\|^2+\|\gm(X^{s,x,\dl}_{\max\{s,\dl(r)\}},z)\|^{p_0}\}\nu(\rd z)\rd r.
	\end{align*}
	By using Young's inequality, Remark \ref{remark2}, and Lemma \ref{corollary1}\,\ref{lemma2}, we obtain, for all $t\in[s,T]$, that
	\begin{align}\label{L2B_1}
		C_1(t) &\leq K|\dl|^{-\frac{1}{4}}\E\int_s^{t } \|X_r^{s,x,\dl}\|^{p_0-2}\|X^{s,x,\dl}_r-X^{s,x,\dl}_{\max\{s,\dl(r)\}}\|(1+\|X_{\max\{s,\dl(r)\}}^{s,x,\dl}\|)\rd r\nonumber\\ 
		&\leq K|\dl|^{-\frac{1}{4}}\bigg\{\E\int_s^{t } \|X_r^{s,x,\dl}-X^{s,x,\dl}_{\max\{s,\dl(r)\}}\|^{p_0-1}(1+\|X_{\max\{s,\dl(r)\}}^{s,x,\dl}\|)\rd r\nonumber\\
		&\quad+ \E\int_s^{t} \|X^{s,x,\dl}_{\max\{s,\dl(r)\}}\|^{p_0-2}\|X^{s,x,\dl}_r-X^{s,x,\dl}_{\max\{s,\dl(r)\}}\|(1+\|X_{\max\{s,\dl(r)\}}^{s,x,\dl}\|))\rd r\bigg\}\nonumber\\
		&\leq K|\dl|^{-\frac{1}{4}}\bigg\{\E\int_s^{t} (1+\|X_{\max\{s,\dl(r)\}}^{s,x,\dl}\|)\E\big[\|X_r^{s,x,\dl}-X^{s,x,\dl}_{\max\{s,\dl(r)\}}\|^{p_0-1}|\mathcal{F}_{\max\{s,\dl(r)\}} \big]\rd r\nonumber\\
		&\quad+ \E\int_s^{t} \|X^{s,x,\dl}_{\max\{s,\dl(r)\}}\|^{p_0-2}(1+\|X_{\max\{s,\dl(r)\}}^{s,x,\dl}\|) \E\big[\|X^{s,x,\dl}_r-X^{s,x,\dl}_{\max\{s,\dl(r)\}}\||\mathcal{F}_{\max\{s,\dl(r)\}} \big] \rd r\bigg\}\nonumber\\
		&\leq K\E \int_s^{t} 1+N_d^{\frac{p_0}{2}}+(1+L_d^{\frac{p_0}{2}})\|X^{s,x,\dl}_{\max\{s,\dl(r)\}}\|^{p_0} \rd r \nonumber\\ 
		&\leq  K(t-s)(1+N_d^{\frac{p_0}{2}})+K(1+L_d^{\frac{p_0}{2}})\int_s^t\sup_{u\in[s,r]} \E\|X_{u}^{s,x,\dl}\|^{p_0} \rd r.
	\end{align}
	Moreover, by Assumption \ref{A1}, Remarks \ref{remark1}, \ref{remark2}, and Young's inequality,
	\begin{align}\label{L2B_2}
		C_2(t)&\leq \frac{p_0} {2}\E\int^{t}_s\|X^{s,x,\dl}_r\|^{p_0-2} \{2 X^{s,x,\dl}_{\max\{s,\dl(r)\}} \mu(X^{s,x,\dl}_{\max\{s,\dl(r)\}})+(p_0-1) \|\sg(X^{s,x,\dl}_{\max\{s,\dl(r)\}})\|^2\} \rd r \nonumber\\
		&\leq \frac{p_0}{2}\E\int^{t}_s\|X^{s,x,\dl}_{r}\|^{p_0-2}\{2 X^{s,x,\dl}_{\max\{s,\dl(r)\}}(\mu(X^{s,x,\dl}_{\max\{s,\dl(r)\}})-\mu(0)) \nonumber\\
		&\hspace{50mm}+(p_0-1)\|\sg(X^{s,x,\dl}_{\max\{s,\dl(r)\}})-\sg(0)\|^2\}\rd r \nonumber\\
		&\quad+ \frac{p_0}{2}\E\int^{t}_s\|X^{s,x,\dl}_{r}\|^{p_0-2}\{2X^{s,x,\dl}_{\max\{s,\dl(r)\}}\mu(0)+(p_0-1)\|\sg(0)\|^2\}\rd r   \nonumber \\
		&\leq K\E\int^{t}_s 1+\|X^{s,x,\dl}_{r}\|^{p_0}+\|X^{s,x,\dl}_{\max\{s,\dl(r)\}}\|^{p_0}\rd r\nonumber \\
		&\leq K(t-s)+ K\int_s^{t} \sup_{u\in[s,r]} \E\|X_{u }^{s,x,\dl}\|^{p_0} \rd r.
	\end{align}
	Finally, on using Assumptions \ref{A3}, \ref{A4}, and Young's inequality, one obtains that
	\begin{align}\label{L2B_3}
		C_3(t) &\leq K\E\int^{t}_s\int_{\R^d} \big\{\|X^{s,x,\dl}_{r}\|^{p_0-2}\|\gm(X^{s,x,\dl}_{\max\{s,\dl(r)\}},z)-\gm(0,z)\|^2\nonumber\\
		&\hspace{35mm} +\|\gm(X^{s,x,\dl}_{\max\{s,\dl(r)\}},z)-\gm(0,z)\|^{p_0}\big\}\nu(\rd z)\rd r \nonumber\\
		&\quad+K\E\int^{t}_s\int_{\R^d} \lf\{\|X^{s,x,\dl}_{r}\|^{p_0-2}\|\gm(0,z)\|^2+\|\gm(0,z)\|^{p_0}\rt\}\nu(\rd z)\rd r \nonumber\\
		&\leq K\E\int^{t}_s N_d+N_d^{\frac{p_0}{2}}+\|X^{s,x,\dl}_{r}\|^{p_0}+(L_d+L_d^{\frac{p_0}{2}})\|X^{s,x,\dl}_{\max\{s,\dl(r)\}}\|^{p_0} \rd r\nonumber\\
		&\leq  K(t-s)(N_d+N_d^{\frac{p_0}{2}})+K(1+L_d^{\frac{p_0}{2}})\int^{t}_s \sup_{u\in[s,r]} \E\|X_{u }^{s,x,\dl}\|^{p_0} \rd r.
	\end{align}
	Substituting \eqref{L2B_1}, \eqref{L2B_2} and \eqref{L2B_3}  into \eqref{L3A13} yields that
	\begin{equation*}
		\begin{aligned}
			\sup_{t\in[s,T]}\E\|X^{s,x,\dl}_{t}\|^{p_0} \leq\|x\|^{p_0} + K(T-s)\big(1+N_d^{\frac{p_0}{2}}\big)+K\big(1+L_d^{\frac{p_0}{2}}\big)\int^{T}_s\sup_{u\in[s,r]}\E\|X_{u}^{s,x,\dl}\|^{p_0}\rd r.
		\end{aligned}
	\end{equation*}
	Then, we apply Gr\"onwall's lemma to obtain that
	\begin{equation*}
		\begin{aligned}
			\sup_{t\in[s,T]}   \E\|X^{s,x,\dl}_{t}\|^{p_0} 
			& \leq K\big( 1+\|x\|^{p_0}+N_d^{\frac{p_0}{2}}\big)e^{K\big(1+L_d^{\frac{p_0}{2}}\big)}.\\
		\end{aligned}
	\end{equation*}
\end{proof}
	\begin{proof}[\textbf{Proof of Lemma \ref{epbounds}}]
Using the fact that
$
\int_{A_\ep} g(z)\,\nu(\rd z)\le \int_{\R^d} g(z)\,\nu(\rd z)
$
for any non-negative integrable function \(g\), the proof follows from the same arguments as in the proof of Lemma \ref{L3}.
\end{proof}

\begin{proof}[\textbf{Proof of Lemma \ref{scheme3bounds}}]
	Fix $s\in [0,T]$, $x\in \R^d$, $\dl \in \Th$, $\ep \in (0,1)$, and $\mathcal{M} \in \N$ with $\mathcal{M}\geq (1 \vee \ep^{-2}\mathcal{Z}_d)^{2}$ throughout this proof. 
	Applying It\^o's formula to $\|X^{s,x,\dl,\ep,\mathcal{M}}_{t}\|^{p_0}$ in \eqref{Tamedscheme2}, by Lemmas \ref{schemebounds}, \ref{Formula for the remainder}, and Cauchy-Schwarz inequality, we obtain, for all $t \in [s,T]$, that
	\begin{equation*}
		\E\|X^{s,x,\dl,\ep,\mathcal{M}}_{t}\|^{p_0} \leq\|x\|^{p_0}+\sum_{k=1}^4 D_k(t),
	\end{equation*}
	where
	\begin{align*}
		D_1(t)&:=p_0\E \int ^{t}_s \|X^{s,x,\dl,\ep,\mathcal{M}}_r\|^{p_0-2}(X^{s,x,\dl,\ep,\mathcal{M}}_r-X^{s,x,\dl,\ep,\mathcal{M}}_{\max\{s,\dl(r)\}})\mu^\dl(X^{s,x,\dl,\ep,\mathcal{M}}_{\max\{s,\dl(r)\}}) \rd r,\\
		D_2(t)&:=\frac{p_0} {2}\E\int^{t}_s\|X^{s,x,\dl,\ep,\mathcal{M}}_r\|^{p_0-2} \{2 X^{s,x,\dl,\ep,\mathcal{M}}_{\max\{s,\dl(r)\}} \mu^\dl(X^{s,x,\dl,\ep,\mathcal{M}}_{\max\{s,\dl(r)\}})+(p_0-1) \|\sg^\dl(X^{s,x,\dl,\ep,\mathcal{M}}_{\max\{s,\dl(r)\}})\|^2\} \rd r, \\
		D_3(t)&:=K\E\int^{t}_s\int_{A_\ep} \{\|X^{s,x,\dl,\ep,\mathcal{M}}_r\|^{p_0-2}\|\gm(X^{s,x,\dl,\ep,\mathcal{M}}_{\max\{s,\dl(r)\}},z)\|^2+\|\gm(X^{s,x,\dl,\ep,\mathcal{M}}_{\max\{s,\dl(r)\}},z)\|^{p_0}\}\nu(\rd z)\rd r,\\
		D_4(t)&:=p_0\E\int_s^{t}\|X^{s,x,\dl,\ep,\mathcal{M}}_r\|^{p_0-2}X^{s,x,\dl,\ep,\mathcal{M}}_r\\
		&\quad\quad\quad\times\lf\{\int_{A_\ep}\gm\lf(X_{\max\{s,\dl(r)\}}^{s,x,\dl,\ep,\mathcal{M}}, z\rt) \nu(dz)- \lf(\frac{\nu(A_\ep)}{\mathcal{M}}\sum_{i=1}^{\mathcal{M}}\gm\lf(X_{\max\{s,\dl(r)\}}^{s,x,\dl,\ep,\mathcal{M}},V^{\dl,\ep,\mathcal{M}}_{i,\max\{s,\dl(r)\}}\rt)\rt) \rt\} \rd r.
	\end{align*}
	To estimate $D_1(t)$, we apply Young's inequality, Remark \ref{remark2}, Lemmas \ref{corollary1}\,\ref{lemma3} and \ref{L3A_4} to derive, for all $t \in [s,T]$, that
	\begin{align*}
		D_1(t) &\leq K|\dl|^{-\frac{1}{4}}\E\int_s^{t } \|X_r^{s,x,\dl,\ep,\mathcal{M}}\|^{p_0-2}\|X^{s,x,\dl,\ep,\mathcal{M}}_r-X^{s,x,\dl,\ep,\mathcal{M}}_{\max\{s,\dl(r)\}}\|(1+\|X_{\max\{s,\dl(r)\}}^{s,x,\dl,\ep,\mathcal{M}}\|)\rd r\nonumber\\ 
		&\leq K|\dl|^{-\frac{1}{4}}\bigg\{\E\int_s^{t} (1+\|X_{\max\{s,\dl(r)\}}^{s,x,\dl,\ep,\mathcal{M}}\|)\E\big[\|X_r^{s,x,\dl,\ep,\mathcal{M}}-X^{s,x,\dl,\ep,\mathcal{M}}_{\max\{s,\dl(r)\}}\|^{p_0-1}|\mathcal{F}_{\max\{s,\dl(r)\}} \big]\rd r\nonumber\\
		&\quad+ \E\int_s^{t} \|X^{s,x,\dl,\ep,\mathcal{M}}_{\max\{s,\dl(r)\}}\|^{p_0-2}(1+\|X_{\max\{s,\dl(r)\}}^{s,x,\dl,\ep,\mathcal{M}}\|) \E\big[\|X^{s,x,\dl,\ep,\mathcal{M}}_r-X^{s,x,\dl,\ep,\mathcal{M}}_{\max\{s,\dl(r)\}}\||\mathcal{F}_{\max\{s,\dl(r)\}} \big] \rd r\bigg\}\nonumber\\
		&\leq K\bigg\{\E\int_s^{t} (1+\|X_{\max\{s,\dl(r)\}}^{s,x,\dl,\ep,\mathcal{M}}\|)\bigg[
		\big(1+N_d^{\frac{p_0-1}{2}}+(1+L_d^{\frac{p_0-1}{2}})\|X^{s,x,\dl,\ep,\mathcal{M}}_{\max\{s,\dl(t)\}}\|^{p_0-1} \big)\\
		&\quad+  \bigg\|\int_{A_\ep} \gm(X^{s,x,\dl,\ep,\mathcal{M}}_{\max\{s,\dl(t)\}},z)\nu(\rd z) - \bigg(\frac{\nu(A_\ep)}{\mathcal{M}}\sum_{i=1}^{\mathcal{M}} \gm\lf(X_{\max\{s,\dl(t)\}}^{s,x,\dl,\ep,\mathcal{M}},V^{\dl,\ep,\mathcal{M}}_{i,\max\{s,\dl(t)\}}\rt)\bigg)\bigg\|^{p_0-1} \bigg] \rd r\nonumber\\
		&\quad+ \E\int_s^{t} \|X^{s,x,\dl,\ep,\mathcal{M}}_{\max\{s,\dl(r)\}}\|^{p_0-2}(1+\|X_{\max\{s,\dl(r)\}}^{s,x,\dl,\ep,\mathcal{M}}\|) \bigg[\big(1+N_d^{\frac{1}{2}}+(1+L_d^{\frac{1}{2}})\|X^{s,x,\dl,\ep,\mathcal{M}}_{\max\{s,\dl(t)\}}\|\big)\\
		&\quad+  \bigg\|\int_{A_\ep} \gm(X^{s,x,\dl,\ep,\mathcal{M}}_{\max\{s,\dl(t)\}},z)\nu(\rd z) - \bigg(\frac{\nu(A_\ep)}{\mathcal{M}}\sum_{i=1}^{\mathcal{M}} \gm\lf(X_{\max\{s,\dl(t)\}}^{s,x,\dl,\ep,\mathcal{M}},V^{\dl,\ep,\mathcal{M}}_{i,\max\{s,\dl(t)\}}\rt)\bigg)\bigg\| \bigg]\rd r\bigg\}\nonumber\\
		&\leq K\bigg\{\E\int_s^{t} 1+\|X_{\max\{s,\dl(r)\}}^{s,x,\dl,\ep,\mathcal{M}}\|^{p_0} \rd r+ \E\int_s^{t} 1+N_d^{\frac{p_0}{2}}+(1+L_d^{\frac{p_0}{2}})\|X^{s,x,\dl,\ep,\mathcal{M}}_{\max\{s,\dl(t)\}}\|^{p_0} \rd r\\
		&\quad+ \E\int_s^{t} \bigg\|\int_{A_\ep} \gm(X^{s,x,\dl,\ep,\mathcal{M}}_{\max\{s,\dl(t)\}},z)\nu(\rd z) - \bigg(\frac{\nu(A_\ep)}{\mathcal{M}}\sum_{i=1}^{\mathcal{M}} \gm\lf(X_{\max\{s,\dl(t)\}}^{s,x,\dl,\ep,\mathcal{M}},V^{\dl,\ep,\mathcal{M}}_{i,\max\{s,\dl(t)\}}\rt)\bigg)\bigg\|^{p_0}  \rd r \bigg\}\nonumber\\
		&\leq K\bigg\{ \int_s^{t} 1+N_d^{\frac{p_0}{2}}+(1+L_d^{\frac{p_0}{2}})\E\|X^{s,x,\dl,\ep,\mathcal{M}}_{\max\{s,\dl(t)\}}\|^{p_0} \rd r+ \int_s^{t} N_d+L_d \E \|X_{\max\{s,\dl(t)\}}^{s,x,\dl,\ep,\mathcal{M}}\|^{p_0} \rd r \bigg\}\nonumber\\
		&\leq K\E \int_s^{t} 1+N_d^{\frac{p_0}{2}}+(1+L_d^{\frac{p_0}{2}})\E\|X^{s,x,\dl,\ep,\mathcal{M}}_{\max\{s,\dl(r)\}}\|^{p_0} \rd r \nonumber\\ 
		&\leq  K(t-s)(1+N_d^{\frac{p_0}{2}})+K(1+L_d^{\frac{p_0}{2}})\int_s^t\sup_{u\in[s,r]} \E\|X_{u}^{s,x,\dl,\ep,\mathcal{M}}\|^{p_0} \rd r.
	\end{align*}
	The bounds for $D_2(t)$--$D_3(t)$ follow as in Lemma \ref{L3} (cf.~\eqref{L2B_2}--\eqref{L2B_3}), using Assumptions \ref{A1}, \ref{A3}, \ref{A4}, Remarks \ref{remark1}, \ref{remark2}, and Lemma \ref{corollary1}\,\ref{lemma3}. In particular, for all $t\in[s,T]$,
	\begin{equation*}
		\begin{aligned}
			D_2(t) &\leq K(t-s)+K\int_s^{t} \sup_{u\in[s,r]} \E\|X_{u }^{s,x,\dl,\ep,\mathcal{M}}\|^{p_0} \rd r,\\
			D_3(t) &\leq K(t-s)(N_d+N_d^{\frac{p_0}{2}}) + K(1+L_d^{\frac{p_0}{2}})\int_s^{t }\sup_{u\in[s,r]} \E\|X_{u}^{s,x,\dl,\ep,\mathcal{M}}\|^{p_0} \rd r.
		\end{aligned}
	\end{equation*}
	For $D_4(t)$, we apply Young's inequality to obtain, for all $t\in[s,T]$, that
	\begin{align*}
		D_4(t) &\leq  K\E\int^{t}_s\|X_r^{s,x,\dl,\ep,\mathcal{M}}\|^{p_0}\rd r\\
		+K\E \int^{t}_s \bigg\|&\int_{A_\ep}\gm\lf(X_{\max\{s,\dl(r)\}}^{s,x,\dl,\ep,\mathcal{M}}, z\rt) \nu(\rd z)- \bigg(\frac{\nu(A_\ep)}{\mathcal{M}}\sum_{i=1}^{\mathcal{M}}\gm\lf(X_{\max\{s,\dl(r)\}}^{s,x,\dl,\ep,\mathcal{M}},V^{\dl,\ep,\mathcal{M}}_{i,\max\{s,\dl(r)\}}\rt)\bigg)\bigg\|^{p_0}\rd r.
	\end{align*}
	By Lemma \ref{L3A_4} and the assumption of $\mathcal{M} \geq (1 \vee \ep^{-2}\mathcal{Z}_d)^{2}$, we have, for all $t\in[s,T]$, that
	\begin{align*}
		D_4(t) &\leq K\E\int^{t}_s \|X_{r}^{s,x,\dl,\ep,\mathcal{M}}\|^{p_0}\rd r+ K\int^{t}_s 
		N_d+L_d\E\|X_{\max\{s,\dl(r)\}}^{s,x,\dl,\ep,\mathcal{M}}\|^{p_0}  \rd r\\
		&\leq K  N_d(t-s)+ K(1+L_d)\int^{t}_s 
		\sup_{u\in[s,r]}\E\|X_{u}^{s,x,\dl,\ep,\mathcal{M}}\|^{p_0} \rd r.
	\end{align*}
	Collecting the above estimates yields,
	\begin{align*}
		\sup_{t\in[s,T]}\E\|X^{s,x,\dl,\ep,\mathcal{M}}_{t}\|^{p_0} &\leq\|x\|^{p_0} + K(T-s)\big(1+N_d^{\frac{p_0}{2}}\big)+K(1+L_d^{\frac{p_0}{2}})\int^{T}_s\sup_{u\in[s,r]}\E\|X_{u}^{s,x,\dl,\ep,\mathcal{M}}\|^{p_0}\rd r.
	\end{align*}
	An application of Gr\"onwall’s lemma completes the proof.
\end{proof}

\subsection{Proof of results in Section \ref{section5.2}}\label{app:section5.2}
The following lemma provides an $O(|\dl|)$ estimate for the one-step error of schemes \eqref{tamedscheme1}, \eqref{tamedscheme2}, and \eqref{Tamedscheme2}. This rate is sharper than the conditional increment estimate of Lemma \ref{corollary1} and will be crucial for obtaining the final convergence rate in Theorem \ref{theorem1}\,\ref{theorem(i)}.
\begin{lemma}\label{corollary2}
	Let Assumptions \ref{A1}--\ref{A4} hold, then there exists $K>0$ such that, 
	\begin{enumerate}
	\item\label{lemma5}  for all $s\in [0,T]$, $x\in \R^d$, $\dl \in \Th$, $\ep\in(0,1)$, and $\rho \in [2,\frac{2p_0}{\chi+2}]$,
	\[
	\sup_{t\in[s,T]} \E \|X_t^{s,x,\dl}-X^{s,x,\dl}_{\max\{s,\dl(t)\}} \|^{\rho} \leq   K|\dl|(1+L_d^{\frac{\rho}{2}})(1+\|x\|^{p_0}+N_d^{\frac{p_0}{2}})e^{K(1+L_d^{\frac{p_0}{2}})},
	\]
     \item\label{corollary2(ii)} for all $\dl \in \Th$, $s\in [0,T], x\in \R^d$, $\ep\in(0,1)$, and $\rho \in [2,\frac{2p_0}{\chi+2}]$,
     \[
     \sup_{t\in[s,T]} \E \|X_t^{s,x,\dl,\ep}-X^{s,x,\dl,\ep}_{\max\{s,\dl(t)\}} \|^{\rho}\leq K|\dl|(1+L_d^{\frac{\rho}{2}})(1+\|x\|^{p_0}+N_d^{\frac{p_0}{2}})e^{K(1+L_d^{\frac{p_0}{2}})},
     \]
     \item\label{lemma9} for all $\dl \in \Th$, $s\in [0,T], x\in \R^d$, $\ep\in(0,1)$, $\mathcal{M} \in \N$ with $\mathcal{M} \geq (1 \vee \ep^{-2}\mathcal{Z}_d)^{2}$, and $\rho \in [2,\frac{2p_0}{\chi+2}]$,
     \[
     \sup_{t\in[s,T]} \E \|X_t^{s,x,\dl,\ep,\mathcal{M}}-X^{s,x,\dl,\ep,\mathcal{M}}_{\max\{s,\dl(t)\}} \|^{\rho} \leq K|\dl|(1+L_d^{\frac{\rho}{2}})\big(1+\|x\|^{p_0}+N_d^{p_0}\big)e^{K(1+L_d^{p_0})}.
     \]
	\end{enumerate}
	\begin{proof}[Proof of (i):]
		Fix $s\in [0,T]$, $x\in \R^d$, and $\dl \in \Th$ throughout this proof. By the definition of scheme \eqref{tamedscheme1}, we obtain, for all $t\in[s,T]$, that
		\begin{align*}
			\E\|X^{s,x,\dl}_t - X^{s,x,\dl}_{\max\{s,\dl(t)\}} \|^\rho &\leq K\E \big\|\int^{t}_{\max\{s,\dl(t)\}} \mu^{\dl}(X^{s,x,\dl}_{\max\{s,\dl(r)\}})\rd r\big\|^\rho  \\
			&\quad+ K\E\big\|\int^{t}_{\max\{s,\dl(t)\}} \sg^{\dl}(X^{s,x,\dl}_{\max\{s,\dl(r)\}})\rd W_r \big\|^\rho \\
			&\quad+K\E\big\|\int^{t}_{\max\{s,\dl(t)\}} \int_{\R^d} \gm(X^{s,x,\dl}_{\max\{s,\dl(r)\}},z) \tl\pi(\rd z,\rd r)\big\|^\rho .
		\end{align*}
		Applying H\"older's inequality, \cite[Theorem 7.3]{xuerongmao}, and \cite[Lemma 1]{Mikulevicius} yields, for all $t\in[s,T]$, that
		\begin{align*}
			\E\|X^{s,x,\dl}_t - X^{s,x,\dl}_{\max\{s,\dl(t)\}} \|^\rho &\leq K|\dl|^{\rho-1}\E \int^{t}_{\max\{s,\dl(t)\}} \|\mu^{\dl}(X^{s,x,\dl}_{\max\{s,\dl(r)\}})\|^{\rho}\rd r\\
			&\quad+ K|\dl|^{\frac{\rho}{2}-1}\E\int^{t}_{\max\{s,\dl(t)\}} \|\sg^{\dl}(X^{s,x,\dl}_{\max\{s,\dl(r)\}})\|^{\rho}\rd r \\
			&\quad+K|\dl|^{\frac{\rho}{2}-1}\E\int^{t}_{\max\{s,\dl(t)\}} \big(\int_{\R^d} \|\gm(X^{s,x,\dl}_{\max\{s,\dl(r)\}},z)\|^2 \nu(\rd z)\big)^{\frac{\rho}{2}} \rd r\\
			&\quad+K\E\int^{t}_{\max\{s,\dl(t)\}}\int_{\R^d} \|\gm(X^{s,x,\dl}_{\max\{s,\dl(r)\}},z)\|^\rho \nu(\rd z) \rd r.
		\end{align*}
		Then, we apply Remarks \ref{remark1} and \ref{remark2}, to obtain, for all $t\in[s,T]$, that
		\begin{align*}
			\E\|X^{s,x,\dl}_t - X^{s,x,\dl}_{\max\{s,\dl(t)\}} \|^\rho &\leq K|\dl|^{\rho}\E (1+\|X^{s,x,\dl}_{\max\{s,\dl(t)\}}\|^{\frac{\chi+2}{2}\rho})\\
			&\quad+ K|\dl|^{\frac{\rho}{2}}\E  (1+\|X^{s,x,\dl}_{\max\{s,\dl(t)\}}\|^{\frac{\chi+2}{2}\rho})\\
			&\quad+K|\dl|^{\frac{\rho}{2}-1}\E\int^{t}_{\max\{s,\dl(t)\}} \big(\int_{\R^d} \|\gm(X^{s,x,\dl}_{\max\{s,\dl(r)\}},z)-\gm(0,z)\|^2 \nu(\rd z)\big)^{\frac{\rho}{2}} \rd r\\
			&\quad+K|\dl|^{\frac{\rho}{2}-1}\E\int^{t}_{\max\{s,\dl(t)\}} \big(\int_{\R^d} \|\gm(0,z)\|^2 \nu(\rd z)\big)^{\frac{\rho}{2}} \rd r\\
			&\quad+K\E\int^{t}_{\max\{s,\dl(t)\}}\int_{\R^d} \|\gm(X^{s,x,\dl}_{\max\{s,\dl(r)\}},z)-\gm(0,z)\|^\rho \nu(\rd z) \rd r\\
			&\quad+K\E\int^{t}_{\max\{s,\dl(t)\}}\int_{\R^d} \|\gm(0,z)\|^\rho \nu(\rd z) \rd r,
		\end{align*}
		which on the application of Assumptions \ref{A3} and \ref{A4} gives, for all $t\in[s,T]$, that
		\begin{align*}
			\E\|X^{s,x,\dl}_t - X^{s,x,\dl}_{\max\{s,\dl(t)\}} \|^\rho &\leq K(|\dl|^{\frac{\rho}{2}}+|\dl|^{\rho})\E  (1+\|X^{s,x,\dl}_{\max\{s,\dl(t)\}}\|^{\frac{\chi+2}{2}\rho})\\
			&\quad+K|\dl|^{\frac{\rho}{2}}(L_d^{\frac{\rho}{2}}\E\|X^{s,x,\dl}_{\max\{s,\dl(t)\}}\|^{\rho}+N_d^{\frac{\rho}{2}})\\
			&\quad+K|\dl|(L_d\E\|X^{s,x,\dl}_{\max\{s,\dl(t)\}}\|^{\rho}+N_d).
		\end{align*}
		Given $\rho \in[2,\frac{2p_0}{\chi+2}]$, applying Lemma \ref{L3} yields that
		\begin{align*}
			\sup_{t\in[s,T]}\E\|X^{s,x,\dl}_t - X^{s,x,\dl}_{\max\{s,\dl(t)\}} \|^\rho 
			&\leq K|\dl|(1+N_d^{\frac{\rho}{2}}+L_d^{\frac{\rho}{2}}+(1+L_d^{\frac{\rho}{2}})\sup_{t\in[s,T]}\E\|X^{s,x,\dl}_{t}\|^{p_0})\\
			&\leq K|\dl|(1+L_d^{\frac{\rho}{2}})(1+\|x\|^{p_0}+N_d^{\frac{p_0}{2}})e^{K(1+L_d^{\frac{p_0}{2}})}.
		\end{align*}
	\end{proof}
	\begin{proof}[Proof of (ii)]
		The proof follows the same lines as above and is therefore omitted.
	\end{proof}
	\begin{proof}[Proof of (iii)]
		Fix $s\in [0,T]$, $x\in \R^d$, $\dl \in \Th$, $\ep \in (0,1)$, and $\mathcal{M} \in \N$ with $\mathcal{M} \geq (1 \vee \ep^{-2}\mathcal{Z}_d)^{2}$ throughout this proof. By the definition of scheme \eqref{Tamedscheme2}, we obtain, for all $t\in[s,T]$, that
		\begin{align*}
			\E\|X^{s,x,\dl,\ep,\mathcal{M}}_t - X^{s,x,\dl,\ep,\mathcal{M}}_{\max\{s,\dl(t)\}} \|^\rho &\leq K\E \big\|\int^{t}_{\max\{s,\dl(t)\}} \mu^{\dl}(X^{s,x,\dl,\ep,\mathcal{M}}_{\max\{s,\dl(r)\}})\rd r\big\|^\rho  \\
			&\quad+ K\E\big\|\int^{t}_{\max\{s,\dl(t)\}} \sg^{\dl}(X^{s,x,\dl,\ep,\mathcal{M}}_{\max\{s,\dl(r)\}})\rd W_r \big\|^\rho \\
			&\quad+K\E\big\|\int^{t}_{\max\{s,\dl(t)\}} \int_{A_\ep} \gm(X^{s,x,\dl,\ep,\mathcal{M}}_{\max\{s,\dl(r)\}},z) \tl\pi(\rd z,\rd r)\big\|^\rho \\
			&\quad+K\E\bigg\|\int^{t}_{\max\{s,\dl(t)\}} \int_{A_\ep} \gm(X^{s,x,\dl,\ep,\mathcal{M}}_{\max\{s,\dl(r)\}},z)\nu(\rd z) \\
			&\quad\quad\quad - \lf\{\frac{\nu(A_\ep)}{\mathcal{M}}\sum_{i=1}^{\mathcal{M}}\gm\lf(X_{\max\{s,\dl(r-)\}}^{s,x,\dl,\ep,\mathcal{M}},V^{\dl,\ep,\mathcal{M}}_{i,\max\{s,\dl(r)\}}\rt)\rt\}\rd r \bigg\|^\rho.
		\end{align*}
		By H\"older's inequality, \cite[Theorem 7.3]{xuerongmao}, and \cite[Lemma 1]{Mikulevicius}, we obtain, for all $t\in[s,T]$, that
		\begin{align*}
			\E\|X^{s,x,\dl,\ep,\mathcal{M}}_t - X^{s,x,\dl,\ep,\mathcal{M}}_{\max\{s,\dl(t)\}} \|^\rho &\leq K|\dl|^{\rho-1}\E \int^{t}_{\max\{s,\dl(t)\}} \|\mu^{\dl}(X^{s,x,\dl,\ep,\mathcal{M}}_{\max\{s,\dl(r)\}})\|^{\rho}\rd r\\
			&\quad+ K|\dl|^{\frac{\rho}{2}-1}\E\int^{t}_{\max\{s,\dl(t)\}} \|\sg^{\dl}(X^{s,x,\dl,\ep,\mathcal{M}}_{\max\{s,\dl(r)\}})\|^{\rho}\rd r \\
			&\quad+K|\dl|^{\frac{\rho}{2}-1}\E\int^{t}_{\max\{s,\dl(t)\}} \big(\int_{A_\ep} \|\gm(X^{s,x,\dl,\ep,\mathcal{M}}_{\max\{s,\dl(r)\}},z)-\gm(0,z)\|^2 \nu(\rd z)\big)^{\frac{\rho}{2}} \rd r\\
			&\quad+K|\dl|^{\frac{\rho}{2}-1}\E\int^{t}_{\max\{s,\dl(t)\}} \big(\int_{A_\ep} \|\gm(0,z)\|^2 \nu(\rd z)\big)^{\frac{\rho}{2}} \rd r\\
			&\quad+K\E\int^{t}_{\max\{s,\dl(t)\}}\int_{A_\ep} \|\gm(X^{s,x,\dl,\ep,\mathcal{M}}_{\max\{s,\dl(r)\}},z)-\gm(0,z)\|^\rho \nu(\rd z) \rd r\\
			&\quad+K\E\int^{t}_{\max\{s,\dl(t)\}}\int_{A_\ep} \|\gm(0,z)\|^\rho \nu(\rd z) \rd r\\
			&\quad+K|\dl|^{\rho-1}\E\int^{t}_{\max\{s,\dl(t)\}} \bigg\|\int_{A_\ep} \gm(X^{s,x,\dl,\ep,\mathcal{M}}_{\max\{s,\dl(r)\}},z)\nu(\rd z) \\
			&\quad\quad\quad - \lf\{\frac{\nu(A_\ep)}{\mathcal{M}}\sum_{i=1}^{\mathcal{M}}\gm\lf(X_{\max\{s,\dl(r-)\}}^{s,x,\dl,\ep,\mathcal{M}},V^{\dl,\ep,\mathcal{M}}_{i,\max\{s,\dl(r)\}}\rt)\rt\}\bigg\|^\rho\rd r.
		\end{align*}
		The first two terms are controlled by Remark \ref{remark1} and the taming bound in Remark \ref{remark2}, while the third to sixth terms are handled using Assumptions \ref{A3} and \ref{A4}. Finally, the Monte Carlo term is estimated by \eqref{L3A_4(t)} (together with $\mathcal{M}\ge (1 \vee \ep^{-2}\mathcal{Z}_d)^{2}$). Collecting these bounds yields, for all $t\in[s,T]$, that
		\begin{align*}
			\E\|X^{s,x,\dl,\ep,\mathcal{M}}_t - X^{s,x,\dl,\ep,\mathcal{M}}_{\max\{s,\dl(t)\}} \|^\rho 
			\leq K|\dl|\lf(1+N_d^{\frac{\rho}{2}}+L_d^{\frac{\rho}{2}}+(1+L_d^{\frac{\rho}{2}})\E\|X^{s,x,\dl,\ep,\mathcal{M}}_{\max\{s,\dl(t)\}}\|^{\frac{\chi+2}{2}\rho}\rt).
		\end{align*}
		As $\rho \in[2,\frac{2p_0}{\chi+2}]$, we apply Lemma \ref{scheme3bounds} to obtain that,
		\begin{align*}
			\sup_{t\in[s,T]}	\E\|X^{s,x,\dl,\ep,\mathcal{M}}_t - X^{s,x,\dl,\ep,\mathcal{M}}_{\max\{s,\dl(t)\}} \|^\rho 
			\leq K|\dl|(1+L_d^{\frac{\rho}{2}})\big(1+\|x\|^{p_0}+N_d^{p_0}\big)e^{K(1+L_d^{p_0})}.
		\end{align*}
	\end{proof}
\end{lemma}
	
\begin{proof}[\textbf{Proof of Lemma \ref{convergencescheme1}}]
	Fix $s\in [0,T]$, $x\in \R^d$, and $\dl \in \Th$ throughout this proof. By the definition of SDE \eqref{exactsolu} and scheme \eqref{tamedscheme1}, we obtain, for all $t \in [s,T]$, that 
	\begin{equation}\label{convergence1}
		\begin{aligned}
			X_t^{s,x,\iota} - X_t^{s,x,\dl}&= \int_s^t \mu(X_{r-}^{s,x,\iota})-\mu^\dl(X^{s,x,\dl}_{\max\{s,\dl(r-)\}}) \rd r\\
			&+\int_s^t \sg(X_{r-}^{s,x,\iota})-\sg^\dl(X^{s,x,\dl}_{\max\{s,\dl(r-)\}}) \rd W_r \\
			&+\int_s^t \int_{\R^d} \gm(X^{s,x,\iota}_{r-},z)-\gm(X^{s,x,\dl}_{\max\{s,\dl(r-)\}},z) \tl{\pi}(\rd z ,\rd r).
		\end{aligned}
	\end{equation}
	Further applying It\^o's formula yields, for all $t \in [s,T]$, that
	\begin{align*}
		&\|X_t^{s,x,\iota} - X_t^{s,x,\dl} \|^{p}\\
		&=p\int^{t}_s\|X_{r-}^{s,x,\iota}-X^{s,x,\dl}_{r-}\|^{p-2} (X_{r-}^{s,x,\iota}-X^{s,x,\dl}_{r-}) (\mu(X_{r-}^{s,x,\iota})-\mu^\dl(X^{s,x,\dl}_{\max\{s,\dl(r-)\}}) ) \rd r \\
		&\quad+ p\int^{t}_s\|X_{r-}^{s,x,\iota}-X^{s,x,\dl}_{r-}\|^{p-2} (X_{r-}^{s,x,\iota}-X^{s,x,\dl}_{r-})(\sg(X_{r-}^{s,x,\iota} )-\sg^\dl(X^{s,x,\dl}_{\max\{s,\dl(r-)\}}))\rd W_r\\
		&\quad+\frac{p(p-2)}{2} \int^{t}_s \|X_{r-}^{s,x,\iota}-X^{s,x,\dl}_{r-}\|^{p-4} \|(\sg(X_{r-}^{s,x,\iota})-\sg^\dl(X^{s,x,\dl}_{\max\{s,\dl(r-)\}}))^*(X_{r-}^{s,x,\iota}-X^{s,x,\dl}_{r-}) \|^{2}\rd r\\
		&\quad+\frac{p}{2}\int^{t}_s\|X_{r-}^{s,x,\iota}-X^{s,x,\dl}_{r-}\|^{p-2}\|\sg(X_{r-}^{s,x,\iota})-\sg^\dl(X^{s,x,\dl}_{\max\{s,\dl(r-)\}})\|^2\rd r\\
		&\quad+p\int^t_s\int_{\R^d}\|X_{r-}^{s,x,\iota}-X^{s,x,\dl}_{r-}\|^{p-2} (X_{r-}^{s,x,\iota}-X^{s,x,\dl}_{r-}) (\gm(X^{s,x,\iota}_{r},z)-\gm(X^{s,x,\dl}_{\max\{s,\dl(r-)\}},z))\tl{\pi}(\rd z,\rd r)\\
		&\quad+\int_s^t\int_{\R^d}\big\{\|X_{r-}^{s,x,\iota}-X^{s,x,\dl}_{r-}+(\gm(X^{s,x,\iota}_{r-},z)-\gm(X^{s,x,\dl}_{\max\{s,\dl(r-)\}},z))\|^{p}-\|X_{r-}^{s,x,\iota}-X^{s,x,\dl}_{r-}\|^{p}\\
		&\quad\quad-p\|X_{r-}^{s,x,\iota}-X^{s,x,\dl}_{r-}\|^{p-2}(X_{r-}^{s,x,\iota}-X^{s,x,\dl}_{r-})(\gm(X^{s,x,\iota}_{r-},z)-\gm(X^{s,x,\dl}_{\max\{s,\dl(r-)\}},z))\big\}\pi(\rd z,\rd r),
	\end{align*}
	almost surely. Then, we define a stopping time $\upsilon^{s,x,\dl}_R$ := $\tau^{s,x}_R \wedge \tau^{s,x,\dl}_R$  with $R \in \N$ where $\tau^{s,x}_R:= \inf\{t\geq s: \|X^{s,x,\iota}_t\|\geq R\}\wedge T$ and $\tau^{s,x,\dl}_R:= \inf\{t\geq s: \|X^{s,x,\dl}_t\|\geq R\}\wedge T$. Then, by Cauchy-Schwarz inequality and Lemma \ref{Formula for the remainder}, we obtain, for all $t \in [s,T]$, that
	\begin{align*}
		&\E\|X_{t \wedge \upsilon^{s,x,\dl}_R}^{s,x,\iota} - X_{t \wedge \upsilon^{s,x,\dl}_R}^{s,x,\dl} \|^{p} \\
		& \leq
		p\E \int^{t \wedge \upsilon^{s,x,\dl}_R}_s \|X_{r}^{s,x,\iota}-X^{s,x,\dl}_{r}\|^{p-2}(X^{s,x,\iota}_r-X^{s,x,\dl}_{r})(\mu(X_{r}^{s,x,\iota})-\mu^\dl(X^{s,x,\dl}_{\max\{s,\dl(r)\}}) )  \rd r\\
		&\quad+\frac{p(p-1)}{2}\E\int^{t \wedge \upsilon^{s,x,\dl}_R}_s\|X_{r}^{s,x,\iota}-X^{s,x,\dl}_{r}\|^{p-2}  \|\sg(X_{r}^{s,x,\iota} )-\sg^\dl(X^{s,x,\dl}_{\max\{s,\dl(r)\}})\|^2 \rd r \\
		&\quad+K\E\int^{t \wedge \upsilon^{s,x,\dl}_R}_s\int_{\R^d} \big\{\|X_{r}^{s,x,\iota}-X^{s,x,\dl}_{r}\|^{p-2} \|\gm(X^{s,x,\iota}_{r},z)-\gm(X^{s,x,\dl}_{\max\{s,\dl(r)\}},z)\|^2\\
		&\hspace{40mm}+\|\gm(X^{s,x,\iota}_{r},z)-\gm(X^{s,x,\dl}_{\max\{s,\dl(r)\}},z)\|^{p}\big\}\nu(\rd z)\rd r.
	\end{align*}
	The above can be further rewritten as,
		\begin{align}\label{33}
			&\E\|X_{t \wedge \upsilon^{s,x,\dl}_R}^{s,x,\iota} - X_{t \wedge \upsilon^{s,x,\dl}_R}^{s,x,\dl} \|^{p}\leq
			\frac{p}{2}\E \int ^{t \wedge \upsilon^{s,x,\dl}_R}_s \|X_{r}^{s,x,\iota}-X^{s,x,\dl}_{r}\|^{p-2}\\
			&\quad\quad \times\bigg(2(X^{s,x,\iota}_r-X^{s,x,\dl}_{r})(\mu(X_{r}^{s,x,\iota})-\mu(X^{s,x,\dl}_{r}))+(p-1) \|\sg(X_{r}^{s,x,\iota} )-\sg(X^{s,x,\dl}_{r})\|^2 \nonumber \\
			&\hspace{15mm} +2(p-1)(\sg(X_{r}^{s,x,\iota}
			)-\sg(X^{s,x,\dl}_{r}))(\sg(X^{s,x,\dl}_{r})-\sg^\dl(X^{s,x,\dl}_{\max\{s,\dl(r)\}}))\bigg)  \rd r\nonumber \\
			&\quad+ p\E \int ^{t \wedge \upsilon^{s,x,\dl}_R}_s \|X_{r}^{s,x,\iota}-X^{s,x,\dl}_{r}\|^{p-2} (X^{s,x,\iota}_r-X^{s,x,\dl}_{r})(\mu(X^{s,x,\dl}_{r})-\mu^\dl(X^{s,x,\dl}_{\max\{s,\dl(r)\}}) ) \rd r\nonumber \\
			&\quad+\frac{p(p-1)} {2}\E\int^{t \wedge \upsilon^{s,x,\dl}_R}_s\|X_{r}^{s,x,\iota}-X^{s,x,\dl}_{r}\|^{p-2}  \|\sg(X_{r}^{s,x,\dl} )-\sg^\dl(X^{s,x,\dl}_{\max\{s,\dl(r)\}})\|^2 \rd r\nonumber  \\
			&\quad+K\E\int^{t \wedge \upsilon^{s,x,\dl}_R}_s\int_{\R^d} \big\{\|X_{r}^{s,x,\iota}-X^{s,x,\dl}_{r}\|^{p-2} \|\gm(X^{s,x,\iota}_{r},z)-\gm(X^{s,x,\dl}_{\max\{s,\dl(r)\}},z)\|^2\nonumber \\
			&\quad\quad+\|\gm(X^{s,x,\iota}_{r},z)-\gm(X^{s,x,\dl}_{\max\{s,\dl(r)\}},z)\|^{p}\big\}\nu(\rd z)\rd r.\nonumber 
		\end{align}
	We note that, by applying Young's inequality, i.e., $2ab\leq a^2/\eta+ \eta b^2$ with $\eta = (p-1)/((p_0-p))$, we obtain, for all $x,y,z\in\R^d$, that
	\begin{align*}
		&(p-1) \|\sg(x )-\sg(y)\|^2 +2(p-1)(\sg(x
		)-\sg(y))(\sg(y)-\sg^\dl(z)) \\
		&\leq(p-1) \|\sg(x)-\sg(y)\|^2+(p-1)\frac{p_0-p}{p-1}\|\sg(x
		)-\sg(y)\|^2+\frac{(p-1)^2}{p_0-p}\|\sg(y)-\sg^\dl(z)\|^2  \\
		&\leq (p_0-1) \|\sg(x )-\sg(y)\|^2 +K\|\sg(y)-\sg^\dl(z)\|^2.
	\end{align*}
	Applying this to \eqref{33} yields, for all $t\in [s,T],$ that
	\begin{align*}
		&\E\|X_{t \wedge \upsilon^{s,x,\dl}_R}^{s,x,\iota} - X_{t \wedge \upsilon^{s,x,\dl}_R}^{s,x,\dl} \|^{p} \\
		&\leq
		\frac{p}{2}\E \int ^{t \wedge \upsilon^{s,x,\dl}_R}_s \|X_{r}^{s,x,\iota}-X^{s,x,\dl}_{r}\|^{p-2}\big(2(X^{s,x,\iota}_r-X^{s,x,\dl}_{r})(\mu(X_{r}^{s,x,\iota})-\mu(X^{s,x,\dl}_{r}) )\\
		&\hspace{25mm} +(p_0-1) \|\sg(X_{r}^{s,x,\iota} )-\sg(X^{s,x,\dl}_{r})\|^2 \big)  \rd r\\
		&\quad+ p\E \int ^{t \wedge \upsilon^{s,x,\dl}_R}_s \|X_{r}^{s,x,\iota}-X^{s,x,\dl}_{r}\|^{p-2} (X^{s,x,\iota}_r-X^{s,x,\dl}_{r})(\mu(X^{s,x,\dl}_{r})-\mu^\dl(X^{s,x,\dl}_{\max\{s,\dl(r)\}}) ) \rd r\\
		&\quad+K\E\int^{t \wedge \upsilon^{s,x,\dl}_R}_s\|X_{r}^{s,x,\iota}-X^{s,x,\dl}_{r}\|^{p-2}  \|\sg(X_{r}^{s,x,\dl} )-\sg^\dl(X^{s,x,\dl}_{\max\{s,\dl(r)\}})\|^2 \rd r \\
		&\quad+K\E\int^{t \wedge \upsilon^{s,x,\dl}_R}_s\int_{\R^d} \|X_{r}^{s,x,\iota}-X^{s,x,\dl}_{r}\|^{p-2} \|\gm(X^{s,x,\iota}_{r},z)-\gm(X^{s,x,\dl}_{r},z)\|^2\nu(\rd z)\rd r\\
		&\quad+K\E\int^{t \wedge \upsilon^{s,x,\dl}_R}_s\int_{\R^d} \|X_{r}^{s,x,\iota}-X^{s,x,\dl}_{r}\|^{p-2} \|\gm(X^{s,x,\dl}_{r},z)-\gm(X^{s,x,\dl}_{\max\{s,\dl(r)\}},z)\|^2\nu(\rd z)\rd r\\
		&\quad+ K\E\int^{t \wedge \upsilon^{s,x,\dl}_R}_s\int_{\R^d} \|\gm(X^{s,x,\iota}_{r},z)-\gm(X^{s,x,\dl}_{r},z)\|^{p}\nu(\rd z)\rd r\\
		&\quad+ K\E\int^{t \wedge \upsilon^{s,x,\dl}_R}_s\int_{\R^d} \|\gm(X^{s,x,\dl}_{r},z)-\gm(X^{s,x,\dl}_{\max\{s,\dl(r)\}},z)\|^{p}\nu(\rd z)\rd r,
	\end{align*}
	which on the application of Assumptions \ref{A1}, \ref{A4}, Cauchy-Schwarz inequality, and Young's inequality implies,
	\begin{align*}
		\E\|X_{t \wedge \upsilon^{s,x,\dl}_R}^{s,x,\iota} - X_{t \wedge \upsilon^{s,x,\dl}_R}^{s,x,\dl} \|^{p} &\leq
		K \E\int ^{t \wedge \upsilon^{s,x,\dl}_R}_s (1+L_d^{\frac{p}{2}})\|X_{r}^{s,x,\iota}-X^{s,x,\dl}_{r}\|^{p} \rd r\\
		&\quad+ K\E \int ^{t \wedge \upsilon^{s,x,\dl}_R}_s \|\mu(X^{s,x,\dl}_{r})-\mu(X^{s,x,\dl}_{\max\{s,\dl(r)\}}) \|^p \rd r\\
		&\quad+ K\E \int ^{t \wedge \upsilon^{s,x,\dl}_R}_s \|\mu(X^{s,x,\dl}_{\max\{s,\dl(r)\}})-\mu^\dl(X^{s,x,\dl}_{\max\{s,\dl(r)\}}) \|^p \rd r\\
		&\quad+K\E\int^{t \wedge \upsilon^{s,x,\dl}_R}_s \|\sg(X_{r}^{s,x,\dl} )-\sg(X^{s,x,\dl}_{\max\{s,\dl(r)\}})\|^p \rd r \\
		&\quad+K\E\int^{t \wedge \upsilon^{s,x,\dl}_R}_s \|\sg(X^{s,x,\dl}_{\max\{s,\dl(r)\}})-\sg^\dl(X^{s,x,\dl}_{\max\{s,\dl(r)\}})\|^p \rd r \\
		&\quad+K\E\int^{t \wedge \upsilon^{s,x,\dl}_R}_s\bigg(\int_{\R^d}  \|\gm(X^{s,x,\dl}_{r},z)-\gm(X^{s,x,\dl}_{\max\{s,\dl(r)\}},z)\|^2\nu(\rd z)\bigg)^{\frac{p}{2}}\rd r\\
		&\quad+ K\E\int^{t \wedge \upsilon^{s,x,\dl}_R}_s\int_{\R^d} \|\gm(X^{s,x,\dl}_{r},z)-\gm(X^{s,x,\dl}_{\max\{s,\dl(r)\}},z)\|^{p}\nu(\rd z)\rd r.
	\end{align*}
	By Assumptions \ref{A2}, \ref{A4} and Remark \ref{remark3}, one obtains, for all $t\in[s,T]$, that
	\begin{align*}
		&\E\|X_{t \wedge \upsilon^{s,x,\dl}_R}^{s,x,\iota} - X_{t \wedge \upsilon^{s,x,\dl}_R}^{s,x,\dl} \|^{p}\\
		&\leq
		K \E\int ^{t \wedge \upsilon^{s,x,\dl}_R}_s (1+L_d^{\frac{p}{2}})\|X_{r}^{s,x,\iota}-X^{s,x,\dl}_{r}\|^{p} \rd r\\
		&\quad+K\E \int ^{t \wedge \upsilon^{s,x,\dl}_R}_s (L_d+L_d^{\frac{p}{2}})\|X_{r}^{s,x,\dl}-X^{s,x,\dl}_{\max\{s,\dl(r)\}}\|^{p} \rd r\\
		&\quad+ K\E \int ^{t \wedge \upsilon^{s,x,\dl}_R}_s \|X^{s,x,\dl}_{r}-X^{s,x,\dl}_{\max\{s,\dl(r)\}}\|^p(1+\|X^{s,x,\dl}_{r}\|^{\frac{\chi}{2}}+\|X^{s,x,\dl}_{\max\{s,\dl(r)\}}\|^{\frac{\chi}{2}})^p \rd r\\
		&\quad+ K|\dl|^{\frac{p}{2}}\E \int ^{t \wedge \upsilon^{s,x,\dl}_R}_s \lf(1+\|X^{s,x,\dl}_{\max\{s,\dl(r)\}}\|^{(\frac{3}{2}\chi+1)p}\rt)  \rd r.
	\end{align*}
	Then, we apply H\"older's inequality to obtain, for all $t\in[s,T]$, that
	\begin{align*}
		\E\|X_{t \wedge \upsilon^{s,x,\dl}_R}^{s,x,\iota} - X_{t \wedge \upsilon^{s,x,\dl}_R}^{s,x,\dl} \|^{p} &\leq
		K \int ^{t}_s (1+L_d^{\frac{p}{2}})\E\|X_{r\wedge \upsilon^{s,x,\dl}_R}^{s,x,\iota}-X^{s,x,\dl}_{r\wedge \upsilon^{s,x,\dl}_R}\|^{p} \rd r \\
		&\quad +K \int ^{t}_s  (L_d+L_d^{\frac{p}{2}})\E\|X_{r \wedge \upsilon^{s,x,\dl}_R}^{s,x,\dl}-X^{s,x,\dl}_{\max\{s,\dl(r \wedge \upsilon^{s,x,\dl}_R)\}}\|^{p} \rd r\\
		&\quad+ K  \int ^{t}_s \lf(\E\|X^{s,x,\dl}_{r \wedge \upsilon^{s,x,\dl}_R}-X^{s,x,\dl}_{\max\{s,\dl(r \wedge \upsilon^{s,x,\dl}_R)\}}\|^{p+\zeta}\rt)^{\frac{p}{p+\zeta}}\\
		&\hspace{14mm}\times\lf(\E(1+\|X^{s,x,\dl}_{r\wedge \upsilon^{s,x,\dl}_R}\|^{\frac{\chi}{2}}+\|X^{s,x,\dl}_{\max\{s,\dl(r\wedge \upsilon^{s,x,\dl}_R)\}}\|^{\frac{\chi}{2}})^{\frac{p(p+\zeta)}{\zeta}}\rt)^{\frac{\zeta}{p+\zeta}} \rd r\\
		&\quad+ K|\dl|^{\frac{p}{2}} \int^{t}_s \lf(1+\E\|X^{s,x,\dl}_{\max\{s,\dl(r\wedge \upsilon^{s,x,\dl}_R)\}}\|^{(\frac{3}{2}\chi+1)p}\rt)  \rd r.
	\end{align*}
	Consequently, we obtain that
	\begin{align*}
		\sup_{t\in[s,T]}\E\|X_{t \wedge \upsilon^{s,x,\dl}_R}^{s,x,\iota}- X_{t \wedge \upsilon^{s,x,\dl}_R}^{s,x,\dl} \|^{p} &\leq K \int ^{T}_s (1+L_d^{\frac{p}{2}})\sup_{u\in[s,r]}\E\|X_{u\wedge \upsilon^{s,x,\dl}_R}^{s,x,\iota}-X^{s,x,\dl}_{u\wedge \upsilon^{s,x,\dl}_R}\|^{p} \rd r\\
		&\quad+K(T-s)(L_d+L_d^{\frac{p}{2}})\sup_{t\in[s,T]} \E\|X_{t}^{s,x,\dl}-X^{s,x,\dl}_{\max\{s,\dl(t)\}}\|^{p}\\
		&\quad+K(T-s) (\sup_{t\in[s,T]} \E\|X_{t}^{s,x,\dl}-X^{s,x,\dl}_{\max\{s,\dl(t)\}}\|^{p+\zeta})^{\frac{p}{p+\zeta}} \\
		&\hspace{20mm}\times \big(\sup_{t\in[s,T]}\E(1+\|X^{s,x,\dl}_{t}\|^{\frac{\chi p(p+\zeta)}{2\zeta}})\big)^{\frac{\zeta}{p+\zeta}}\\
		&\quad+K(T-s)|\dl|^{\frac{p}{2}}  (1+\sup_{t\in[s,T]}\E\|X^{s,x,\dl}_{t}\|^{(\frac{3}{2}\chi+1)p} ).
	\end{align*}
	For $p\in[2,p^*]$ with $p^*= \min\{\frac{2p_0}{\chi+2}-\zeta, \frac{2p_0}{3\chi+2}, \frac{-\chi\zeta+\sqrt{\chi\zeta(\chi\zeta+8 p_0)}}{2\chi}\}$, we apply Lemmas \ref{L3} and \ref{corollary2}\,\ref{lemma5} to yield that
	\begin{align*}
		\sup_{t\in[s,T]}\E\|X_t^{s,x,\iota}- X_t^{s,x,\dl} \|^{p} &\leq K \int ^{T}_s (1+L_d^{\frac{p}{2}})\sup_{u\in[s,r]}\E\|X_{u\wedge \upsilon^{s,x,\dl}_R}^{s,x,\iota}-X^{s,x,\dl}_{u\wedge \upsilon^{s,x,\dl}_R}\|^{p} \rd r\\
		&\quad+K(T-s)|\dl|(L_d+L_d^{\frac{p}{2}})(1+L_d^{\frac{p}{2}})(1+\|x\|^{p_0}+N_d^{\frac{p_0}{2}})e^{K(1+L_d^{\frac{p_0}{2}})}\\
		&\quad+K (T-s) \lf(|\dl|(1+L_d^{\frac{p+\zeta}{2}})(1+\|x\|^{p_0}+N_d^{\frac{p_0}{2}})e^{K(1+L_d^{\frac{p_0}{2}})}\rt)^{\frac{p}{p+\zeta}} \\
		&\hspace{21mm}\times \lf((1+\|x\|^{p_0}+N_d^{\frac{p_0}{2}} )e^{K(1+L_d^{\frac{p_0}{2}})} \rt)^{\frac{\zeta}{p+\zeta}}\\
		&\quad+K(T-s)|\dl|^{\frac{p}{2}} (1+\|x\|^{p_0}+N_d^{\frac{p_0}{2}} )e^{K(1+L_d^{\frac{p_0}{2}})}\\
		&\leq K \int ^{T}_s (1+L_d^{\frac{p}{2}})\sup_{u\in[s,r]}\E\|X_{u\wedge \upsilon^{s,x,\dl}_R}^{s,x,\iota}-X^{s,x,\dl}_{u\wedge \upsilon^{s,x,\dl}_R}\|^{p} \rd r \\
		&\quad+K |\dl|^{\frac{p}{p+\zeta}}(T-s)(1+L_d^{p})(1+\|x\|^{p_0}+N_d^{\frac{p_0}{2}})e^{K(1+L_d^{\frac{p_0}{2}})}.
	\end{align*}
	Therefore, by Gr\"onwall's lemma, we yield that
	\begin{equation*}
		\sup_{t\in[s,T]}\E\|X_{t\wedge \upsilon^{s,x,\dl}_R}^{s,x,\iota}- X_{t\wedge \upsilon^{s,x,\dl}_R}^{s,x,\dl} \|^{p} \leq K |\dl|^{\frac{p}{p+\zeta}}(1+L_d^{p})(1+\|x\|^{p_0}+N_d^{\frac{p_0}{2}})e^{K(1+L_d^{\frac{p_0}{2}})}.
	\end{equation*}
	Finally, the application of Fatou's lemma completes the proof.
\end{proof}
	
\begin{proof}[\textbf{Proof of Lemma \ref{convergencescheme2}}]
	Fix $s\in [0,T]$, $x\in \R^d$, $\dl \in \Th$, and $ \ep \in(0,1)$ throughout this proof. By the definitions \eqref{tamedscheme1} and \eqref{tamedscheme2}, we obtain, for all $t \in [s,T]$, that
	\begin{align*}
		X_t^{s,x,\dl} - X_t^{s,x,\dl,\ep}&= \int_s^t \mu^\dl(X_{\max\{s,\dl(r-)\}}^{s,x,\dl})-\mu^\dl(X^{s,x,\dl,\ep}_{\max\{s,\dl(r-)\}}) \rd r\\
		&\quad+\int_s^t \sg^\dl(X_{\max\{s,\dl(r-)\}}^{s,x,\dl})-\sg^\dl(X^{s,x,\dl,\ep}_{\max\{s,\dl(r-)\}}) \rd W_r \\
		&\quad+\int_s^t \int_{A_\ep} \gm(X^{s,x,\dl}_{\max\{s,\dl(r-)\}},z)-\gm(X^{s,x,\dl,\ep}_{\max\{s,\dl(r-)\}},z) \tl{\pi}(\rd z ,\rd r)\\
		&\quad+\int_s^t \int_{\R^d \backslash A_\ep}\gm(X^{s,x,\dl}_{\max\{s,\dl(r-)\}},z) \tl{\pi}(\rd z ,\rd r).
	\end{align*}
	Applying It\^o's formula yields, for all $t \in [s,T]$, that
	\begin{align*}
		&\|X_t^{s,x,\dl} - X_t^{s,x,\dl,\ep} \|^{p}\\
		&=p\int^{t}_s\|X_{r-}^{s,x,\dl}-X^{s,x,\dl,\ep}_{r-}\|^{p-2} (X_{r-}^{s,x,\dl}-X^{s,x,\dl,\ep}_{r-}) (\mu^\dl(X_{\max\{s,\dl(r-)\}}^{s,x,\dl})-\mu^\dl(X^{s,x,\dl,\ep}_{\max\{s,\dl(r-)\}}) ) \rd r \\
		&\quad+ p\int^{t}_s\|X_{r-}^{s,x,\dl}-X^{s,x,\dl,\ep}_{r-}\|^{p-2} (X_{r-}^{s,x,\dl}-X^{s,x,\dl,\ep}_{r-})(\sg^\dl(X_{\max\{s,\dl(r-)\}}^{s,x,\dl} )-\sg^\dl(X^{s,x,\dl,\ep}_{\max\{s,\dl(r-)\}}))\rd W_r\\
		&\quad+\frac{p(p-2)}{2} \int^{t}_s \|X_{r-}^{s,x,\dl}-X^{s,x,\dl,\ep}_{r-}\|^{p-4} \|(\sg^\dl(X_{\max\{s,\dl(r-)\}}^{s,x,\dl})-\sg^\dl(X^{s,x,\dl,\ep}_{\max\{s,\dl(r-)\}}))^*(X_{r-}^{s,x,\dl}-X^{s,x,\dl,\ep}_{r-}) \|^{2}\rd r\\
		&\quad+\frac{p}{2}\int^{t}_s\|X_{r-}^{s,x,\dl}-X^{s,x,\dl,\ep}_{r-}\|^{p-2}\|\sg^\dl(X_{\max\{s,\dl(r-)\}}^{s,x,\dl})-\sg^\dl(X^{s,x,\dl,\ep}_{\max\{s,\dl(r-)\}})\|^2\rd r\\
		&\quad+p\int^t_s\int_{A_\ep}\|X_{r-}^{s,x,\dl}-X^{s,x,\dl,\ep}_{r-}\|^{p-2} (X_{r-}^{s,x,\dl}-X^{s,x,\dl,\ep}_{r-}) (\gm(X^{s,x,\dl}_{\max\{s,\dl(r-)\}},z)-\gm(X^{s,x,\dl,\ep}_{\max\{s,\dl(r-)\}},z))\tl{\pi}(\rd z,\rd r)\\
		&\quad+\int_s^t\int_{A_\ep}\big\{\|X_{r-}^{s,x,\dl}-X^{s,x,\dl,\ep}_{r-}+(\gm(X^{s,x,\dl}_{\max\{s,\dl(r-)\}},z)-\gm(X^{s,x,\dl,\ep}_{\max\{s,\dl(r-)\}},z))\|^{p}-\|X_{r-}^{s,x,\dl}-X^{s,x,\dl,\ep}_{r-}\|^{p}\\
		&\quad\quad-p\|X_{r-}^{s,x,\dl}-X^{s,x,\dl,\ep}_{r-}\|^{p-2}(X_{r-}^{s,x,\dl}-X^{s,x,\dl,\ep}_{r-})(\gm(X^{s,x,\dl}_{\max\{s,\dl(r-)\}},z)-\gm(X^{s,x,\dl,\ep}_{\max\{s,\dl(r-)\}},z))\big\}\pi(\rd z,\rd r)\\
		&\quad+p\int^t_s\int_{\R^d \backslash A_\ep}\|X_{r-}^{s,x,\dl}-X^{s,x,\dl,\ep}_{r-}\|^{p-2} (X_{r-}^{s,x,\dl}-X^{s,x,\dl,\ep}_{r-}) \gm(X^{s,x,\dl}_{\max\{s,\dl(r-)\}},z)\tl{\pi}(\rd z,\rd r)\\
		&\quad+\int_s^t\int_{\R^d \backslash A_\ep}\big\{\|X_{r-}^{s,x,\dl}-X^{s,x,\dl,\ep}_{r-}+\gm(X^{s,x,\dl}_{\max\{s,\dl(r-)\}},z)\|^{p}-\|X_{r-}^{s,x,\dl}-X^{s,x,\dl,\ep}_{r-}\|^{p}\\
		&\quad\quad-p\|X_{r-}^{s,x,\dl}-X^{s,x,\dl,\ep}_{r-}\|^{p-2}(X_{r-}^{s,x,\dl}-X^{s,x,\dl,\ep}_{r-})\gm(X^{s,x,\dl}_{\max\{s,\dl(r-)\}},z)\big\}\pi(\rd z,\rd r),
	\end{align*}
	almost surely. Then, we introduce the stopping time $\upsilon^{s,x,\dl,\ep}_R := \tau^{s,x,\dl}_R \wedge \tau^{s,x,\dl,\ep}_R$ with $R \in \N$ where
	\[
	\tau^{s,x,\dl}_R:= \inf\{t\geq s: \|X^{s,x,\dl}_t\|\geq R\}\wedge T,\quad
	\tau^{s,x,\dl,\ep}_R:= \inf\{t\geq s: \|X^{s,x,\dl,\ep}_t\|\geq R\}\wedge T .
	\] 
	Futhermore, by Cauchy-Schwarz inequality and Lemma \ref{Formula for the remainder}, the following inequality holds for all $t \in [s,T]$:
	\begin{align*}
		&\E\|X_{t\wedge \upsilon^{s,x,\dl,\ep}_R}^{s,x,\dl} - X_{t\wedge \upsilon^{s,x,\dl,\ep}_R}^{s,x,\dl,\ep} \|^{p}  \\
		&\leq
		p\E \int^{t\wedge \upsilon^{s,x,\dl,\ep}_R}_s \|X_{r}^{s,x,\dl}-X^{s,x,\dl,\ep}_{r}\|^{p-2}(X^{s,x,\dl}_r-X^{s,x,\dl,\ep}_{r})(\mu^\dl(X_{\max\{s,\dl(r)\}}^{s,x,\dl})-\mu^\dl(X^{s,x,\dl,\ep}_{\max\{s,\dl(r)\}}) )  \rd r\\
		&\quad+\frac{p(p-1)}{2}\E\int^{t\wedge \upsilon^{s,x,\dl,\ep}_R}_s\|X_{r}^{s,x,\dl}-X^{s,x,\dl,\ep}_{r}\|^{p-2}  \|\sg^\dl(X_{\max\{s,\dl(r)\}}^{s,x,\dl} )-\sg^\dl(X^{s,x,\dl,\ep}_{\max\{s,\dl(r)\}})\|^2 \rd r\\
		&\quad+K\E\int^{t\wedge \upsilon^{s,x,\dl,\ep}_R}_s\int_{A_\ep} \big\{\|X_{r}^{s,x,\dl}-X^{s,x,\dl,\ep}_{r}\|^{p-2} \|\gm(X^{s,x,\dl}_{\max\{s,\dl(r)\}},z)-\gm(X^{s,x,\dl,\ep}_{\max\{s,\dl(r)\}},z)\|^2\\
		&\quad\quad+\|\gm(X^{s,x,\dl}_{\max\{s,\dl(r)\}},z)-\gm(X^{s,x,\dl,\ep}_{\max\{s,\dl(r)\}},z)\|^{p}\big\}\nu(\rd z)\rd r\\
		&\quad+K\E\int^{t\wedge \upsilon^{s,x,\dl,\ep}_R}_s\int_{\R^d \backslash A_\ep} \big\{\|X_{r}^{s,x,\dl}-X^{s,x,\dl,\ep}_{r}\|^{p-2} \|\gm(X^{s,x,\dl}_{\max\{s,\dl(r)\}},z)\|^2\\
		&\quad\quad+\|\gm(X^{s,x,\dl}_{\max\{s,\dl(r)\}},z)\|^{p}\big\}\nu(\rd z)\rd r.
	\end{align*}
	The above can be further rewritten, for all $t\in[s,T]$, as
	\begin{align*}
		&\E\|X_{t\wedge \upsilon^{s,x,\dl,\ep}_R}^{s,x,\dl} - X_{t\wedge \upsilon^{s,x,\dl,\ep}_R}^{s,x,\dl,\ep} \|^{p} \\
		&\leq
		\frac{p}{2}\E \int ^{t\wedge \upsilon^{s,x,\dl,\ep}_R}_s \|X_{r}^{s,x,\dl}-X^{s,x,\dl,\ep}_{r}\|^{p-2}\big(2(X^{s,x,\dl}_r-X^{s,x,\dl,\ep}_{r})(\mu(X_{r}^{s,x,\dl})-\mu(X^{s,x,\dl,\ep}_{r}) )  \\
		&\quad\quad +(p_0-1) \|\sg(X_{r}^{s,x,\dl} )-\sg(X^{s,x,\dl,\ep}_{r})\|^2 \big)\rd r\\
		&\quad+ K\E \int ^{t\wedge \upsilon^{s,x,\dl,\ep}_R}_s \|X_{r}^{s,x,\dl}-X^{s,x,\dl,\ep}_{r}\|^{p-2} (X^{s,x,\dl}_r-X^{s,x,\dl,\ep}_{r})(\mu^\dl(X_{\max\{s,\dl(r)\}}^{s,x,\dl})-\mu(X_{\max\{s,\dl(r)\}}^{s,x,\dl}) ) \rd r\\
		&\quad+K\E\int^{t\wedge \upsilon^{s,x,\dl,\ep}_R}_s\|X_{r}^{s,x,\dl}-X^{s,x,\dl,\ep}_{r}\|^{p-2}  \|\sg^\dl(X_{\max\{s,\dl(r)\}}^{s,x,\dl})-\sg(X_{\max\{s,\dl(r)\}}^{s,x,\dl}) \|^2 \rd r \\
		&\quad+ K\E \int ^{t\wedge \upsilon^{s,x,\dl,\ep}_R}_s \|X_{r}^{s,x,\dl}-X^{s,x,\dl,\ep}_{r}\|^{p-2} (X^{s,x,\dl}_r-X^{s,x,\dl,\ep}_{r})(\mu(X_{\max\{s,\dl(r)\}}^{s,x,\dl})-\mu(X_{r}^{s,x,\dl}) ) \rd r\\
		&\quad+K\E\int^{t\wedge \upsilon^{s,x,\dl,\ep}_R}_s\|X_{r}^{s,x,\dl}-X^{s,x,\dl,\ep}_{r}\|^{p-2}  \|\sg(X_{\max\{s,\dl(r)\}}^{s,x,\dl})-\sg(X_{r}^{s,x,\dl})\|^2 \rd r \\
		&\quad+ K\E \int ^{t\wedge \upsilon^{s,x,\dl,\ep}_R}_s \|X_{r}^{s,x,\dl}-X^{s,x,\dl,\ep}_{r}\|^{p-2} (X^{s,x,\dl}_r-X^{s,x,\dl,\ep}_{r})(\mu(X_{r}^{s,x,\dl,\ep})-\mu(X_{\max\{s,\dl(r)\}}^{s,x,\dl,\ep}) ) \rd r\\
		&\quad+K\E\int^{t\wedge \upsilon^{s,x,\dl,\ep}_R}_s\|X_{r}^{s,x,\dl}-X^{s,x,\dl,\ep}_{r}\|^{p-2}  \|\sg(X_{r}^{s,x,\dl,\ep})-\sg(X_{\max\{s,\dl(r)\}}^{s,x,\dl,\ep})  \|^2 \rd r \\
		&\quad+ K\E \int ^{t\wedge \upsilon^{s,x,\dl,\ep}_R}_s \|X_{r}^{s,x,\dl}-X^{s,x,\dl,\ep}_{r}\|^{p-2} (X^{s,x,\dl}_r-X^{s,x,\dl,\ep}_{r})(\mu(X_{\max\{s,\dl(r)\}}^{s,x,\dl,\ep})-\mu^\dl(X_{\max\{s,\dl(r)\}}^{s,x,\dl,\ep}) ) \rd r\\
		&\quad+K\E\int^{t\wedge \upsilon^{s,x,\dl,\ep}_R}_s\|X_{r}^{s,x,\dl}-X^{s,x,\dl,\ep}_{r}\|^{p-2}  \|\sg(X_{\max\{s,\dl(r)\}}^{s,x,\dl,\ep})-\sg^\dl(X_{\max\{s,\dl(r)\}}^{s,x,\dl,\ep}) \|^2 \rd r\\
		&\quad+K\E\int^{t\wedge \upsilon^{s,x,\dl,\ep}_R}_s\int_{A_\ep} \big\{\|X_{r}^{s,x,\dl}-X^{s,x,\dl,\ep}_{r}\|^{p-2} \|\gm(X^{s,x,\dl}_{\max\{s,\dl(r)\}},z)-\gm(X^{s,x,\dl,\ep}_{\max\{s,\dl(r)\}},z)\|^2\\
		&\quad\quad+\|\gm(X^{s,x,\dl}_{\max\{s,\dl(r)\}},z)-\gm(X^{s,x,\dl,\ep}_{\max\{s,\dl(r)\}},z)\|^{p}\big\}\nu(\rd z)\rd r\\
		&\quad+K\E\int^{t\wedge \upsilon^{s,x,\dl,\ep}_R}_s\int_{\R^d \backslash A_\ep} \big\{\|X_{r}^{s,x,\dl}-X^{s,x,\dl,\ep}_{r}\|^{p-2} \|\gm(X^{s,x,\dl}_{\max\{s,\dl(r)\}},z)\|^2\\
		&\quad\quad+\|\gm(X^{s,x,\dl}_{\max\{s,\dl(r)\}},z)\|^{p}\big\}\nu(\rd z)\rd r.
	\end{align*}
	By using Assumption \ref{A1} and Young's inequality, one gets, for all $t\in[s,T]$, that
	\begin{align*}
		&\E\|X_{t\wedge \upsilon^{s,x,\dl,\ep}_R}^{s,x,\dl} - X_{t\wedge \upsilon^{s,x,\dl,\ep}_R}^{s,x,\dl,\ep} \|^{p}\\
		&\leq
		K\E \int ^{t\wedge \upsilon^{s,x,\dl,\ep}_R}_s\|X_{r}^{s,x,\dl}-X^{s,x,\dl,\ep}_{r}\|^{p}\rd r\\
		&\quad+ K\E \int ^{t\wedge \upsilon^{s,x,\dl,\ep}_R}_s \|\mu^\dl(X_{\max\{s,\dl(r)\}}^{s,x,\dl})-\mu(X_{\max\{s,\dl(r)\}}^{s,x,\dl})\|^p+\|\sg^\dl(X_{\max\{s,\dl(r)\}}^{s,x,\dl})-\sg(X_{\max\{s,\dl(r)\}}^{s,x,\dl}) \|^p \rd r\\
		&\quad+ K\E \int ^{t\wedge \upsilon^{s,x,\dl,\ep}_R}_s  \|\mu(X_{\max\{s,\dl(r)\}}^{s,x,\dl})-\mu(X_{r}^{s,x,\dl}) \|^p+\|\sg(X_{\max\{s,\dl(r)\}}^{s,x,\dl})-\sg(X_{r}^{s,x,\dl})\|^p \rd r\\
		&\quad+ K\E \int ^{t\wedge \upsilon^{s,x,\dl,\ep}_R}_s \|\mu(X_{r}^{s,x,\dl,\ep})-\mu(X_{\max\{s,\dl(r)\}}^{s,x,\dl,\ep}) \|^p+\|\sg(X_{r}^{s,x,\dl,\ep})-\sg(X_{\max\{s,\dl(r)\}}^{s,x,\dl,\ep})  \|^p \rd r\\
		&\quad+ K\E \int ^{t\wedge \upsilon^{s,x,\dl,\ep}_R}_s\|\mu(X_{\max\{s,\dl(r)\}}^{s,x,\dl,\ep})-\mu^\dl(X_{\max\{s,\dl(r)\}}^{s,x,\dl,\ep}) \|^p+\|\sg(X_{\max\{s,\dl(r)\}}^{s,x,\dl,\ep})-\sg^\dl(X_{\max\{s,\dl(r)\}}^{s,x,\dl,\ep}) \|^p\rd r\\
		&\quad+K\E\int^{t\wedge \upsilon^{s,x,\dl,\ep}_R}_s\big(\int_{A_\ep}  \|\gm(X^{s,x,\dl}_{\max\{s,\dl(r)\}},z)-\gm(X^{s,x,\dl,\ep}_{\max\{s,\dl(r)\}},z)\|^{2}\nu(\rd z)\big)^{\frac{p}{2}}\rd r\\
		&\quad+K\E\int^{t\wedge \upsilon^{s,x,\dl,\ep}_R}_s\big(\int_{\R^d \backslash A_\ep}  \|\gm(X^{s,x,\dl}_{\max\{s,\dl(r)\}},z)\|^{2}\nu(\rd z)\big)^{\frac{p}{2}}\rd r.\\
		&\quad+K\E\int^{t\wedge \upsilon^{s,x,\dl,\ep}_R}_s\int_{A_\ep}  \|\gm(X^{s,x,\dl}_{\max\{s,\dl(r)\}},z)-\gm(X^{s,x,\dl,\ep}_{\max\{s,\dl(r)\}},z)\|^{p}+\|\gm(X^{s,x,\dl}_{\max\{s,\dl(r)\}},z)\|^{p}\nu(\rd z)\rd r\\
		&\quad+K\E\int^{t\wedge \upsilon^{s,x,\dl,\ep}_R}_s\int_{\R^d \backslash A_\ep}  \|\gm(X^{s,x,\dl}_{\max\{s,\dl(r)\}},z)\|^{p}\nu(\rd z)\rd r,
	\end{align*}
	which on the application of Assumptions \ref{A2}, \ref{A4}, \ref{A5}, and Remark \ref{remark3} yields, for all $t\in[s,T]$, that
	\begin{align*}
		&\E\|X_{t\wedge \upsilon^{s,x,\dl,\ep}_R}^{s,x,\dl} - X_{t\wedge \upsilon^{s,x,\dl,\ep}_R}^{s,x,\dl,\ep} \|^{p} \\
		&\leq
		K\E \int ^{t\wedge \upsilon^{s,x,\dl,\ep}_R}_s  \|X_{r}^{s,x,\dl}-X^{s,x,\dl,\ep}_{r}\|^{p} \rd r +K (L_d+L_d^{\frac{p}{2}}) \E\int ^{t\wedge \upsilon^{s,x,\dl,\ep}_R}_s \|X_{\max\{s,\dl(r)\}}^{s,x,\dl}-X^{s,x,\dl,\ep}_{\max\{s,\dl(r)\}}\|^{p} \rd r\\
		&\quad+ K\E \int ^{t\wedge \upsilon^{s,x,\dl,\ep}_R}_s \|X^{s,x,\dl}_{r}-X^{s,x,\dl}_{\max\{s,\dl(r)\}}\|^p(1+\|X^{s,x,\dl}_{r}\|^{\frac{\chi}{2}}+\|X^{s,x,\dl}_{\max\{s,\dl(r)\}}\|^{\frac{\chi}{2}})^p \rd r\\
		&\quad+ K\E \int ^{t\wedge \upsilon^{s,x,\dl,\ep}_R}_s \|X^{s,x,\dl,\ep}_{r}-X^{s,x,\dl,\ep}_{\max\{s,\dl(r)\}}\|^p(1+\|X^{s,x,\dl,\ep}_{r}\|^{\frac{\chi}{2}}+\|X^{s,x,\dl,\ep}_{\max\{s,\dl(r)\}}\|^{\frac{\chi}{2}})^p \rd r\\
		&\quad+ K|\dl|^{\frac{p}{2}}\E \int ^{t\wedge \upsilon^{s,x,\dl,\ep}_R}_s 1+\|X^{s,x,\dl}_{\max\{s,\dl(r)\}}\|^{(\frac{3}{2}\chi+1)p} +\|X^{s,x,\dl,\ep}_{\max\{s,\dl(r)\}}\|^{(\frac{3}{2}\chi+1)p}  \rd r\\
		&\quad+ K\E\int ^{t\wedge \upsilon^{s,x,\dl,\ep}_R}_s ((\ep^{q}\mathcal{Z}_d)^{\frac{p}{2}}+\ep^{q}\mathcal{Z}_d)(1+\|X^{s,x,\dl}_{\max\{s,\dl(r)\}}\|^p) \rd r.
	\end{align*}
	Then, we apply H\"older's inequality to obtain, for all $t\in[s,T]$, that
	\begin{align*}
		&\E\|X_{t\wedge \upsilon^{s,x,\dl,\ep}_R}^{s,x,\dl} - X_{t\wedge \upsilon^{s,x,\dl,\ep}_R}^{s,x,\dl,\ep} \|^{p} \\
		&\leq
		K \int ^{t}_s \E\|X_{r\wedge \upsilon^{s,x,\dl,\ep}_R}^{s,x,\dl}-X^{s,x,\dl,\ep}_{r\wedge \upsilon^{s,x,\dl,\ep}_R}\|^{p} \rd r\\
		&\quad+K(L_d+L_d^{\frac{p}{2}}) \int ^{t}_s \E\|X_{\max\{s,\dl(r\wedge \upsilon^{s,x,\dl,\ep}_R)\}}^{s,x,\dl}-X^{s,x,\dl,\ep}_{\max\{s,\dl(r\wedge \upsilon^{s,x,\dl,\ep}_R)\}}\|^{p} \rd r\\
		&\quad+ K  \int ^{t}_s (\E\|X^{s,x,\dl}_{r\wedge \upsilon^{s,x,\dl,\ep}_R}-X^{s,x,\dl}_{\max\{s,\dl(r\wedge \upsilon^{s,x,\dl,\ep}_R)\}}\|^{p+\zeta})^{\frac{p}{p+\zeta}}\\
		&\hspace{17mm}\times(\E(1+\|X^{s,x,\dl}_{r\wedge \upsilon^{s,x,\dl,\ep}_R}\|^{\frac{\chi}{2}}+\|X^{s,x,\dl}_{\max\{s,\dl(r\wedge \upsilon^{s,x,\dl,\ep}_R)\}}\|^{\frac{\chi}{2}})^{\frac{p(p+\zeta)}{\zeta}})^{\frac{\zeta}{p+\zeta}} \rd r\\
		&\quad+ K  \int ^{t}_s (\E\|X^{s,x,\dl,\ep}_{r\wedge \upsilon^{s,x,\dl,\ep}_R}-X^{s,x,\dl,\ep}_{\max\{s,\dl(r\wedge \upsilon^{s,x,\dl,\ep}_R)\}}\|^{p+\zeta})^{\frac{p}{p+\zeta}}\\
		&\hspace{17mm}\times(\E(1+\|X^{s,x,\dl,\ep}_{r\wedge \upsilon^{s,x,\dl,\ep}_R}\|^{\frac{\chi}{2}}+\|X^{s,x,\dl,\ep}_{\max\{s,\dl(r\wedge \upsilon^{s,x,\dl,\ep}_R)\}}\|^{\frac{\chi}{2}})^{\frac{p(p+\zeta)}{\zeta}})^{\frac{\zeta}{p+\zeta}} \rd r\\
		&\quad+ K|\dl|^{\frac{p}{2}} \int ^{t}_s 1+\E\|X^{s,x,\dl}_{\max\{s,\dl(r\wedge \upsilon^{s,x,\dl,\ep}_R)\}}\|^{(\frac{3}{2}\chi+1)p} +\E\|X^{s,x,\dl,\ep}_{\max\{s,\dl(r\wedge \upsilon^{s,x,\dl,\ep}_R)\}}\|^{(\frac{3}{2}\chi+1)p}  \rd r\\
		&\quad+ K((\ep^{q}\mathcal{Z}_d)^{\frac{p}{2}}+\ep^{q}\mathcal{Z}_d)\int ^{t}_s (1+\E\|X^{s,x,\dl}_{\max\{s,\dl(r\wedge \upsilon^{s,x,\dl,\ep}_R)\}}\|^p) \rd r,
	\end{align*}
	which is followed by 
	\begin{align*}
		&\sup_{t\in[s,T]}	\E\|X_{t\wedge \upsilon^{s,x,\dl,\ep}_R}^{s,x,\dl} - X_{t\wedge \upsilon^{s,x,\dl,\ep}_R}^{s,x,\dl,\ep} \|^{p}\\
		& \leq
		K(1+L_d^{\frac{p}{2}}) \int ^{T}_s \sup_{u\in[s,r]} \E\|X_{u\wedge \upsilon^{s,x,\dl,\ep}_R}^{s,x,\dl}-X^{s,x,\dl,\ep}_{u\wedge \upsilon^{s,x,\dl,\ep}_R}\|^{p}\rd r\\
		&\quad+ K  (T-s) (\sup_{t\in[s,T]}\E\|X^{s,x,\dl}_{t}-X^{s,x,\dl}_{\max\{s,\dl(t)\}}\|^{p+\zeta})^{\frac{p}{p+\zeta}}(\sup_{t\in[s,T]}\E(1+\|X^{s,x,\dl}_{t}\|^{\frac{\chi}{2}})^{\frac{p(p+\zeta)}{\zeta}})^{\frac{\zeta}{p+\zeta}} \\
		&\quad+ K  (T-s) (\sup_{t\in[s,T]}\E\|X^{s,x,\dl,\ep}_{t}-X^{s,x,\dl,\ep}_{\max\{s,\dl(t)\}}\|^{p+\zeta})^{\frac{p}{p+\zeta}}(\sup_{t\in[s,T]}\E(1+\|X^{s,x,\dl,\ep}_{t}\|^{\frac{\chi}{2}})^{\frac{p(p+\zeta)}{\zeta}})^{\frac{\zeta}{p+\zeta}} \\
		&\quad+ K|\dl|^{\frac{p}{2}}(T-s) ( 1+\sup_{t\in[s,T]}\E\|X^{s,x,\dl}_{t}\|^{(\frac{3}{2}\chi+1)p} +\sup_{t\in[s,T]}\E\|X^{s,x,\dl,\ep}_{t}\|^{(\frac{3}{2}\chi+1)p}  )\rd r\\
		&\quad+ K((\ep^{q}\mathcal{Z}_d)^{\frac{p}{2}}+\ep^{q}\mathcal{Z}_d)(T-s) (1+\sup_{t\in[s,T]}\E\|X^{s,x,\dl}_{t}\|^p).
	\end{align*}
	For $p\in[2,p^*]$ with $p^*= \min\{\frac{2p_0}{\chi+2}-\zeta, \frac{2p_0}{3\chi+2}, \frac{-\chi\zeta+\sqrt{\chi\zeta(\chi\zeta+8 p_0)}}{2\chi}\}$, we use Lemmas \ref{L3}, \ref{epbounds}, and \ref{corollary2}\,\ref{lemma5}--\ref{corollary2(ii)} to obtain that
	\begin{align*}
		&\sup_{t\in[s,T]}	\E\|X_{t\wedge \upsilon^{s,x,\dl,\ep}_R}^{s,x,\dl} - X_{t\wedge \upsilon^{s,x,\dl,\ep}_R}^{s,x,\dl,\ep} \|^{p}\\
		&\leq
		K (1+L_d^{\frac{p}{2}})\int ^{T}_s \sup_{u\in[s,r]} \E\|X_{u\wedge \upsilon^{s,x,\dl,\ep}_R}^{s,x,\dl}-X^{s,x,\dl,\ep}_{u\wedge \upsilon^{s,x,\dl,\ep}_R}\|^{p}\rd r\\
		&\quad+K (T-s) \lf(|\dl|(1+L_d^{\frac{p+\zeta}{2}})(1+\|x\|^{p_0}+N_d^{\frac{p_0}{2}})e^{K(1+L_d^{\frac{p_0}{2}})}\rt)^{\frac{p}{p+\zeta}} \\
		&\hspace{23mm} \lf((1+\|x\|^{p_0}+N_d^{\frac{p_0}{2}} )e^{K(1+L_d^{\frac{p_0}{2}})} \rt)^{\frac{\zeta}{p+\zeta}}\\
		&\quad+K(T-s)|\dl|^{\frac{p}{2}} (1+\|x\|^{p_0}+N_d^{\frac{p_0}{2}} )e^{K(1+L_d^{\frac{p_0}{2}})}\\
		&\quad+ K((\ep^{q}\mathcal{Z}_d)^{\frac{p}{2}}+\ep^{q}\mathcal{Z}_d)(T-s) (1+\|x\|^{p_0}+N_d^{\frac{p_0}{2}} )e^{K(1+L_d^{\frac{p_0}{2}})}\\
		&\leq K \int ^{T}_s (1+L_d^{\frac{p}{2}})\sup_{u\in[s,r]}\E\|X_{u\wedge \upsilon^{s,x,\dl,\ep}_R}^{s,x,\iota}-X^{s,x,\dl,\ep}_{u\wedge \upsilon^{s,x,\dl}_R}\|^{p} \rd r \\
		&\quad+K (|\dl|^{\frac{p}{p+\zeta}}+(\ep^{q}\mathcal{Z}_d)^{\frac{p}{2}}+\ep^{q}\mathcal{Z}_d) )(T-s)(1+L_d^{\frac{p}{2}})(1+\|x\|^{p_0}+N_d^{\frac{p_0}{2}})e^{K(1+L_d^{\frac{p_0}{2}})}.
	\end{align*}
	Then, we apply Gr\"onwall's lemma to obtain that,
	\begin{align*}
		&	\sup_{t\in[s,T]}	\E\|X_{t\wedge \upsilon^{s,x,\dl,\ep}_R}^{s,x,\dl} - X_{t\wedge \upsilon^{s,x,\dl,\ep}_R}^{s,x,\dl,\ep} \|^{p} \\
		&\leq	K \big(|\dl|^{\frac{p}{p+\zeta}}+\ep^{q}(1+\mathcal{Z}_d^{\frac{p}{2}})\big)(1+L_d^{\frac{p}{2}})(1+\|x\|^{p_0}+N_d^{\frac{p_0}{2}})e^{K(1+L_d^{\frac{p_0}{2}})}.
	\end{align*}
	Finally, the application of Fatou's lemma completes the proof.
\end{proof}
	
\begin{proof}[\textbf{Proof of Lemma \ref{convergencescheme3}}]
	Fix $s\in [0,T]$, $x\in \R^d$, $\dl \in \Th$, $\ep \in (0,1)$, and $\mathcal{M} \in \N$ with $\mathcal{M}\geq (1 \vee \ep^{-2}\mathcal{Z}_d)^{2}$ throughout this proof. Comparing \eqref{tamedscheme2} and \eqref{Tamedscheme2} yields, for all $t \in [s,T]$, that
	\begin{align*}
		X_t^{s,x,\dl,\ep} - X_t^{s,x,\dl,\ep,\mathcal{M}}&= \int_s^t \mu^\dl(X_{\max\{s,\dl(r-)\}}^{s,x,\dl,\ep})-\mu^\dl(X^{s,x,\dl,\ep,\mathcal{M}}_{\max\{s,\dl(r-)\}}) \rd r\\
		&\quad+\int_s^t \sg^\dl(X_{\max\{s,\dl(r-)\}}^{s,x,\dl,\ep})-\sg^\dl(X^{s,x,\dl,\ep,\mathcal{M}}_{\max\{s,\dl(r-)\}}) \rd W_r \\
		&\quad+\int_s^t \int_{A_\ep} \gm(X^{s,x,\dl,\ep}_{\max\{s,\dl(r-)\}},z)-\gm(X^{s,x,\dl,\ep,\mathcal{M}}_{\max\{s,\dl(r-)\}},z) \tl{\pi}(\rd z ,\rd r)\\
		&\quad+\int^{t}_{s}  \lf\{\frac{\nu(A_\ep)}{\mathcal{M}}\sum_{i=1}^{\mathcal{M}}\gm\lf(X_{\max\{s,\dl(r-)\}}^{s,x,\dl,\ep,\mathcal{M}},V^{\dl,\ep,\mathcal{M}}_{i,\max\{s,\dl(r-)\}}\rt)\rt\}\\
		&\quad\quad\quad\quad -\int_{A_\ep} \gm(X^{s,x,\dl,\ep,\mathcal{M}}_{\max\{s,\dl(r-)\}},z)\nu(\rd z)\rd r.
	\end{align*}
  Define a stopping time $\upsilon^{s,x,\dl,\ep,\mathcal{M}}_R$ := $\tau^{s,x,\dl,\ep}_R \wedge \tau^{s,x,\dl,\ep,\mathcal{M}}_R$ with $R \in \N$ where 
	\[
	\tau^{s,x,\dl,\ep}_R:= \inf\{t\geq s: \|X^{s,x,\dl,\ep}_t\|\geq R\}\wedge T, \qquad\tau^{s,x,\dl,\ep,\mathcal{M}}_R:= \inf\{t\geq s: \|X^{s,x,\dl,\ep,\mathcal{M}}_t\|\geq R\}\wedge T.\] 
	By It\^o's formula, Cauchy-Schwarz inequality, and Lemma \ref{Formula for the remainder}, the following inequality holds for all $t \in [s,T]$:
	\begin{align*}
		&\E\|X_{t \wedge \upsilon^{s,x,\dl,\ep,\mathcal{M}}_R}^{s,x,\dl,\ep} - X_{t \wedge \upsilon^{s,x,\dl,\ep,\mathcal{M}}_R}^{s,x,\dl,\ep,\mathcal{M}} \|^{p}  \\&\leq
		p\E \int^{t \wedge \upsilon^{s,x,\dl,\ep,\mathcal{M}}_R}_s \|X_{r}^{s,x,\dl,\ep}-X^{s,x,\dl,\ep,\mathcal{M}}_{r}\|^{p-2}(X^{s,x,\dl,\ep}_r-X^{s,x,\dl,\ep,\mathcal{M}}_{r})\\
		&\hspace{35mm}\times(\mu^\dl(X_{\max\{s,\dl(r)\}}^{s,x,\dl,\ep})-\mu^\dl(X^{s,x,\dl,\ep,\mathcal{M}}_{\max\{s,\dl(r)\}}) )  \rd r\\
		&\quad+\frac{p(p-1)}{2}\E\int^{t \wedge \upsilon^{s,x,\dl,\ep,\mathcal{M}}_R}_s\|X_{r}^{s,x,\dl,\ep}-X^{s,x,\dl,\ep,\mathcal{M}}_{r}\|^{p-2}  \|\sg^\dl(X_{\max\{s,\dl(r)\}}^{s,x,\dl,\ep} )-\sg^\dl(X^{s,x,\dl,\ep,\mathcal{M}}_{\max\{s,\dl(r)\}})\|^2 \rd r\\
		&\quad+K\E\int^{t \wedge \upsilon^{s,x,\dl,\ep,\mathcal{M}}_R}_s\int_{A_\ep} \big\{\|X_{r}^{s,x,\dl,\ep}-X^{s,x,\dl,\ep,\mathcal{M}}_{r}\|^{p-2} \|\gm(X^{s,x,\dl,\ep}_{\max\{s,\dl(r)\}},z)-\gm(X^{s,x,\dl,\ep,\mathcal{M}}_{\max\{s,\dl(r)\}},z)\|^2\\
		&\quad+\|\gm(X^{s,x,\dl,\ep}_{\max\{s,\dl(r)\}},z)-\gm(X^{s,x,\dl,\ep,\mathcal{M}}_{\max\{s,\dl(r)\}},z)\|^{p}\big\}\nu(\rd z)\rd r\\
		&\quad+p\E\int^{t \wedge \upsilon^{s,x,\dl,\ep,\mathcal{M}}_R}_s\|X_{r}^{s,x,\dl,\ep}-X^{s,x,\dl,\ep,\mathcal{M}}_{r}\|^{p-2} (X_{r}^{s,x,\dl,\ep}-X^{s,x,\dl,\ep,\mathcal{M}}_{r})\\ 
		&\quad\quad\lf(\lf\{\frac{\nu(A_\ep)}{\mathcal{M}}\sum_{i=1}^{\mathcal{M}}\gm\lf(X_{\max\{s,\dl(r)\}}^{s,x,\dl,\ep,\mathcal{M}},V^{\dl,\ep,\mathcal{M}}_{i,\max\{s,\dl(r)\}}\rt)\rt\}-\int_{A_\ep} \gm(X^{s,x,\dl,\ep,\mathcal{M}}_{\max\{s,\dl(r)\}},z)\nu(\rd z)\rt) \rd r.
	\end{align*}
	By using Assumption \ref{A1} and Young's inequality, we have, for all $t\in[s,T]$, that
	\begin{align*}
		&\E\|X_{t \wedge \upsilon^{s,x,\dl,\ep,\mathcal{M}}_R}^{s,x,\dl,\ep} - X_{t \wedge \upsilon^{s,x,\dl,\ep,\mathcal{M}}_R}^{s,x,\dl,\ep,\mathcal{M}} \|^{p} \\
		&\leq
		K\E \int^{t \wedge \upsilon^{s,x,\dl,\ep,\mathcal{M}}_R}_s \|X_{r}^{s,x,\dl,\ep}-X^{s,x,\dl,\ep,\mathcal{M}}_{r}\|^{p}\rd r\\
		&\quad+ K\E \int^{t \wedge \upsilon^{s,x,\dl,\ep,\mathcal{M}}_R}_s \|\mu^\dl(X_{\max\{s,\dl(r)\}}^{s,x,\dl,\ep})-\mu(X_{\max\{s,\dl(r)\}}^{s,x,\dl,\ep})\|^p +\|\sg^\dl(X_{\max\{s,\dl(r)\}}^{s,x,\dl,\ep})-\sg(X_{\max\{s,\dl(r)\}}^{s,x,\dl,\ep}) \|^p\rd r\\
		&\quad+ K\E \int^{t \wedge \upsilon^{s,x,\dl,\ep,\mathcal{M}}_R}_s  \|\mu(X_{\max\{s,\dl(r)\}}^{s,x,\dl,\ep})-\mu(X_{r}^{s,x,\dl,\ep}) \|^p+\|\sg(X_{\max\{s,\dl(r)\}}^{s,x,\dl,\ep})-\sg(X_{r}^{s,x,\dl,\ep})\|^p \rd r\\
		&\quad+ K\E \int^{t \wedge \upsilon^{s,x,\dl,\ep,\mathcal{M}}_R}_s \|\mu(X_{r}^{s,x,\dl,\ep,\mathcal{M}})-\mu(X_{\max\{s,\dl(r)\}}^{s,x,\dl,\ep,\mathcal{M}}) \|^p +\|\sg(X_{r}^{s,x,\dl,\ep,\mathcal{M}})-\sg(X_{\max\{s,\dl(r)\}}^{s,x,\dl,\ep,\mathcal{M}})  \|^p \rd r\\
		&\quad+ K\E \int^{t \wedge \upsilon^{s,x,\dl,\ep,\mathcal{M}}_R}_s\|\mu(X_{\max\{s,\dl(r)\}}^{s,x,\dl,\ep,\mathcal{M}})-\mu^\dl(X_{\max\{s,\dl(r)\}}^{s,x,\dl,\ep,\mathcal{M}}) \|^p+\|\sg(X_{\max\{s,\dl(r)\}}^{s,x,\dl,\ep,\mathcal{M}})-\sg^\dl(X_{\max\{s,\dl(r)\}}^{s,x,\dl,\ep,\mathcal{M}}) \|^p\rd r\\
		&\quad+K\E\int^{t \wedge \upsilon^{s,x,\dl,\ep,\mathcal{M}}_R}_s\big(\int_{A_\ep}  \|\gm(X^{s,x,\dl,\ep}_{\max\{s,\dl(r)\}},z)-\gm(X^{s,x,\dl,\ep,\mathcal{M}}_{\max\{s,\dl(r)\}},z)\|^{2}\nu(\rd z)\big)^{\frac{p}{2}}\rd r\\
		&\quad+K\E\int^{t \wedge \upsilon^{s,x,\dl,\ep,\mathcal{M}}_R}_s\int_{A_\ep}  \|\gm(X^{s,x,\dl,\ep}_{\max\{s,\dl(r)\}},z)-\gm(X^{s,x,\dl,\ep,\mathcal{M}}_{\max\{s,\dl(r)\}},z)\|^{p}\nu(\rd z)\rd r\\
		&+K\E\int^{t \wedge \upsilon^{s,x,\dl,\ep,\mathcal{M}}_R}_s \lf\|\lf\{\frac{\nu(A_\ep)}{\mathcal{M}}\sum_{i=1}^{\mathcal{M}}\gm\lf(X_{\max\{s,\dl(r-)\}}^{s,x,\dl,\ep,\mathcal{M}},V^{\dl,\ep,\mathcal{M}}_{i,\max\{s,\dl(r)\}}\rt)\rt\}-\int_{A_\ep} \gm(X^{s,x,\dl,\ep,\mathcal{M}}_{\max\{s,\dl(r)\}},z)\nu(\rd z)\rt\|^p \rd r,
	\end{align*}
	which on the application of Assumptions \ref{A2}, \ref{A4}, Remark \ref{remark3}, and \eqref{L3A_4(t)} (together with $\mathcal{M}\ge (1 \vee \ep^{-2}\mathcal{Z}_d)^{2}$) gives, for all $t\in[s,T]$, that
	\begin{align*}
		&\E\|X_{t \wedge \upsilon^{s,x,\dl,\ep,\mathcal{M}}_R}^{s,x,\dl,\ep} - X_{t \wedge \upsilon^{s,x,\dl,\ep,\mathcal{M}}_R}^{s,x,\dl,\ep,\mathcal{M}} \|^{p} \\
		&\leq
		K \E \int ^{t \wedge \upsilon^{s,x,\dl,\ep,\mathcal{M}}_R}_s \|X_{r}^{s,x,\dl,\ep}-X^{s,x,\dl,\ep,\mathcal{M}}_{r}\|^{p} \rd r \\
		&\quad+K \E \int ^{t \wedge \upsilon^{s,x,\dl,\ep,\mathcal{M}}_R}_s (L_d+L_d^{\frac{p}{2}})\|X_{\max\{s,\dl(r)\}}^{s,x,\dl,\ep}-X^{s,x,\dl,\ep,\mathcal{M}}_{\max\{s,\dl(r)\}}\|^{p} \rd r\\
		&\quad+ K\E \int^{t \wedge \upsilon^{s,x,\dl,\ep,\mathcal{M}}_R}_s \|X^{s,x,\dl,\ep}_{r}-X^{s,x,\dl,\ep}_{\max\{s,\dl(r)\}}\|^p(1+\|X^{s,x,\dl,\ep}_{r}\|^{\frac{\chi}{2}}+\|X^{s,x,\dl,\ep}_{\max\{s,\dl(r)\}}\|^{\frac{\chi}{2}})^p \rd r\\
		&\quad+ K\E \int^{t \wedge \upsilon^{s,x,\dl,\ep,\mathcal{M}}_R}_s \|X^{s,x,\dl,\ep,\mathcal{M}}_{r}-X^{s,x,\dl,\ep,\mathcal{M}}_{\max\{s,\dl(r)\}}\|^p(1+\|X^{s,x,\dl,\ep,\mathcal{M}}_{r}\|^{\frac{\chi}{2}}+\|X^{s,x,\dl,\ep,\mathcal{M}}_{\max\{s,\dl(r)\}}\|^{\frac{\chi}{2}})^p \rd r\\
		&\quad+ K|\dl|^{\frac{p}{2}}\E \int^{t \wedge \upsilon^{s,x,\dl,\ep,\mathcal{M}}_R}_s 1+\|X^{s,x,\dl,\ep}_{\max\{s,\dl(r)\}}\|^{(\frac{3}{2}\chi+1)p} +\|X^{s,x,\dl,\ep,\mathcal{M}}_{\max\{s,\dl(r)\}}\|^{(\frac{3}{2}\chi+1)p}  \rd r\\
		&\quad+ K(\ep^{-2}\mathcal{Z}_d)^{p-1}\mathcal{M}^{-\frac{p}{2}}\E\int^{t \wedge \upsilon^{s,x,\dl,\ep,\mathcal{M}}_R}_s  N_d+L_d\|X_{\max\{s,\dl(r)\}}^{s,x,\dl,\ep,\mathcal{M}}\|^{p}\rd r.
	\end{align*}
	Then, we apply H\"older's inequality to get, for all $t\in[s,T]$, that
	\begin{align*}
		&\E\|X_{t \wedge \upsilon^{s,x,\dl,\ep,\mathcal{M}}_R}^{s,x,\dl,\ep} - X_{t \wedge \upsilon^{s,x,\dl,\ep,\mathcal{M}}_R}^{s,x,\dl,\ep,\mathcal{M}} \|^{p} \\
		&\leq
		K \int ^{t}_s \E\|X_{r\wedge \upsilon^{s,x,\dl,\ep,\mathcal{M}}_R}^{s,x,\dl,\ep}-X^{s,x,\dl,\ep,\mathcal{M}}_{r\wedge \upsilon^{s,x,\dl,\ep,\mathcal{M}}_R}\|^{p} \rd r\\
		&\quad+K \int ^{t}_s (L_d+L_d^{\frac{p}{2}})\E\|X_{\max\{s,\dl(r\wedge \upsilon^{s,x,\dl,\ep,\mathcal{M}}_R)\}}^{s,x,\dl,\ep}-X^{s,x,\dl,\ep,\mathcal{M}}_{\max\{s,\dl(r\wedge \upsilon^{s,x,\dl,\ep,\mathcal{M}}_R)\}}\|^{p} \rd r\\
		&\quad+ K  \int ^{t}_s (\E\|X^{s,x,\dl,\ep}_{r\wedge \upsilon^{s,x,\dl,\ep,\mathcal{M}}_R}-X^{s,x,\dl,\ep}_{\max\{s,\dl(r\wedge \upsilon^{s,x,\dl,\ep,\mathcal{M}}_R)\}}\|^{p+\zeta})^{\frac{p}{p+\zeta}}\\
		&\hspace{15mm}\times(\E(1+\|X^{s,x,\dl,\ep}_{r\wedge \upsilon^{s,x,\dl,\ep,\mathcal{M}}_R}\|^{\frac{\chi}{2}}+\|X^{s,x,\dl,\ep}_{\max\{s,\dl(r\wedge \upsilon^{s,x,\dl,\ep,\mathcal{M}}_R)\}}\|^{\frac{\chi}{2}})^{\frac{p(p+\zeta)}{\zeta}})^{\frac{\zeta}{p+\zeta}} \rd r\\
		&\quad+ K  \int^{t}_s (\E\|X^{s,x,\dl,\ep,\mathcal{M}}_{r \wedge \upsilon^{s,x,\dl,\ep,\mathcal{M}}_R}-X^{s,x,\dl,\ep,\mathcal{M}}_{\max\{s,\dl(r \wedge \upsilon^{s,x,\dl,\ep,\mathcal{M}}_R)\}}\|^{p+\zeta})^{\frac{p}{p+\zeta}}\\
		&\hspace{15mm}\times(\E(1+\|X^{s,x,\dl,\ep,\mathcal{M}}_{r \wedge \upsilon^{s,x,\dl,\ep,\mathcal{M}}_R}\|^{\frac{\chi}{2}}+\|X^{s,x,\dl,\ep,\mathcal{M}}_{\max\{s,\dl(r \wedge \upsilon^{s,x,\dl,\ep,\mathcal{M}}_R)\}}\|^{\frac{\chi}{2}})^{\frac{p(p+\zeta)}{\zeta}})^{\frac{\zeta}{p+\zeta}} \rd r\\
		&\quad+ K|\dl|^{\frac{p}{2}}\E \int^{t}_s 1+\|X^{s,x,\dl,\ep}_{\max\{s,\dl(r \wedge \upsilon^{s,x,\dl,\ep,\mathcal{M}}_R)\}}\|^{(\frac{3}{2}\chi+1)p} +\|X^{s,x,\dl,\ep,\mathcal{M}}_{\max\{s,\dl(r \wedge \upsilon^{s,x,\dl,\ep,\mathcal{M}}_R)\}}\|^{(\frac{3}{2}\chi+1)p}  \rd r\\
		&\quad+ K(\ep^{-2}\mathcal{Z}_d)^{p-1}\mathcal{M}^{-\frac{p}{2}}\int^{t}_s  N_d+L_d\E\|X_{\max\{s,\dl(r \wedge \upsilon^{s,x,\dl,\ep,\mathcal{M}}_R)\}}^{s,x,\dl,\ep,\mathcal{M}}\|^{p} \rd r,
	\end{align*}
	which is followed by 
	\begin{align*}
		&\sup_{t\in[s,T]}	\E\|X_{t \wedge \upsilon^{s,x,\dl,\ep,\mathcal{M}}_R}^{s,x,\dl,\ep} - X_{t \wedge \upsilon^{s,x,\dl,\ep,\mathcal{M}}_R}^{s,x,\dl,\ep,\mathcal{M}} \|^{p} \\
		&\leq
		K \int ^{T}_s(1+L_d^{\frac{p}{2}}) \sup_{u\in[s,r]} \E\|X_{u \wedge \upsilon^{s,x,\dl,\ep,\mathcal{M}}_R}^{s,x,\dl,\ep}-X^{s,x,\dl,\ep,\mathcal{M}}_{u \wedge \upsilon^{s,x,\dl,\ep,\mathcal{M}}_R}\|^{p}\rd r\\\
		&\quad+ K  (T-s) (\sup_{t\in[s,T]}\E\|X^{s,x,\dl,\ep}_{t}-X^{s,x,\dl,\ep}_{\max\{s,\dl(t)\}}\|^{p+\zeta})^{\frac{p}{p+\zeta}}(\sup_{t\in[s,T]}\E(1+\|X^{s,x,\dl,\ep}_{t}\|^{\frac{\chi}{2}})^{\frac{p(p+\zeta)}{\zeta}})^{\frac{\zeta}{p+\zeta}} \\
		&\quad+ K  (T-s) (\sup_{t\in[s,T]}\E\|X^{s,x,\dl,\ep,\mathcal{M}}_{t}-X^{s,x,\dl,\ep,\mathcal{M}}_{\max\{s,\dl(t)\}}\|^{p+\zeta})^{\frac{p}{p+\zeta}}(\sup_{t\in[s,T]}\E(1+\|X^{s,x,\dl,\ep,\mathcal{M}}_{t}\|^{\frac{\chi}{2}})^{\frac{p(p+\zeta)}{\zeta}})^{\frac{\zeta}{p+\zeta}} \\
		&\quad+ K|\dl|^{\frac{p}{2}}(T-s) ( 1+\sup_{t\in[s,T]}\E\|X^{s,x,\dl,\ep}_{t}\|^{(\frac{3}{2}\chi+1)p} +\sup_{t\in[s,T]}\E\|X^{s,x,\dl,\ep,\mathcal{M}}_{t}\|^{(\frac{3}{2}\chi+1)p}  )\\
		&\quad+ K(\ep^{-2}\mathcal{Z}_d)^{p-1}\mathcal{M}^{-\frac{p}{2}}(T-s) ( N_d+L_d\sup_{t\in[s,T]}\E\|X_{\max\{s,\dl(r)\}}^{s,x,\dl,\ep,\mathcal{M}}\|^p).
	\end{align*}
	For $p\in[2,p^*]$ with $p^*= \min\{\frac{2p_0}{\chi+2}-\zeta, \frac{2p_0}{3\chi+2}, \frac{-\chi\zeta+\sqrt{\chi\zeta(\chi\zeta+8 p_0)}}{2\chi}\}$, we apply Lemmas \ref{epbounds}, \ref{scheme3bounds}, and \ref{corollary2}\,\ref{corollary2(ii)}--\ref{lemma9} to obtain that
	\begin{align*}
		&\sup_{t\in[s,T]}	\E\|X_{t \wedge \upsilon^{s,x,\dl,\ep,\mathcal{M}}_R}^{s,x,\dl,\ep} - X_{t \wedge \upsilon^{s,x,\dl,\ep,\mathcal{M}}_R}^{s,x,\dl,\ep,\mathcal{M}} \|^{p}\\
		&\leq K \int ^{T}_s (1+L_d^{\frac{p}{2}}) \sup_{u\in[s,r]} \E\|X_{u}^{s,x,\dl,\ep}-X^{s,x,\dl,\ep,\mathcal{M}}_{u}\|^{p}\\
		&\quad +K(|\dl|^{\frac{p}{p+\zeta}}+(\ep^{-2}\mathcal{Z}_d)^{p-1}\mathcal{M}^{-\frac{p}{2}}(T-s)(1+L_d^{\frac{p}{2}}) \big(1+\|x\|^{p_0}+N_d^{p_0}\big)e^{K(1+L_d^{p_0})}.
	\end{align*}
	Gr\"onwall’s lemma yields the desired uniform bound and Fatou’s lemma concludes the proof. 
\end{proof}

\subsection{Proof of results in Section \ref{section5.3}}\label{app:section5.3}
	\begin{proof}[\textbf{Proof of Lemma \ref{exactdifferenttimet}}]
	Fix $s\in [0,T]$ and $x\in \R^d$. By the definition of SDE \eqref{exactsolu}, we obtain, for all $t\in[s,T]$ and $\tl{t} \in[t,T]$, that
	\begin{align*}
		\E\|X^{s,x,\iota}_{\tl{t}} - X^{s,x,\iota}_t\|^p &\leq K\E \big\|\int^{\tl{t}}_t \mu(X^{s,x,\iota}_{r})\rd r\big\|^p + K\E\big\|\int^{\tl{t}}_t \sg(X^{s,x,\iota}_{r})\rd W_r \big\|^p\\
		&\quad+K\E\big\|\int^{\tl{t}}_t \int_{\R^d} \gm(X^{s,x,\iota}_{r},z) \tl\pi(\rd z,\rd r)\big\|^p.
	\end{align*}
	By H\"older's inequality, \cite[Theorem 7.3]{xuerongmao}, and \cite[Lemma 1]{Mikulevicius}, we obtain, for all $t\in[s,T]$ and $\tl{t} \in[t,T]$, that
	\begin{align*}
		\E\|X^{s,x,\iota}_{\tl{t}} - X^{s,x,\iota}_t\|^p &\leq K|\tl{t}-t|^{p-1}\E \int^{\tl{t}}_t \|\mu^{\dl}(X^{s,x,\iota}_{r})\|^{p}\rd r+ K|\tl{t}-t|^{\frac{p}{2}-1}\E\int^{\tl{t}}_t \|\sg^{\dl}(X^{s,x,\iota}_{r})\|^{p}\rd r \\
		&\quad+K|\tl{t}-t|^{\frac{p}{2}-1}\E\int^{\tl{t}}_t \big(\int_{\R_d} \|\gm(X^{s,x,\iota}_{r},z)-\gm(0,z)\|^2 \nu(\rd z)\big)^{\frac{p}{2}} \rd r\\
		&\quad+K|\tl{t}-t|^{\frac{p}{2}-1}\E\int^{\tl{t}}_t \big(\int_{\R_d} \|\gm(0,z)\|^2 \nu(\rd z)\big)^{\frac{p}{2}} \rd r\\
		&\quad+K\E\int^{\tl{t}}_t\int_{\R^d} \|\gm(X^{s,x,\iota}_{r},z)-\gm(0,z)\|^p \nu(\rd z) \rd r+K\E\int^{\tl{t}}_t\int_{\R^d} \|\gm(0,z)\|^p \nu(\rd z) \rd r.
	\end{align*}
	Applying Remarks \ref{remark1}, \ref{remark2}, Assumptions \ref{A3} and \ref{A4} yields, for all $t\in[s,T]$ and $\tl{t} \in[t,T]$, that
	\begin{align*}
		\E\|X^{s,x,\iota}_{\tl{t}}- X^{s,x,\iota}_t \|^p 
		\leq K\lf(|\tl{t}-t|+|\tl{t}-t|^{p}\rt)\lf(1+N_d^{\frac{p}{2}}+L_d^{\frac{p}{2}}+(1+L_d^{\frac{p}{2}})\sup_{r\in[t,\tl{t}]}\E\|X^{s,x,\iota}_{r}\|^{\frac{\chi+2}{2}p}\rt).
	\end{align*}
	Since $p \in[2,\frac{2p_0}{\chi+2}]$, by Lemma \ref{exactmomentbounds}, one obtains, for all $t\in[s,T]$ and $\tl{t} \in[t,T]$, that
	\begin{align*}
		\E\|X^{s,x,\iota}_{\tl{t}} - X^{s,x,\iota}_t\|^p \leq K\lf(|\tl{t}-t|+|\tl{t}-t|^{p}\rt)(1+L_d^{\frac{p}{2}})\big(1+\|x\|^{p_0}+N_d^{\frac{p_0}{2}}\big)e^{K(1+L_d^{\frac{p_0}{2}})}.
	\end{align*}
	This and symmetry complete the proof.
\end{proof}

\begin{proof}[\textbf{Proof of Lemma \ref{exactdifferentx}}]
	Fix $s \in[0,T]$ and $x\in\R^d$. By the definition of SDE \eqref{SDE}, the difference satisfies, for all $t\in[s,T]$, that
	\begin{align*}
		X_{t}^{s,x,\iota} - X_t^{s,\tl{x},\iota} &= x -\tl{x} + \int_{s}^t \mu(X_{r-}^{s,x,\iota})-\mu(X^{s,\tl{x},\iota}_{r-}) \rd r+\int_{s}^t \sg(X_{r-}^{s,x,\iota})-\sg(X^{s,\tl{x},\iota}_{r-}) \rd W_r \\
		&\quad+\int_{s}^t \int_{\R^d} \gm(X^{s,x,\iota}_{r-},z)-\gm(X^{s,\tl{x},\iota}_{r-},z) \tl{\pi}(\rd z ,\rd r).
	\end{align*}
	Define the following stopping time $\upsilon^{s,x,\tl{x},\iota}_R$ := $\tau^{s,x,\iota}_R \wedge \tau^{s,\tl{x},\iota}_R$ with $R \in \N$ where 
	\[\tau^{s,x,\iota}_R:= \inf\{t\geq s: \|X^{s,x,\iota}_t\|\geq R\}\wedge T, \qquad \tau^{s,\tl{x},\iota}_R:= \inf\{t\geq s: \|X^{s,\tl{x},\iota}_t\|\geq R\}\wedge T.
	\]
	Applying It\^o's formula, together with Cauchy-Schwarz inequality and Lemma \ref{Formula for the remainder}, one obtains, for all $t \in [s,T]$, that
	\begin{align*}
		&\E\|X_{t \wedge \upsilon^{s,x,\tl{x},\iota}_R}^{s,x,\iota} - X_{t \wedge \upsilon^{s,x,\tl{x},\iota}_R}^{s,\tl{x},\iota} \|^{p} \leq \|x -\tl{x}\|^p\\
		&\quad+
		p\E \int^{t \wedge \upsilon^{s,x,\tl{x},\iota}_R}_{s} \|X_{r}^{s,x,\iota}-X^{s,\tl{x},\iota}_{r}\|^{p-2}(X^{s,x,\iota}_r-X^{s,\tl{x},\iota}_{r})(\mu(X_{r}^{s,x,\iota})-\mu(X^{s,\tl{x},\iota}_{r}) )  \rd r\\
		&\quad+\frac{p(p-1)}{2}\E\int^{t \wedge \upsilon^{s,x,\tl{x},\iota}_R}_{s}\|X_{r}^{s,x,\iota}-X^{s,\tl{x},\iota}_{r}\|^{p-2}  \|\sg(X_{r}^{s,x,\iota} )-\sg(X^{s,\tl{x},\iota}_{r})\|^2 \rd r\\
		&\quad+K\E\int^{t \wedge\upsilon^{s,x,\tl{x},\iota}_R}_{s}\int_{\R^d} \big\{\|X_{r}^{s,x,\iota}-X^{s,\tl{x},\iota}_{r}\|^{p-2} \|\gm(X^{s,x,\iota}_{r},z)-\gm(X^{s,\tl{x},\iota}_{r},z)\|^2\\
		&\hspace{40mm}+\|\gm(X^{s,x,\iota}_{r},z)-\gm(X^{s,\tl{x},\iota}_{r},z)\|^{p}\big\}\nu(\rd z)\rd r.
	\end{align*}
	By Young's inequality, one gets, for all $t \in [s,T]$, that
	\begin{align*}
		&\E\|X_{t \wedge \upsilon^{s,x,\tl{x},\iota}_R}^{s,x,\iota} - X_{t \wedge \upsilon^{s,x,\tl{x},\iota}_R}^{s,\tl{x},\iota} \|^{p} \leq \|x -\tl{x}\|^p\\
		&\quad+
		\frac{p}{2}\E \int^{t \wedge \upsilon^{s,x,\tl{x},\iota}_R}_{s} \|X_{r}^{s,x,\iota}-X^{s,\tl{x},\iota}_{r}\|^{p-2}\big\{2(X^{s,x,\iota}_r-X^{s,\tl{x},\iota}_{r})\\
		&\hspace{30mm}\times(\mu(X_{r}^{s,x,\iota})-\mu(X^{s,\tl{x},\iota}_{r}) ) +(p_0-1)  \|\sg(X_{r}^{s,x,\iota} )-\sg(X^{s,\tl{x},\iota}_{r})\|^2 \big\}\rd r\\
		&\quad+K\E\int^{t \wedge\upsilon^{s,x,\tl{x},\iota}_R}_{s} \|X_{r}^{s,x,\iota}-X^{s,\tl{x},\iota}_{r}\|^{p} \rd r\\
		&\quad+K\E\int^{t \wedge\upsilon^{s,x,\tl{x},\iota}_R}_{s} \bigg(\int_{\R^d}  \|\gm(X^{s,x,\iota}_{r},z)-\gm(X^{s,\tl{x},\iota}_{r},z)\|^2\nu(\rd z)  \bigg)^{\frac{p}{2}}\rd r\\
		&\quad+K\E\int^{t \wedge\upsilon^{s,x,\tl{x},\iota}_R}_{s} \int_{\R^d} \|\gm(X^{s,x,\iota}_{r},z)-\gm(X^{s,\tl{x},\iota}_{r},z)\|^{p}\nu(\rd z)\rd r,
	\end{align*}
	which on the application of Assumptions \ref{A1} and \ref{A4} yields, for all $t \in [s,T]$, that
	\begin{align*}
		\E\|X_{t \wedge \upsilon^{s,x,\tl{x},\iota}_R}^{s,x,\iota} - X_{t \wedge \upsilon^{s,x,\tl{x},\iota}_R}^{s,\tl{x},\iota} \|^{p} \leq \|x-\tl{x}\|^p+K(1+L_d^{\frac{p_0}{2}})\E\int^{t \wedge\upsilon^{s,x,\tl{x},\iota}_R}_{s} \|X_{r}^{s,x,\iota}-X^{s,\tl{x},\iota}_{r}\|^{p} \rd r.
	\end{align*}
	Finally, the application of Gr\"onwall's lemma and Fatou's lemma completes the proof.
\end{proof}

\begin{proof}[\textbf{Proof of Lemma \ref{exactdifferents}}]
	Fix $s \in[0,T]$, $\tl{s} \in [s,T]$, and $x\in\R^d$. By the definition of SDE \eqref{SDE}, we have, for all $t\in[\tl{s},T]$, that
	\begin{align*}
		X_{t}^{s,x,\iota} - X_t^{\tl{s},x,\iota} &= X_{\tl{s}}^{s,x,\iota} -x + \int_{\tl{s}}^t \mu(X_{r-}^{s,x,\iota})-\mu(X^{\tl{s},x,\iota}_{r-}) \rd r+\int_{\tl{s}}^t \sg(X_{r-}^{s,x,\iota})-\sg(X^{\tl{s},x,\iota}_{r-}) \rd W_r \\
		&\quad+\int_{\tl{s}}^t \int_{\R^d} \gm(X^{s,x,\iota}_{r-},z)-\gm(X^{\tl{s},x,\iota}_{r-},z) \tl{\pi}(\rd z ,\rd r).
	\end{align*}
	Define a stopping time $\upsilon^{s,\tl{s},x}_R$ := $\tau^{s,x}_R \wedge \tau^{\tl{s},x}_R$ with $R \in \N$
	where \[\tau^{s,x}_R:= \inf\{t\geq s: \|X^{s,x,\iota}_t\|\geq R\}\wedge T,\qquad \tau^{\tl{s},x}_R:= \inf\{t\geq \tl{s}: \|X^{\tl{s},x,\iota}_t\|\geq R\}\wedge T.\]
	Moreover, we use It\^o's formula, Cauchy-Schwarz inequality, and Lemma \ref{Formula for the remainder} to obtain, for all $t \in [\tl{s},T]$, that
	\begin{align*}
		&\E\|X_{t \wedge \upsilon^{s,\tl{s},x}_R}^{s,x,\iota} - X_{t \wedge \upsilon^{s,\tl{s},x}_R}^{\tl{s},x,\iota} \|^{p} \leq \E\|X_{\tl{s}}^{s,x,\iota} -x\|^p\\
		&\quad+
		p\E \int^{t \wedge \upsilon^{s,\tl{s},x}_R}_{\tl{s}} \|X_{r}^{s,x,\iota}-X^{\tl{s},x,\iota}_{r}\|^{p-2}(X^{s,x,\iota}_r-X^{\tl{s},x,\iota}_{r})(\mu(X_{r}^{s,x,\iota})-\mu(X^{\tl{s},x,\iota}_{r}) )  \rd r\\
		&\quad+\frac{p(p-1)}{2}\E\int^{t \wedge \upsilon^{s,\tl{s},x}_R}_{\tl{s}}\|X_{r}^{s,x,\iota}-X^{\tl{s},x,\iota}_{r}\|^{p-2}  \|\sg(X_{r}^{s,x,\iota} )-\sg(X^{\tl{s},x,\iota}_{r})\|^2 \rd r\\
		&\quad+K\E\int^{t \wedge\upsilon^{s,\tl{s},x}_R}_{\tl{s}}\int_{\R^d} \big\{\|X_{r}^{s,x,\iota}-X^{\tl{s},x,\iota}_{r}\|^{p-2} \|\gm(X^{s,x,\iota}_{r},z)-\gm(X^{\tl{s},x,\iota}_{r},z)\|^2\\
		&\hspace{35mm}+\|\gm(X^{s,x,\iota}_{r},z)-\gm(X^{\tl{s},x,\iota}_{r},z)\|^{p}\big\}\nu(\rd z)\rd r,
	\end{align*}
	which on using Young's inequality, Assumptions \ref{A1} and \ref{A4} gives, for all $t \in [\tl{s},T]$, that
	\begin{align*}
		\E\|X_{t \wedge \upsilon^{s,\tl{s},x}_R}^{s,x,\iota} - X_{t \wedge \upsilon^{s,x,\tl{s},x,\iota}_R}^{\tl{s},x,\iota} \|^{p} \leq \E\|X_{\tl{s}}^{s,x,\iota} -x\|^p+K(1+L_d^{\frac{p}{2}})\E\int^{t \wedge\upsilon^{s,\tl{s},x}_R}_{\tl{s}} \|X_{r}^{s,x,\iota}-X^{\tl{s},x,\iota}_{r}\|^{p} \rd r.
	\end{align*}
	Gr\"onwall's lemma implies that
	\begin{align*}
		\sup_{t\in[\tl{s},T]} \E\|X_{t \wedge \upsilon^{s,\tl{s},x}_R}^{s,x,\iota} - X_{t \wedge \upsilon^{s,\tl{s},x}_R}^{\tl{s},x,\iota} \|^{p} &\leq \E\|X_{\tl{s}}^{s,x,\iota} -x\|^p+K(1+L_d^{\frac{p}{2}})\int^{T}_{\tl{s}} \sup_{u\in[{\tl{s}},r]} \E \|X_{u \wedge \upsilon^{s,\tl{s},x}_R}^{s,x,\iota}-X^{\tl{s},x,\iota}_{u \wedge \upsilon^{s,\tl{s},x}_R}\|^{p} \rd r\\
		&\leq \E\|X_{\tl{s}}^{s,x,\iota} -x\|^p e^{K(1+L_d^{\frac{p}{2}})}.
	\end{align*}
	Then, Lemma \ref{exactdifferenttimet} applied with $(t,\tl{t})  \curvearrowleft (s,\tl{s})$ yields that
	\begin{align*}
		\sup_{t\in[\tl{s},T]} \E\|X_{t \wedge \upsilon^{s,\tl{s},x}_R}^{s,x,\iota} - X_{t \wedge \upsilon^{s,\tl{s},x}_R}^{\tl{s},x,\iota} \|^{p} \leq K\big(|\tl{s}-s|+|\tl{s}-s|^p\big)(1+L_d^{\frac{p}{2}}) \big(1+\|x\|^{p_0}+N_d^{\frac{p_0}{2}}\big)e^{K(1+L_d^{\frac{p_0}{2}})}.
	\end{align*}
	Finally, the application of Fatou's lemma and symmetry finish the proof.
\end{proof}

\begin{proof}[\textbf{Proof of Lemma \ref{differenttimet}}]
	Fix $s\in [0,T]$, $x\in \R^d$, $\dl \in \Th$, $\ep \in (0,1)$, and $\mathcal{M} \in \N$ with $\mathcal{M}\geq (1 \vee \ep^{-2}\mathcal{Z}_d)^{2}$. By the definition of scheme \eqref{Tamedscheme2}, we obtain, for all $t\in[s,T]$ and $\tl{t} \in[t,T]$, that
	\begin{align*}
		\E\|X^{s,x,\dl,\ep,\mathcal{M}}_{\tl{t}} - X^{s,x,\dl,\ep,\mathcal{M}}_t\|^p &\leq K\E \lf\| \int^{\tl{t}}_t \mu^{\dl}(X^{s,x,\dl,\ep,\mathcal{M}}_{\max\{s,\dl(r)\}})\rd r\rt\|^p + K\E\lf\|\int^{\tl{t}}_t \sg^{\dl}(X^{s,x,\dl,\ep,\mathcal{M}}_{\max\{s,\dl(r)\}})\rd W_r \rt\|^p\\
		&\quad+K\E\lf\|\int^{\tl{t}}_t \int_{A_\ep} \gm(X^{s,x,\dl,\ep,\mathcal{M}}_{\max\{s,\dl(r)\}},z) \tl\pi(\rd z,\rd r)\rt\|^p \\
		&\quad+K\E\bigg\|\int^{\tl{t}}_t\int_{A_\ep} \gm(X^{s,x,\dl,\ep,\mathcal{M}}_{\max\{s,\dl(r)\}},z)\nu(\rd z) \\
		&\quad\quad\quad - \lf\{\frac{\nu(A_\ep)}{\mathcal{M}}\sum_{i=1}^{\mathcal{M}}\gm\lf(X_{\max\{s,\dl(r-)\}}^{s,x,\dl,\ep,\mathcal{M}},V^{\dl,\ep,\mathcal{M}}_{i,\max\{s,\dl(r)\}}\rt)\rt\}\rd r \bigg\|^p.
	\end{align*}
	By using H\"older's inequality, \cite[Theorem 7.3]{xuerongmao}, and \cite[Lemma 1]{Mikulevicius}, we obtain, for all $t\in[s,T]$ and $\tl{t} \in[t,T]$, that
	\begin{align*}
		&\E\|X^{s,x,\dl,\ep,\mathcal{M}}_{\tl{t}} - X^{s,x,\dl,\ep,\mathcal{M}}_t\|^p\\
		& \leq K|\tl{t}-t|^{p-1}\E \int^{\tl{t}}_t \|\mu^{\dl}(X^{s,x,\dl,\ep,\mathcal{M}}_{\max\{s,\dl(r)\}})\|^{p}\rd r+ K|\tl{t}-t|^{\frac{p}{2}-1}\E\int^{\tl{t}}_t \|\sg^{\dl}(X^{s,x,\dl,\ep,\mathcal{M}}_{\max\{s,\dl(r)\}})\|^{p}\rd r \\
		&\quad+K|\tl{t}-t|^{\frac{p}{2}-1}\E\int^{\tl{t}}_t \big(\int_{A_\ep} \|\gm(X^{s,x,\dl,\ep,\mathcal{M}}_{\max\{s,\dl(r)\}},z)-\gm(0,z)\|^2 \nu(\rd z)\big)^{\frac{p}{2}} \rd r\\
		&\quad+K|\tl{t}-t|^{\frac{p}{2}-1}\E\int^{\tl{t}}_t \lf(\int_{A_\ep} \|\gm(0,z)\|^2 \nu(\rd z)\rt)^{\frac{p}{2}} \rd r\\
		&\quad+K\E\int^{\tl{t}}_t\int_{A_\ep} \|\gm(X^{s,x,\dl,\ep,\mathcal{M}}_{\max\{s,\dl(r)\}},z)-\gm(0,z)\|^p \nu(\rd z) \rd r
+K\E\int^{\tl{t}}_t\int_{A_\ep} \|\gm(0,z)\|^p \nu(\rd z) \rd r\\
		&\quad+K|\tl{t}-t|^{p-1}\E\int^{\tl{t}}_t \bigg\|\int_{A_\ep} \gm(X^{s,x,\dl,\ep,\mathcal{M}}_{\max\{s,\dl(r)\}},z)\nu(\rd z) - \lf\{\frac{\nu(A_\ep)}{\mathcal{M}}\sum_{i=1}^{\mathcal{M}}\gm\lf(X_{\max\{s,\dl(r-)\}}^{s,x,\dl,\ep,\mathcal{M}},V^{\dl,\ep,\mathcal{M}}_{i,\max\{s,\dl(r)\}}\rt)\rt\}\bigg\|^p\rd r .
	\end{align*}
	Then, applying Remarks \ref{remark1}, \ref{remark2}, Assumption \ref{A2}, \ref{A3}, \ref{A4}, and \eqref{L3A_4(t)} (together with $\mathcal{M}\ge (1 \vee \ep^{-2}\mathcal{Z}_d)^{2}$) yields, for all $t\in[s,T]$ and $\tl{t} \in[t,T]$, that
	\begin{align*}
		\E\|X^{s,x,\dl,\ep,\mathcal{M}}_{\tl{t}}- X^{s,x,\dl,\ep,\mathcal{M}}_t \|^p 
		&\leq K\lf(|\tl{t}-t|+|\tl{t}-t|^{p}(1+(\ep^{-2}\mathcal{Z}_d)^{p-1}\mathcal{M}^{-\frac{p}{2}})\rt)\\
		&\quad\times\lf(1+N_d^{\frac{p}{2}}+L_d^{\frac{p}{2}}+(1+L_d^{\frac{p}{2}})\sup_{r\in[t,\tl{t}]}\E\|X^{s,x,\dl,\ep,\mathcal{M}}_{\max\{s,\dl(r)\}}\|^{\frac{\chi+2}{2}p}\rt).
	\end{align*}
	Since $p \in[2,\frac{2p_0}{\chi+2}]$, one applies Lemma \ref{scheme3bounds} to obtain, for all $t\in[s,T]$ and $\tl{t} \in[t,T]$, that
	\begin{align*}
		&\E\|X^{s,x,\dl,\ep,\mathcal{M}}_{\tl{t}} - X^{s,x,\dl,\ep,\mathcal{M}}_t\|^p \\ &\leq K\lf(|\tl{t}-t|+|\tl{t}-t|^{p}(1+(\ep^{-2}\mathcal{Z}_d)^{p-1}\mathcal{M}^{-\frac{p}{2}})\rt) (1+L_d^{\frac{p}{2}})\big(1+\|x\|^{p_0}+N_d^{p_0}\big)e^{K(1+L_d^{p_0})}.
	\end{align*}
	This and symmetry complete the proof.
\end{proof}

\begin{proof}[\textbf{Proof of Lemma \ref{differentx'}}]
	Fix $s\in[0,T]$, $x,\tl{x}\in\R^d$, $\dl \in \Th$, $\ep \in (0,1)$, and $\mathcal{M} \in \N$ with $\mathcal{M}\geq (1 \vee \ep^{-2}\mathcal{Z}_d)^{2}$ throughout this proof. By the definition of scheme \eqref{Tamedscheme2}, we obtain, for all $t \in [s,T]$, that
	\begin{align*}
		&\E\|X_{t}^{s,x,\dl,\ep,\mathcal{M}} - X_t^{s,\tl{x},\dl,\ep,\mathcal{M}} \|^p \leq K\|x -\tl{x}\|^p \\
		&\quad+K\E \lf\| \int_{s}^t \mu^\dl(X_{\max\{s,\dl(r)\}}^{s,x,\dl,\ep,\mathcal{M}})-\mu^\dl(X^{s,\tl{x},\dl,\ep,\mathcal{M}}_{\max\{s,\dl(r)\}}) \rd r\rt\|^p\\
		&\quad+K\E \lf\|\int_{s}^t \sg^\dl(X_{\max\{s,\dl(r)\}}^{s,x,\dl,\ep,\mathcal{M}})-\sg^\dl(X^{s,\tl{x},\dl,\ep,\mathcal{M}}_{\max\{s,\dl(r)\}}) \rd W_r \rt\|^p\\
		&\quad+K\E \lf\|\int_{s}^t \int_{A_\ep} \gm(X^{s,x,\dl,\ep,\mathcal{M}}_{\max\{s,\dl(r)\}},z)-\gm(X^{s,\tl{x},\dl,\ep,\mathcal{M}}_{\max\{s,\dl(r)\}},z) \tl{\pi}(\rd z ,\rd r)\rt\|^p\\
		&\quad+K\E \bigg\|\int^{t}_{s} \int_{A_\ep} \gm(X^{s,x,\dl,\ep,\mathcal{M}}_{\max\{s,\dl(r)\}},z)-\gm(X^{s,\tl{x},\dl,\ep,\mathcal{M}}_{\max\{s,\dl(r)\}},z)\nu(\rd z)\\
		&\quad\quad- \lf\{\frac{\nu(A_\ep)}{\mathcal{M}}\sum_{i=1}^{\mathcal{M}}\lf(\gm\lf(X_{\max\{s,\dl(r)\}}^{s,x,\dl,\ep,\mathcal{M}},V^{\dl,\ep,\mathcal{M}}_{i,\max\{s,\dl(r)\}}\rt)-\gm\lf(X_{\max\{s,\dl(r)\}}^{s,\tl{x},\dl,\ep,\mathcal{M}},V^{\dl,\ep,\mathcal{M}}_{i,\max\{s,\dl(r)\}}\rt)\rt)\rt\} \rd r \bigg\|^p.
	\end{align*}
		By using H\"older's inequality, \cite[Theorem 7.3]{xuerongmao}, and \cite[Lemma 1]{Mikulevicius}, we obtain, for all $t\in[s,T]$, that
	\begin{align*}
			&\E\|X_{t}^{s,x,\dl,\ep,\mathcal{M}} - X_t^{s,\tl{x},\dl,\ep,\mathcal{M}} \|^p \leq K\|x -\tl{x}\|^p\\
			&\quad+ K\E \int_{s}^t\| \mu^\dl(X_{\max\{s,\dl(r)\}}^{s,x,\dl,\ep,\mathcal{M}})-\mu^\dl(X^{s,\tl{x},\dl,\ep,\mathcal{M}}_{\max\{s,\dl(r)\}}) \|^p\rd r\\
			&\quad+K\E \int_{s}^t \|\sg^\dl(X_{\max\{s,\dl(r)\}}^{s,x,\dl,\ep,\mathcal{M}})-\sg^\dl(X^{s,\tl{x},\dl,\ep,\mathcal{M}}_{\max\{s,\dl(r)\}})\|^p \rd W_r \\
			&\quad+K\E \int_{s}^t \lf( \int_{A_\ep} \|\gm(X^{s,x,\dl,\ep,\mathcal{M}}_{\max\{s,\dl(r)\}},z)-\gm(X^{s,\tl{x},\dl,\ep,\mathcal{M}}_{\max\{s,\dl(r)\}},z)\|^2 \nu(\rd z) \rt)^{\frac{p}{2}} \rd r\\
			&\quad+K\E \int_{s}^t\int_{A_\ep} \|\gm(X^{s,x,\dl,\ep,\mathcal{M}}_{\max\{s,\dl(r)\}},z)-\gm(X^{s,\tl{x},\dl,\ep,\mathcal{M}}_{\max\{s,\dl(r)\}},z)\|^p \nu(\rd z)  \rd r\\
			&\quad+K \E \int^{t}_{s} \bigg\| \int_{A_\ep} \gm(X^{s,x,\dl,\ep,\mathcal{M}}_{\max\{s,\dl(r)\}},z)-\gm(X^{s,\tl{x},\dl,\ep,\mathcal{M}}_{\max\{s,\dl(r)\}},z)\nu(\rd z)\\
			&\quad- \lf\{\frac{\nu(A_\ep)}{\mathcal{M}}\sum_{i=1}^{\mathcal{M}}\lf(\gm\lf(X_{\max\{s,\dl(r)\}}^{s,x,\dl,\ep,\mathcal{M}},V^{\dl,\ep,\mathcal{M}}_{i,\max\{s,\dl(r)\}}\rt)-\gm\lf(X_{\max\{s,\dl(r)\}}^{s,\tl{x},\dl,\ep,\mathcal{M}},V^{\dl,\ep,\mathcal{M}}_{i,\max\{s,\dl(r)\}}\rt)\rt)\rt\} \bigg\|^{p} \rd r.
		\end{align*}
	On using Assumptions \ref{A4}, Remark \ref{remark4}, and the analogous argument to get \eqref{L3A_4(t)} (together with $\mathcal{M}\ge (1 \vee \ep^{-2}\mathcal{Z}_d)^{2}$), we obtain, for all $t\in[s,T]$, that
	\begin{align*}
		&\E\|X_{t}^{s,x,\dl,\ep,\mathcal{M}} - X_{t}^{s,\tl{x},\dl,\ep,\mathcal{M}} \|^{p} \leq K\|x -\tl{x}\|^p\\
		&\quad+
		K|\dl|^{-\frac{p}{4}}\E \int ^{t}_{s} \|X_{r}^{s,x,\dl,\ep,\mathcal{M}}-X^{s,\tl{x},\dl,\ep,\mathcal{M}}_{r}\|^{p} \rd r \\
		&\quad+K(L_d+L_d^{\frac{p}{2}}) \E \int ^{t}_{s} \|X_{\max\{s,\dl(r)\}}^{s,x,\dl,\ep,\mathcal{M}}-X^{s,\tl{x},\dl,\ep,\mathcal{M}}_{\max\{s,\dl(r)\}}\|^{p} \rd r\\
		&\quad+ K(\ep^{-2}\mathcal{Z}_d)^{p-1}\mathcal{M}^{-\frac{p}{2}}L_d \E \int^{t }_{s}  \| X^{s,x,\dl,\ep,\mathcal{M}}_{\max\{s,\dl(r)\}}-X_{\max\{s,\dl(r)\}}^{s,\tl{x},\dl,\ep,\mathcal{M}}\|^{p}\rd r.
	\end{align*}
It is followed by
	\begin{align*}
		&\sup_{t\in[s,T]}	\E\|X_{t}^{s,x,\dl,\ep,\mathcal{M}} - X_{t }^{s,\tl{x},\dl,\ep,\mathcal{M}} \|^{p} \leq K\|x -\tl{x}\|^p\\
		&\quad+K\lf(|\dl|^{-\frac{p}{4}}+L_d^{\frac{p}{2}} +(\ep^{-2}\mathcal{Z}_d)^{p-1}\mathcal{M}^{-\frac{p}{2}}L_d \rt)\E \int^{T}_{s} \sup_{u\in[s,r]} \| X^{s,x,\dl,\ep,\mathcal{M}}_{u}-X_{u}^{s,\tl{x},\dl,\ep,\mathcal{M}}\|^{p}\rd r.
	\end{align*}
	Finally, Gr\"onwall's lemma completes the proof.
\end{proof}

\begin{proof}[\textbf{Proof of Lemma \ref{differents'}}]
	\begin{figure}[t]
		\centering
		
		% A small helper style for consistent look
		\tikzset{
			timeline/.style={>=Latex,thick},
			gridpt/.style={circle,fill=black,inner sep=1.2pt},
			spt/.style={circle,fill=blue,inner sep=1.6pt},
			tpt/.style={circle,fill=red,inner sep=1.6pt},
			bpt/.style={circle,fill=black,inner sep=1.6pt},
			brace/.style={decorate,decoration={brace,amplitude=4pt}}
		}
		
		% -------------------- Case 1 --------------------
		\begin{subfigure}[t]{0.48\textwidth}
			\centering
			\resizebox{\linewidth}{!}{%
				\begin{tikzpicture}[timeline]
					\draw[-] (0,0) -- (8,0) node[right] {};
					
					\foreach \x/\lab in  {1/{t_i},4/{t_{i+1}},6/{t_{i+2}}} {
						\node[crosspt] at (\x,0) {};
						\node[below] at (\x,0) {\small $\lab$};
					}
					
					\node[spt] at (1.5,0) {};
					\node[above,blue] at (1.5,0) {\small $s$};
					\node[tpt] at (2.8,0) {};
					\node[above,red] at (2.8,0) {\small $\tilde s$};
					\node[tpt,fill=teal] at (3.5,0) {};
					\node[above,teal] at (3.5,0) {\small $\bar s$};
					\node at (6.9,-0.25) { $\cdots$};
					\node[below,black] at (7.8,0) {\small $t$};
					\node[tpt,fill=black] at (7.8,0) {};
				\end{tikzpicture}%
			}
			\caption{(i) No grid point in \((s,\bar s)\).}
		\end{subfigure}
		\hfill
		% -------------------- Case 2 --------------------
		\begin{subfigure}[t]{0.48\textwidth}
			\centering
			\resizebox{\linewidth}{!}{%
				\begin{tikzpicture}[timeline]
					\draw[-] (0,0) -- (8,0) node[right] {};
					
					\foreach \x/\lab in  {1/{t_i},4/{t_{i+1}},6/{t_{i+2}}} {
						\node[crosspt] at (\x,0) {};
						\node[below] at (\x,0) {\small $\lab$};
					}
					
					\node[spt] at (1.5,0) {};
					\node[above,blue] at (1.5,0) {\small $s$};
					\node[tpt] at (2.8,0) {};
					\node[above,red] at (2.8,0) {\small $\tilde s$};
					\node[tpt,fill=teal] at (4.0,0) {};
					\node[above,teal] at (4.0,0) {\small $\bar s$};
					\node at (6.9,-0.25) { $\cdots$};
					\node[below,black] at (7.8,0) {\small $t$};
					\node[tpt,fill=black] at (7.8,0) {};
				\end{tikzpicture}%
			}
			\caption{(ii) $\bar s$ is the smallest grid point on $(\tilde{s},t]$.}
		\end{subfigure}
		
		\vspace{0.8em}
		
		% -------------------- Case 3 --------------------
		\begin{subfigure}[t]{0.48\textwidth}
			\centering
			\resizebox{\linewidth}{!}{%
				\begin{tikzpicture}[timeline]
					\draw[-] (0,0) -- (8,0) node[right] {};
					
					\foreach \x/\lab in  {1/{t_i},4/{t_{i+1}},6/{t_{i+2}}} {
						\node[crosspt] at (\x,0) {};
						\node[below] at (\x,0) {\small $\lab$};
					}
					
					\node[spt] at (3,0) {};
					\node[above,blue,align=center] at (3,0) {\small $s$};
					
					\node[tpt] at (6.0,0) {};
					\node[above,red] at (6.0,0) {\small $\tilde s$};
					\node at (6.9,-0.25) { $\cdots$};
					\node[below,black] at (7.8,0) {\small $t$};
					\node[tpt,fill=black] at (7.8,0) {};
				\end{tikzpicture}%
			}
			\caption{(iii) $\tl{s}$ is a grid point.}
		\end{subfigure}
		\hfill
		% -------------------- Case 4 --------------------
		\begin{subfigure}[t]{0.48\textwidth}
			\centering
			\resizebox{\linewidth}{!}{%
				\begin{tikzpicture}[timeline]
					\draw[-] (0,0) -- (8,0) node[right] {};
					
					\foreach \x/\lab in  {1/{t_i},4/{t_{i+1}},6/{t_{i+2}}} {
						\node[crosspt] at (\x,0) {};
						\node[below] at (\x,0) {\small $\lab$};
					}
					
					\node[spt] at (0.8,0) {};
					\node[above,blue] at (0.8,0) {\small $s$};
					
					\node[tpt,fill=teal] at (4,0) {};
					\node[above,teal] at (4,0) {\small $\bar s$};
					
					\node[tpt] at (5.0,0) {};
					\node[above,red] at (5.0,0) {\small $\tilde s$};
					\node at (6.9,-0.25) { $\cdots$};
					\node[below,black] at (7.8,0) {\small $t$};
					\node[tpt,fill=black] at (7.8,0) {};
				\end{tikzpicture}%
			}
			\caption{(iv) $\bar{s}$ is the largest grid point on $[s,\tl{s})$.}
		\end{subfigure}
		
		\caption{A illustration for the case distinction. The points $\{t_i,t_{i+1},t_{i+2}\}$ indicate grid points (drawn by $\times$).}
		\label{fig:four-cases-initial-time}
	\end{figure}
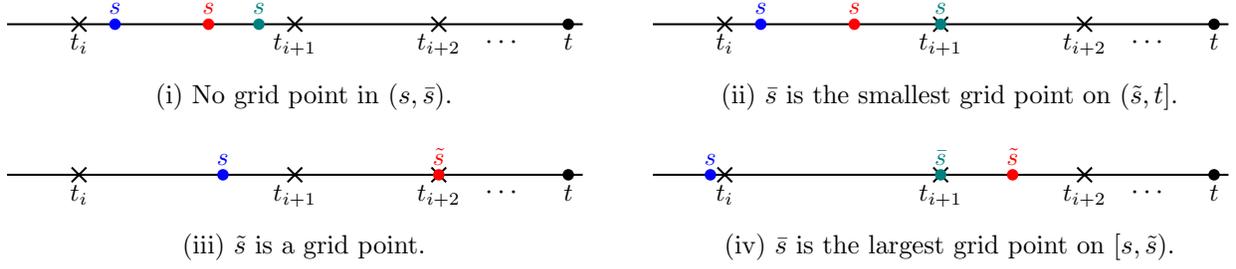
	
	By symmetry, it suffices to consider $s\le \tl{s}\le T$ and $t\in[\tl{s},T]$. Fix  $x\in\R^d$, $\dl \in \Th$, $\ep \in (0,1)$, and $\mathcal{M} \in \N$ with $\mathcal{M}\geq (1 \vee \ep^{-2}\mathcal{Z}_d)^{2}$ throughout this proof. Then, we follow the case distinction given in Figure \ref{fig:four-cases-initial-time}. Note that the case distinction is not independent; it is a strategy that upgrades a local estimate to the general situation.
	In Case~(i), we prove the one-step bound \eqref{enumerate1'} on an interval without grid points, i.e., for $(s,\bar s)\cap\dl([0,T])=\emptyset$, by estimating directly the drift, Brownian, jump and Monte-Carlo compensation terms on $[s,\tilde s]$.
	This local estimate is then the key input for Case~(ii): when $(s,\tilde s)\cap\dl([0,T])=\emptyset$ but we want a bound at time $t\ge \tilde s$, we choose the first grid time $\bar s=\min([\tilde s,t]\cap(\dl([0,T])\cup\{t\}))$ and apply the disintegration theorem (see, e.g., \cite[Lemma 2.2]{disintegration}) and the Markov property on grid points, so that \eqref{enumerate1'} (together with Lemma~\ref{differentx'}) yields \eqref{enumerate2'}.
	In Case~(iii) we treat the complementary situation where $\tilde s$ is a grid time, i.e., $\tilde s\in\dl([0,T])$, and again apply the disintegration theorem and the Markov property on grid points. Then, Lemmas~\ref{differenttimet} and \ref{differentx'} give \eqref{enumerate3'}.
	Finally, Case~(iv) reduces the general situation of $s\le \tilde s$ to the previously established cases: we select
	\(\bar s\in [s,\tilde s]\cap(\dl([0,T])\cup\{s\})\) such that \((\bar s,\tilde s)\cap\dl([0,T])=\emptyset\), split by the triangle inequality as in \eqref{enumerate4'}, and bound the term from $s$ to $\bar s$ by Case~(iii) (since $\bar s$ is a grid point) and the term from $\bar s$ to $\tilde s$ by Case~(ii) (since $(\bar s,\tilde s)$ contains no grid point).
	Combining these steps yields the desired estimate for all $s\le \tilde s$ and all $t\in[\tilde s,T]$. We now proceed with the detailed arguments for the four cases.
	
	\begin{enumerate}
		\item By definition of scheme \eqref{Tamedscheme2}, H\"older's inequality, \cite[Theorem 7.3]{xuerongmao}, \cite[Lemma 1]{Mikulevicius}, Remarks \ref{remark1},  \ref{remark2}, Assumptions \ref{A3}, \ref{A4}, and \eqref{L3A_4(t)} (together with $\mathcal{M}\ge (1 \vee \ep^{-2}\mathcal{Z}_d)^{2}$), one obtains, for all $s\in[0,T]$, $\tl{s}\in[s,T]$, and $\bar{s} \in[\tl{s},T]$ with $(s,\bar{s})\cap \dl([0,T])=\emptyset$, that
			\begin{align}\label{enumerate1'}
				&\E\|X_{\bar{s}}^{s,x,\dl,\ep,\mathcal{M}} - X_{\bar{s}}^{\tl{s},x,\dl,\ep,\mathcal{M}} \|^p\leq K\E\bigg[\|\mu^{\dl}(x)|\tl{s}-s|\|^p +\|\sg^{\dl}(x)(W_{\tl{s}}-W_s)\|^p\nonumber\\
				&\quad+\big\|\int^{\tl{s}}_s \int_{A_\ep}  \gm(x,z) -\gm(0,z) \tl{\pi}(\rd z,\rd r)\big\|^p +\big\|\int^{\tl{s}}_s \int_{A_\ep}  \gm(0,z) \tl{\pi}(\rd z,\rd r) \big\|^p\nonumber\\
				&\quad+ \big\|\int^{\tl{s}}_s \int_{A_\ep}  \gm(x,z) \nu(\rd z) - \frac{\nu(A_\ep)}{\mathcal{M}} \sum_{i=1}^{\mathcal{M}} \gm(x, V^{\dl,\ep,\mathcal{M}}_{i,s}) \rd r\big\|^p\bigg]\nonumber\\
				&\leq K\big\{  |\tl{s}-s|^{p}(1+\|x\|^{(\frac{\chi}{2}+1)p}) +  |\tl{s}-s|^{\frac{p}{2}}(1+\|x\|^{(\frac{\chi}{2}+1)p})\nonumber\\
				&\quad+  |\tl{s}-s|^{\frac{p}{2}}(N_d^{\frac{p}{2}}+L_d^{\frac{p}{2}}\|x\|^{p})+  |\tl{s}-s|(N_d+L_d\|x\|^{p}) \nonumber\\
				&\quad+  |\tl{s}-s|^p(\ep^{-2}\mathcal{Z}_d)^{p-1}\mathcal{M}^{-\frac{p}{2}} (N_d+L_d\|x\|^p)\big\}\nonumber\\
				&\leq K\big(|\tl{s}-s|+|\tl{s}-s|^p(1+(\ep^{-2}\mathcal{Z}_d)^{p-1}\mathcal{M}^{-\frac{p}{2}})\big) (1+N_d^{\frac{p}{2}}+L_d^{\frac{p}{2}}+(1+L_d^{\frac{p}{2}})\|x\|^{p_0}).
			\end{align}
		
		\item Note that for all $s\in[0,T]$, $\tl{s}\in[s,T]$, and $t\in[\tl{s},T]$ with $(s,\tl{s})\cap \dl([0,T])=\emptyset$, there exists $\bar{s}\in[\tl{s},t]\cap (\dl([0,T]) \cup \{t\})$ with $(s,\bar{s})\cap \dl([0,T]) = \emptyset$. Moreover, by the fact that Euler approximations \eqref{Tamedscheme2} restricted to their grid points satisfy the Markov property, the disintegration theorem (see, e.g., \cite[Lemma 2.2]{disintegration}), Lemma \ref{differentx'}, and \eqref{enumerate1'}, show that, for all $s\in[0,T]$, $\tl{s}\in[s,T]$, $t\in[\tl{s},T]$, and $\bar{s} = \min([\tl{s},t] \cap (\dl([0,T]) \cup \{t\}))$, it holds that
			\begin{align}\label{enumerate2'}
				&\E\|X_{t}^{s,x,\dl,\ep,\mathcal{M}} - X_{t}^{\tl{s},x,\dl,\ep,\mathcal{M}} \|^p=\E\lf[  \E\|   X_{t}^{\bar{s},\eta,\dl,\ep,\mathcal{M}} - X_{t}^{\bar{s},\tl{\eta},\dl,\ep,\mathcal{M}} \|^p \bigg\vert_{\eta = X_{\bar{s}}^{s,x,\dl,\ep,\mathcal{M}},\tl{\eta}=X_{\bar{s}}^{\tl{s},x,\dl,\ep,\mathcal{M}}   }  \rt] \nonumber\\
				&\leq K\E\lf[ \big(\|\eta -\tl{\eta}\|^pe^{K(|\dl|^{-\frac{p}{4}}+L_d^{\frac{p}{2}})} \bigg\vert_{\eta = X_{\bar{s}}^{s,x,\dl,\ep,\mathcal{M}},\tl{\eta}=X_{\bar{s}}^{\tl{s},x,\dl,\ep,\mathcal{M}}   } \rt]\nonumber\\  
				&\leq K\big(|\tl{s}-s|+|\tl{s}-s|^p(1+(\ep^{-2}\mathcal{Z}_d)^{p-1}\mathcal{M}^{-\frac{p}{2}})\big) (1+N_d^{\frac{p}{2}}+L_d^{\frac{p}{2}}+(1+L_d^{\frac{p}{2}})\|x\|^{p_0}) e^{K(|\dl|^{-\frac{p}{4}}+L_d^{\frac{p}{2}})}.
			\end{align}
		
		\item Next, we apply the disintegration theorem (see, e.g., \cite[Lemma 2.2]{disintegration}), Lemmas \ref{differenttimet} and \ref{differentx'} to show for all $s\in[0,T]$, $\tl{s} \in \dl([0,T])\cap[s,T]$, and $t\in[\tl{s},T]$, that
			\begin{align}\label{enumerate3'}
				&\E\|X_{t}^{s,x,\dl,\ep,\mathcal{M}} - X_{t}^{\tl{s},x,\dl,\ep,\mathcal{M}} \|^p=\E\lf[  \E\|   X_{t}^{\tl{s},\eta,\dl,\ep,\mathcal{M}} - X_{t}^{\tl{s},x,\dl,\ep,\mathcal{M}} \|^p \bigg\vert_{\eta = X_{\tl{s}}^{s,x,\dl,\ep,\mathcal{M}}  }  \rt] \nonumber\\
				&d \leq K\E\lf[ \big(\|\eta -x\|^p e^{K(|\dl|^{-\frac{p}{4}}+L_d^{\frac{p}{2}})}  \bigg\vert_{\eta = X_{\tl{s}}^{s,x,\dl,\ep,\mathcal{M}}  } \rt]\nonumber\\
				&\leq K \big[\big(|\tl{s}-s|+|\tl{s}-s|^p(1+(\ep^{-2}\mathcal{Z}_d)^{p-1}\mathcal{M}^{-\frac{p}{2}})\big) (1+L_d^{\frac{p}{2}})(1+\|x\|^{p_0}+N_d^{p_0})e^{K(|\dl|^{-\frac{p}{4}}+L_d^{p_0})}.
			\end{align}

		\item Then, by triangle inequality, \eqref{enumerate2'} and \eqref{enumerate3'}, it holds for all $s\in[0,T]$, $\tl{s} \in [s,T]$, $t\in[\tl{s},T]$, and $\bar{s} \in [s,\tl{s}] \cap(\dl([0,T]) \cup \{s\})$ with $(\bar{s},\tl{s})\cap \dl([0,T])=\emptyset$ that
		\begin{equation}\label{enumerate4'}
			\begin{aligned}
				&\E\|X_{t}^{s,x,\dl,\ep,\mathcal{M}} - X_{t}^{\tl{s},x,\dl,\ep,\mathcal{M}} \|^p\leq \E\|  X_{t}^{s,x,\dl,\ep,\mathcal{M}} - X_{t}^{\bar{s},x,\dl,\ep,\mathcal{M}}  \|^p +\E\|  X_{t}^{\bar{s},x,\dl,\ep,\mathcal{M}} - X_{t}^{\tl{s},x,\dl,\ep,\mathcal{M}}  \| ^p\\ 
				&\leq  K \big[\big(|\bar{s}-s|+|\bar{s}-s|^p(1+(\ep^{-2}\mathcal{Z}_d)^{p-1}\mathcal{M}^{-\frac{p}{2}})\big) (1+L_d^{\frac{p}{2}})(1+\|x\|^{p_0}+N_d^{p_0})e^{K(|\dl|^{-\frac{p}{4}}+L_d^{p_0})}\\
				&\quad+  K\big(|\tl{s}-\bar{s}|+|\tl{s}-\bar{s}|^p(1+(\ep^{-2}\mathcal{Z}_d)^{p-1}\mathcal{M}^{-\frac{p}{2}})\big) (1+N_d^{\frac{p}{2}}+L_d^{\frac{p}{2}}+(1+L_d^{\frac{p}{2}})\|x\|^{p_0}) e^{K(|\dl|^{-\frac{p}{4}}+L_d^{\frac{p}{2}})}\\
				&\leq  K\big( |\tl{s}-s|+|\tl{s}-s|^p(1+(\ep^{-2}\mathcal{Z}_d)^{p-1}\mathcal{M}^{-\frac{p}{2}}) \big)(1+L_d^{\frac{p}{2}})(1+\|x\|^{p_0}+N_d^{p_0})e^{K(|\dl|^{-\frac{p}{4}}+L_d^{p_0})}.
			\end{aligned}
		\end{equation}
	\end{enumerate}
	Finally, note that  for all $s\in[0,T]$, $\tl{s} \in [s,T]$, and $t\in[\tl{s},T]$, there must exist $\bar{s} \in [s,\tl{s}] \cap(\dl([0,T]) \cup \{s\})$ with $(\bar{s},\tl{s})\cap \dl([0,T])=\emptyset$, it implies that for all $s\in[0,T]$, $\tl{s} \in [s,T]$, and $t\in[\tl{s},T]$, the following inequality holds:
	\begin{align*}
		&\E\|X_{t}^{s,x,\dl,\ep,\mathcal{M}} - X_{t}^{\tl{s},x,\dl,\ep,\mathcal{M}} \|^p\\
		&\leq K\big( |\tl{s}-s|+|\tl{s}-s|^p(1+(\ep^{-2}\mathcal{Z}_d)^{p-1}\mathcal{M}^{-\frac{p}{2}}) \big)(1+L_d^{\frac{p}{2}})(1+\|x\|^{p_0}+N_d^{p_0})e^{K(|\dl|^{-\frac{p}{4}}+L_d^{p_0})}.
	\end{align*}
\end{proof}

\subsection{Proof of results in Section \ref{section5.4}}\label{app:section5.4}
	\begin{proof}[\textbf{Proof of Lemma \ref{differentx}}]
		Fix $s\in[0,T]$, $x,\tl{x}\in\R^d$, $\dl \in \Th$, $\ep \in (0,1)$, and $\mathcal{M} \in \N$ with $\mathcal{M}\geq (1 \vee \ep^{-2}\mathcal{Z}_d)^{2}$ throughout this proof. By the definition  \eqref{Tamedscheme2} of scheme, we obtain, for all $t \in [s,T]$, that
		\begin{align*}
			X_{t}^{s,x,\dl,\ep,\mathcal{M}} - X_t^{s,\tl{x},\dl,\ep,\mathcal{M}} &= x -\tl{x}+ \int_{s}^t \mu^\dl(X_{\max\{s,\dl(r-)\}}^{s,x,\dl,\ep,\mathcal{M}})-\mu^\dl(X^{s,\tl{x},\dl,\ep,\mathcal{M}}_{\max\{s,\dl(r-)\}}) \rd r\\
			&\quad+\int_{s}^t \sg^\dl(X_{\max\{s,\dl(r-)\}}^{s,x,\dl,\ep,\mathcal{M}})-\sg^\dl(X^{s,\tl{x},\dl,\ep,\mathcal{M}}_{\max\{s,\dl(r-)\}}) \rd W_r \\
			&\quad+\int_{s}^t \int_{A_\ep} \gm(X^{s,x,\dl,\ep,\mathcal{M}}_{\max\{s,\dl(r-)\}},z)-\gm(X^{s,\tl{x},\dl,\ep,\mathcal{M}}_{\max\{s,\dl(r-)\}},z) \tl{\pi}(\rd z ,\rd r)\\
			&\quad+\int^{t}_{s} \int_{A_\ep} \gm(X^{s,x,\dl,\ep,\mathcal{M}}_{\max\{s,\dl(r-)\}},z)-\gm(X^{s,\tl{x},\dl,\ep,\mathcal{M}}_{\max\{s,\dl(r-)\}},z)\nu(\rd z)\\
			&\hspace{-23mm}- \lf\{\frac{\nu(A_\ep)}{\mathcal{M}}\sum_{i=1}^{\mathcal{M}}\lf(\gm\lf(X_{\max\{s,\dl(r-)\}}^{s,x,\dl,\ep,\mathcal{M}},V^{\dl,\ep,\mathcal{M}}_{i,\max\{s,\dl(r-)\}}\rt)-\gm\lf(X_{\max\{s,\dl(r-)\}}^{s,\tl{x},\dl,\ep,\mathcal{M}},V^{\dl,\ep,\mathcal{M}}_{i,\max\{s,\dl(r-)\}}\rt)\rt)\rt\} \rd r.
		\end{align*}
		Define a stopping time $\upsilon^{s,x,\tl{x},\dl,\ep,\mathcal{M}}_R$ := $\tau^{s,x,\dl,\ep,\mathcal{M}}_R \wedge \tau^{s,\tl{x},\dl,\ep,\mathcal{M}}_R$ with $R \in \N$ where
		\[ \tau^{s,x,\dl,\ep,\mathcal{M}}_R:= \inf\{t\geq s: \|X^{s,x,\dl,\ep,\mathcal{M}}_t\|\geq R\}\wedge T, \qquad \tau^{s,\tl{x},\dl,\ep,\mathcal{M}}_R:= \inf\{t\geq s: \|X^{s,\tl{x},\dl,\ep,\mathcal{M}}_t\|\geq R\}\wedge T.\] 
		By It\^o's formula, Cauchy-Schwarz inequality, and Lemma \ref{Formula for the remainder}, the following inequality holds for all $t \in [s,T]$:
		\begin{align*}
			&\E\|X_{t \wedge \upsilon^{s,x,\tl{x},\dl,\ep,\mathcal{M}}_R}^{s,x,\dl,\ep,\mathcal{M}} - X_{t \wedge \upsilon^{s,x,\tl{x},\dl,\ep,\mathcal{M}}_R}^{s,\tl{x},\dl,\ep,\mathcal{M}} \|^{p}\leq \|x -\tl{x}\|^p\\
			&\quad+\frac{p}{2}\E \int ^{t \wedge \upsilon^{s,x,\tl{x},\dl,\ep,\mathcal{M}}_R}_{s} \|X_{r}^{s,x,\dl,\ep,\mathcal{M}}-X^{s,\tl{x},\dl,\ep,\mathcal{M}}_{r}\|^{p-2}\big(2(X^{s,x,\dl,\ep,\mathcal{M}}_r-X^{s,\tl{x},\dl,\ep,\mathcal{M}}_{r})\\
			&\hspace{20mm}\times(\mu(X_{r}^{s,x,\dl,\ep,\mathcal{M}})-\mu(X^{s,\tl{x},\dl,\ep,\mathcal{M}}_{r}) )  
			+(p_0-1) \|\sg(X_{r}^{s,x,\dl,\ep,\mathcal{M}} )-\sg(X^{s,\tl{x},\dl,\ep,\mathcal{M}}_{r})\|^2 \big)\rd r\\
			&\quad+ K\E \int ^{t \wedge\upsilon^{s,x,\tl{x},\dl,\ep,\mathcal{M}}_R}_{s} \|X_{r}^{s,x,\dl,\ep,\mathcal{M}}-X^{s,\tl{x},\dl,\ep,\mathcal{M}}_{r}\|^{p-2} (X^{s,x,\dl,\ep,\mathcal{M}}_r-X^{s,\tl{x},\dl,\ep,\mathcal{M}}_{r})\\
			&\hspace{20mm}\times(\mu^\dl(X_{\max\{s,\dl(r)\}}^{s,x,\dl,\ep,\mathcal{M}})-\mu(X_{\max\{s,\dl(r)\}}^{s,x,\dl,\ep,\mathcal{M}}) ) \rd r\\
			&\quad+K\E\int^{t \wedge \upsilon^{s,x,\tl{x},\dl,\ep,\mathcal{M}}_R}_{s}\|X_{r}^{s,x,\dl,\ep,\mathcal{M}}-X^{s,\tl{x},\dl,\ep,\mathcal{M}}_{r}\|^{p-2}  \|\sg^\dl(X_{\max\{s,\dl(r)\}}^{s,x,\dl,\ep,\mathcal{M}})-\sg(X_{\max\{s,\dl(r)\}}^{s,x,\dl,\ep,\mathcal{M}}) \|^2 \rd r \\
			&\quad+ K\E \int^{t \wedge \upsilon^{s,x,\tl{x},\dl,\ep,\mathcal{M}}_R}_{s} \|X_{r}^{s,x,\dl,\ep,\mathcal{M}}-X^{s,\tl{x},\dl,\ep,\mathcal{M}}_{r}\|^{p-2} (X^{s,x,\dl,\ep,\mathcal{M}}_r-X^{s,\tl{x},\dl,\ep,\mathcal{M}}_{r})\\
			&\hspace{20mm}\times(\mu(X_{\max\{s,\dl(r)\}}^{s,x,\dl,\ep,\mathcal{M}})-\mu(X_{r}^{s,x,\dl,\ep,\mathcal{M}}) ) \rd r\\
			&\quad+K\E\int^{t \wedge \upsilon^{s,x,\tl{x},\dl,\ep,\mathcal{M}}_R}_{s}\|X_{r}^{s,x,\dl,\ep,\mathcal{M}}-X^{s,\tl{x},\dl,\ep,\mathcal{M}}_{r}\|^{p-2}  \|\sg(X_{\max\{s,\dl(r)\}}^{s,x,\dl,\ep,\mathcal{M}})-\sg(X_{r}^{s,x,\dl,\ep,\mathcal{M}})\|^2 \rd r \\
			&\quad+ K\E \int^{t \wedge \upsilon^{s,x,\tl{x},\dl,\ep,\mathcal{M}}_R}_{s} \|X_{r}^{s,x,\dl,\ep,\mathcal{M}}-X^{s,\tl{x},\dl,\ep,\mathcal{M}}_{r}\|^{p-2} (X^{s,x,\dl,\ep,\mathcal{M}}_r-X^{s,\tl{x},\dl,\ep,\mathcal{M}}_{r})\\
			&\hspace{20mm}\times(\mu(X_{r}^{s,\tl{x},\dl,\ep,\mathcal{M}})-\mu(X_{\max\{s,\dl(r)\}}^{s,\tl{x},\dl,\ep,\mathcal{M}}) ) \rd r\\
			&\quad+K\E\int^{t \wedge \upsilon^{s,x,\tl{x},\dl,\ep,\mathcal{M}}_R}_{s}\|X_{r}^{s,x,\dl,\ep,\mathcal{M}}-X^{s,\tl{x},\dl,\ep,\mathcal{M}}_{r}\|^{p-2}  \|\sg(X_{r}^{s,\tl{x},\dl,\ep,\mathcal{M}})-\sg(X_{\max\{s,\dl(r)\}}^{s,\tl{x},\dl,\ep,\mathcal{M}})  \|^2 \rd r \\
			&\quad+ K\E \int ^{t \wedge \upsilon^{s,x,\tl{x},\dl,\ep,\mathcal{M}}_R}_{s} \|X_{r}^{s,x,\dl,\ep,\mathcal{M}}-X^{s,\tl{x},\dl,\ep,\mathcal{M}}_{r}\|^{p-2} (X^{s,x,\dl,\ep,\mathcal{M}}_r-X^{s,\tl{x},\dl,\ep,\mathcal{M}}_{r})\\
			&\hspace{20mm}\times(\mu(X_{\max\{s,\dl(r)\}}^{s,\tl{x},\dl,\ep,\mathcal{M}})-\mu^\dl(X_{\max\{s,\dl(r)\}}^{s,\tl{x},\dl,\ep,\mathcal{M}}) ) \rd r\\
			&\quad+K\E\int^{t \wedge \upsilon^{s,x,\tl{x},\dl,\ep,\mathcal{M}}_R}_{s}\|X_{r}^{s,x,\dl,\ep,\mathcal{M}}-X^{s,\tl{x},\dl,\ep,\mathcal{M}}_{r}\|^{p-2}  \|\sg(X_{\max\{s,\dl(r)\}}^{s,\tl{x},\dl,\ep,\mathcal{M}})-\sg^\dl(X_{\max\{s,\dl(r)\}}^{s,\tl{x},\dl,\ep,\mathcal{M}}) \|^2 \rd r\\
			&\quad+K\E\int^{t \wedge \upsilon^{s,x,\tl{x},\dl,\ep,\mathcal{M}}_R}_{s}\int_{A_\ep} \big\{\|X_{r}^{s,x,\dl,\ep,\mathcal{M}}-X^{s,\tl{x},\dl,\ep,\mathcal{M}}_{r}\|^{p-2} \|\gm(X^{s,x,\dl,\ep,\mathcal{M}}_{\max\{s,\dl(r)\}},z)-\gm(X^{s,\tl{x},\dl,\ep,\mathcal{M}}_{\max\{s,\dl(r)\}},z)\|^2\\
			&\hspace{20mm}+\|\gm(X^{s,x,\dl,\ep,\mathcal{M}}_{\max\{s,\dl(r)\}},z)-\gm(X^{s,\tl{x},\dl,\ep,\mathcal{M}}_{\max\{s,\dl(r)\}},z)\|^{p}\big\}\nu(\rd z)\rd r\\
			&\quad+p\E\int^{t \wedge \upsilon^{s,x,\tl{x},\dl,\ep,\mathcal{M}}_R}_{s}\|X_{r}^{s,x,\dl,\ep,\mathcal{M}}-X^{s,\tl{x},\dl,\ep,\mathcal{M}}_{r}\|^{p-2} (X_{r}^{s,x,\dl,\ep,\mathcal{M}}-X^{s,\tl{x},\dl,\ep,\mathcal{M}}_{r})\\
			&\hspace{20mm} \times\bigg(\int_{A_\ep} \gm(X^{s,x,\dl,\ep,\mathcal{M}}_{\max\{s,\dl(r)\}},z)-\gm(X^{s,\tl{x},\dl,\ep,\mathcal{M}}_{\max\{s,\dl(r-)\}},z)\nu(\rd z)\\
			&\quad\quad- \bigg\{\frac{\nu(A_\ep)}{\mathcal{M}}\sum_{i=1}^{\mathcal{M}}\lf(\gm\lf(X_{\max\{s,\dl(r)\}}^{s,x,\dl,\ep,\mathcal{M}},V^{\dl,\ep,\mathcal{M}}_{i,\max\{s,\dl(r)\}}\rt)-\gm\lf(X_{\max\{s,\dl(r)\}}^{s,\tl{x},\dl,\ep,\mathcal{M}},V^{\dl,\ep,\mathcal{M}}_{i,\max\{s,\dl(r)\}}\rt)\rt)\bigg\} \bigg)\rd r.
		\end{align*}
		One then further applies Assumption \ref{A1} and Young's inequality to obtain, for all $t\in[s,T]$, that
		\begin{align*}
			&\E\|X_{t \wedge \upsilon^{s,x,\tl{x},\dl,\ep,\mathcal{M}}_R}^{s,x,\dl,\ep,\mathcal{M}} - X_{t \wedge\upsilon^{s,x,\tl{x},\dl,\ep,\mathcal{M}}_R}^{s,\tl{x},\dl,\ep,\mathcal{M}} \|^{p} \leq \|x -\tl{x}\|^p\\
			&\quad+
			K\E \int^{t \wedge \upsilon^{s,x,\tl{x},\dl,\ep,\mathcal{M}}_R}_{s} \|X_{r}^{s,x,\dl,\ep,\mathcal{M}}-X^{s,\tl{x},\dl,\ep,\mathcal{M}}_{r}\|^{p}\rd r\\
			&\quad+ K\E \int^{t \wedge \upsilon^{s,x,\tl{x},\dl,\ep,\mathcal{M}}_R}_{s} \|\mu^\dl(X_{\max\{s,\dl(r)\}}^{s,x,\dl,\ep,\mathcal{M}})-\mu(X_{\max\{s,\dl(r)\}}^{s,x,\dl,\ep,\mathcal{M}})\|^p+\|\sg^\dl(X_{\max\{s,\dl(r)\}}^{s,x,\dl,\ep,\mathcal{M}})-\sg(X_{\max\{s,\dl(r)\}}^{s,x,\dl,\ep,\mathcal{M}}) \|^p \rd r\\
			&\quad+ K\E \int^{t \wedge \upsilon^{s,x,\tl{x},\dl,\ep,\mathcal{M}}_R}_{s}  \|\mu(X_{\max\{s,\dl(r)\}}^{s,x,\dl,\ep,\mathcal{M}})-\mu(X_{r}^{s,x,\dl,\ep,\mathcal{M}}) \|^p +\|\sg(X_{\max\{s,\dl(r)\}}^{s,x,\dl,\ep,\mathcal{M}})-\sg(X_{r}^{s,x,\dl,\ep,\mathcal{M}})\|^p\rd r\\
			&\quad+ K\E \int^{t \wedge \upsilon^{s,x,\tl{x},\dl,\ep,\mathcal{M}}_R}_{s} \|\mu(X_{r}^{s,\tl{x},\dl,\ep,\mathcal{M}})-\mu(X_{\max\{s,\dl(r)\}}^{s,\tl{x},\dl,\ep,\mathcal{M}}) \|^p+\|\sg(X_{r}^{s,\tl{x},\dl,\ep,\mathcal{M}})-\sg(X_{\max\{s,\dl(r)\}}^{s,\tl{x},\dl,\ep,\mathcal{M}})  \|^p \rd r\\
			&\quad+ K\E \int^{t \wedge \upsilon^{s,x,\tl{x},\dl,\ep,\mathcal{M}}_R}_{s}\|\mu(X_{\max\{s,\dl(r)\}}^{s,\tl{x},\dl,\ep,\mathcal{M}})-\mu^\dl(X_{\max\{s,\dl(r)\}}^{s,\tl{x},\dl,\ep,\mathcal{M}}) \|^p+\|\sg(X_{\max\{s,\dl(r)\}}^{s,\tl{x},\dl,\ep,\mathcal{M}})-\sg^\dl(X_{\max\{s,\dl(r)\}}^{s,\tl{x},\dl,\ep,\mathcal{M}}) \|^p\rd r\\
			&\quad+K\E\int^{t \wedge \upsilon^{s,x,\tl{x},\dl,\ep,\mathcal{M}}_R}_{s}\big(\int_{A_\ep}  \|\gm(X^{s,x,\dl,\ep,\mathcal{M}}_{\max\{s,\dl(r)\}},z)-\gm(X^{s,\tl{x},\dl,\ep,\mathcal{M}}_{\max\{s,\dl(r)\}},z)\|^{2}\nu(\rd z)\big)^{\frac{p}{2}}\rd r\\
			&\quad+K\E\int^{t \wedge \upsilon^{s,x,\tl{x},\dl,\ep,\mathcal{M}}_R}_{s}\int_{A_\ep}  \|\gm(X^{s,x,\dl,\ep,\mathcal{M}}_{\max\{s,\dl(r)\}},z)-\gm(X^{s,\tl{x},\dl,\ep,\mathcal{M}}_{\max\{s,\dl(r)\}},z)\|^{p}\nu(\rd z)\rd r\\
			&\quad+K\E\int^{t \wedge \upsilon^{s,x,\tl{x},\dl,\ep,\mathcal{M}}_R}_{s} \bigg\|\int_{A_\ep} \gm(X^{s,x,\dl,\ep,\mathcal{M}}_{\max\{s,\dl(r)\}},z)-\gm(X^{s,\tl{x},\dl,\ep,\mathcal{M}}_{\max\{s,\dl(r-)\}},z)\nu(\rd z)\\
			&\qquad- \bigg\{\frac{\nu(A_\ep)}{\mathcal{M}}\sum_{i=1}^{\mathcal{M}}\lf(\gm\lf(X_{\max\{s,\dl(r-)\}}^{s,x,\dl,\ep,\mathcal{M}},V^{\dl,\ep,\mathcal{M}}_{i,\max\{s,\dl(r)\}}\rt)-\gm\lf(X_{\max\{s,\dl(r-)\}}^{s,\tl{x},\dl,\ep,\mathcal{M}},V^{\dl,\ep,\mathcal{M}}_{i,\max\{s,\dl(r)\}}\rt)\rt)\bigg\} \bigg\|^p \rd r.
		\end{align*}
		On using Assumptions \ref{A2}, \ref{A4}, Remark \ref{remark3}, and the analogous argument to get \eqref{L3A_4(t)} (together with $\mathcal{M}\ge (1 \vee \ep^{-2}\mathcal{Z}_d)^{2}$), we obtain, for all $t\in[s,T]$, that
		\begin{align*}
			&\E\|X_{t \wedge \upsilon^{s,x,\tl{x},\dl,\ep,\mathcal{M}}_R}^{s,x,\dl,\ep,\mathcal{M}} - X_{t \wedge \upsilon^{s,x,\tl{x},\dl,\ep,\mathcal{M}}_R}^{s,\tl{x},\dl,\ep,\mathcal{M}} \|^{p} \leq \|x -\tl{x}\|^p\\
			&\quad+
			K\E \int ^{t \wedge \upsilon^{s,x,\tl{x},\dl,\ep,\mathcal{M}}_R}_{s} \|X_{r}^{s,x,\dl,\ep,\mathcal{M}}-X^{s,\tl{x},\dl,\ep,\mathcal{M}}_{r}\|^{p} \rd r \\
			&\quad+K(L_d+L_d^{\frac{p}{2}}) \E \int ^{t \wedge \upsilon^{s,x,\tl{x},\dl,\ep,\mathcal{M}}_R}_{s} \|X_{\max\{s,\dl(r)\}}^{s,x,\dl,\ep,\mathcal{M}}-X^{s,\tl{x},\dl,\ep,\mathcal{M}}_{\max\{s,\dl(r)\}}\|^{p} \rd r\\
			&\quad+ K\E \int^{t \wedge \upsilon^{s,x,\tl{x},\dl,\ep,\mathcal{M}}_R}_{s} \|X^{s,x,\dl,\ep,\mathcal{M}}_{r}-X^{s,x,\dl,\ep,\mathcal{M}}_{\max\{s,\dl(r)\}}\|^p(1+\|X^{s,x,\dl,\ep,\mathcal{M}}_{r}\|^{\frac{\chi}{2}}+\|X^{s,x,\dl,\ep,\mathcal{M}}_{\max\{s,\dl(r)\}}\|^{\frac{\chi}{2}})^p \rd r\\
			&\quad+ K\E \int^{t \wedge \upsilon^{s,x,\tl{x},\dl,\ep,\mathcal{M}}_R}_{s} \|X^{s,\tl{x},\dl,\ep,\mathcal{M}}_{r}-X^{s,\tl{x},\dl,\ep,\mathcal{M}}_{\max\{s,\dl(r)\}}\|^p(1+\|X^{s,\tl{x},\dl,\ep,\mathcal{M}}_{r}\|^{\frac{\chi}{2}}+\|X^{s,\tl{x},\dl,\ep,\mathcal{M}}_{\max\{s,\dl(r)\}}\|^{\frac{\chi}{2}})^p \rd r\\
			&\quad+ K|\dl|^{\frac{p}{2}}\E \int^{t \wedge \upsilon^{s,x,\tl{x},\dl,\ep,\mathcal{M}}_R}_{s} 1+\|X^{s,x,\dl,\ep,\mathcal{M}}_{\max\{s,\dl(r)\}}\|^{(\frac{3}{2}\chi+1)p} +\|X^{s,\tl{x},\dl,\ep,\mathcal{M}}_{\max\{s,\dl(r)\}}\|^{(\frac{3}{2}\chi+1)p}  \rd r\\
			&\quad+ K(\ep^{-2}\mathcal{Z}_d)^{p-1}\mathcal{M}^{-\frac{p}{2}}L_d \E \int^{t \wedge \upsilon^{s,x,\tl{x},\dl,\ep,\mathcal{M}}_R}_{s}  \| X^{s,x,\dl,\ep,\mathcal{M}}_{\max\{s,\dl(r)\}}-X_{\max\{s,\dl(r)\}}^{s,\tl{x},\dl,\ep,\mathcal{M}}\|^{p}\rd r.
		\end{align*}
		Then, H\"older's inequality gives that
		\begin{align*}
			&\sup_{t\in[s,T]} \E\|X_{t \wedge \upsilon^{s,x,\tl{x},\dl,\ep,\mathcal{M}}_R}^{s,x,\dl,\ep,\mathcal{M}} - X_{t \wedge \upsilon^{s,x,\tl{x},\dl,\ep,\mathcal{M}}_R}^{s,\tl{x},\dl,\ep,\mathcal{M}} \|^{p} \leq \|x -\tl{x}\|^p\\
			&\quad+ K(1+L_d^{\frac{p}{2}}+(\ep^{-2}\mathcal{Z}_d)^{p-1}\mathcal{M}^{-\frac{p}{2}}L_d)\int ^{T}_{s} \sup_{u\in[{s},r]}\E\|X_{u\wedge \upsilon^{s,x,\tl{x},\dl,\ep,\mathcal{M}}_R}^{s,x,\dl,\ep,\mathcal{M}} - X_{u\wedge \upsilon^{s,x,\tl{x},\dl,\ep,\mathcal{M}}_R}^{s,\tl{x},\dl,\ep,\mathcal{M}} \|^{p} \rd r\\
			&\quad+ K  (T-{s}) (\sup_{t\in[{s},T]}\E\|X^{s,x,\dl,\ep,\mathcal{M}}_{t}-X^{s,x,\dl,\ep,\mathcal{M}}_{\max\{s,\dl(t)\}}\|^{p+\zeta})^{\frac{p}{p+\zeta}}(\sup_{t\in[{s},T]}\E(1+\|X^{s,x,\dl,\ep,\mathcal{M}}_{t}\|^{\frac{\chi}{2}})^{\frac{p(p+\zeta)}{\zeta}})^{\frac{\zeta}{p+\zeta}} \\
			&\quad+ K  (T-{s}) (\sup_{t\in[{s},T]}\E\|X^{s,\tl{x},\dl,\ep,\mathcal{M}}_{t}-X^{s,\tl{x},\dl,\ep,\mathcal{M}}_{\max\{s,\dl(t)\}}\|^{p+\zeta})^{\frac{p}{p+\zeta}}(\sup_{t\in[{s},T]}\E(1+\|X^{s,\tl{x},\dl,\ep,\mathcal{M}}_{t}\|^{\frac{\chi}{2}})^{\frac{p(p+\zeta)}{\zeta}})^{\frac{\zeta}{p+\zeta}} \\
			&\quad+ K|\dl|^{\frac{p}{2}}(T-s) ( 1+\sup_{t\in[{s},T]}\E\|X^{s,x,\dl,\ep,\mathcal{M}}_{t}\|^{(\frac{3}{2}\chi+1)p} +\sup_{t\in[{s},T]}\E\|X^{s,\tl{x},\dl,\ep,\mathcal{M}}_{t}\|^{(\frac{3}{2}\chi+1)p})\rd r.
		\end{align*}
		Since $p\in[2,p^*]$ with $p^*= \min\{\frac{2p_0}{\chi+2}-\zeta, \frac{2p_0}{3\chi+2}, \frac{-\chi\zeta+\sqrt{\chi\zeta(\chi\zeta+8 p_0)}}{2\chi}\}$, the application of Lemmas \ref{scheme3bounds} and \ref{corollary2}\,\ref{lemma9} gives that
		\begin{align*}
			&\sup_{t\in[s,T]}	\E\|X_{t \wedge \upsilon^{s,x,\tl{x},\dl,\ep,\mathcal{M}}_R}^{s,x,\dl,\ep,\mathcal{M}} - X_{t \wedge \upsilon^{s,x,\tl{x},\dl,\ep,\mathcal{M}}_R}^{s,\tl{x},\dl,\ep,\mathcal{M}} \|^{p}\\
			&\leq \|x-\tl{x}\|^p +K|\dl|^{\frac{p}{p+\zeta}} (1+L_d^{\frac{p}{2}})(1+\|x\|^{p_0}+\|\tl{x}\|^{p_0}+N_d^{p_0}) e^{K(1+L_d^{p_0})}\\
			&\quad + K(1+L_d^{\frac{p}{2}}+(\ep^{-2}\mathcal{Z}_d)^{p-1}\mathcal{M}^{-\frac{p}{2}}L_d) \int ^{T}_{s} \ \sup_{u\in[{s},r]} \E\|X_{u \wedge \upsilon^{s,x,\tl{x},\dl,\ep,\mathcal{M}}_R}^{s,x,\dl,\ep,\mathcal{M}}-X^{s,\tl{x},\dl,\ep,\mathcal{M}}_{u \wedge \upsilon^{s,x,\tl{x},\dl,\ep,\mathcal{M}}_R}\|^{p} \rd r.
		\end{align*}
		Finally, Gr\"onwall's lemma yields the desired uniform bound and the application of Fatou's lemma completes the proof.
	\end{proof}

	\begin{proof}[\textbf{Proof of Lemma \ref{differents}}]
	 Fix  $x\in\R^d$, $\dl \in \Th$, $\ep \in (0,1)$, and $\mathcal{M} \in \N$ with $\mathcal{M}\geq (1 \vee \ep^{-2}\mathcal{Z}_d)^{2}$ throughout this proof. By symmetry, it suffices to consider $s\le \tl{s}\le T$ and $t\in[\tl{s},T]$. We then distinguish whether $(s,\tl{s})$ intersects the grid $\dl([0,T])$ following the same case decomposition as in Lemma~\ref{differents'} (see Figure \ref{fig:four-cases-initial-time}). We now proceed with the detailed arguments for the four cases.
		\begin{enumerate}
			\item By definition of scheme \eqref{Tamedscheme2}, H\"older's inequality, \cite[Theorem 7.3]{xuerongmao}, \cite[Lemma 1]{Mikulevicius}, Remarks \ref{remark1},  \ref{remark2}, Assumptions \ref{A3}, \ref{A4}, and \eqref{L3A_4(t)} (together with $\mathcal{M}\ge (1 \vee \ep^{-2}\mathcal{Z}_d)^{2}$), one obtains, for all $s\in[0,T]$, $\tl{s}\in[s,T]$, and $\bar{s} \in[\tl{s},T]$ with $(s,\bar{s})\cap \dl([0,T])=\emptyset$, that
				\begin{align}\label{enumerate1}
					&\E\|X_{\bar{s}}^{s,x,\dl,\ep,\mathcal{M}} - X_{\bar{s}}^{\tl{s},x,\dl,\ep,\mathcal{M}} \|^p\leq K\big\{\|\mu^{\dl}(x)|\tl{s}-s|\|^p +\|\sg^{\dl}(x)(W_{\tl{s}}-W_s)\|^p \nonumber\\
					&\quad+\|\int^{\tl{s}}_s \int_{A_\ep}  \gm(x,z) -\gm(0,z) \tl{\pi}(\rd z,\rd r)\|^p +\|\int^{\tl{s}}_s \int_{A_\ep}  \gm(0,z) \tl{\pi}(\rd z,\rd r)\|^p\nonumber\\
					&\quad+ \|\int^{\tl{s}}_s \int_{A_\ep}  \gm(x,z) \nu(\rd z) - \frac{\nu(A_\ep)}{\mathcal{M}} \sum_{i=1}^{\mathcal{M}} \gm(x, V^{\dl,\ep,\mathcal{M}}_{i,s}) \rd r\|^p\big\}\nonumber\\
					&\leq K\big\{  |\tl{s}-s|^{p}(1+\|x\|^{(\frac{\chi}{2}+1)p}) +  |\tl{s}-s|^{\frac{p}{2}}(1+\|x\|^{(\frac{\chi}{2}+1)p})\nonumber\\
					&\quad+  |\tl{s}-s|^{\frac{p}{2}}(N_d^{\frac{p}{2}}+L_d^{\frac{p}{2}}\|x\|^{p})+  |\tl{s}-s|(N_d+L_d\|x\|^{p})\nonumber \\
					&\quad+  |\tl{s}-s|^p(\ep^{-2}\mathcal{Z}_d)^{p-1}\mathcal{M}^{-\frac{p}{2}} (N_d+L_d\|x\|^p)\big\}\nonumber\\
					&\leq K\big(|\tl{s}-s|+|\tl{s}-s|^p(1+(\ep^{-2}\mathcal{Z}_d)^{p-1}\mathcal{M}^{-\frac{p}{2}})\big) (1+N_d^{\frac{p}{2}}+L_d^{\frac{p}{2}}+(1+L_d^{\frac{p}{2}})\|x\|^{p_0}).
				\end{align}

			\item Note that for all $s\in[0,T]$, $\tl{s}\in[s,T]$, and $t\in[\tl{s},T]$ with $(s,\tl{s})\cap \dl([0,T])=\emptyset$, there exists $\bar{s}\in[\tl{s},t]\cap (\dl([0,T]) \cup \{t\})$ with $(s,\bar{s})\cap \dl([0,T]) = \emptyset$. Moreover, by the fact that Euler approximations \eqref{Tamedscheme2} restricted to their grid points satisfy the Markov property, the disintegration theorem (see, e.g., \cite[Lemma 2.2]{disintegration}), Lemmas \ref{scheme3bounds}, \ref{differentx} and \eqref{enumerate1}, show that, for all $s\in[0,T]$, $\tl{s}\in[s,T]$, $t\in[\tl{s},T]$, and $\bar{s} = \min([\tl{s},t] \cap (\dl([0,T]) \cup \{t\}))$, it holds that
				\begin{align}\label{enumerate2}
					&\E\|X_{t}^{s,x,\dl,\ep,\mathcal{M}} - X_{t}^{\tl{s},x,\dl,\ep,\mathcal{M}} \|^p=\E\lf[  \E\|   X_{t}^{\bar{s},\eta,\dl,\ep,\mathcal{M}} - X_{t}^{\bar{s},\tl{\eta},\dl,\ep,\mathcal{M}} \|^p \bigg\vert_{\eta = X_{\bar{s}}^{s,x,\dl,\ep,\mathcal{M}},\tl{\eta}=X_{\bar{s}}^{\tl{s},x,\dl,\ep,\mathcal{M}}   }  \rt]\nonumber\\
					&\leq K\E\lf[ \big(\|\eta -\tl{\eta}\|^p +|\dl|^{\frac{p}{p+\zeta}} (1+L_d^{\frac{p}{2}})(1+\|\eta\|^{p_0}+\|\tl{\eta}\|^{p_0}+N_d^{p_0}) \big)e^{K(1+L_d^{p_0})}\bigg\vert_{\eta = X_{\bar{s}}^{s,x,\dl,\ep,\mathcal{M}},\tl{\eta}=X_{\bar{s}}^{\tl{s},x,\dl,\ep,\mathcal{M}}   } \rt]\nonumber\\  
					&\leq K\big[\big(|\tl{s}-s|+|\tl{s}-s|^p(1+(\ep^{-2}\mathcal{Z}_d)^{p-1}\mathcal{M}^{-\frac{p}{2}})\big) (1+N_d^{\frac{p}{2}}+L_d^{\frac{p}{2}}+(1+L_d^{\frac{p}{2}})\|x\|^{p_0})\nonumber\\
					& \quad\quad\quad\quad+|\dl|^{\frac{p}{p+\zeta}} (1+L_d^{\frac{p}{2}})(1+\|x\|^{p_0}+N_d^{p_0}) \big)e^{K(1+L_d^{p_0})}\big]\nonumber\\
					&\leq K\big( |\tl{s}-s|+|\tl{s}-s|^p(1+(\ep^{-2}\mathcal{Z}_d)^{p-1}\mathcal{M}^{-\frac{p}{2}})+ |\dl|^{\frac{p}{p+\zeta}}  \big)(1+L_d^{\frac{p}{2}})(1+\|x\|^{p_0}+N_d^{p_0})e^{K(1+L_d^{p_0})}.
				\end{align}
			
			\item Next, we apply the disintegration theorem (see, e.g., \cite[Lemma 2.2]{disintegration}), Lemmas \ref{differenttimet} and \ref{differentx} to show for all $s\in[0,T]$, $\tl{s} \in \dl([0,T])\cap[s,T]$, and $t\in[\tl{s},T]$, that
				\begin{align}\label{enumerate3}
					&\E\|X_{t}^{s,x,\dl,\ep,\mathcal{M}} - X_{t}^{\tl{s},x,\dl,\ep,\mathcal{M}} \|^p=\E\lf[  \E\|   X_{t}^{\tl{s},\eta,\dl,\ep,\mathcal{M}} - X_{t}^{\tl{s},x,\dl,\ep,\mathcal{M}} \|^p \bigg\vert_{\eta = X_{\tl{s}}^{s,x,\dl,\ep,\mathcal{M}}  }  \rt]\nonumber\\
					&\quad \leq K\E\lf[ \big(\|\eta -x\|^p +|\dl|^{\frac{p}{p+\zeta}} (1+L_d^{\frac{p}{2}})(1+\|\eta\|^{p_0}+\|x\|^{p_0}+N_d^{p_0}) \big)e^{K(1+L_d^{p_0})}  \bigg\vert_{\eta = X_{\tl{s}}^{s,x,\dl,\ep,\mathcal{M}}  } \rt]\nonumber\\
					&\leq K \big[\big(|\tl{s}-s|+|\tl{s}-s|^p(1+(\ep^{-2}\mathcal{Z}_d)^{p-1}\mathcal{M}^{-\frac{p}{2}})\big) (1+L_d^{\frac{p}{2}})(1+\|x\|^{p_0}+N_d^{p_0})e^{K(1+L_d^{p_0})}\nonumber\\
					&\quad\quad\quad + |\dl|^{\frac{p}{p+\zeta}}(1+L_d^{\frac{p}{2}})(1+\|x\|^{p_0}+N_d^{p_0})\big]e^{K(1+L_d^{p_0})} \nonumber\\
					&\leq K \big(|\tl{s}-s|+|\tl{s}-s|^p(1+(\ep^{-2}\mathcal{Z}_d)^{p-1}\mathcal{M}^{-\frac{p}{2}})+ |\dl|^{\frac{p}{p+\zeta}}  \big)(1+L_d^{\frac{p}{2}})(1+\|x\|^{p_0}+N_d^{p_0})e^{K(1+L_d^{p_0})}.
				\end{align}
			
			\item Then, by triangle inequality, \eqref{enumerate2} and \eqref{enumerate3}, it holds for all $s\in[0,T]$, $\tl{s} \in [s,T]$, $t\in[\tl{s},T]$, and $\bar{s} \in [s,\tl{s}] \cap(\dl([0,T]) \cup \{s\})$ with $(\bar{s},\tl{s})\cap \dl([0,T])=\emptyset$ that
				\begin{align*}
					&\E\|X_{t}^{s,x,\dl,\ep,\mathcal{M}} - X_{t}^{\tl{s},x,\dl,\ep,\mathcal{M}} \|^p\leq \E\|  X_{t}^{s,x,\dl,\ep,\mathcal{M}} - X_{t}^{\bar{s},x,\dl,\ep,\mathcal{M}}  \|^p +\E\|  X_{t}^{\bar{s},x,\dl,\ep,\mathcal{M}} - X_{t}^{\tl{s},x,\dl,\ep,\mathcal{M}}  \| ^p\\ 
					&\leq  K\big( |\bar{s}-s|+|\bar{s}-s|^p(1+(\ep^{-2}\mathcal{Z}_d)^{p-1}\mathcal{M}^{-\frac{p}{2}})+ |\dl|^{\frac{p}{p+\zeta}}  \big)(1+L_d^{\frac{p}{2}})(1+\|x\|^{p_0}+N_d^{p_0})e^{K(1+L_d^{p_0})} \\
					&\quad+  K\big( |\tl{s}-\bar{s}|+|\tl{s}-\bar{s}|^p(1+(\ep^{-2}\mathcal{Z}_d)^{p-1}\mathcal{M}^{-\frac{p}{2}})+ |\dl|^{\frac{p}{p+\zeta}}  \big)(1+L_d^{\frac{p}{2}})(1+\|x\|^{p_0}+N_d^{p_0})e^{K(1+L_d^{p_0})} \\
					&\leq  K\big( |\tl{s}-s|+|\tl{s}-s|^p(1+(\ep^{-2}\mathcal{Z}_d)^{p-1}\mathcal{M}^{-\frac{p}{2}})+ |\dl|^{\frac{p}{p+\zeta}}  \big)(1+L_d^{\frac{p}{2}})(1+\|x\|^{p_0}+N_d^{p_0})e^{K(1+L_d^{p_0})}.
				\end{align*}
		\end{enumerate}
		Finally, note that  for all $s\in[0,T]$, $\tl{s} \in [s,T]$, and $t\in[\tl{s},T]$, there must exist $\bar{s} \in [s,\tl{s}] \cap(\dl([0,T]) \cup \{s\})$ with $(\bar{s},\tl{s})\cap \dl([0,T])=\emptyset$, it implies that for all $s\in[0,T]$, $\tl{s} \in [s,T]$, and $t\in[\tl{s},T]$, the following inequality holds:
		\begin{align*}
			&\E\|X_{t}^{s,x,\dl,\ep,\mathcal{M}} - X_{t}^{\tl{s},x,\dl,\ep,\mathcal{M}} \|^p\\
			&\leq K\big( |\tl{s}-s|+|\tl{s}-s|^p(1+(\ep^{-2}\mathcal{Z}_d)^{p-1}\mathcal{M}^{-\frac{p}{2}})+ |\dl|^{\frac{p}{p+\zeta}}  \big)(1+L_d^{\frac{p}{2}})(1+\|x\|^{p_0}+N_d^{p_0})e^{K(1+L_d^{p_0})}.
		\end{align*}
	\end{proof}

\section{Proof of auxiliary results}\label{app:D}
\begin{lemma}[Formula for the remainder]\label{Formula for the remainder} Let $p\ge 2$ and $y_1,y_2\in\R^d$. Then there exists a constant $K>0$ only depending on $p$ such that
	\[
\|y_1+y_2\|^{p}-\|y_1\|^{p}-p\|y_1\|^{p-2} y_1^*y_2
	\le
	K\bigl(\|y_1\|^{p-2}\|y_2\|^{2}+\|y_2\|^{p}\bigr).
	\]
	
\begin{proof}
Set $f(y)=\|y\|^{p}$ for $y\in\R^d$. Then, we have the gradient and the Hessian of \(f\) as follows:\footnote{Recall that 0/0:=0 by convention}
	\[
	 \nabla f(y)=p\|y\|^{p-2}y,
	\qquad
     \nabla^2 f(y)=p(p-2)\|y\|^{p-4}yy^*+p\|y\|^{p-2}I_d.
	\]
	Using the second-order Taylor formula with integral remainder (see, e.g., \cite[Section~8.4, Theorem~4]{MR2033094}),
	\[
	f(y_1+y_2)=f(y_1)+\nabla f(y_1)^*y_2
	+\int_0^1(1-q) y_2^* \nabla^2f(y_1+qy_2)y_2 \;\rd q .
	\]
	Hence, we have that
	\begin{align*}
	\|y_1+y_2\|^{p}-\|y_1\|^{p}-p\|y_1\|^{p-2}y_1^*y_2
	&=\int_0^1(1-q) y_2^* \nabla^2 f(y_1+qy_2)y_2 \;\rd q \\
	&\le K \int_0^1(1-q) \|y_1+qy_2\|^{p-2}\|y_2\|^{2}\; \rd q\\
	&\le K\big(\|y_1\|^{p-2}\|y_2\|^{2}+\|y_2\|^{p}\big),
	\end{align*}
for a constant $K>0$ depending only on $p$.
\end{proof}
\end{lemma}


\begin{thebibliography}{10}
	
	\bibitem{MR2512800}
	D.~Applebaum.
	\newblock {\em L\'evy processes and stochastic calculus}, volume 116 of {\em
		Cambridge Studies in Advanced Mathematics}.
	\newblock Cambridge University Press, Cambridge, second edition, 2009.
	
	\bibitem{MR3225549}
	J.~Baldeaux and A.~Badran.
	\newblock Consistent modelling of {VIX} and equity derivatives using a 3/2 plus
	jumps model.
	\newblock {\em Appl. Math. Finance}, 21(4):299--312, 2014.
	
	\bibitem{barndorff2012levy}
	O.~E. Barndorff-Nielsen, T.~Mikosch, and S.~I. Resnick.
	\newblock {\em L{\'e}vy Processes: Theory and Applications}.
	\newblock Springer Science \& Business Media, 2012.
	
	\bibitem{MR2042661}
	R.~Cont and P.~Tankov.
	\newblock {\em Financial modelling with jump processes}.
	\newblock Chapman \& Hall/CRC Financial Mathematics Series. Chapman \&
	Hall/CRC, Boca Raton, FL, 2004.
	
	\bibitem{ProbabilityB}
	C.~Dellacherie and P.-A. Meyer.
	\newblock {\em Probabilities and potential. {B}}, volume~72 of {\em
		North-Holland Mathematics Studies}.
	\newblock North-Holland Publishing Co., Amsterdam, 1982.
	\newblock Theory of martingales, Translated from the French by J. P. Wilson.
	
	\bibitem{MR4338044}
	W.~E, M.~Hutzenthaler, A.~Jentzen, and T.~Kruse.
	\newblock Multilevel {P}icard iterations for solving smooth semilinear
	parabolic heat equations.
	\newblock {\em Partial Differ. Equ. Appl.}, 2(6):Paper No. 80, 31, 2021.
	
	\bibitem{MR3070371}
	J.~Goard and M.~Mazur.
	\newblock Stochastic volatility models and the pricing of {VIX} options.
	\newblock {\em Math. Finance}, 23(3):439--458, 2013.
	
	\bibitem{L.GononC.Schwab}
	L.~Gonon and C.~Schwab.
	\newblock Deep {R}e{LU} neural networks overcome the curse of dimensionality
	for partial integrodifferential equations.
	\newblock {\em Anal. Appl. (Singap.)}, 21(1):1--47, 2023.
	
	\bibitem{I.Gyongy1}
	I.~Gy\"ongy and N.~V. Krylov.
	\newblock On stochastic equations with respect to semimartingales. {I}.
	\newblock {\em Stochastics}, 4(1):1--21, 1980/81.
	
	\bibitem{Itosformula}
	I.~Gy\"ongy and S.~Wu.
	\newblock On {I}t\^o{} formulas for jump processes.
	\newblock {\em Queueing Syst.}, 98(3-4):247--273, 2021.
	
	\bibitem{MR3364862}
	M.~Hutzenthaler and A.~Jentzen.
	\newblock {Numerical approximations of stochastic differential equations with
		non-globally {L}ipschitz continuous coefficients}.
	\newblock {\em Mem. Amer. Math. Soc.}, 236(1112):v+99, 2015.
	
	\bibitem{divergence}
	M.~Hutzenthaler, A.~Jentzen, and P.~E. Kloeden.
	\newblock Strong and weak divergence in finite time of {E}uler's method for
	stochastic differential equations with non-globally {L}ipschitz continuous
	coefficients.
	\newblock {\em Proc. R. Soc. Lond. Ser. A Math. Phys. Eng. Sci.},
	467(2130):1563--1576, 2011.
	
	\bibitem{MR2985171}
	M.~Hutzenthaler, A.~Jentzen, and P.~E. Kloeden.
	\newblock Strong convergence of an explicit numerical method for {SDE}s with
	nonglobally {L}ipschitz continuous coefficients.
	\newblock {\em Ann. Appl. Probab.}, 22(4):1611--1641, 2012.
	
	\bibitem{MR4462404}
	M.~Hutzenthaler, A.~Jentzen, and T.~Kruse.
	\newblock Overcoming the curse of dimensionality in the numerical approximation
	of parabolic partial differential equations with gradient-dependent
	nonlinearities.
	\newblock {\em Found. Comput. Math.}, 22(4):905--966, 2022.
	
	\bibitem{hutzenthaler2020multilevel1}
	M.~Hutzenthaler, A.~Jentzen, T.~Kruse, and T.~A. Nguyen.
	\newblock {Multilevel Picard approximations for high-dimensional semilinear
		second-order PDEs with Lipschitz nonlinearities}.
	\newblock {\em arXiv preprint arXiv:2009.02484}, 2020.
	
	\bibitem{MR4557620}
	M.~Hutzenthaler, A.~Jentzen, T.~Kruse, and T.~A. Nguyen.
	\newblock Overcoming the curse of dimensionality in the numerical approximation
	of backward stochastic differential equations.
	\newblock {\em J. Numer. Math.}, 31(1):1--28, 2023.
	
	\bibitem{disintegration}
	M.~Hutzenthaler, A.~Jentzen, T.~Kruse, T.~A. Nguyen, and P.~von Wurstemberger.
	\newblock Overcoming the curse of dimensionality in the numerical approximation
	of semilinear parabolic partial differential equations.
	\newblock {\em Proc. Roy. Soc. A}, 476(2244):20190630, 25, 2020.
	
	\bibitem{MR4075337}
	M.~Hutzenthaler and T.~Kruse.
	\newblock Multilevel {P}icard approximations of high-dimensional semilinear
	parabolic differential equations with gradient-dependent nonlinearities.
	\newblock {\em SIAM J. Numer. Anal.}, 58(2):929--961, 2020.
	
	\bibitem{MR4423846}
	M.~Hutzenthaler and T.~A. Nguyen.
	\newblock {Strong convergence rate of {E}uler-{M}aruyama approximations in
		temporal-spatial {H}\"older-norms}.
	\newblock {\em J. Comput. Appl. Math.}, 413:Paper No. 114391, 17, 2022.
	
	\bibitem{MR1214374}
	P.~E. Kloeden and E.~Platen.
	\newblock {\em Numerical solution of stochastic differential equations},
	volume~23 of {\em Applications of Mathematics (New York)}.
	\newblock Springer-Verlag, Berlin, 1992.
	
	\bibitem{2017eulerdiffusionlevynoise}
	C.~Kumar and S.~Sabanis.
	\newblock {On explicit approximations for L\'evy driven SDEs with super-linear
		diffusion coefficients}.
	\newblock {\em Electron. J. Probab.}, 22:Paper No. 73, 19, 2017.
	
	\bibitem{MR4032893}
	C.~Kumar and S.~Sabanis.
	\newblock {On {M}ilstein approximations with varying coefficients: the case of
		super-linear diffusion coefficients}.
	\newblock {\em BIT}, 59(4):929--968, 2019.
	
	\bibitem{xuerongmao}
	X.~Mao.
	\newblock {\em Stochastic differential equations and applications}.
	\newblock Horwood Publishing Limited, Chichester, second edition, 2008.
	
	\bibitem{Mikulevicius}
	R.~Mikulevicius and H.~Pragarauskas.
	\newblock On {$L_p$}-estimates of some singular integrals related to jump
	processes.
	\newblock {\em SIAM J. Math. Anal.}, 44(4):2305--2328, 2012.
	
	\bibitem{MR4951649}
	A.~Neufeld and S.~Wu.
	\newblock Multilevel {P}icard approximation algorithm for semilinear partial
	integro-differential equations and its complexity analysis.
	\newblock {\em Stoch. Partial Differ. Equ. Anal. Comput.}, 13(3):1220--1278,
	2025.
	
	\bibitem{2013anoteoneuler}
	S.~Sabanis.
	\newblock A note on tamed {E}uler approximations.
	\newblock {\em Electron. Commun. Probab.}, 18:1--10, 2013.
	
	\bibitem{2016eulerdiffusion}
	S.~Sabanis.
	\newblock {Euler approximations with varying coefficients: the case of
		superlinearly growing diffusion coefficients}.
	\newblock {\em Ann. Appl. Probab.}, 26(4):2083--2105, 2016.
	
	\bibitem{MR2160585}
	R.~Situ.
	\newblock {\em Theory of stochastic differential equations with jumps and
		applications}.
	\newblock Mathematical and Analytical Techniques with Applications to
	Engineering. Springer, New York, 2005.
	
	\bibitem{MR3428878}
	C.~H. Yuen, W.~Zheng, and Y.~K. Kwok.
	\newblock Pricing exotic discrete variance swaps under the 3/2-stochastic
	volatility models.
	\newblock {\em Appl. Math. Finance}, 22(5):421--449, 2015.
	
	\bibitem{MR2033094}
	V.~A. Zorich.
	\newblock {\em Mathematical analysis. {I}}.
	\newblock Universitext. Springer-Verlag, Berlin, {R}ussian edition, 2004.

	
\end{thebibliography}
\end{document}